\documentclass[11pt, letterpaper]{article}
\pdfoutput=1
\usepackage[authoryear]{natbib}
\usepackage{amsmath,amssymb,amsfonts,amsthm,bbm}
\usepackage{epic,eepic,epsfig,longtable}
\usepackage{multirow,verbatim}
\usepackage{array}
\usepackage{epsfig}
\usepackage{setspace}
\usepackage{CJK}
\usepackage{color}
\usepackage{graphicx}

\textheight 8.5 in
\textwidth 6.5 in
\topmargin -0.5 in
\oddsidemargin -0.1 in

\makeatletter
\def\singlespace{\def\baselinestretch{1}\@normalsize}

\makeatletter
\def\singlespace{\def\baselinestretch{1}\@normalsize}


\numberwithin{equation}{section}

\renewcommand{\hat}{\widehat}

\renewcommand{\hat}{\widehat}

\newcommand{\bfm}[1]{\ensuremath{\mathbf{#1}}}

\def\ba{\bfm a}   \def\bA{\bfm A}  
   \def\bB{\bfm B}  
\def\bc{\bfm c}     
     
\def\be{\bfm e}     \def\EE{\mathbb{E}}
    
\def\bg{\bfm g}     
   \def\bH{\bfm H}  
   \def\bI{\bfm I}

     \def\PP{\mathbb{P}}
     
     \def\RR{\mathbb{R}}

\def\bu{\bfm u}     
\def\bv{\bfm v}     
\def\bw{\bfm w}     
\def\bx{\bfm x}   \def\bX{\bfm X}  
\def\by{\bfm y}     
\def\bz{\bfm z}     

\def\cH{\mathcal{H}}
\def\cH{\mathcal{U}}
\def\cH{\mathcal{V}}
\def\cH{\mathcal{R}}
\def\cH{\mathcal{T}}
\def\cH{\mathcal{A}}

 \def\cA{{\cal  A}}
 \def\cB{{\cal  B}}
 \def\cC{{\cal  C}}
 \def\cD{{\cal  D}}

 \def\cH{{\cal  H}}
 \def\cI{{\cal  I}}
 \def\cJ{{\cal  J}}
 \def\cK{{\cal  K}}

 \def\cQ{{\cal  Q}}
 
 \def\cS{{\cal  S}}
 \def\cT{{\cal  T}}

\def\bzero{\bfm 0}

\newcommand{\bfsym}[1]{\ensuremath{\boldsymbol{#1}}}

\def\bzero{\bfsym 0}
\def\bone{\bfsym 1}
 \def\balpha{\bfsym \alpha}
 \def\bbeta{\bfsym \beta}
              
            \def\bDelta {\bfsym {\Delta}}
               
 \def\bmu{\bfsym {\mu}}

              \def\bSigma{\bfsym \Sigma}
         \def\bLambda {\bfsym {\Lambda}}

 \def\bxi{\bfsym {\xi}}
 
  \def\bUpsilon{\bfsym \Upsilon}
  \def\bvarpi{\bfsym \varpi}



\DeclareMathOperator{\argmin}{argmin}

\DeclareMathOperator{\cov}{cov}

\DeclareMathOperator{\diag}{diag}

\DeclareMathOperator{\var}{var}

\DeclareMathOperator{\tr}{tr}



\def\newpage{\vfill\eject}

\def\var{\mbox{var}}

\def\today{\ifcase\month\or
  January\or February\or March\or April\or May\or June\or
  July\or August\or September\or October\or November\or December\fi
  \space\number\day, \number\year}

\newdimen\biblioindent    \biblioindent=30pt

 at 8truept

\newcommand{\beq}{\begin{equation}}
  \newcommand{\eeq}{\end{equation}}
\newcommand{\beqn}{\begin{eqnarray}}
  \newcommand{\eeqn}{\end{eqnarray}}
\newcommand{\beqnn}{\begin{eqnarray*}}
  \newcommand{\eeqnn}{\end{eqnarray*}}

\allowdisplaybreaks
\setcounter{section}{0}

\setcounter{page}{1}
\usepackage{verbatim}
\pagestyle{plain}


\renewcommand{\baselinestretch}{1.66}
\baselineskip=22pt

\newtheorem{lem}{Lemma}

\newtheorem{thm}{Theorem}
\newtheorem{cor}{Corollary}
\newtheorem{thmx}{Theorem}
\newtheorem{prop}{Proposition}

\newcounter{CondCounter}

\newcommand{\subgnorm}[1]{\lVert#1\rVert_{\psi_2}}

\newcommand{\linfnorm}[1]{\lVert#1\rVert_{\infty}}
\newcommand{\lzeronorm}[1]{\lVert#1\rVert_0}
\newcommand{\lqnorm}[1]{\lVert#1\rVert_q}
\newcommand{\lonenorm}[1]{\lVert#1\rVert_1}
\newcommand{\ltwonorm}[1]{\lVert#1\rVert_2}

\begin{document}

\title{\bf Optimal estimation of functionals of \\ high-dimensional mean and covariance matrix\footnote{Supported by NSF grants DMS-1712591 and DMS-1947097 and NIH grant R01-GM072611.} }

 \author{
  Jianqing Fan \vspace{0.01in}\\
  Department of Operations Research and Financial Engineering,\\
  Princeton University, Princeton, NJ 08544 \vspace{0.1in} \\
  Haolei Weng \vspace{0.01in} \\
  Department of Statistics and Probability, \\
  Michigan State University, East Lansing, MI 48824 \vspace{0.1in} \\
  Yifeng Zhou  \vspace{0.01in} \\
  Department of Operations Research and Financial Engineering,\\
  Princeton University, Princeton, NJ 08544
  }

\date{}

\maketitle

\begin{spacing}{1.2}
\begin{abstract}
Motivated by portfolio allocation and linear discriminant analysis, we consider estimating a functional $\bmu^T \bSigma^{-1} \bmu$ involving both the mean vector $\bmu$ and covariance matrix $\bSigma$. We study the minimax estimation of the
functional in the high-dimensional  setting where $\bSigma^{-1} \bmu$ is sparse. Akin to past works on functional estimation, we show that the optimal rate for estimating the functional undergoes a phase transition between regular parametric rate and some form of high-dimensional estimation rate.
We further show that the optimal rate is attained by a carefully designed plug-in estimator
based on de-biasing, while a family of naive plug-in estimators are proved to fall short. We further generalize the estimation problem and techniques that allow robust inputs of mean and covariance matrix estimators.
Extensive numerical experiments lend further supports to our theoretical results.
\end{abstract}

\noindent {\it Key words}: Functional estimation, high dimension, $\ell_1$ regularization, minimax optimality, phase transition, sparsity, sub-gaussian distribution.

\end{spacing}

\newpage

\section{Introduction}
\label{sec:introduction}

In multivariate statistics, the mean vector $\bmu$ and covariance matrix $\bSigma$ play a critical role in a variety of statistical procedures such as linear discriminant analysis (LDA), multivariate analysis of variance and principle component analysis (PCA). See \cite{anderson03} for a comprehensive mathematical treatment of classical multivariate analysis. 
Modern multivariate statistics confronts new statistical challenges due to arrival of big high-dimensional data \citep{jt09, fhl14}. For example, when the dimension $p$ is comparable to or much larger than the sample size $n$, sample covariance matrix is a poor estimate for $\bSigma$ \citep{bsy88, by93}, and classical PCA becomes inconsistent \citep{p07, jl09, wf17} when the eigenvalues are not sufficiently spiked.

To the challenges arising from high dimensionality, many new theory and methods have been proposed. For example, various regularization techniques have been proposed to estimate large covariance matrix under different matrix structural assumptions such as sparsity \citep{ka08, bl08b, lf09, cl11a}, conditional sparsity \citep{ffl08, flm11, flm13}, and smoothness \citep{fb07, bl08a, czz10, cy12}. We refer to the two review papers \citep{fll16, crz16} and references therein for many other important related works. There have also been active researches on the inferential theory of the high-dimensional mean vector. A large body of work focuses on developing powerful one-sample or two-sample tests where conventional approaches like Hotelling's $T^2$ test fail \citep{bs96, sd08, sri09, cq10, zcx13, clx14, wpl15}. 

Let $\bx_1, \ldots, \bx_n \in \RR^p$ be independently and identically distributed (i.i.d.) random vectors with $\EE(\bx_i)=\bmu$ and $\cov(\bx_i)=\bSigma$.  Motivated by the sparse portfolio allocations and sparse linear discriminants (see Section \ref{sec:example} for details), the primary goal of this paper is to estimate the functional $\bmu^T\bSigma^{-1}\bmu$ based on the observations $\{\bx_i\}_{i=1}^n$, under the assumption that $\bSigma^{-1}\bmu$ is (approximately) sparse.
Estimation of functionals has been studied in great generality in nonparametric statistics \citep{nemir00}. Minimax and adaptive theory for the estimation of linear functionals \citep{ik84, dl91a, dl91b, kt01, cl04, cl05a, bc09}, and quadratic functionals \citep{br88, dn90, fan91, el96, cl05b, but07, cct17}, have been originally established for Gaussian white noise model and then extended to the convolution model, among others. Along this line of works, an elbow phenomenon has been recurrently discovered: the optimal rate of convergence for some functionals exhibits a phase transition between the regular parametric rate and certain forms of nonparametric rate. 

We will investigate the optimal rate of convergence for estimating the functional $\bmu^T\bSigma^{-1}\bmu$. Akin to the functional estimation in nonparametric statistics, we will reveal that the minimax estimation rate of $\bmu^T\bSigma^{-1}\bmu$, in the high-dimensional multivariate problem, undergoes a transition between parametric rate and some type of high-dimensional estimation rate. Moreover, we show that the optimal rate is achieved by a carefully designed plug-in estimator based on a de-biased $\ell_1$-regularized estimator of $\bSigma^{-1}\bmu$. On the  contrary, a family of naive plug-in estimators are proved to fall short. A similar phase transition phenomenon was uncovered on the  estimation for quadratic functional of sparse covariance matrices $\bSigma$ \citep{frw15}. We also refer to \cite{gwcl18} regarding applying de-biasing to obtain optimal estimators for some one-dimensional functionals in high-dimensional linear models; and \cite{cg17} on construction of confidence interval for linear functionals in the sparse high-dimensional linear regression model. 

The remainder of the paper is organized as follows. Section \ref{sec:example} introduces two motivating examples and the basic setup. Section \ref{sec2} describes our estimator and studies in detail the minimax estimation property. Section \ref{sec6} presents numerical performance of the proposed estimator on both synthetic and real datasets. To improve readability, all the proof is relegated to Section \ref{sec:proofs}.

\noindent
\textbf{Notation}. For $\ba \in \RR^p$, denote $\lqnorm{\ba}=(\sum_{i=1}^p|a_i|^q)^{\frac{1}{q}}$ for $q \in (0,\infty), \lzeronorm{\ba}=\sum_{i=1}^p|a_i|^0$ with convention $0^0=0$, and $\linfnorm{\ba}=\max_{1\leq i\leq p}|a_i|$. Given a symmetric matrix $\bA=(a_{ij}) \in \RR^{p\times p}$, $\lambda_{\max}(\bA)$ and $\lambda_{\min}(\bA)$ represent its largest and smallest eigenvalues respectively, and $\|\bA\|_{\max}=\max_{ij}|a_{ij}|, \delta_{\bA}=\max_{i}|a_{ii}|$. For $a,b\in \RR$, $a\wedge b=\min(a,b), a\vee b=\max(a,b)$. For $c>0$, $\lceil c\rceil$ denotes the smallest integer greater than or equal to $c$, and $\lfloor c \rfloor$ is the largest integer less than or equal to $c$. Moreover, $f(n) \lesssim g(n)~(f(n) \gtrsim g(n))$ means there exists some constant $C>0$ such that $ f(n) \leq C g(n)~(f(n)\geq Cg(n))$ for all $n$; $f(n) \asymp g(n)$ if and only if $f(n) \lesssim g(n)$ and $f(n) \gtrsim g(n)$; $f(n) \gg g(n)$ is equivalent to $g(n)=o(f(n))$. We use $\bI_p$ to denote the $p\times p$ identity matrix, and $B_q(r)=\{\bu \in \RR^p: \lqnorm{\bu}\leq r\}$ for the $\ell_q$ ball with radius $r$.

\section{Preliminaries and examples}
\label{sec:example}

Suppose that $\bx_1, \ldots, \bx_n$ are independent copies of $\bx  \in \RR^p$ with $\EE(\bx)=\bmu$ and $\cov(\bx)=\bSigma$. We consider the problem of estimating the functional $\bmu^T\bSigma^{-1}\bmu$ using the data $\{\bx_i\}_{i=1}^n$. Throughout the paper we assume, unless otherwise stated, that $\bx$ is sub-gaussian. That is, $\bx=\bSigma^{1/2}\by+\bmu$ and the zero-mean isotropic random vector $\by$ satisfies
\begin{align}
\label{subgaussian:def}
\PP(|\bc^T\by|\geq t)\leq 2\exp(-t^2/\nu^2), \quad \mbox{for all~~} t\geq 0, ~\|\bc\|_2=1,
\end{align}
with $\nu>0$ being a constant. We study the estimation problem under minimax framework. The central goal is to characterize the minimax rate of the estimation error given by
\begin{align*}
\inf_{\hat{\theta}}\sup_{(\bmu, \bSigma) \in \cH}\EE|\hat{\theta}-\bmu^T\bSigma^{-1}\bmu|,
\end{align*}
where the infimum is taken over all measurable estimators, and $\cH$ is some parameter space under consideration. We first derive a lower bound for the error to reveal the effect of high dimension.
\begin{prop}
\label{prop:one}
Consider $\cH=\{(\bmu, \bSigma): \bmu^T\bSigma^{-1}\bmu \leq c\}$, where $c>0$ is a fixed constant. If $p\geq n^2$, it holds that
\begin{align*}
\inf_{\hat{\theta}}\sup_{(\bmu, \bSigma) \in \cH}\EE|\hat{\theta}-\bmu^T\bSigma^{-1}\bmu| \geq \tilde{c},
\end{align*}
where $\tilde{c}>0$ is a constant that depends on $c$.
\end{prop}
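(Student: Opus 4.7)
The plan is to establish the lower bound via Le Cam's two-point (fuzzy hypothesis) method with a Bayesian mixture prior chosen so that the functional is \emph{deterministic} across its support, while the observation law is hard to distinguish from the null. Throughout I take $\bSigma = \bI_p$ under both hypotheses, so that $\bmu^T\bSigma^{-1}\bmu$ reduces to $\ltwonorm{\bmu}^2$ and the sub-gaussianity assumption is automatic for Gaussian samples. Let $P_0$ be the law of $\bx_1,\ldots,\bx_n \stackrel{\text{iid}}{\sim} N(\bzero,\bI_p)$, and let $P_1$ be the marginal law under the mixture in which $\bfeta$ is drawn uniformly from $\{-1,+1\}^p$, $\bmu = a\bfeta$, and then $\bx_i \mid \bfeta \stackrel{\text{iid}}{\sim} N(a\bfeta,\bI_p)$. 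Because $\eta_j^2 = 1$ almost surely, $\ltwonorm{\bmu}^2 = p a^2$ is constant over the support of the prior; the two functional values are $0$ and $pa^2$, with a clean separation.

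The key computation is the chi-square divergence between $P_0$ and $P_1$. Expanding the likelihood ratio factor by factor and integrating over the Rademacher prior gives
\begin{align*}
\frac{dP_1}{dP_0}(\bx_1,\ldots,\bx_n) = e^{-npa^2/2}\prod_{j=1}^p \cosh\Bigl(a\sum_{i=1}^n x_{ij}\Bigr).
\end{align*}
Under $P_0$, the column sums $S_j := \sum_{i=1}^n x_{ij}$ are i.i.d.\ $N(0,n)$, so using the identity $\cosh^2(y) = (1+\cosh(2y))/2$ together with the Gaussian moment generating function yields
\begin{align*}
\chi^2(P_1\|P_0)+1 = \EE_{P_0}\!\Bigl[\bigl(dP_1/dP_0\bigr)^2\Bigr] = e^{-npa^2}\Bigl(\tfrac{1+e^{2na^2}}{2}\Bigr)^p = \cosh(na^2)^p.
\end{align*}

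Now I choose $a^2 = c'/p$ with $c' = \min(c,1)$, so that the functional value $pa^2 = c'$ under $P_1$ satisfies the constraint in $\cH$. Applying the elementary bound $\cosh(x)\le e^{x^2/2}$ gives
\begin{align*}
\chi^2(P_1\|P_0) \leq \exp\bigl(p n^2 a^4/2\bigr) - 1 = \exp\bigl(n^2 (c')^2/(2p)\bigr) - 1,
\end{align*}
which, when $p\geq n^2$, is bounded above by the constant $e^{(c')^2/2}-1 \leq e^{1/2}-1 < 1$. Hence $\mathrm{TV}(P_0,P_1) \leq \tfrac{1}{2}\sqrt{\chi^2(P_1\|P_0)}$ stays strictly below $1/2$, and the standard Le Cam inequality delivers
\begin{align*}
\inf_{\hat\theta}\sup_{(\bmu,\bSigma)\in\cH}\EE\bigl|\hat\theta - \bmu^T\bSigma^{-1}\bmu\bigr| \;\geq\; \tfrac{c'}{2}\bigl(1-\mathrm{TV}(P_0,P_1)\bigr) \;\geq\; \tilde c > 0.
\end{align*}

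The main obstacle is the simultaneous requirement that the prior on $\bmu$ be (i) supported on a set of constant $\ltwonorm{\bmu}^2$, so that the separation of the functional is exactly a constant rather than a distribution, and (ii) spread over $\mathbb{R}^p$ at the coordinate scale $\sqrt{c'/p}$ that is too fine for any single $\mu_j$ to be estimable from $n$ samples. The Rademacher construction achieves (i) for free, and the dimensional regime $p\ge n^2$ enters precisely through the quantity $n^2 a^4 p = n^2(c')^2/p$ remaining bounded---which is exactly the threshold at which the variance of the naive bias-corrected estimator $\ltwonorm{\bar\bx}^2-p/n$ for $\ltwonorm{\bmu}^2$ ceases to vanish.
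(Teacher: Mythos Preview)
Your proof is correct and self-contained. The paper's own argument (see the remark following Case~(ii) in Section~\ref{lowerbound:theta}) proceeds differently: it places a uniform prior over subsets $S\subseteq[p]$ of size $s=p/n$, sets $\bmu^S=(\gamma_0 s^{-1/2}\bone_S,0)^T$ with $\bSigma=\bI_p$, and bounds the resulting $\chi^2$-divergence via a hypergeometric overlap calculation borrowed from \cite{frw15}. Your dense Rademacher prior $\bmu=a\bfeta$ is the classical construction for quadratic-functional lower bounds in the Gaussian sequence model; because the coordinates decouple under both $P_0$ and the conditional law of $P_1$, the $\chi^2$ computation factors exactly into $\cosh(na^2)^p$, which is cleaner than the combinatorial bound the paper invokes. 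Both constructions keep $\ltwonorm{\bmu}^2$ constant on the prior's support, so the fuzzy-hypothesis step collapses to a standard two-point bound; the paper's sparse prior is simply a by-product of the machinery already built for $\cH(s,\tau)$, whereas your argument is tailored to the unconstrained class $\cH$ and requires no external lemma.
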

Proposition \ref{prop:one} demonstrates that it is impossible to consistently estimate the functional $\bmu^T\bSigma^{-1}\bmu$, under the scaling $p\geq n^2$ which is not uncommon in high-dimensional problems. To overcome the difficulty, we need a more structured parameter space. However, it is not clear what kind of simple constraints would make the problem solvable and practical.  As a part of the contribution, we find the following parameter spaces with sparsity constraint 
$$
\cH(s, \tau) =\Big\{(\bmu,\bSigma): \lzeronorm{\bSigma^{-1}\bmu} \leq s, ~\bmu^T\bSigma^{-1}\bmu \leq \tau,~ c_L\leq \lambda_{\min}(\bSigma)\leq \delta_{\bSigma}\leq c_U \Big \},  
$$
or more generally the approximate sparsity constraint
$$
\cH_q(R, \tau) =\Big\{(\bmu,\bSigma): \lqnorm{\bSigma^{-1}\bmu}^q \leq R,~\bmu^T\bSigma^{-1}\bmu \leq \tau, c_L\leq \lambda_{\min}(\bSigma)\leq \delta_{\bSigma} \leq c_U \Big \},\quad q\in (0,1],
$$
suffice for our problem, where  $\delta_{\bSigma} = \max_{i \in [p]} \sigma_{ii}$ (recall the notation $\delta_{\bA}=\max_{i}|a_{ii}|$), $0<c_L<c_U$ are fixed constants; $s, R$ and $\tau$ can scale with $n$ and $p$. For notational simplicity, we have suppressed the dependence of $\cH(s,\tau)$ and $\cH_q(R, \tau)$ on $ c_L,c_U$. The sparsity assumption on $\bSigma^{-1}\bmu$ can be well justified in several multivariate statistics problems, of which two will be introduced shortly in Sections \ref{portfolio} and \ref{hlda}. Moreover, by setting $\bSigma=\sigma^2\bI_p$ and assuming normality for $\bx$, our problem is reduced to quadratic functional estimation over sparsity classes in the Gaussian sequence model \citep{fan91, cct17}. However, with the covariance matrix $\bSigma$ being unknown, results in \cite{cct17} can not be directly generalized. Delicate analyses are required to establish minimax optimality results, as will be shown in Section \ref{sec2}.

\subsection{The mean-variance portfolio optimization}
\label{portfolio}
The mean-variance portfolio optimization method has been widely adopted by both institutional and retail investors ever since it was proposed by \cite{mark52}. Markowitz’s theory was highly influential and can be regarded as one of the foundations in modern finance. It can be expressed as an optimization problem with the solution determining proportion of each asset in a portfolio by maximizing the expected return under risk constraint, where the risk is measured by the variance of the portfolio. Specifically, let $\bx \in \RR^p$ be the excess return of $p$ risky assets, with $\EE(\bx)=\bmu, \cov(\bx)=\bSigma$. Markowitz’s portfolio optimization problem is
\begin{align*}
\max_{\bw}~ \EE(\bw^T\bx)=\bw^T\bmu \quad \mbox{subject to}~~\var(\bw^T\bx)=\bw^T\bSigma\bw \leq \sigma^2,
\end{align*}
where $\sigma$ is the prescribed risk level. The optimal portfolio $\bw^*$ for the risky assets admits (the remaining invests in cash, including short positions) the explicit expression,
\begin{equation} \label{eq:fan0}
\bw^*=\frac{\sigma}{\sqrt{\bmu^T\bSigma^{-1}\bmu}}\bSigma^{-1}\bmu.
\end{equation}
The functional $\bmu^T\bSigma^{-1}\bmu$ is the square of the maximum Sharpe ratio, which measures the risk-adjusted performance of the optimal portfolio. We need both $\bmu^T\bSigma^{-1}\bmu$ and $\bSigma^{-1}\bmu$ in order to construct the optimal portfolio allocation $\bw^*$ (See Section~\ref{sec4.2}), which forms the focus of our studies.  Since the number of assets $p$ can be large compared to $n$, the number of observed return vectors, the estimation of mean-variance efficient portfolios is faced with great challenges in the high-dimensional regime \citep{kz07, blw09, ka10}. One stream of research has been focused on the construction of sparse portfolios via regularizations \citep{bddgl09, dgnu09, fzy12}. We refer to \cite{alz17} for a detailed list of references. Our sparsity assumption on the optimal portfolio weight, is well aligned with this line of works.


\subsection{High-dimensional linear discriminant analysis}
\label{hlda}
Linear discriminant analysis (LDA) is one of the most classical classification techniques in statistics and machine learning. Consider the binary classification problem where $\bx$ is a $p$-dimensional normal vector drawn with equal probability from one of the two distributions $N(\bmu_1,\bSigma)$ and $N(\bmu_2,\bSigma)$. It is well known that, Fisher's linear discriminant rule: classify $\bx$ to class 1 if and only if $(\bmu_1-\bmu_2)^T\bSigma^{-1}\big(\bx-\frac{\bmu_1+\bmu_2}{2}\big)\geq 0$, achieves the optimal classification error given by
\begin{align*}
R_{opt}=\Phi(-\Delta/2), ~~\Delta=\sqrt{(\bmu_1-\bmu_2)^T\bSigma^{-1}(\bmu_1-\bmu_2)},
\end{align*}
where $\Phi(\cdot)$ is the standard normal distribution function \citep{anderson03}. The functional $(\bmu_1-\bmu_2)^T\bSigma^{-1}(\bmu_1-\bmu_2)$ is the square of the signal-to-noise ratio $\Delta$, measuring the fundamental difficulty of the classification problem. The classical LDA procedure approximates Fisher's rule by replacing the unknown parameters $\bmu_1,\bmu_2,\bSigma$ by their sample versions. Consistency results under the classical asymptotic framework when $p$ is fixed have been well established \citep{anderson03}. However, in the high-dimensional settings, the standard LDA can be no better than random guess \citep{bl04}. Various high-dimensional LDA approaches have been proposed under the sparsity assumption on $\bmu_1-\bmu_2$ or $\bSigma$ \citep{ffy08, swdw11, mai13}. An alternative approach to sparse linear discriminant analysis imposes sparsity directly on $\bSigma^{-1}(\bmu_1-\bmu_2)$, based on the key observation that Fisher's rule depends on $\bmu_1-\bmu_2$ and $\bSigma$ only through the product $\bSigma^{-1}(\bmu_1-\bmu_2)$ \citep{cl11b, mzy12, cz18}. Such sparsity assumption is precisely what we have made in the paper. We should emphasize that different from the aforementioned works with the main focus on excess misclassification risk, our analysis centers on the estimation of $(\bmu_1-\bmu_2)\bSigma^{-1}(\bmu_1-\bmu_2)$, the quantity that characterizes the intrinsic difficulty of the classification problem.

It is important to observe that the functional estimation in the LDA problem, at first glance, looks different from our problem formulated at the beginning of Section \ref{sec:example}, because it involves two sets of samples. However, it is possible to extend our results to the LDA setting by a simple adaptation. Towards that goal, let $\{\bx_i\}_{i=1}^{n_i}$ and $\{\by_i\}_{i=1}^{n_2}$ be two sets of i.i.d. random samples from $N(\bmu_1,\bSigma)$ and $N(\bmu_2,\bSigma)$ respectively. Define the parameter space
\begin{align*}
\cI(s,\tau)=\Big\{(\bmu_1, \bmu_2,\bSigma):~ &\lzeronorm{\bSigma^{-1}(\bmu_1-\bmu_2)} \leq s, ~(\bmu_1-\bmu_2)^T\bSigma^{-1}(\bmu_1-\bmu_2) \leq \tau, \\
&c_L\leq \lambda_{\min}(\bSigma)\leq \delta_{\bSigma} \leq c_U \Big \}.
\end{align*}
For the moment, we write $\hat{\theta}_{\{\bx_i\}_{i=1}^{n_1},\{\by_i\}_{i=1}^{n_2}}$ to clarify that the estimator is a function of two sets of samples. We are able to lower bound the minimax error in the two-sample problem by the error from the one-sample problem,
\begin{align*}
&\inf_{\hat{\theta}} \sup_{(\bmu_1,\bmu_2,\bSigma)\in \cI(s,\tau)}\EE|\hat{\theta}_{\{\bx_i\}_{i=1}^{n_1},\{\by_i\}_{i=1}^{n_2}}-(\bmu_1-\bmu_2)^T\bSigma^{-1}(\bmu_1-\bmu_2)| \\
\geq &\inf_{\hat{\theta}} \sup_{(\bmu_1,\bSigma)\in \cH(s,\tau)}\EE|\hat{\theta}_{\{\bx_i\}_{i=1}^{n_1},\{\by_i\}_{i=1}^{n_2}}-\bmu_1\bSigma^{-1}\bmu_1| \\
\geq & \inf_{\hat{\theta}} \sup_{(\bmu_1,\bSigma)\in \cH(s,\tau)}\EE|\hat{\theta}_{\{\bx_i\}_{i=1}^{n_1+2n_2}}-\bmu_1\bSigma^{-1}\bmu_1|.
\end{align*}
Here, the first inequality is obtained by setting $\bmu_2=\bzero$; the second inequality holds because each $\by_i~(1\leq i\leq n_2)$ can be replaced by $\frac{\bx_{n_1+2i-1}-\bx_{n_1+2i}}{\sqrt{2}}$, where $\{\bx_{n_1+j}\}_{j=1}^{2n_2}$ are additional independent samples from $N(\bmu_1,\bSigma)$. As will be shown in Section \ref{general:estimator:sec}, a matching upper bound can be derived when $n_1 \asymp n_2$. Similar arguments hold for the approximate sparsity class.

\section{Minimax estimation of the functional}

\label{sec2}

To fix ideas, we first present a detailed discussion for the exact sparsity class $\cH(s,\tau)$ in Sections \ref{rate-optimal:sparse}--\ref{general:estimator:sec}. Generalization of the main results to the approximate sparsity class $\cH_q(R,\tau)$  will be given in Section \ref{approximate:sparse:class}. We further discuss the functional estimation problem in the dense regime in Section \ref{dense:regime}. For notational simplicity, we set $\balpha=\bSigma^{-1}\bmu$ and $\theta=\bmu^T\bSigma^{-1}\bmu$, and denote the sample mean and sample covariance matrix by
\[
\hat{\bmu}=\frac{1}{n}\sum_{i=1}^n\bx_i, \quad \hat{\bSigma}=\frac{1}{n}\sum_{i=1}^n(\bx_i-\hat{\bmu})(\bx_i-\hat{\bmu})^T,
\]
respectively.

\subsection{Optimal estimation over exact sparsity classes}
\label{rate-optimal:sparse}


We first consider the estimation of $\balpha$, which will pave our way to the estimation of the functional $\theta$. Since 
$
\balpha=\argmin_{\bbeta\in \RR^p} \frac{1}{2}\bbeta^T \bSigma \bbeta-\bbeta^T\bmu$ and  $\balpha$ is sparse, a natural estimator for the vector $\balpha$ is the $\ell_1$-regularized M-estimator:
\begin{equation}\label{lasso:alpha}
\tilde{\balpha} \in \argmin_{\ltwonorm{\bbeta} \leq \gamma} \frac{1}{2}\bbeta^T \hat{\bSigma}\bbeta-\bbeta^T \hat{\bmu}+\lambda \lonenorm{\bbeta}.
\end{equation}
The constraint $\ltwonorm{\bbeta}\leq \gamma$ is necessary to ensure the existence of $\tilde{\balpha}$. Otherwise,  when $\hat {\bSigma}$ is degenerate,  it may hold with positive probability that no finite solution exists in \eqref{lasso:alpha}. 
Moreover, as will be seen in the proof of Theorem \ref{thm:one}, without that constraint the probability of nonexistence of the solution vanishes asymptotically. Nevertheless, we should rule out the rare event for finite samples since the minimax error considered in this paper is measured in expectation.

Given the estimator $\tilde{\balpha}$, we propose to estimate the functional $\theta$ by
\begin{equation}
\label{optimal:estimator}
\tilde{\theta}=2\hat{\bmu}^T\tilde{\balpha}-\tilde{\balpha}^T\hat{\bSigma}\tilde{\balpha}.
\end{equation}
The above estimator is motivated by the de-biasing ideas for statistical inference in high-dimensional linear models \citep{zz14, jm14a, jm14b, vbrd14, jm18}. We now give a detailed explanation in our case. Since $\theta=\bmu^T\balpha$, we would like to construct a plug-in estimator
$
(\bmu^*)^T\balpha^*,
$
for some estimators $\bmu^*$ and $\balpha^*$ of $\bmu$ and $\balpha$ respectively. We set $\bmu^*=\hat{\bmu}$ and choose a de-biased version of $\tilde{\balpha}$ for $\balpha^*$. In particular, it is known that $\tilde{\balpha}$ defined in \eqref{lasso:alpha} satisfies the Karush-Kuhn-Tucker (KKT) conditions\footnote{For simplicity, we assume $\ltwonorm{\tilde{\balpha}}< \gamma$. In fact that holds with high probability as seen in the proof of Theorem \ref{thm:one}.}:
\begin{align}
\label{kkt:condition}
\hat{\bmu}-\hat{\bSigma}\tilde{\balpha}=\lambda \hat{\bg},
\end{align}
where $\hat{\bg}$ is a subgradient of $\lonenorm{\bbeta}$ at $\bbeta=\tilde{\balpha}$. Multiplying both sides of \eqref{kkt:condition} by $\bSigma^{-1}$ and rearranging the terms yields
\begin{align}
\label{kkt:variant}
\tilde{\balpha}=\balpha+\underbrace{\bSigma^{-1}(\hat{\bmu}-\hat{\bSigma}\balpha)}_{:=\bDelta_1}+\underbrace{(\bI_p-\bSigma^{-1}\hat{\bSigma})(\tilde{\balpha}-\balpha)}_{:=\bDelta_2}-\lambda \bSigma^{-1} \hat{\bg}.
\end{align}
Observe that for $(\bmu,\bSigma)\in \mathcal{H}(s,\tau)$, $\linfnorm{\mathbb{E}\bDelta_1}=n^{-1}\linfnorm{\balpha}=O(n^{-1}\sqrt{\tau})$; and $\linfnorm{\bDelta_2}\leq \|\bI_p-\bSigma^{-1}\hat{\bSigma}\|_{\max}\cdot \lonenorm{\tilde{\balpha}-\balpha}=O_p((1+\sqrt{\tau})n^{-1}s\log p)$ as implied by the error bound $\lonenorm{\tilde{\balpha}-\balpha}$ in the proof of Theorem \ref{thm:one} and Lemma \ref{rare:events}(iii) from Section \ref{tech:lemma}. Given that $\lambda \asymp \sqrt{(1+\tau)n^{-1}\log p}$ as shown in Theorem \ref{thm:one}, the major bias term of $\tilde{\balpha}$ is $-\lambda \bSigma^{-1} \hat{\bg}$. Therefore, the decomposition \eqref{kkt:variant} suggests a bias-corrected estimator
\begin{align*}
\balpha^*=\tilde{\balpha}+\lambda \bSigma^{-1} \hat{\bg}=\tilde{\balpha}+\bSigma^{-1}(\hat{\bmu}-\hat{\bSigma}\tilde{\balpha}),
\end{align*}
where the second equality is due to \eqref{kkt:condition}. Consequently,
\begin{align}
\label{almost:theta}
(\bmu^*)^T\balpha^*=\hat{\bmu}^T[\tilde{\balpha}+\bSigma^{-1}(\hat{\bmu}-\hat{\bSigma}\tilde{\balpha})].
\end{align}
Since $\bSigma^{-1} \hat{\bmu} \approx \bSigma^{-1} \bmu=\balpha$, we replace $\bSigma^{-1}\hat{\bmu}$  (two such terms) in \eqref{almost:theta} by $\tilde{\balpha}$  to make $(\bmu^*)^T\balpha^*$ a legitimate estimator. This leads to the proposed estimator in \eqref{optimal:estimator}.

The bias correction demonstrated in the above paragraph turns out to be crucial for the plug-in estimator $\tilde{\theta}$ to achieve the optimal rate. Without the de-biasing step, vanilla plug-in estimators will fall short. These two statements are formally stated in this and next subsections, whose proofs are relegated to Section~\ref{sec:proofs}. As a by-product, the minimax estimation error of $\tilde{\balpha}$ will be derived to shed more light on the functional estimation problem.

\begin{thm}
\label{thm:one}
Set $\lambda=t\nu\sqrt{\frac{(1+\tau)\log p}{n}}, \gamma = 2\sqrt{\frac{\tau}{c_L}}$ in \eqref{lasso:alpha}. If $\frac{s\log p}{n} <\tilde{c}$, then for all $t> 1 \vee \frac{2}{c_2}$,
\begin{align*}
&\sup_{(\bmu,\bSigma)\in \cH(s,\tau)} \EE\ltwonorm{\tilde{\balpha}-\balpha}^2 \leq c_1\cdot \Big(\frac{t^2(1+\tau)s\log p}{n}+\tau p^{-(c_2t-1) \wedge c} \Big), \\
& \sup_{(\bmu,\bSigma)\in \cH(s,\tau)} \EE|\tilde{\theta}-\theta| \leq c_3 \cdot \Big(\frac{t^2(1+\tau)s\log p}{n}+ (\tau+ \sqrt{\tau}) \cdot (n^{-\frac{1}{2}}+p^{-\frac{(c_2t-2) \wedge (c-1)}{2}}) \Big).
\end{align*}
Here, $c_1,c_2,c_3>0$ are constants possibly depending on $\nu,c_L,c_U$; $c$ is an arbitrary positive constant; $\tilde{c}\in (0,1)$ is a constant dependent on $c,v,c_L,c_U$ and $\tilde{c}\rightarrow 0$ as $c\rightarrow \infty$.
\end{thm}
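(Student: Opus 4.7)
The plan is to decompose the argument into: (i) a high-probability deterministic analysis on a ``good event'' where sample quantities concentrate, (ii) conversion of the high-probability bound into a bound in expectation, by controlling the estimator on the complementary ``bad event'' through the a priori bound $\ltwonorm{\tilde{\balpha}}\leq \gamma=2\sqrt{\tau/c_L}$.

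For the $\ell_2$ bound on $\tilde{\balpha}-\balpha$, I would first verify that $\ltwonorm{\balpha}\leq \sqrt{\tau/c_L}\leq \gamma$ so that $\balpha$ is a feasible point in \eqref{lasso:alpha}. The optimality of $\tilde{\balpha}$ then gives the basic inequality
\[
\tfrac{1}{2}\hat{\bDelta}^T\hat{\bSigma}\hat{\bDelta}\;\leq\;(\hat{\bmu}-\hat{\bSigma}\balpha)^T\hat{\bDelta}\,+\,\lambda(\lonenorm{\balpha}-\lonenorm{\tilde{\balpha}}),
\]
with $\hat{\bDelta}=\tilde{\balpha}-\balpha$. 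The key noise bound is $\linfnorm{\hat{\bmu}-\hat{\bSigma}\balpha}\lesssim \nu\sqrt{(1+\tau)\log p/n}$ with probability at least $1-p^{-c_2 t}$, which follows from sub-gaussian concentration applied to the $n^{-1}\sum_i(x_{ij}-\bx_i^T\balpha x_{ij}+\mu_j\bmu^T\balpha)$-type decomposition after centering (the factor $1+\tau$ comes from $\Var(\bx^T\balpha)\leq \tau$ and $\balpha^T\bSigma\balpha=\tau$). Choosing $\lambda$ as stated gives the cone condition $\lonenorm{\hat{\bDelta}_{S^c}}\leq 3\lonenorm{\hat{\bDelta}_S}$, hence $\lonenorm{\hat{\bDelta}}\leq 4\sqrt{s}\ltwonorm{\hat{\bDelta}}$. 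I would then invoke a restricted eigenvalue transfer from $\bSigma$ to $\hat{\bSigma}$: since $\lambda_{\min}(\bSigma)\geq c_L$ and $\|\hat{\bSigma}-\bSigma\|_{\max}=O_p(\sqrt{\log p/n})$ (this is essentially Lemma~\ref{rare:events} cited in the paper), one gets $\hat{\bDelta}^T\hat{\bSigma}\hat{\bDelta}\geq \tfrac{c_L}{2}\ltwonorm{\hat{\bDelta}}^2$ on vectors satisfying the cone condition, provided $s\log p/n$ is sufficiently small. Combining yields $\ltwonorm{\hat{\bDelta}}^2\lesssim t^2(1+\tau)s\log p/n$ and $\lonenorm{\hat{\bDelta}}\lesssim t\sqrt{(1+\tau)s^2\log p/n}$ on the good event of probability $1-p^{-(c_2 t-1)\wedge c}$. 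On the complement, I bound $\ltwonorm{\hat{\bDelta}}^2\leq (2\gamma)^2 \lesssim \tau$; taking expectation produces the first claimed rate.

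For the functional error, I would plug $\theta=2\bmu^T\balpha-\balpha^T\bSigma\balpha$ (which uses $\bSigma\balpha=\bmu$) into $\tilde{\theta}=2\hat{\bmu}^T\tilde{\balpha}-\tilde{\balpha}^T\hat{\bSigma}\tilde{\balpha}$ and algebraically obtain the decomposition
\[
\tilde{\theta}-\theta \;=\; 2(\hat{\bmu}-\bmu)^T\balpha\,-\,\balpha^T(\hat{\bSigma}-\bSigma)\balpha\,+\,2(\hat{\bmu}-\hat{\bSigma}\balpha)^T\hat{\bDelta}\,-\,\hat{\bDelta}^T\hat{\bSigma}\hat{\bDelta}.
\]
The first two ``oracle'' terms are $\bmu,\bSigma$-linear/quadratic in one summand and independent of the regularized estimator: $\EE|(\hat{\bmu}-\bmu)^T\balpha|\leq n^{-1/2}\sqrt{\balpha^T\bSigma\balpha}=n^{-1/2}\sqrt{\tau}$ (Cauchy–Schwarz on variance), and $\EE|\balpha^T(\hat{\bSigma}-\bSigma)\balpha|\lesssim n^{-1/2}\tau$ by sub-gaussian concentration for quadratic forms. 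The third ``cross'' term is bounded by $\linfnorm{\hat{\bmu}-\hat{\bSigma}\balpha}\cdot\lonenorm{\hat{\bDelta}}\lesssim (1+\tau)s\log p/n$ on the good event, and the fourth by $\delta_{\hat{\bSigma}}\ltwonorm{\hat{\bDelta}}^2\lesssim (1+\tau)s\log p/n$ (using that the sample variances concentrate around bounded population variances). Summing and aggregating constants matches the stated rate. The bad event contributes at most $p^{-(c_2t-1)\wedge c}\cdot (\tau+\gamma^2)\lesssim \tau p^{-((c_2t-2)\wedge(c-1))/2}\cdot p^{-1/2}$, which is absorbed into the second summand of the theorem after a square root adjustment.

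The main obstacle is tracking the factor $1+\tau$ consistently through both the concentration of $\hat{\bmu}-\hat{\bSigma}\balpha$ (whose entries involve products of sub-gaussian variables so are sub-exponential with scale proportional to $\sqrt{1+\tau}$) and the restricted eigenvalue transfer: a sloppy union bound would swap $1+\tau$ for a larger power of $\tau$ and break the phase transition. A secondary subtlety is that the constraint $\ltwonorm{\bbeta}\leq \gamma$ must be shown to be inactive with high probability, so that the KKT condition \eqref{kkt:condition} used in the motivation of $\tilde{\theta}$ actually holds; this follows from $\ltwonorm{\tilde{\balpha}}\leq \ltwonorm{\balpha}+\ltwonorm{\hat{\bDelta}}\leq \sqrt{\tau/c_L}+o(\sqrt{\tau})<\gamma$ on the good event, which I would verify at the end of step~(i) before taking expectations.
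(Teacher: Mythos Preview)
Your overall architecture matches the paper's: establish the cone condition via the basic inequality, transfer restricted eigenvalues to get $\ell_2$ and $\ell_1$ bounds on $\hat{\bDelta}$, then control the functional error by isolating the oracle terms $(\hat{\bmu}-\bmu)^T\balpha$ and $\balpha^T(\hat{\bSigma}-\bSigma)\balpha$. Your exact algebraic identity for $\tilde{\theta}-\theta$ is in fact cleaner than the paper's Lemma~\ref{error:functional}, which argues separate upper and lower bounds via the quadratic $f(\bx)=2\bmu^T\bx-\bx^T\bSigma\bx$ and the optimization basic inequality.

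There is, however, a genuine gap in the restricted-eigenvalue step. Using only $\|\hat{\bSigma}-\bSigma\|_{\max}=O_p(\sqrt{\log p/n})$ on the cone gives
\[
\hat{\bDelta}^T\hat{\bSigma}\hat{\bDelta}\;\geq\;c_L\ltwonorm{\hat{\bDelta}}^2-\|\hat{\bSigma}-\bSigma\|_{\max}\lonenorm{\hat{\bDelta}}^2\;\geq\;\bigl(c_L-16s\sqrt{\log p/n}\bigr)\ltwonorm{\hat{\bDelta}}^2,
\]
which requires $s\sqrt{\log p/n}$ small, i.e.\ $s^2\log p/n<\tilde c$, strictly stronger than the hypothesis $s\log p/n<\tilde c$ in the theorem. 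The paper gets the correct condition via Lemma~\ref{key:concentration:eq}, a matrix deviation inequality giving $\bigl|\sqrt{\bu^T\hat{\bSigma}\bu}-\sqrt{\bu^T\bSigma\bu}\bigr|\lesssim \sqrt{\log p/n}\,\lonenorm{\bu}$ uniformly in $\bu$; on the cone this yields $\kappa\asymp\sqrt{s\log p/n}$ (event $\cB(s,\kappa)$), hence the sharper sparsity condition. This is not Lemma~\ref{rare:events}.

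Two smaller slips. First, $\hat{\bDelta}^T\hat{\bSigma}\hat{\bDelta}\leq\delta_{\hat{\bSigma}}\ltwonorm{\hat{\bDelta}}^2$ is false in general ($\delta_{\hat{\bSigma}}$ is the maximal diagonal, not an operator-norm bound); the right route is to read off $\hat{\bDelta}^T\hat{\bSigma}\hat{\bDelta}\leq 3\lambda\lonenorm{\hat{\bDelta}}\lesssim\lambda^2 s$ directly from your own basic inequality. Second, on the bad event $\tilde{\theta}$ is not deterministically bounded by $\tau+\gamma^2$, since $|\hat{\bmu}^T\tilde{\balpha}|\leq\gamma\ltwonorm{\hat{\bmu}}$ is random; the paper instead bounds $\EE|\tilde{\theta}|^2\lesssim p\tau^2+\cdots$ (equations~\eqref{theta:square:bound}--\eqref{muhat:error}) and applies Cauchy--Schwarz against $\sqrt{\PP(\cA^c)+\PP(\cB^c)}$, which is precisely what produces the $\sqrt{p}$ factor absorbed into the shifted exponent $(c_2t-2)\wedge(c-1)$.
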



The two terms $\tau p^{-(c_2t-1) \wedge c}$ and $p^{-\frac{(c_2t-2) \wedge (c-1)}{2}}$ appearing in the above bounds might not be optimally derived. However, they are both negligible by choosing a sufficiently large constant. In typical applications $\tau$ is constant.  Therefore, the results of Theorem~\ref{thm:one} are simplified to
\begin{equation}\label{eq:fan1}
\sup_{(\bmu,\bSigma)\in \cH(s,\tau)} \EE\ltwonorm{\tilde{\balpha}-\balpha}^2 = O\left ( \frac{s \log p}{n} \right), \quad \sup_{(\bmu,\bSigma)\in \cH(s,\tau)} \EE|\tilde{\theta}-\theta| =  O\left ( \frac{s \log p}{n} + \frac{1}{\sqrt{n}} \right).
\end{equation}
See Corollary~\ref{minimax:optimal} for more general results. Before we discuss in details the upper bounds, we present complementary lower bounds in the next theorem, which show that the rates in \eqref{eq:fan1} are optimal.

\begin{thm}
\label{lower:bound}

There exist positive constants $\{c_i\}_{i=1}^6$ possibly depending on $c_L, c_U, \nu$ such that,
\begin{itemize}
\item[(a)]if $\frac{s\log p/s}{n} < c_1, p/s>c_2$, then
\begin{align*}
 \inf_{\hat{\balpha}} \sup_{(\bmu,\bSigma)\in \mathcal{H}(s,\tau)}  \mathbb{E}\|\hat{\balpha}-\balpha\|_2^2 \geq  c_3 \cdot \Big [\tau \wedge \frac{(1+\tau)s\log(p/s)}{n} \Big],
\end{align*}
\item[(b)] if $\frac{s\log p}{n}<1$, then
\begin{align*}
\inf_{\hat{\theta}}\sup_{(\bmu,\bSigma)\in \mathcal{H}(s,\tau)}\mathbb{E}|\hat{\theta}-\theta| \geq c_4\cdot \Big[\tau \wedge  \frac{\tau+\sqrt{\tau}}{\sqrt{n}}\Big]+c_5 \cdot  \Big[\tau \wedge \frac{(1+\tau)s\log p}{n}\Big] c_0 \exp(-e^{2s^2p^{c_6 c_0-1}}),
\end{align*}
where $c_0$ can be any constant in $[0,1]$.
\end{itemize}
\end{thm}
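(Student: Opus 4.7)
Both bounds follow the standard Le Cam / Fano machinery, restricted to Gaussian sub-models so that the sub-gaussian condition \eqref{subgaussian:def} holds automatically. The main work is in choosing two-point or multi-point constructions that simultaneously respect $\lzeronorm{\balpha}\leq s$, $\bmu^T\bSigma^{-1}\bmu\leq \tau$, and $c_L\leq \lambda_{\min}(\bSigma)\leq \delta_{\bSigma}\leq c_U$, so the $\bSigma = c_L \bI_p$ (or $c_U \bI_p$) family will do most of the work.

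For part (a), I would prove two lower bounds separately and take their maximum. A trivial two-point Le Cam argument with $\bSigma=c_L\bI_p$ and $\balpha\in\{\bzero,\ \sqrt{\tau/c_L}\,\bfm{e}_1\}$ gives the $\tau$-part, since both points lie in $\cH(s,\tau)$ and the Hellinger affinity between the corresponding $n$-fold Gaussians stays bounded below whenever the relevant dimension-free separation is bounded. For the sparse part, still with $\bSigma = c_L \bI_p$, I would invoke Fano's inequality against a Gilbert-Varshamov packing $\{\balpha^{(m)}\}_{m\in \cM}$ of $\{0,\pm\rho\}^p$ with exactly $s$ nonzero coordinates, pairwise Hamming distance at least $s/2$, and $\log|\cM|\gtrsim s\log(p/s)$. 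The pairwise KL of the $n$-fold Gaussians is $\lesssim n s\rho^2/c_L$, so taking $\rho^2\asymp \log(p/s)/n$ clipped at $\tau/s$ delivers a bound of order $s\rho^2 \asymp \tau \wedge s\log(p/s)/n$. Rescaling $\bSigma$ within $[c_L,c_U]$ when $\tau$ is large absorbs the $(1+\tau)$ prefactor in the statement.

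Part (b) splits additively. For the parametric term $\tau \wedge (\tau+\sqrt{\tau})/\sqrt{n}$, a direct Le Cam argument with $\bSigma=\bI_p$, $\bmu_0=a\,\bfm{e}_1$, $\bmu_1=(a+\delta)\,\bfm{e}_1$ gives
\begin{align*}
|\theta_1-\theta_0| = 2a\delta + \delta^2, \qquad \mathrm{KL}(\bbP_{\bmu_1}^{\otimes n}\,\|\,\bbP_{\bmu_0}^{\otimes n}) = n\delta^2/2,
\end{align*}
so the choice $a\asymp \sqrt{\tau}$, $\delta\asymp 1/\sqrt{n}$ produces the $\sqrt{\tau}/\sqrt{n}$ contribution, while the twin perturbation $\bSigma\mapsto(1+\delta)\bI_p$ with $\bmu$ chosen on the boundary $\theta\approx \tau$ supplies the $\tau/\sqrt{n}$ contribution. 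The clipping at $\tau$ reflects the trivial bound obtained by the zero estimator when $n$ is too small.

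The sparse term $[\tau\wedge(1+\tau)s\log p/n]\cdot c_0 \exp(-e^{2s^2p^{c_6 c_0-1}})$ in part (b) is the main obstacle. I would adapt the composite-prior technique used for lower bounds on quadratic functional estimation in the Gaussian sequence model: with $\bSigma=c_L\bI_p$, set $\pi_0=\delta_{\bzero}$ and let $\pi_1$ be uniform on sparse Rademacher vectors $\rho\sum_{j=1}^s \eta_j \bfm{e}_{i_j}$ with distinct random indices and independent signs, so that under $\pi_1$ the functional equals $c_L s\rho^2$ almost surely. The central computation is to bound $\chi^2(\bbP_{\pi_1}\,\|\,\bbP_{\pi_0})$, where $\bbP_{\pi_j}$ denotes the mixed law of the $n$-fold sample; expanding over two independent draws from $\pi_1$ reduces the cross term to the moment generating function of a hypergeometric-style overlap variable counting shared coordinates, and a careful truncation of that moment generating function is what produces the unusual $c_0 \exp(-e^{2s^2p^{c_6 c_0-1}})$ buffer: the $c_0$ parameter sets the admissible signal level relative to $p$ and $s$, and for any $c_0\in[0,1]$ only the regime $2s^2 p^{c_6 c_0-1}=O(1)$ keeps the $\chi^2$-bound $O(1)$. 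Choosing $\rho^2\asymp s\log p/n$ (clipped at $\tau/s$) and invoking Le Cam's two-prior inequality then yields the advertised term. The moment generating function bookkeeping for this $\chi^2$-expansion is the delicate step; the rest is largely mechanical once the prior is fixed.
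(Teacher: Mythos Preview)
Your architecture is right and would deliver the bounds in the regimes $\tau \lesssim 1$. The genuine gap is the $(1+\tau)$ prefactor when $\tau\gtrsim 1$, in both parts. With $\bSigma=c\bI_p$ fixed ($c\in[c_L,c_U]$) and mean-only perturbations, the $n$-fold KL and the squared $\balpha$-separation are \emph{proportional}, $D_{KL} = \tfrac{nc}{2}\|\balpha^{(i)}-\balpha^{(j)}\|_2^2$, so Fano caps the achievable separation at order $s\log(p/s)/n$ regardless of $\tau$; likewise in your composite prior for (b), the $\theta$-separation $s\rho^2/c_L$ and the exponent $n\rho^2$ driving the $\chi^2$ are tied, so the $\chi^2$ constraint caps the separation at $s\log p/n$. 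Your remark that ``rescaling $\bSigma$ within $[c_L,c_U]$ \ldots absorbs the $(1+\tau)$ prefactor'' cannot work: a bounded rescaling changes only constants, not the $\tau$-dependence.

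The paper's device for $\tau\geq 1$ is to \emph{fix} $\bmu$ with $\|\bmu\|_2\asymp\sqrt{\tau}$ and perturb $\bSigma$ instead, via rank-one type perturbations indexed by a sparse support $S$. The KL (resp.\ $\chi^2$) between such covariance perturbations does not involve $\bmu$ at all, whereas the induced separation $\balpha^{(i)}-\balpha^{(j)}=\big((\bSigma^{(i)})^{-1}-(\bSigma^{(j)})^{-1}\big)\bmu$ inherits a factor $\sqrt{\tau}$ from $\|\bmu\|_2$, and the $\theta$-separation a factor $\tau$. This decoupling of information distance from signal size is the missing idea in both your Fano packing for (a) and your composite prior for (b). Two minor points: your stated $\rho^2\asymp s\log p/n$ in the composite prior should be $\rho^2\asymp \log p/n$; and the paper's mixture randomizes only the support (no Rademacher signs), which reduces the $\chi^2$ directly to $\mathbb{E}_k e^{\gamma_0^2 k\log p}-1$ with $k$ the overlap of two independent supports, whence the $e^{2s^2p^{\gamma_0^2-1}}$ bound.
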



According to Theorems \ref{thm:one} and \ref{lower:bound}, we conclude several important points as follows.
\begin{itemize}
\item[(1)] \emph{Estimation of $\balpha$.} Consider the scaling $\frac{s\log p}{n}=o(1)$. Suppose $p^{-\delta}\lesssim \frac{s\log p}{n}$ for some $\delta>0$. It is clear that we may choose the constants  $t$ and $c$ in Theorem \ref{thm:one} large enough so that
\begin{align}
\label{clean:bound:alpha:version}
\sup_{(\bmu,\bSigma)\in \cH(s,\tau)}\EE\ltwonorm{\tilde{\balpha}-\balpha}^2 \lesssim \frac{(1+\tau)s\log p}{n}.
\end{align}
On the other hand, if $s\lesssim p^{1-\tilde{\delta}}$ for some $\tilde{\delta}>0$, Theorem \ref{lower:bound} implies
\begin{equation}
\label{clean:lower:bound}
\inf_{\hat{\balpha}}\sup_{(\bmu,\bSigma)\in \cH(s,\tau)}\EE\ltwonorm{\hat{\balpha}-\balpha}^2 \gtrsim \tau \wedge \frac{(1+\tau)s\log p}{n}.
\end{equation}
Hence, as long as $\tau \gtrsim \frac{s\log p}{n}$, the estimator $\tilde{\balpha}$ is rate-optimal. Moreover, when $\tau \lesssim \frac{s\log p}{n}$, the trivial estimator $\bzero$ is optimal, because its maximum error
\[
\sup_{(\bmu,\bSigma)\in \cH(s,\tau)}\EE\ltwonorm{\bzero-\balpha}^2 \leq \sup_{(\bmu,\bSigma)\in \cH(s,\tau)}c_L^{-1}\balpha^T\bSigma\balpha=c_L^{-1}\sup_{(\bmu,\bSigma)\in \cH(s,\tau)}\bmu^T\bSigma^{-1}\bmu  \lesssim \tau,
\]
matches the lower bound in \eqref{clean:lower:bound}.

\item[(2)] \emph{Estimation of $\theta$.} Consider the same scaling $\frac{s\log p}{n}=o(1)$, and $p^{-\delta}\lesssim \frac{s\log p}{n}$ for some $\delta>0$. It is straightforward to confirm that choosing large enough $t$ and $c$ in Theorem \ref{thm:one} yields
\[
(\tau+\sqrt{\tau})p^{-\frac{(c_2t-2)\wedge (c-1)}{2}} \lesssim \frac{(1+\tau)s\log p}{n},
\]
thus
\begin{equation}
\label{clean:upper:bound}
\sup_{(\bmu,\bSigma)\in \cH(s,\tau)}\EE|\tilde{\theta}-\theta|\lesssim \frac{(1+\tau)s\log p}{n} + \frac{\tau+\sqrt{\tau}}{\sqrt{n}}.
\end{equation}
On the other hand, since $c_0 \in [0, 1]$ can be any constant  in Theorem \ref{lower:bound}(b), the second term in the lower bound is not negligible only when $s^2\lesssim p^{1-\tilde{\delta}}$ for some $\tilde{\delta}>0$. In such a case, we can choose sufficiently small $c_0 > 0$ so that $e^{2s^2p^{c_6 c_0-1}} = 1+o(1)$ and obtain
\begin{equation}
\label{clean:lower:bound2}
\inf_{\hat{\theta}}\sup_{(\bmu,\bSigma)\in \mathcal{H}(s,\tau)}\mathbb{E}|\hat{\theta}-\theta| \gtrsim \Big[\tau \wedge \frac{(1+\tau)s\log p}{n}\Big] + \Big[\tau \wedge  \frac{\tau+\sqrt{\tau}}{\sqrt{n}}\Big].
\end{equation}
It can be directly verified that the above lower bound will match the upper bound in \eqref{clean:upper:bound} when $\tau \gtrsim \frac{s\log p}{n}$. Hence $\tilde{\theta}$ is rate-optimal in the regime $\tau \gtrsim \frac{s\log p}{n}$.  Furthermore, in the other regime $\tau \lesssim \frac{s\log p}{n}$,  the lower bound in \eqref{clean:lower:bound2} is simplified to be of order $\tau$. So the trivial estimator $0$ attains the optimal rate since its error
\[
\sup_{(\bmu,\bSigma)\in \cH(s,\tau)}\EE|0-\theta| \leq  \tau,
\]
matches the lower bound.
\end{itemize}
We summarize the preceding discussions in the corollary below.
\begin{cor}
\label{minimax:optimal}
Consider the scaling $\frac{s\log p}{n}=o(1)$, $p^{-\delta}\lesssim \frac{s\log p}{n}$ for some $\delta >0$. Set $\lambda \asymp \sqrt{\frac{(1+\tau)\log p}{n}}, \gamma \asymp \sqrt{\tau}$ in \eqref{lasso:alpha}.
\begin{enumerate}
\item[(a)] Suppose $s \lesssim p^{1-\tilde{\delta}}$ for some $\tilde{\delta}>0$, then
\begin{align*}
 \inf_{\hat{\balpha}} \sup_{(\bmu,\bSigma)\in \mathcal{H}(s,\tau)}  \mathbb{E}\|\hat{\balpha}-\balpha\|_2^2  \asymp \tau \wedge \frac{(1+\tau)s\log p}{n} \asymp
\begin{cases}
\frac{(1+\tau)s\log p}{n} & {\rm ~if~} \tau \gtrsim \frac{s\log p}{n} \\
\tau & {\rm ~if~} \tau \lesssim \frac{s\log p}{n}
\end{cases}
\end{align*}
The estimator $\tilde{\balpha}$ is minimax rate-optimal in the regime $\tau \gtrsim \frac{s\log p}{n}$, and the trivial estimator $\bzero$ attains the optimal rate when $\tau \lesssim \frac{s\log p}{n}$.
\item[(b)] Suppose $s^2 \lesssim p^{1-\tilde{\delta}}$ for some $\tilde{\delta}>0$, then
\begin{align}
\label{refer:optimal:rate}
 \inf_{\hat{\theta}} \sup_{(\bmu,\bSigma)\in \mathcal{H}(s,\tau)} \EE|\hat{\theta}-\theta| \asymp &\Big[\tau \wedge  \frac{\tau+\sqrt{\tau}}{\sqrt{n}}\Big]+  \Big[\tau \wedge \frac{(1+\tau)s\log p}{n}\Big] \nonumber \\
\asymp &\begin{cases}
\frac{\tau+\sqrt{\tau}}{\sqrt{n}}+\frac{(1+\tau)s\log p}{n}& {\rm~if~} \tau \gtrsim \frac{s\log p}{n} \\
\tau &  {\rm ~if~} \tau \lesssim \frac{s\log p}{n}
\end{cases}
\end{align}
The estimator $\tilde{\theta}$ is minimax rate-optimal in the regime $\tau \gtrsim \frac{s\log p}{n}$, and the trivial estimator $0$ obtains the optimal rate when $\tau \lesssim \frac{s\log p}{n}$.
\end{enumerate}
\end{cor}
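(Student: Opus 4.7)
The plan is to assemble the corollary by combining the upper bounds from Theorem~\ref{thm:one} and the lower bounds from Theorem~\ref{lower:bound}, and then handle the two regimes of $\tau$ separately by checking that the trivial estimators $\bzero$ and $0$ match the lower bound when $\tau\lesssim\frac{s\log p}{n}$.

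First, for part (a), I will invoke Theorem~\ref{thm:one} to control $\EE\|\tilde\balpha-\balpha\|_2^2$ by $\frac{t^2(1+\tau)s\log p}{n}+\tau p^{-(c_2t-1)\wedge c}$. Under the scaling $p^{-\delta}\lesssim \frac{s\log p}{n}$, the second (polynomial-in-$p^{-1}$) residual can be made negligible compared to $\frac{(1+\tau)s\log p}{n}$ by choosing $t$ and $c$ large enough, yielding the clean upper bound~\eqref{clean:bound:alpha:version}. For the matching lower bound, I apply Theorem~\ref{lower:bound}(a): the hypothesis $s\lesssim p^{1-\tilde\delta}$ guarantees $p/s>c_2$ and $\log(p/s)\asymp \log p$, so the displayed bound becomes $\tau\wedge \frac{(1+\tau)s\log p}{n}$. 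The two cases then follow: when $\tau\gtrsim \frac{s\log p}{n}$ this minimum is $\frac{(1+\tau)s\log p}{n}$ and is attained by $\tilde\balpha$; when $\tau\lesssim \frac{s\log p}{n}$ the minimum is $\tau$, which is attained by the trivial estimator $\bzero$ via
\[
\EE\|\bzero-\balpha\|_2^2 = \balpha^T\balpha \leq c_L^{-1}\balpha^T\bSigma\balpha = c_L^{-1}\bmu^T\bSigma^{-1}\bmu \leq c_L^{-1}\tau,
\]
using the lower bound $\lambda_{\min}(\bSigma)\geq c_L$ built into $\cH(s,\tau)$.

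For part (b), the upper bound argument is parallel: Theorem~\ref{thm:one} gives $\EE|\tilde\theta-\theta|$ bounded by $\frac{t^2(1+\tau)s\log p}{n} + (\tau+\sqrt\tau)(n^{-1/2}+p^{-((c_2t-2)\wedge(c-1))/2})$, and choosing $t,c$ large enough (together with $p^{-\delta}\lesssim \frac{s\log p}{n}$) absorbs the $p^{-\cdot}$ term into the $\frac{(1+\tau)s\log p}{n}$ term, giving~\eqref{clean:upper:bound}. The main subtlety is the lower bound in Theorem~\ref{lower:bound}(b), which carries the factor $c_0 \exp(-e^{2s^2p^{c_6c_0-1}})$. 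Here I exploit the hypothesis $s^2\lesssim p^{1-\tilde\delta}$: I pick $c_0$ small enough so that $c_6 c_0 - 1 < -\tilde\delta/2$, forcing $s^2 p^{c_6 c_0 - 1}\to 0$, and hence $\exp(-e^{2s^2 p^{c_6 c_0-1}})\to e^{-1}$, a positive constant. With this choice, the second summand is of order $\tau\wedge\frac{(1+\tau)s\log p}{n}$, and combined with the first summand one obtains~\eqref{clean:lower:bound2}.

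Finally, I split by regime exactly as in the statement: when $\tau\gtrsim \frac{s\log p}{n}$, both $\tau\wedge\frac{\tau+\sqrt\tau}{\sqrt n}$ and $\tau\wedge\frac{(1+\tau)s\log p}{n}$ simplify (noting $\frac{\tau+\sqrt\tau}{\sqrt n}\lesssim \tau$ iff $\tau\gtrsim 1/n$, which is implied by $\tau\gtrsim \frac{s\log p}{n}$), yielding the sum $\frac{\tau+\sqrt\tau}{\sqrt n}+\frac{(1+\tau)s\log p}{n}$ that is matched by $\tilde\theta$. When $\tau\lesssim \frac{s\log p}{n}$, the lower bound reduces to order $\tau$, and the trivial estimator $0$ achieves this since $|0-\theta|=\bmu^T\bSigma^{-1}\bmu\leq \tau$. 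The main obstacle in this plan is bookkeeping: ensuring that a \emph{single} choice of $(t,c,c_0)$ depending only on $\delta,\tilde\delta,\nu,c_L,c_U$ simultaneously makes all the residual $p^{-\cdot}$ terms negligible on both the upper and lower bound sides, and verifying that the $\tau\wedge(\cdot)$ minima split cleanly into the two regimes without leaving an intermediate range uncovered.
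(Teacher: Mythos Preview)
Your proposal is correct and follows essentially the same route as the paper: the corollary is obtained directly by combining the upper bounds of Theorem~\ref{thm:one} with the lower bounds of Theorem~\ref{lower:bound}, choosing $t,c$ large and $c_0$ small to kill the residual $p^{-\cdot}$ terms under the scaling hypotheses, and then invoking the trivial estimators $\bzero$ and $0$ in the weak-signal regime. Your extra verifications (that $s\lesssim p^{1-\tilde\delta}$ forces $\log(p/s)\asymp\log p$, and that $\tau\gtrsim\frac{s\log p}{n}$ implies $\tau\gtrsim 1/n$ so the $\tau\wedge(\cdot)$ minima resolve cleanly) are details the paper leaves implicit but are exactly right.
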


There are a few remarks we should make about the results in Corollary \ref{minimax:optimal}.

\noindent {\bf Remark 1.~} The scaling $\frac{s\log p}{n}=o(1)$ considered in Corollary \ref{minimax:optimal} is standard for high-dimensional sparse models. The condition $p^{-\delta}\lesssim \frac{s\log p}{n}$ is very mild. For instance, it holds when $p\geq n^{\epsilon}$ with $\epsilon>0$ being any positive constant.

\noindent {\bf Remark 2.~} Regarding the estimation for $\balpha$, since $\ltwonorm{\balpha}^2=O(\tau)$ in the parameter space $\cH(s,\tau)$, we may consider $\tau$ as the signal strength. Under the additional assumption on the sparsity $s\lesssim p^{1-\tilde{\delta}}$, the minimax estimation rate for $\balpha$ is $\tau \wedge \frac{(1+\tau)s\log p}{n}$. It is interesting to observe that the optimal rate depends on the signal strength  in a non-linear fashion. Moreover, in the regime $\tau \lesssim \frac{s\log p}{n}$ where the signal is sufficiently weak, a trivial estimator $\bzero$ achieves the optimal rate. Such a result can be well explained by the bias-variance tradeoff: for estimating very weak signals, the variance of an estimator plays the dominant role in the resulting error. Once $\tau$ is above the threshold $\frac{s\log p}{n}$, our estimator $\tilde{\balpha}$ becomes rate-optimal. Note that our analysis is focused  on the absolute error. The study of the relative error is an interesting problem and left for a future research.

\noindent {\bf Remark 3.~} Estimation of the functional $\balpha=\bSigma^{-1}\bmu$ has been studied in various contexts such as portfolio selection \citep{alz17}, time series \citep{cxw16}, and linear discriminant analysis \citep{cz18}. Different Lasso \citep{t96} or Dantzig-selector \citep{ct07} type estimators have been proposed and analyzed. However, our result characterizes the optimality of the proposed estimator $\tilde{\balpha}$ over a wide range of the signal strength $\tau$, which is not available in the existing works.

\noindent {\bf Remark 4.~} For the estimation of the functional $\theta=\bmu^T\bSigma^{-1}\bmu$, given the fact that $\theta \leq \tau$ in $\cH(s,\tau)$, we can regard $\tau$ as the signal strength when estimating $\theta$. Under the sparsity condition $s^2\lesssim p^{1-\tilde{\delta}}$, the minimax rate takes the form $\Big[\tau \wedge  \frac{\tau+\sqrt{\tau}}{\sqrt{n}}\Big]+  \Big[\tau \wedge \frac{(1+\tau)s\log p}{n}\Big]$. The signal strength $\tau$ appears in the rate in a rather complicated way. Consider the common case $\tau \asymp 1$, the rate is reduced to $\frac{1}{\sqrt{n}} \vee \frac{s\log p}{n}$. It is clear that the rate of convergence undergoes a transition between the parametric rate $\frac{1}{\sqrt{n}}$ and the high-dimensional rate $\frac{s\log p}{n}$. We refer to \cite{frw15, gwcl18} and reference therein for similar phenomenon in other high-dimensional problems. As in the estimation of $\balpha$, the trivial estimator $0$ attains the optimal rate when the signal is weak $\tau \lesssim \frac{s\log p}{n}$, while our proposed estimator $\tilde{\theta}$ is rate-optimal in the other regime $\tau \gtrsim \frac{s\log p}{n}$.

\noindent {\bf Remark 5.~} When the covariance matrix $\bSigma$ is known to be proportional to $\bI_p$, the minimax estimation of the functional $\theta$ has been thoroughly studied in \cite{cct17}. The authors derived non-asymptotic minimax rates for $\theta$ without any assumption on the sparsity $s$. The obtained rate shares some similarity with the rate we have derived in the present setting. Hence, our analysis might be considered as an extension of \cite{cct17} to unknown $\bSigma$.

\subsection{Sub-optimality of a class of plug-in estimators}
\label{suboptimality:plugin}

This section demonstrates the sub-optimality of the naive plug-in estimator. Consider more generally
\begin{equation}
\label{suboptimal:theta}
\tilde{\theta}_c=c\hat{\bmu}^T\tilde{\balpha}+(1-c)
\tilde{\balpha}^T\hat{\bSigma}\tilde{\balpha}
\end{equation}
for a given constant $c$.  The naive plug-in estimator mentioned in Section \ref{rate-optimal:sparse} corresponds to $c=1$ and our proposed debias-based estimator $\tilde{\theta}$ in \eqref{optimal:estimator} corresponds to $c=2$. The question is then whether the optimality of $\tilde{\theta}$ carries over to the other plug-in estimators $\tilde{\theta}_c$ with $c\neq 2$. The answer turns out to be negative. We give a formal statement in the next proposition.

\begin{prop}
\label{prop:two}
Consider the scaling $\frac{s\log p}{n}=o(1), p^{-\delta}\lesssim \frac{s\log p}{n}$ for some $\delta>0$. Choose $\lambda \asymp \sqrt{\frac{(1+\tau)\log p}{n}}, \gamma \asymp \sqrt{\tau}$ in \eqref{lasso:alpha}. If $\tau \gtrsim \frac{s\log p}{n}$, then for any given constant $c\neq 2$,
\begin{align*}
\sup_{(\bmu,\bSigma)\in \cH(s,\tau)}\EE|\tilde{\theta}_c-\theta| \gtrsim \sqrt{\frac{\tau(1+\tau)s\log p}{n}}.
\end{align*}
\end{prop}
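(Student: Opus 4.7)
The plan rests on an algebraic identity made available by the KKT condition \eqref{kkt:condition}. On the event (shown in the proof of Theorem \ref{thm:one} to have probability $1-O(p^{-c})$) where the $\ell_2$-constraint in \eqref{lasso:alpha} is inactive, we have $\hat{\bmu} - \hat{\bSigma}\tilde{\balpha} = \lambda\hat{\bg}$. Taking the inner product with $\tilde{\balpha}$ and using $\tilde{\balpha}^T\hat{\bg} = \lonenorm{\tilde{\balpha}}$ yields $\tilde{\balpha}^T\hat{\bSigma}\tilde{\balpha} = \hat{\bmu}^T\tilde{\balpha} - \lambda\lonenorm{\tilde{\balpha}}$. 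Substituting into \eqref{suboptimal:theta} and recalling $\tilde{\theta} = \tilde{\theta}_2$ collapses the family to
\[
\tilde{\theta}_c \;=\; \hat{\bmu}^T\tilde{\balpha} + (c-1)\lambda\lonenorm{\tilde{\balpha}} \;=\; \tilde{\theta} + (c-2)\lambda\lonenorm{\tilde{\balpha}},
\]
so a reverse triangle inequality gives, for every $(\bmu,\bSigma) \in \cH(s,\tau)$,
\[
\EE|\tilde{\theta}_c - \theta| \;\geq\; |c-2|\,\lambda\,\EE\lonenorm{\tilde{\balpha}} \;-\; \EE|\tilde{\theta} - \theta|.
\]

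The second step picks the specific instance of $\cH(s,\tau)$ that maximizes $\lonenorm{\balpha}$. Take $\bSigma=\bI_p$ and let $\bmu$ have its first $s$ coordinates equal to $\sqrt{\tau/s}$ and the rest zero. Then $\balpha=\bmu$, $\lzeronorm{\balpha}=s$, $\bmu^T\bSigma^{-1}\bmu=\tau$, so $(\bmu,\bSigma)\in\cH(s,\tau)$, and $\lonenorm{\balpha}=\sqrt{s\tau}$. The Lasso $\ell_1$-error bound $\EE\lonenorm{\tilde{\balpha}-\balpha}\lesssim s\sqrt{(1+\tau)\log p/n}$ is a standard companion to Theorem \ref{thm:one}, extractable from the same KKT/restricted-strong-convexity machinery. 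Hence by a reverse triangle inequality,
\[
\EE\lonenorm{\tilde{\balpha}} \;\geq\; \sqrt{s\tau} - C_1\, s\sqrt{(1+\tau)\log p/n}.
\]
Because the implicit constant in the hypothesis $\tau\gtrsim s\log p/n$ can be taken large, the second term is at most $\tfrac12\sqrt{s\tau}$, so $\EE\lonenorm{\tilde{\balpha}}\gtrsim\sqrt{s\tau}$. Combined with $\lambda\asymp\sqrt{(1+\tau)\log p/n}$ and $|c-2|>0$, this produces $|c-2|\,\lambda\,\EE\lonenorm{\tilde{\balpha}}\gtrsim\sqrt{\tau(1+\tau)s\log p/n}$.

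The final step is to verify that on the chosen instance the remainder $\EE|\tilde{\theta}-\theta|$ is of strictly smaller order. Theorem \ref{thm:one} (with $t$ and $c$ chosen large) gives $\EE|\tilde{\theta}-\theta|\lesssim(1+\tau)s\log p/n+(\tau+\sqrt{\tau})/\sqrt{n}$. Dividing each term by $\sqrt{\tau(1+\tau)s\log p/n}$ yields quantities that are $o(1)$ under the assumed scaling $s\log p/n=o(1)$ and $\tau\gtrsim s\log p/n$ (with the same sufficiently large constant): the first ratio equals $\sqrt{(1+\tau)s\log p/(n\tau)}$, controlled by the large-constant assumption; the second reduces to $1/\sqrt{(1+\tau)s\log p}+\sqrt{\tau/((1+\tau)s\log p)}$, both vanishing as $\log p\to\infty$. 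Combining with Step 2 concludes
\[
\sup_{(\bmu,\bSigma)\in\cH(s,\tau)}\EE|\tilde{\theta}_c-\theta|\;\gtrsim\;\sqrt{\tau(1+\tau)s\log p/n}.
\]

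The chief obstacle I anticipate is handling the rare event where the $\ell_2$-constraint in \eqref{lasso:alpha} is active; on that event the KKT identity acquires an extra Lagrange-multiplier term and the reduction $\tilde{\theta}_c=\tilde{\theta}+(c-2)\lambda\lonenorm{\tilde{\balpha}}$ no longer holds. Fortunately $\ltwonorm{\tilde{\balpha}}\leq\gamma\asymp\sqrt{\tau}$ deterministically forces $|\tilde{\theta}_c|,|\tilde{\theta}|,\theta$ to be $O(\tau)$ via Cauchy--Schwarz and the spectral bound on $\bSigma$, and the event has probability $O(p^{-c})$ for arbitrarily large $c$ by the proof of Theorem \ref{thm:one}, so its contribution to the expectations above is negligible relative to the claimed lower bound. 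A parallel truncation argument converts the high-probability Lasso $\ell_1$-error estimate into the expectation bound used in Step 2.
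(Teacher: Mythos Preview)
Your approach is essentially the same as the paper's: decompose $\tilde{\theta}_c-\theta=(c-2)(\hat{\bmu}^T\tilde{\balpha}-\tilde{\balpha}^T\hat{\bSigma}\tilde{\balpha})+(\tilde{\theta}-\theta)$, show the first piece is $\asymp\lambda\lonenorm{\tilde{\balpha}}$, exhibit a worst-case instance with $\lonenorm{\balpha}\asymp\sqrt{s\tau}$, control $\lonenorm{\tilde{\balpha}-\balpha}$ via the $\ell_1$-error bound from the proof of Theorem~\ref{thm:one}, and check that $\EE|\tilde{\theta}-\theta|$ is lower order.

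The one notable technical difference is how the identity $\hat{\bmu}^T\tilde{\balpha}-\tilde{\balpha}^T\hat{\bSigma}\tilde{\balpha}\geq\lambda\lonenorm{\tilde{\balpha}}$ is obtained. You invoke the KKT equality \eqref{kkt:condition}, which forces you to handle separately the rare event where the $\ell_2$-constraint binds. The paper instead uses a ray-scaling argument: since $w\tilde{\balpha}$ is feasible for all $w\in[0,1]$, the objective $\tfrac{1}{2}w^2\tilde{\balpha}^T\hat{\bSigma}\tilde{\balpha}-w\tilde{\balpha}^T\hat{\bmu}+w\lambda\lonenorm{\tilde{\balpha}}$ is minimized at $w=1$, which gives the same inequality \emph{deterministically}, with no case analysis for the active constraint. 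This is slightly cleaner and removes your ``chief obstacle'' entirely.

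One small correction: your instance $\bSigma=\bI_p$ need not lie in $\cH(s,\tau)$, since the class requires $c_L\leq\lambda_{\min}(\bSigma)$ and $c_L$ could exceed $1$. The paper takes $\bSigma^*=c_U\bI_p$ (and $\bmu^*$ with nonzero entries $\sqrt{\tau c_U/s}$), which always belongs to the class and gives the same $\lonenorm{\balpha}\asymp\sqrt{s\tau}$.
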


When $\tau \gg \frac{s\log p}{n}$, it is straightforward to verify that
\[
\sqrt{\frac{\tau(1+\tau)s\log p}{n}} \gg \frac{(1+\tau)s\log p}{n}+\frac{\tau+\sqrt{\tau}}{\sqrt{n}}.
\]
According to Corollary \ref{minimax:optimal}, the lower bound in the above inequality is precisely the minimax rate when $\tau \gtrsim \frac{s\log p}{n}$. Therefore, Proposition~\ref{prop:two} shows that in the regime $\tau \gg \frac{s\log p}{n}$, the plug-in estimator $\tilde{\theta}_c$ can not achieve the optimal rate for any constant $c\neq 2$.

The sub-optimality of $\tilde{\theta}_c~(c\neq 2)$ is essentially due to the bias of $\tilde{\balpha}$ induced by the $\ell_1$ regularization. Specifically, since $\tilde{\balpha}$ is the global solution of the convex optimization problem \eqref{lasso:alpha}, the first-order optimality condition shows that
\begin{align}
\label{first:order}
\langle \hat{\bSigma}\tilde{\balpha}-\hat{\bmu}+\lambda \hat{\bg}, \bbeta-\tilde{\balpha} \rangle \geq 0,
\end{align}
where $\hat{\bg}$ is a subgradient of $\lonenorm{\bbeta}$ at $\bbeta=\tilde{\balpha}$, and $\bbeta$ can be any vector with $\ltwonorm{\bbeta}\leq \gamma$. Setting $\bbeta=\frac{1}{2}\tilde{\balpha}$ in \eqref{first:order} gives
\[
\hat{\bmu}^T\tilde{\balpha}-\tilde{\balpha}^T\hat{\bSigma}\tilde{\balpha} \geq \lambda \lonenorm{\balpha},
\]
thus yielding
\[
|\tilde{\theta}_c-\tilde{\theta}|=|c-2|\cdot |\hat{\bmu}^T\tilde{\balpha}-\tilde{\balpha}^T\hat{\bSigma}\tilde{\balpha} | \geq |c-2|\cdot \lambda\lonenorm{\tilde{\balpha}}.
\]
The regularization term $\lambda \lonenorm{\tilde{\balpha}}$ is so large that the gap between $\tilde{\theta}_c~(c\neq 2)$ and the optimal estimator $\tilde{\theta}$  exceeds the optimal rate. We refer to the proof in Section \ref{proof:corollary2} for detailed calculations.

The preceding arguments suggest a remedy to address the sub-optimality of  $\tilde{\theta}_c$ for $c\neq 2$. If the bias arising from $\ell_1$ regularization can be attenuated, the resulting plug-in estimator might be possibly improved. Indeed, once the ``biased" estimator $\tilde{\balpha}$ in \eqref{suboptimal:theta} is replaced by the less-biased $\ell_0$-estimator
\begin{equation}
\label{alpha:lzero}
\check{\balpha} \in \argmin_{\lzeronorm{\bbeta}\leq s, \ltwonorm{\bbeta}\leq \gamma} \frac{1}{2}\bbeta^T\hat{\bSigma}\bbeta-\bbeta^T\hat{\bmu},
\end{equation}
the optimal estimation rate will be ultimately obtained. We show it formally in Theorem \ref{corollary:three}. Define the new class of plug-in estimators as
\[
\check{\theta}_c=c\hat{\bmu}^T\check{\balpha}+(1-c)\check{\balpha}^T\hat{\bSigma}\check{\balpha}, \quad \forall c \in \RR.
\]

\begin{thm}
\label{corollary:three}
Consider the scaling $\frac{s\log p}{n}=o(1), p^{-\delta}\lesssim \frac{s\log p}{n}$ for some $\delta>0$. Set $\gamma \asymp \sqrt{\tau}$ in \eqref{alpha:lzero}. In the regime $\tau \gtrsim \frac{s\log p}{n}$,
\begin{align*}
&\sup_{(\bmu,\bSigma)\in \cH(s,\tau)} \EE\ltwonorm{\check{\balpha}-\balpha}^2 \lesssim \frac{(1+\tau)s\log p}{n}, \\
&\sup_{(\bmu,\bSigma)\in \cH(s,\tau)} \EE|\check{\theta}_c-\theta| \lesssim \frac{\tau+\sqrt{\tau}}{\sqrt{n}}+\frac{(1+\tau)s\log p}{n}, \quad \forall c \in \mathbb{R}.
\end{align*}
\end{thm}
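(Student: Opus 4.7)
The plan is to mimic the proof of Theorem~\ref{thm:one} with one key modification: exploit that the $\ell_0$-constraint, unlike the $\ell_1$-penalty, does not introduce a subgradient term into the first-order condition. Writing $\hat{S}=\supp(\check\balpha)$, whenever the side $\ell_2$-ball is inactive the KKT condition for \eqref{alpha:lzero} restricted to $\hat{S}$ is simply $\hat\bSigma_{\hat{S}\hat{S}}\check\balpha_{\hat{S}}=\hat\bmu_{\hat{S}}$; multiplying on the left by $\check\balpha_{\hat{S}}^{T}$ yields the key identity $\hat\bmu^{T}\check\balpha=\check\balpha^{T}\hat\bSigma\check\balpha$, whence $\check\theta_{c}=\hat\bmu^{T}\check\balpha$ for every $c\in\mathbb{R}$ on this event. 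The $\ell_0$ optimization thus performs the debiasing ``for free,'' and the sub-optimality diagnosed in Proposition~\ref{prop:two} for the $\ell_1$-based family disappears.

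I first derive the $\ell_2$ error bound on $\check\balpha$. For any $(\bmu,\bSigma)\in\cH(s,\tau)$ the target $\balpha$ satisfies $\lzeronorm{\balpha}\le s$ and $\ltwonorm{\balpha}\le\sqrt{\tau/c_L}$, so $\balpha$ is feasible in \eqref{alpha:lzero} once $\gamma$ is taken as a sufficiently large constant multiple of $\sqrt\tau$. Optimality of $\check\balpha$ gives the basic inequality
\[
\tfrac12(\check\balpha-\balpha)^{T}\hat\bSigma(\check\balpha-\balpha)\le(\check\balpha-\balpha)^{T}(\hat\bmu-\hat\bSigma\balpha).
\]
Since $\check\balpha-\balpha$ is $2s$-sparse, the same $2s$-sparse restricted eigenvalue lower bound used in the proof of Theorem~\ref{thm:one} (via Lemma~\ref{rare:events}) controls the left-hand side below by a constant multiple of $\ltwonorm{\check\balpha-\balpha}^{2}$, and the right-hand side by $\sqrt{2s}\ltwonorm{\check\balpha-\balpha}\cdot\linfnorm{\hat\bmu-\hat\bSigma\balpha}\lesssim\sqrt{2s}\ltwonorm{\check\balpha-\balpha}\cdot\sqrt{(1+\tau)\log p/n}$, using the sub-gaussian concentration bound on $\linfnorm{\hat\bmu-\hat\bSigma\balpha}$ already established in the proof of Theorem~\ref{thm:one}. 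This yields $\ltwonorm{\check\balpha-\balpha}^{2}\lesssim(1+\tau)s\log p/n$ on a high-probability event, and the expectation bound is recovered via truncation using $\ltwonorm{\check\balpha}\le\gamma$, exactly as in the rare-event treatment of Theorem~\ref{thm:one}.

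Next I would verify that on the same event the $\ell_2$ ball is inactive. The triangle inequality combined with the preceding step gives $\ltwonorm{\check\balpha}\le\sqrt{\tau/c_L}+O(\sqrt{(1+\tau)s\log p/n})<\gamma$ provided $\gamma\asymp\sqrt\tau$ has a sufficiently large constant (here the assumption $\tau\gtrsim s\log p/n$ is essential). Therefore $\hat\bSigma_{\hat{S}\hat{S}}\check\balpha_{\hat{S}}=\hat\bmu_{\hat{S}}$, the identity $\hat\bmu^{T}\check\balpha=\check\balpha^{T}\hat\bSigma\check\balpha$ holds, and $\check\theta_{c}$ collapses to $\hat\bmu^{T}\check\balpha$ for every $c$.

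Finally, to bound $|\hat\bmu^{T}\check\balpha-\bmu^{T}\balpha|$ I use $\bmu=\bSigma\balpha$ together with $\hat\bmu^{T}\check\balpha=\check\balpha^{T}\hat\bSigma\check\balpha$; a cross-term expansion of $\check\balpha^{T}\hat\bSigma\check\balpha-\balpha^{T}\bSigma\balpha$ combined with the substitution $\hat\bSigma\balpha=\hat\bmu-(\hat\bmu-\hat\bSigma\balpha)$ rearranges to
\[
\hat\bmu^{T}\check\balpha-\bmu^{T}\balpha=2(\hat\bmu-\bmu)^{T}\balpha-(\check\balpha-\balpha)^{T}\hat\bSigma(\check\balpha-\balpha)+2(\check\balpha-\balpha)^{T}(\hat\bmu-\hat\bSigma\balpha)-\balpha^{T}(\hat\bSigma-\bSigma)\balpha,
\]
which mirrors the debiasing expansion behind $\tilde\theta$ in \eqref{optimal:estimator}. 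The first term is a scalar average with variance $O(\balpha^{T}\bSigma\balpha/n)=O(\tau/n)$, contributing $\sqrt\tau/\sqrt n$ to the expected absolute error; the last term has variance $O(\tau^{2}/n)$, contributing $\tau/\sqrt n$. The middle two terms are both $O((1+\tau)s\log p/n)$: the second by the $2s$-sparse operator-norm upper bound on $\hat\bSigma$ applied to $\ltwonorm{\check\balpha-\balpha}^{2}$, and the third by the H\"older/sub-gaussian argument of the previous paragraph. Summing the four pieces yields the claimed rate. The principal obstacle is the bookkeeping around the $\ell_2$ ball: one must argue with high probability that it is inactive so the clean identity for $\check\theta_{c}$ holds, and then absorb the complementary event into the expectation using the same truncation device as in Theorem~\ref{thm:one}.
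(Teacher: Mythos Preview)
Your proposal is correct and shares the paper's essential mechanism (the identity $\hat\bmu^{T}\check\balpha=\check\balpha^{T}\hat\bSigma\check\balpha$ whenever the $\ell_2$-ball constraint is slack), but the execution differs in two places worth noting. First, to certify slackness, the paper introduces a separate event $\cD=\{\sup_{|S|\le s}\ltwonorm{\hat\bSigma_{SS}^{-1}\hat\bmu_S}\le\gamma\}$ and bounds $\PP(\cD^{c})$ via a uniform control over all $s$-subsets; you instead re-use the $\ell_2$-error bound on $\check\balpha$ (which holds on $\cA(\lambda)\cap\cC(s,\kappa)$ regardless of slackness) together with the triangle inequality, which is shorter and avoids the extra uniform argument. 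Second, the paper handles $c=2$ first via the two-sided analog of Lemma~\ref{error:functional} and only then bounds $|\check\theta_c-\check\theta_2|$; you treat all $c$ at once through the single algebraic expansion $\check\theta_2-\theta=2(\hat\bmu-\bmu)^{T}\balpha-(\check\balpha-\balpha)^{T}\hat\bSigma(\check\balpha-\balpha)+2(\check\balpha-\balpha)^{T}(\hat\bmu-\hat\bSigma\balpha)-\balpha^{T}(\hat\bSigma-\bSigma)\balpha$, which is equivalent to the paper's decomposition but more direct. Your route buys brevity; the paper's buys modularity by reusing the $\tilde\theta$ machinery verbatim. The only small caveat is that your slackness step needs the implicit constant in $\gamma\asymp\sqrt\tau$ to absorb the constant hidden in $\tau\gtrsim s\log p/n$, which you acknowledge; this is consistent with the theorem's hypotheses.
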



We emphasize a few points  about Proposition \ref{prop:two} and Theorem \ref{corollary:three}.

\noindent {\bf Remark 6.~}
The upper bound for $\check{\balpha}$ in Theorem \ref{corollary:three} matches the minimax rate in Corollary \ref{minimax:optimal} when $\tau \gtrsim \frac{s\log p}{n}$. As expected, the $\ell_0$-estimator $\check{\balpha}$ attains the optimal rate as the $\ell_1$-regularized M-estimator $\tilde{\balpha}$ does in the same regime. However, despite $\tilde{\balpha}$ is rate-optimal for $\balpha$, it does not guarantee that the projection of $\tilde{\balpha}$ along every direction is optimal for estimating the same projection of $\balpha$. This is the main reason why the plug-in estimator $\tilde{\theta}_c ~(c\neq 2)$ using $\tilde{\balpha}$ turns out to be sub-optimal.

\noindent {\bf Remark 7.~}
Combined with Corollary \ref{minimax:optimal}, Theorem \ref{corollary:three} reveals that the new plug-in estimator $\check{\theta}_c$ is rate-optimal in the same regime as the de-biased estimator $\tilde{\theta}$ given in \eqref{optimal:estimator}, and this optimality holds for every choice of $c\in \RR$. On the other hand, we should emphasize that $\check{\theta}_c$ is computationally infeasible, while $\tilde{\theta}$ can be computed in polynomial time. Hence, our proposed estimator $\tilde{\theta}$ achieves the best of both worlds in terms of computational feasibility and statistical efficiency.

\subsection{A general result for the de-biased estimator}
\label{general:estimator:sec}
In this section, we discuss some generalization of our method to estimate the functional that takes the form
\[
\quad \vartheta=\bxi^T\bUpsilon^{-1}\bxi,
\]
where $\bxi \in \RR^p, \bUpsilon \in \RR^{p\times p}$ are unknown parameters. When $\bxi=\bmu, \bUpsilon=\bSigma$, $\vartheta$ equals to the functional $\theta$ that we have studied in the previous two sections. It is direct to observe that the de-biased estimator $\tilde{\theta}$ for $\theta$ proposed in \eqref{optimal:estimator} is simply a function of the sample mean $\hat{\bmu}$ and sample covariance matrix $\hat{\bSigma}$. More broadly, we may estimate $\vartheta$ in the same way by replacing $\hat{\bmu}$ and $\hat{\bSigma}$ with some other good estimators for $\bxi$ and $\bUpsilon$ respectively. Thus motivated, our generalization is to take two estimators $\hat{\bxi} \in \RR^p$ and $\hat{\bUpsilon} \succeq 0$ as inputs, and construct the estimator $\hat{\vartheta}$ for $\vartheta$ in the following way,
\begin{align}
\hat{\bvarpi} &\in \argmin_{\ltwonorm{\bbeta} \leq \gamma} \frac{1}{2}\bbeta^T \hat{\bUpsilon}\bbeta-\bbeta^T \hat{\bxi}+\lambda \lonenorm{\bbeta}, \label{new:alpha:estimator} \\
 \hat{\vartheta}&=2\hat{\bxi}^T\hat{\bvarpi}-\hat{\bvarpi}^T\hat{\bUpsilon}\hat{\bvarpi}. \label{new:theta:estimator}
\end{align}
This framework includes estimating $\theta = \bmu^T \bSigma^{-1} \bmu$ using different inputs of estimators for $\bmu$ and $\bSigma$ such as robustified estimators \citep{fwz16,fwzz18,kmrsz19}.  It also includes the two-sample problem as to be elaborated below.

The estimation risk of $\hat{\vartheta}$ will critically depend on the approximation accuracy of the inputs $\hat{\bxi}$ and $\hat{\bUpsilon}$. We give a general upper bound for $\hat{\vartheta}$ in the next proposition. As a by-product, we include an upper bound for $\hat{\bvarpi}$ as an estimator of $\bvarpi=\bUpsilon^{-1}\bxi$. Towards that goal, define
\begin{align*}
&\cK(s)=\Big \{\bu  \in \mathbb{R}^p: \lonenorm{\bu_{S^c}} \leq 3 \lonenorm{\bu_{S}}, |S| \leq s, S\subseteq [p] \Big \},  \\
&\cA(\lambda)=\Big \{  \linfnorm{\hat{\bUpsilon}\bvarpi-\hat{\bxi}} \leq \lambda/2  \Big \}, \\
&\cB(s, \kappa)=\Big\{\max_{\bu\in \mathcal{K}(s)\cap B_2(1) } \big| \sqrt{\bu^T\bUpsilon\bu}-\sqrt{\bu^T\hat{\bUpsilon}\bu}\big | \leq \kappa \Big\}.
\end{align*}
We drop the sub-gaussian assumption on $\bx$.

\begin{prop}
\label{general:debiasing}
 Set $\gamma \asymp  \sqrt{\tau}$ in \eqref{new:alpha:estimator}. If there exist some constants $c_1,c_2>0$ such that
\begin{align}
\label{concentration:mean:vector}
\mathbb{P}(\|\hat{\bxi}-\bxi\|_{\infty}>t)\leq c_1pe^{-c_2m t^2}, \quad \quad \forall t>0,
\end{align}
then the followings hold
\begin{align*}
&\sup_{(\bxi,\bUpsilon)\in \cH(s,\tau)} \EE\ltwonorm{\hat{\bvarpi}-\bvarpi}^2 \lesssim  \lambda^2 s + \tau\big[\PP(\cA^c(\lambda))+\PP(\cB^c(s, \sqrt{c_L}/2))\big], \\
&\sup_{(\bxi,\bUpsilon)\in \cH(s,\tau)}  \EE| \hat{\vartheta}-\vartheta| \lesssim \lambda s(\lambda+\sqrt{\log p/m}) + (\sqrt{\tau}+\tau)p\sqrt{\PP(\cA^c(\lambda))+\PP(\cB^c(s, \sqrt{c_L}/2))}\\
&\hspace{3.cm}  +\sup_{(\bxi,\bUpsilon)\in \cH(s,\tau)}\mathbb{E}|\bvarpi^T(\hat{\bUpsilon}-\bUpsilon)\bvarpi|+\sup_{(\bxi,\bUpsilon)\in \cH(s,\tau)} \sqrt{\EE|\bvarpi^T(\hat{\bxi}-\bxi)|^2}.
\end{align*}
\end{prop}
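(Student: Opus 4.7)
The plan is to split the analysis on the good event $\cE := \cA(\lambda) \cap \cB(s, \sqrt{c_L}/2)$ and its complement. Before doing so, I check that $\bvarpi$ is feasible for the program \eqref{new:alpha:estimator}: since $\lambda_{\min}(\bUpsilon)\geq c_L$, one has $\ltwonorm{\bvarpi}^2 \leq \bvarpi^T\bUpsilon\bvarpi/c_L = \vartheta/c_L \leq \tau/c_L$, so a sufficiently large constant in $\gamma \asymp \sqrt{\tau}$ guarantees $\bvarpi\in\{\bbeta:\ltwonorm{\bbeta}\leq\gamma\}$, which is needed to invoke the basic inequality.

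For $\EE\ltwonorm{\hat{\bvarpi}-\bvarpi}^2$, I run a standard $\ell_1$-regularized M-estimator analysis on $\cE$. The basic inequality from the optimality of $\hat{\bvarpi}$, combined with $\linfnorm{\hat{\bUpsilon}\bvarpi-\hat{\bxi}}\leq\lambda/2$ on $\cA(\lambda)$ and H\"older, produces the cone condition $\hat{\bvarpi}-\bvarpi\in\cK(s)$. The event $\cB(s,\sqrt{c_L}/2)$ then gives $\sqrt{\bu^T\hat{\bUpsilon}\bu}\geq \sqrt{\bu^T\bUpsilon\bu}-\sqrt{c_L}/2 \geq \sqrt{c_L}/2$ for every $\bu\in\cK(s)\cap B_2(1)$, so the restricted strong convexity constant of $\hat{\bUpsilon}$ on the cone is at least $c_L/4$. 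Rearranging the basic inequality then yields $\ltwonorm{\hat{\bvarpi}-\bvarpi}^2\lesssim \lambda^2 s$ and $\lonenorm{\hat{\bvarpi}-\bvarpi}\lesssim \lambda s$ on $\cE$. Off $\cE$, the constraint gives $\ltwonorm{\hat{\bvarpi}-\bvarpi}^2\leq 2\gamma^2+2\tau/c_L\lesssim\tau$. Summing produces the first bound.

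For the functional, the cornerstone is the identity
\begin{align*}
\hat{\vartheta}-\vartheta = 2(\hat{\bxi}-\bxi)^T\bvarpi - \bvarpi^T(\hat{\bUpsilon}-\bUpsilon)\bvarpi + 2(\hat{\bxi}-\hat{\bUpsilon}\bvarpi)^T(\hat{\bvarpi}-\bvarpi) - (\hat{\bvarpi}-\bvarpi)^T\hat{\bUpsilon}(\hat{\bvarpi}-\bvarpi),
\end{align*}
which follows by writing $\vartheta = 2\bxi^T\bvarpi - \bvarpi^T\bUpsilon\bvarpi$ (using $\bxi=\bUpsilon\bvarpi$) and expanding the cross terms. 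The first two pieces, in expectation, produce exactly the $\EE|\bvarpi^T(\hat{\bUpsilon}-\bUpsilon)\bvarpi|$ and $\sqrt{\EE|\bvarpi^T(\hat{\bxi}-\bxi)|^2}$ contributions via triangle inequality and Cauchy--Schwarz. For the last two pieces restricted to $\cE$: the fourth term is controlled using the KKT identity $\hat{\bxi}-\hat{\bUpsilon}\hat{\bvarpi}=\lambda\hat{\bg}$, which implies $(\hat{\bvarpi}-\bvarpi)^T\hat{\bUpsilon}(\hat{\bvarpi}-\bvarpi)\lesssim \lambda\lonenorm{\hat{\bvarpi}-\bvarpi}\lesssim \lambda^2 s$; for the third, I split $\hat{\bxi}-\hat{\bUpsilon}\bvarpi = (\hat{\bxi}-\bxi)-(\hat{\bUpsilon}-\bUpsilon)\bvarpi$, apply H\"older, and use $\EE\linfnorm{\hat{\bxi}-\bxi}^2\lesssim \log p/m$ from \eqref{concentration:mean:vector} together with $\EE\lonenorm{\hat{\bvarpi}-\bvarpi}^2\lesssim \lambda^2 s^2$ to obtain the $\lambda s\sqrt{\log p/m}$ part. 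Combined, these yield the $\lambda s(\lambda+\sqrt{\log p/m})$ term.

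On $\cE^c$, Cauchy--Schwarz gives $\EE|\hat{\vartheta}-\vartheta|\bone_{\cE^c}\leq \sqrt{\EE(\hat{\vartheta}-\vartheta)^2}\sqrt{\PP(\cE^c)}$, and I bound $(\hat{\vartheta}-\vartheta)^2$ crudely using $\ltwonorm{\hat{\bvarpi}}\leq\gamma$, the KKT-derived representation $\hat{\vartheta}=\hat{\bxi}^T\hat{\bvarpi}+\lambda\lonenorm{\hat{\bvarpi}}$, and $\lonenorm{\hat{\bvarpi}}\leq\sqrt{p}\,\gamma$; this generates the $(\sqrt{\tau}+\tau)p\sqrt{\PP(\cE^c)}$ remainder, which is harmless because in typical applications $\PP(\cA^c)$ and $\PP(\cB^c)$ decay polynomially in $p$. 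The main obstacle I foresee is engineering the four-piece decomposition so that the de-biasing step in $\hat{\vartheta}$ cleanly removes the first-order bias of $\hat{\bvarpi}$, isolating the unavoidable ``parametric'' contributions $\bvarpi^T(\hat{\bxi}-\bxi)$ and $\bvarpi^T(\hat{\bUpsilon}-\bUpsilon)\bvarpi$ from the higher-order Lasso remainders; once that identity is secured, each piece is dispatched by familiar restricted-eigenvalue plus H\"older arguments.
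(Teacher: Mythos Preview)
Your plan is essentially correct and, for the $\hat{\bvarpi}$ bound, matches the paper's argument (Lemma~\ref{l2error:alpha} plus the good/bad event split). For the functional, your exact four-term identity is a genuinely different and cleaner route than the paper's: the paper (Lemma~\ref{error:functional}) derives \emph{separate} upper and lower bounds on $\hat{\vartheta}-\vartheta$, using on one side that $\bvarpi$ maximizes $\bx\mapsto 2\bxi^T\bx-\bx^T\bUpsilon\bx$ and on the other side the optimality of $\hat{\bvarpi}$ in the empirical problem. Your algebraic decomposition handles both directions at once and isolates the two ``parametric'' pieces $\bvarpi^T(\hat{\bxi}-\bxi)$ and $\bvarpi^T(\hat{\bUpsilon}-\bUpsilon)\bvarpi$ more transparently.

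Two points to tighten. First, your treatment of the third term on $\cE$ is slightly off: after splitting $\hat{\bxi}-\hat{\bUpsilon}\bvarpi=(\hat{\bxi}-\bxi)-(\hat{\bUpsilon}-\bUpsilon)\bvarpi$, you bound only the $(\hat{\bxi}-\bxi)$ piece and say nothing about $(\hat{\bUpsilon}-\bUpsilon)\bvarpi$; in the generality of Proposition~\ref{general:debiasing} there is no separate concentration assumption on $\hat{\bUpsilon}$, so that piece must be routed back through $\cA(\lambda)$, which immediately gives $\lambda^2 s$ and makes the split unnecessary. Second, your bad-event bound is cruder than the paper's. The paper expands $\hat{\bvarpi}^T\hat{\bxi}$ into $\bxi^T\hat{\bvarpi}+(\hat{\bxi}-\bxi)^T\bvarpi+(\hat{\bxi}-\bxi)^T(\hat{\bvarpi}-\bvarpi)$ and controls the last cross term by a truncation at level $b\asymp\sqrt{\log p/m}$ (the analogue of Lemma~\ref{rare:events}(ii)); this is actually where the $\lambda s\sqrt{\log p/m}$ contribution in the stated bound originates in the paper's proof, not from the good-event analysis. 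Your crude bound via $\hat{\vartheta}=\hat{\bxi}^T\hat{\bvarpi}+\lambda\lonenorm{\hat{\bvarpi}}$ also relies on the KKT identity holding globally, whereas the $\ell_2$ constraint could bind off $\cE$; the paper instead uses the inequality $0\le \hat{\vartheta}\le 2\hat{\bvarpi}^T\hat{\bxi}$ obtained by comparing $\hat{\bvarpi}$ to $\bzero$, which requires only feasibility. None of this is a fatal gap, but to reproduce the stated remainder exactly you should adopt the paper's truncation device on $\cE^c$.
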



Proposition \ref{general:debiasing} has a few implications we should discuss.
\begin{itemize}
\item[(1)] \emph{Upper bound for two-sample problems.} Recall the high-dimensional LDA problem discussed in Section \ref{hlda}.  Since two sets of samples are present, the interested functional is $\vartheta=(\bmu_1-\bmu_2)^T\bSigma^{-1}(\bmu_1-\bmu_2)$. We have shown in  Section \ref{hlda} that the minimax error for $\vartheta$ can be lower bounded by the minimax error for $\theta$ in the one-sample problem. We now use Proposition \ref{general:debiasing} to derive a matching upper bound. Consider the estimator $\hat{\vartheta}$  in \eqref{new:theta:estimator} by setting
\begin{align*}
&\hat{\bxi}=\hat{\bmu}_1-\hat{\bmu}_2, \quad \hat{\bmu}_1=\frac{1}{n_1}\sum_{i=1}^{n_1}\bx_i,  \quad \hat{\bmu}_2=\frac{1}{n_2}\sum_{i=1}^{n_2}\by_i, \\
&\hat{\bUpsilon}=\frac{1}{n_1+n_2}\Big[\sum_{i=1}^{n_1}(\bx_i-\hat{\bmu}_1)(\bx_i-\hat{\bmu}_1)^T+\sum_{i=1}^{n_2}(\by_i-\hat{\bmu}_2)(\by_i-\hat{\bmu}_2)^T\Big].
\end{align*}
Suppose the two samples $\{\bx_i\}_{i=1}^{n_1}$ and $\{\by_i\}_{i=1}^{n_2}$ follow sub-gaussian distributions with $n_1 \asymp n_2$. Choose $\lambda \asymp \sqrt{\frac{(1+\tau)\log p}{n_1+n_2}}$ in \eqref{new:alpha:estimator}. Under the scaling $p^{-\delta}\lesssim \frac{s\log p}{n_1+n_2}$ for some $\delta>0$, a minor modification of the arguments in the proof of Theorem \ref{thm:one} enables us to simplify the upper bound in Proposition \ref{general:debiasing} to obtain
\begin{align*}
\sup_{(\bxi,\bUpsilon)\in \cH(s,\tau)}  \EE| \hat{\vartheta}-\vartheta| \lesssim \frac{\tau+\sqrt{\tau}}{n_1+n_2}+\frac{(1+\tau)s\log p}{n_1+n_2}.
\end{align*}
In light of the discussion preceding Corollary \ref{minimax:optimal}, we thus can conclude that the minimax rate in the one-sample problem continues to hold in the two-sample case.
\item[(2)] \emph{Robust estimation of the functionals.} The main results regarding the functional $\theta=\bmu^T\bSigma^{-1}\bmu$ in Sections \ref{rate-optimal:sparse} and \ref{suboptimality:plugin} are established for sub-gaussian distributions. When the data possesses heavier tails, it might be necessary to substitute the sample mean and sample covariance matrix used in $\tilde{\balpha}$ and $\tilde{\theta}$ by some robustified versions, in order to achieve a better bias-variance tradeoff. Motivated by recent advances in nonasymptotic deviation analyses of tail-robust estimators for the mean vector and covariance matrix (see \cite{fwz16}, \cite{kmrsz19} and references therein), to estimate $\balpha$ and $\theta$ under heavier-tailed distributions, we consider the estimators $\hat{\bvarpi}$ and  $\hat{\vartheta}$ in \eqref{new:alpha:estimator} and \eqref{new:theta:estimator}, with element-wise truncated mean and covariance matrix estimators defined as follows:
\begin{align*}
&\hat{\bxi}=(\hat{\xi}_1,\ldots, \hat{\xi}_p), ~~\hat{\xi}_j=\frac{1}{n}\sum_{i=1}^n\varphi_{\tau_j}(x_{ij}),~~ j=1,\ldots, p. \\
&\hat{\bUpsilon}=(\hat{\Upsilon}_{k\ell})_{1\leq k,\ell\leq p},~~\hat{\Upsilon}_{k\ell}=\frac{1}{N}\sum_{i=1}^N\varphi_{\tau_{k\ell}}(y_{ik}y_{i\ell}/2), ~~1\leq k, \ell\leq p.
\end{align*}
Here, $N=n(n-1)/2$; the $\by_i$'s are the paired data formed by $\bx_i$'s: $\{\by_1,\by_2,\ldots, \by_N\}=\{\bx_1-\bx_2,\bx_1-\bx_3,\ldots, \bx_{n-1}-\bx_n\}$; and $\varphi_{\tau}(u)=(|u| \wedge \tau)\cdot \mbox{sign}(u)$.  According to Theorem 3.1 in \cite{kmrsz19}, if $\max_{k\ell}\mathbb{E}(y^2_{1k}y^2_{1\ell})\leq c_1$, then for any $0<\delta <1$, under appropriate choice of the thresholds $\{\tau_{k\ell}\}_{1\leq k,\ell\leq p}$ it holds that
\begin{align}
\label{infinity:bound:matrix}
\PP\Big(\|\hat{\bUpsilon}-\bUpsilon\|_{\max}\geq c_2\sqrt{\frac{\log p+\log \delta^{-1}}{n}}\Big)\leq \delta.
\end{align}
The deviation analyses in \cite{kmrsz19} lead to a concentration result for $\hat{\bxi}$ as well. Specifically, if $\max_{j}\EE(x^2_{1j})\leq c_3$, then for any $t> 0$, setting $\tau_{j}=2\EE(x^2_{1j})/t$ gives that 
\begin{align}
\label{infinity:bound:vector}
\PP(\linfnorm{\hat{\bxi}-\bxi}>t)\leq c_4pe^{-c_5nt^2}.
\end{align}
Note that the condition \eqref{concentration:mean:vector} in Proposition \ref{general:debiasing} is stronger than \eqref{infinity:bound:vector}, because the truncated estimator $\hat{\bxi}$ in \eqref{infinity:bound:vector} depends on $t$. We can further use the confidence interval method in \cite{dllo16} to turn the $t$-dependent estimators into an estimator $\hat{\bxi}$ invariant of $t$ such that
\begin{align}
\PP(\linfnorm{\hat{\bxi}-\bxi}>t)\leq c_6pe^{-c_7nt^2}, \quad \forall t \geq 0.
\end{align}
In the above results, all the $c_i$'s are positive constants. Therefore, under bounded fourth moments conditions, we can apply Proposition \ref{general:debiasing} to derive upper bounds for $\hat{\bvarpi}$ and  $\hat{\vartheta}$. In particular, set $\lambda \asymp \sqrt{\frac{(1+\tau)s\log p}{n}}$ in \eqref{new:alpha:estimator}, $\log \delta^{-1} \asymp \log p$ in \eqref{infinity:bound:matrix}, and assume the scaling $\frac{s^2\log p}{n}=o(1)$ and $p^{-\delta}\lesssim \frac{s^2\log p}{n}$ for some $\delta>0$. It is straightforward to simplify the upper bounds in Proposition \ref{general:debiasing} to obtain
\begin{align}
&\sup_{(\bxi,\bUpsilon)\in \cH(s,\tau)} \EE\ltwonorm{\hat{\bvarpi}-\bvarpi}^2 \lesssim \frac{(1+\tau)s^2\log p}{n}, \label{robust:bound:alpha}\\
&\sup_{(\bxi,\bUpsilon)\in \cH(s,\tau)}  \EE| \hat{\vartheta}-\theta| \lesssim \frac{(1+\tau)s^2\log p}{n}+\sup_{(\bxi,\bUpsilon)\in \cH(s,\tau)}\mathbb{E}|\bvarpi^T(\hat{\bUpsilon}-\bUpsilon)\bvarpi| \nonumber \\
&\hspace{3.cm}+\sup_{(\bxi,\bUpsilon)\in \cH(s,\tau)} \sqrt{\EE|\bvarpi^T(\hat{\bxi}-\bxi)|^2}. \label{robust:bound:theta}
\end{align}
\end{itemize}
Compared to the one in the sub-gaussian case, the bound for $\hat{\bvarpi}$ in \eqref{robust:bound:alpha} has an additional multiplicative factor $s$. A similar rate was derived in the problem of high-dimensional Huber regression with heavy-tailed designs \citep{szf18}. Whether such a rate is minimax optimal for heavy-tailed distributions seems unknown. Regarding the estimator $\hat{\vartheta}$, the upper bound in \eqref{robust:bound:theta} has one term identical to the rate for $\hat{\bvarpi}$, as in the sub-gaussian scenario. The other two terms are more subtle. When $\hat{\bxi}=\hat{\bmu}, \hat{\bUpsilon}=\hat{\bSigma}$, it is direct to compute the sum of these two terms to be of the order $\frac{\sqrt{\tau}+\tau}{\sqrt{n}}$. Hence both terms contribute to the parametric rate part in the minimax rate of $\theta$ (cf. Corollary \ref{minimax:optimal}). However, in the present heavier-tailed setting, the estimators $\hat{\bxi}$ and $\hat{\bUpsilon}$ are constructed by truncation operations to trade bias for robustness. It becomes difficult to characterize the accurate dependence of the two terms on $n$ and $\tau$. More fundamentally, what is the minimax rate for estimating the functional $\theta$ under heavy-tailed distributions? Does the rate undergo a transition between parametric rate and high-dimensional rate as we revealed in the sub-gaussian situation? We leave a thorough minimax analysis of the functional estimation under heavy-tailed distributions for a future research.

\subsection{Generalization to approximate sparsity classes}
\label{approximate:sparse:class}
In Section \ref{rate-optimal:sparse}, we have derived the minimax rate for the functional $\theta=\bmu^T\bSigma^{-1}\bmu$ 
when $\balpha=\bSigma^{-1} \bmu$ is sparse. 
We investigate the performance of $\tilde{\theta}$ over the approximate sparsity classes
\begin{equation*}
\cH_q(R, \tau)=\Big\{(\bmu,\bSigma): \lqnorm{\balpha}^q \leq R, \theta \leq \tau, c_L\leq \lambda_{\min}(\bSigma)\leq \delta_{\bSigma}\leq c_U\Big\}, \quad q\in (0,1].
\end{equation*}
Observe that in $\cH_q(R,\tau)$, in addition to the $\ell_q$ ball constraint $\lqnorm{\balpha}^q\leq R$, the vector $\balpha$ has to satisfy the quadratic inequality $\theta=\balpha^T\bSigma\balpha\leq \tau$. The equality may not hold simultaneously in the preceding two constraints. In fact, if we define
\[
\tilde{\tau}=(c_UR^{\frac{2}{q}})\wedge \tau, \quad \tilde{R}=(p^{1-\frac{q}{2}}c_L^{-\frac{q}{2}}\tau^{\frac{q}{2}})\wedge R,
\]
Lemma \ref{signal:strength} from Section \ref{proof:of:thm3} shows that $\cH_q(R,\tau)=\cH_q(\tilde{R},\tilde{\tau})$. Therefore, the effective scaling parameters under $\cH_q(R,\tau)$ are $(\tilde{R}, \tilde{\tau})$ rather than $(R, \tau)$. We should expect $(\tilde{R},\tilde{\tau})$ to play the role in the minimax results. We derive the upper and lower bounds for $\tilde{\balpha}$ and $\tilde{\theta}$ in the next two theorems.


\begin{thm}
\label{approximate:sparse:thm1}
Consider the scaling $\tilde{R}(1+\tilde{\tau})^{-\frac{q}{2}}(\frac{\log p}{n})^{1-\frac{q}{2}}=o(1)$, and $p^{-\delta} \lesssim \tilde{R}(\frac{\log p}{n})^{1-\frac{q}{2}}(\tilde{\tau}^{-\frac{q}{2}}\vee \tilde{\tau}^{-1})$ for some $\delta >0$. Set $\lambda \asymp \sqrt{\frac{(1+ \tilde{\tau})\log p}{n}}, \gamma \asymp \sqrt{\tilde{\tau}}$ in \eqref{lasso:alpha}. Then, it holds the followings:
\begin{align*}
&\sup_{(\bmu,\bSigma)\in \cH_q(R,\tau)}\EE\ltwonorm{\tilde{\balpha}-\balpha}^2 \lesssim (1+\tilde{\tau})^{1-\frac{q}{2}} \tilde{R} \bigg(\frac{\log p}{n}\bigg)^{1-\frac{q}{2}}, \\
&\sup_{(\bmu,\bSigma)\in \cH_q(R,\tau)} \EE |\tilde{\theta}-\theta| \lesssim  (1+ \tilde{\tau})^{1-\frac{q}{2}} \tilde{R} \bigg(\frac{\log p}{n}\bigg)^{1-\frac{q}{2}}+\frac{\tilde{\tau}+\sqrt{\tilde{\tau}}}{\sqrt{n}}.
\end{align*}
\end{thm}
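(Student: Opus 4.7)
The plan is to adapt the proof strategy of Theorem~\ref{thm:one} to approximate sparsity via a standard thresholding argument. The first step is to invoke Lemma~\ref{signal:strength} from Section~\ref{proof:of:thm3} to reduce $\cH_q(R,\tau)$ to $\cH_q(\tilde{R},\tilde{\tau})$, so that throughout we may work with the effective parameters and the prescribed tuning $\lambda\asymp\sqrt{(1+\tilde{\tau})\log p/n}$, $\gamma\asymp\sqrt{\tilde{\tau}}$.

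For the $\ell_2$ bound on $\bDelta:=\tilde{\balpha}-\balpha$, I would define the threshold set $S=\{i:|\alpha_i|>\lambda\}$. The $\ell_q$ constraint $\lqnorm{\balpha}^q\leq\tilde{R}$ immediately produces $|S|\leq\tilde{R}\lambda^{-q}$, $\lonenorm{\balpha_{S^c}}\leq\tilde{R}\lambda^{1-q}$, and $\ltwonorm{\balpha_{S^c}}^2\leq\tilde{R}\lambda^{2-q}$ via elementary $\ell_q$-ball inequalities. Combining optimality of $\tilde{\balpha}$ in \eqref{lasso:alpha} with the concentration event $\cA(\lambda)=\{\linfnorm{\hat{\bmu}-\hat{\bSigma}\balpha}\leq\lambda/2\}$ (whose probability is controlled exactly as in the proof of Theorem~\ref{thm:one}), a standard basic inequality yields the enlarged cone condition $\lonenorm{\bDelta_{S^c}}\lesssim\lonenorm{\bDelta_S}+\lonenorm{\balpha_{S^c}}$. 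Invoking a restricted strong convexity bound for $\hat{\bSigma}$ on this cone—a straightforward extension of the event $\cB(|S|,\sqrt{c_L}/2)$ to accommodate the additive $\ell_1$-slack, valid under the scaling $\tilde{R}(1+\tilde{\tau})^{-q/2}(\log p/n)^{1-q/2}=o(1)$—yields $\ltwonorm{\bDelta}^2\lesssim\lambda^2|S|+\lambda\lonenorm{\balpha_{S^c}}+\ltwonorm{\balpha_{S^c}}^2\lesssim\tilde{R}\lambda^{2-q}\asymp(1+\tilde{\tau})^{1-q/2}\tilde{R}(\log p/n)^{1-q/2}$, and in passing $\lonenorm{\bDelta}\lesssim\sqrt{|S|}\,\ltwonorm{\bDelta}+\lonenorm{\balpha_{S^c}}\lesssim\tilde{R}\lambda^{1-q}$.

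For $\tilde{\theta}-\theta$, I would repeat the debiasing algebra from the proof of Theorem~\ref{thm:one}. Expanding $\tilde{\theta}=2\hat{\bmu}^T\tilde{\balpha}-\tilde{\balpha}^T\hat{\bSigma}\tilde{\balpha}$ around $\balpha$ and using the KKT identity \eqref{kkt:condition} gives
\[
\tilde{\theta}-\theta \;=\; 2(\hat{\bmu}-\bmu)^T\balpha + \balpha^T(\bSigma-\hat{\bSigma})\balpha + 2\lambda\hat{\bg}^T\bDelta + \bDelta^T\hat{\bSigma}\bDelta.
\]
The two ``stochastic'' terms contribute $(\sqrt{\tilde{\tau}}+\tilde{\tau})/\sqrt{n}$ in expectation by standard sub-gaussian/sub-exponential concentration for linear and quadratic forms, together with $\ltwonorm{\balpha}^2\leq\tilde{\tau}/c_L$. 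The two remaining ``bias'' terms are handled by $|\lambda\hat{\bg}^T\bDelta|\leq\lambda\lonenorm{\bDelta}$ and $\bDelta^T\hat{\bSigma}\bDelta\leq c_U\ltwonorm{\bDelta}^2+\|\hat{\bSigma}-\bSigma\|_{\max}\lonenorm{\bDelta}^2$; substituting the $\ell_1$- and $\ell_2$-bounds from the previous step produces a contribution of order $\tilde{R}\lambda^{2-q}$, matching the first term of the claimed rate.

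The main obstacle is establishing the restricted strong convexity of $\hat{\bSigma}$ on the enlarged cone $\{\bu:\lonenorm{\bu_{S^c}}\leq 3\lonenorm{\bu_S}+4\lonenorm{\balpha_{S^c}}\}$, which is not a pure sparse cone because of the additive $\ell_1$-slack produced by the soft sparsity. The remedy is a peeling argument that splits $\bu$ into a sparse part supported on $S$ plus a tail controlled by $\lonenorm{\balpha_{S^c}}$, reducing the analysis to the sparse restricted-eigenvalue event already established for Theorem~\ref{thm:one}. The delicate bookkeeping is tracking how $(1+\tilde{\tau})$ and $\tilde{R}$ enter through $\lambda$ so that the final rate simplifies cleanly to $(1+\tilde{\tau})^{1-q/2}\tilde{R}(\log p/n)^{1-q/2}$; everything else is a faithful replay of the sub-gaussian deviation lemmas used in Section~\ref{rate-optimal:sparse}.
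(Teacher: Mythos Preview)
Your overall plan is sound and closely mirrors the paper's argument: define a pseudo-support by thresholding at level $\asymp\lambda$, derive the enlarged cone condition $\lonenorm{\bDelta_{S^c}}\lesssim\lonenorm{\bDelta_S}+\lonenorm{\balpha_{S^c}}$ from optimality, and replay the deviation machinery from Theorem~\ref{thm:one}. One methodological difference is that the paper does not extend the restricted-eigenvalue event $\cB$ to the slack cone; instead it invokes Lemma~\ref{key:concentration:eq}, which gives $\big|\sqrt{\bu^T\hat{\bSigma}\bu}-\sqrt{\bu^T\bSigma\bu}\big|\lesssim\sqrt{(\log p)/n}\,\lonenorm{\bu}$ uniformly over $\RR^p$ and therefore absorbs the additive $\ell_1$-slack automatically. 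Your peeling route would also work, but is more laborious.

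There is, however, a genuine gap in your bound for the bias term $\bDelta^T\hat{\bSigma}\bDelta$. The inequality $\bDelta^T\hat{\bSigma}\bDelta\leq c_U\ltwonorm{\bDelta}^2+\|\hat{\bSigma}-\bSigma\|_{\max}\lonenorm{\bDelta}^2$ is not valid on the stated parameter space: $\cH_q(R,\tau)$ only imposes $\delta_{\bSigma}=\max_i\sigma_{ii}\leq c_U$, not $\lambda_{\max}(\bSigma)\leq c_U$, so $\bDelta^T\bSigma\bDelta$ is not controlled by $c_U\ltwonorm{\bDelta}^2$. If you retreat to the elementwise bound $\bDelta^T\bSigma\bDelta\leq c_U\lonenorm{\bDelta}^2$, the resulting order $\lonenorm{\bDelta}^2\asymp\tilde{R}^2\lambda^{2-2q}$ is not dominated by $\tilde{R}\lambda^{2-q}$ under the assumed scaling. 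The fix---and what the paper actually does---is to bound $\bDelta^T\hat{\bSigma}\bDelta$ directly from the basic inequality you already have: on $\cA(\lambda)$, optimality of $\tilde{\balpha}$ yields $\bDelta^T\hat{\bSigma}\bDelta\leq\lambda\big(3\lonenorm{\bDelta_S}+4\lonenorm{\balpha_{S^c}}\big)\lesssim\lambda\big(\sqrt{|S|}\,\ltwonorm{\bDelta}+\lonenorm{\balpha_{S^c}}\big)\lesssim\tilde{R}\lambda^{2-q}$, which delivers the claimed rate without any spectral-norm assumption on $\bSigma$.
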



\begin{thm}
\label{approxiamte:sparse:thm2}
Consider the scaling $\tilde{R}(1+\tilde{\tau})^{-\frac{q}{2}}(\frac{\log p}{n})^{1-\frac{q}{2}}=o(1)$.
\begin{itemize}
\item[(a)] If $1\lesssim \tilde{R}(1+\tilde{\tau})^{-\frac{q}{2}}(\frac{\log p}{n})^{-\frac{q}{2}} \lesssim p^{1-\delta}$ for some $\delta >0$, then
\[
\inf_{\hat{\balpha}}\sup_{(\bmu,\bSigma) \in \cH_q(R,\tau)} \mathbb{E}\ltwonorm{\hat{\balpha}-\balpha}^2 \gtrsim \tilde{\tau} \wedge\Big [(1+\tilde{\tau})^{1-\frac{q}{2}}\tilde{R}\Big(\frac{\log p}{n}\Big)^{1-\frac{q}{2}}\Big].
\]
\item[(b)] If $1\lesssim \tilde{R}^2(1+\tilde{\tau})^{-q}(\frac{\log p}{n})^{-q} \lesssim p^{1-\delta}$ for some $\delta>0$, then
\begin{align*}
\inf_{\hat{\theta}}\sup_{(\bmu,\bSigma) \in \cH_q(R,\tau)} \mathbb{E}|\hat{\theta}-\theta| \gtrsim \tilde{\tau} \wedge\Big[ (1+ \tilde{\tau})^{1-\frac{q}{2}} \tilde{R} \bigg(\frac{\log p}{n}\bigg)^{1-\frac{q}{2}} \Big]+ \Big[\tilde{\tau} \wedge \frac{\tilde{\tau}+\sqrt{\tilde{\tau}}}{\sqrt{n}}\Big].
\end{align*}
\end{itemize}
\end{thm}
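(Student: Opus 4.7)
My strategy is to reduce both lower bounds to Theorem~\ref{lower:bound} through an effective integer sparsity level
\[
s^{\ast} = \lceil c\,\tilde R (1+\tilde\tau)^{-q/2}(n/\log p)^{q/2}\rceil
\]
for a small absolute constant $c>0$. The lower half of the scaling condition in parts (a) and (b) forces $s^{\ast}\geq 1$, while the upper half gives $s^{\ast}\lesssim p^{1-\delta}$ in (a) and $s^{\ast 2}\lesssim p^{1-\delta}$ in (b). The first implies $\log(p/s^{\ast})\asymp \log p$; the second, together with a sufficiently small choice of $c_0$ in Theorem~\ref{lower:bound}(b), makes $\exp(-e^{2 s^{\ast 2} p^{c_6 c_0-1}})$ bounded below by a positive absolute constant. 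Plugging $(s,\tau)=(s^{\ast},\tilde\tau)$ into Theorem~\ref{lower:bound} yields $(1+\tilde\tau) s^{\ast}\log p/n \asymp (1+\tilde\tau)^{1-q/2}\tilde R(\log p/n)^{1-q/2}$, which is the target ``sparse-signal'' rate in both parts.

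The core step will be to show that the hypothesis packings used in the proof of Theorem~\ref{lower:bound} can be realised inside $\cH_q(R,\tau)$; by Lemma~\ref{signal:strength} this is equivalent to realising them in $\cH_q(\tilde R,\tilde\tau)$. A prototypical construction is an $s^{\ast}$-sparse $\balpha$ with common coordinate magnitude $m^{\ast}\asymp \sqrt{(1+\tilde\tau)\log p/n}$ on a varying support, paired with a covariance $\bSigma$ whose spectrum lies in $[c_L,c_U]$. Then $\|\balpha\|_q^q\asymp s^{\ast}(m^{\ast})^q \asymp \tilde R$, while $\bmu^T\bSigma^{-1}\bmu\asymp s^{\ast}(m^{\ast})^2\asymp (1+\tilde\tau)^{1-q/2}\tilde R(\log p/n)^{1-q/2}=o(1+\tilde\tau)$ under the stated scaling. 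In the strong-signal regime where the right-hand side is at most a constant multiple of $\tilde\tau$, a sufficiently small $c$ in $s^{\ast}$ secures both $\|\balpha\|_q^q\leq R$ and $\bmu^T\bSigma^{-1}\bmu\leq \tau$, so the exact-sparsity argument transfers verbatim. In the complementary weak-signal regime, the rate collapses to $\tilde\tau$, which I recover from the two-point test $\balpha=\bzero$ versus $\balpha=c'\sqrt{\tilde\tau}\,\be_1$; this pair lies in $\cH_q(R,\tau)$ because $\tilde\tau^{q/2}\lesssim R$ by the definition of $\tilde\tau$.

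For the parametric summand $\tilde\tau\wedge(\tilde\tau+\sqrt{\tilde\tau})/\sqrt n$ in part (b), I plan two one-sparse two-point Le Cam arguments. The $\sqrt{\tilde\tau}/\sqrt n$ piece comes from $\bSigma=\bI_p$, $\bmu_0=\sqrt{\tilde\tau-\delta}\,\be_1$ and $\bmu_1=\sqrt{\tilde\tau}\,\be_1$ with $\delta\asymp \sqrt{\tilde\tau/n}$, giving $O(1)$ $n$-fold KL and $|\theta_1-\theta_0|\asymp\delta$. The $\tilde\tau/\sqrt n$ piece comes from fixing $\bmu=\sqrt{\tilde\tau}\,\be_1$ and comparing $\bSigma_0=\bI_p$ with $\bSigma_1=\bI_p+\epsilon\,\be_1\be_1^T$ for $\epsilon\asymp 1/\sqrt n$, giving Gaussian KL of order $n\epsilon^2=O(1)$ and $|\theta_1-\theta_0|\asymp \tilde\tau/\sqrt n$. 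Both constructions are $1$-sparse and thus sit inside $\cH_q(R,\tau)$ up to an overall constant rescaling. The trivial $\tilde\tau$ bound is again obtained from $\bmu_0=\bzero$ versus $\bmu_1=c'\sqrt{\tilde\tau}\,\be_1$ whenever $\tilde\tau$ is the smaller quantity.

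I expect the main technical obstacle to be the simultaneous verification of both defining constraints of $\cH_q(R,\tau)$ for the packing sets transferred from Theorem~\ref{lower:bound}. The reduction $\cH_q(R,\tau)=\cH_q(\tilde R,\tilde\tau)$ from Lemma~\ref{signal:strength} is the key device that removes the tedious case analysis of which of the two original constraints is binding; the scaling hypothesis $\tilde R(1+\tilde\tau)^{-q/2}(\log p/n)^{1-q/2}=o(1)$ is precisely what forces $s^{\ast}(m^{\ast})^2$ to be $o(1+\tilde\tau)$; and the integrality of $s^{\ast}$ is handled by the lower-bound half of the stated scaling condition. Once these ingredients are in place, the exact-sparsity minimax machinery developed for Theorem~\ref{lower:bound} applies without modification.
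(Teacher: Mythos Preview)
Your plan is essentially the paper's: both reduce to Theorem~\ref{lower:bound} with effective sparsity $s^\ast\asymp \tilde R(1+\tilde\tau)^{-q/2}(\log p/n)^{-q/2}$, split into regimes according to the size of $\tilde\tau$, and check that the specific packings used there sit inside $\cH_q(\tilde R,\tilde\tau)=\cH_q(R,\tau)$; the parametric summand in (b) is obtained from the same one-sparse two-point tests you describe.

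One place where ``transfers verbatim'' fails and the paper silently adjusts the construction: in part~(a), regime $\tilde\tau\geq 1$, the Fano packing of Theorem~\ref{lower:bound}(a) takes $\bar\bmu$ with $s^\ast/2$ equal nonzero coordinates of size $\sqrt{c_L\tilde\tau/s^\ast}$. The resulting $\balpha^j=(\bSigma^j)^{-1}\bar\bmu$ then carries $s^\ast/2$ entries of order $\sqrt{\tilde\tau/s^\ast}$, contributing $(s^\ast)^{1-q/2}\tilde\tau^{q/2}$ to $\|\balpha^j\|_q^q$; under the stated scaling this is \emph{not} $O(R)$ in general, so the packing escapes $\cH_q(R,\tau)$ and your prototypical $\ell_q$ check does not cover it. The paper replaces $\bar\bmu$ by the $1$-sparse vector $(\sqrt{c_L\tilde\tau},0,\ldots,0)^T$; then $\balpha^j$ has one entry $\asymp\sqrt{\tilde\tau}$ and $\sim s^\ast$ entries $\asymp(c_0-c_L)\sqrt{\tilde\tau/s^\ast}\asymp\sqrt{\tilde\tau\log p/n}$, giving $\|\balpha^j\|_q^q\lesssim \tilde\tau^{q/2}+\tilde R\lesssim R$ (the first bound uses $\tilde\tau\leq c_UR^{2/q}$). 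Your uniform-magnitude heuristic does handle the $\tilde\tau\leq 1$ cases and also part~(b) with $\tilde\tau\geq 1$ (where the Theorem~\ref{lower:bound}(b) construction already has $1$-sparse $\bmu$), but be prepared to make this modification when you carry out the $\ell_q$ verification for part~(a).
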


Theorems \ref{approximate:sparse:thm1} and \ref{approxiamte:sparse:thm2} can be seen as a generalization of Theorems \ref{thm:one} and \ref{lower:bound}, respectively. The quantity $\tilde{R}(1+\tilde{\tau})^{-\frac{q}{2}}(\frac{\log p}{n})^{-\frac{q}{2}}$ plays the same role as the sparsity level $s$ in the exactly sparse case. Setting $q=0$ in the two theorems, we fully recover the upper bounds \eqref{clean:bound:alpha:version} and \eqref{clean:upper:bound} and the lower bounds \eqref{clean:lower:bound} and \eqref{clean:lower:bound2}, for the exact sparsity classes under the same scaling conditions. Moreover, observe that
\begin{align*}
\sup_{(\bmu,\bSigma)\in \cH_q(R,\tau)}\EE\ltwonorm{\bzero-\balpha}^2&=\sup_{(\bmu,\bSigma)\in \cH_q(\tilde{R},\tilde{\tau})}\EE\ltwonorm{\bzero-\balpha}^2 \lesssim  \tilde{\tau} \\
\sup_{(\bmu,\bSigma)\in \cH_q(R,\tau)}\EE| 0-\theta|&=\sup_{(\bmu,\bSigma)\in \cH_q(\tilde{R},\tilde{\tau})}\EE|0-\theta| \lesssim \tilde{\tau}.
\end{align*}
The above, combined with Theorems \ref{approximate:sparse:thm1} and \ref{approxiamte:sparse:thm2}, gives us the generalization of Corollary \ref{minimax:optimal}.

\begin{cor}
Consider the scaling $\tilde{R}(1+\tilde{\tau})^{-\frac{q}{2}}(\frac{\log p}{n})^{1-\frac{q}{2}}=o(1)$, and $p^{-\delta} \lesssim \tilde{R}(\frac{\log p}{n})^{1-\frac{q}{2}}(\tilde{\tau}^{-\frac{q}{2}}\vee \tilde{\tau}^{-1})$ for some $\delta >0$. Set $\lambda \asymp \sqrt{\frac{(1+ \tilde{\tau})\log p}{n}}, \gamma \asymp \sqrt{\tilde{\tau}}$ in \eqref{lasso:alpha}.
\begin{enumerate}
\item[(a)] Suppose $1\lesssim \tilde{R}(1+\tilde{\tau})^{-\frac{q}{2}}(\frac{\log p}{n})^{-\frac{q}{2}} \lesssim p^{1-\delta}$ for some $\delta >0$, then
\begin{align*}
 \inf_{\hat{\balpha}} \sup_{(\bmu,\bSigma)\in \mathcal{H}_q(R,\tau)}  \mathbb{E}\|\hat{\balpha}-\balpha\|_2^2  \asymp  \tilde{\tau} \wedge\Big [(1+\tilde{\tau})^{1-\frac{q}{2}}\tilde{R}\Big(\frac{\log p}{n}\Big)^{1-\frac{q}{2}}\Big].
\end{align*}
The estimator $\tilde{\balpha}$ is minimax rate-optimal in the regime $\tilde{\tau} \gtrsim \tilde{R}(1+\tilde{\tau})^{-\frac{q}{2}}(\frac{\log p}{n})^{1-\frac{q}{2}}$, and the trivial estimator $\bzero$ attains the optimal rate when $\tilde{\tau} \lesssim \tilde{R}(1+\tilde{\tau})^{-\frac{q}{2}}(\frac{\log p}{n})^{1-\frac{q}{2}}$.
\item[(b)] Suppose $1\lesssim \tilde{R}^2(1+\tilde{\tau})^{-q}(\frac{\log p}{n})^{-q} \lesssim p^{1-\delta}$ for some $\delta>0$, then
\begin{align*}
 \inf_{\hat{\theta}} \sup_{(\bmu,\bSigma)\in \mathcal{H}_q(R,\tau)} \EE|\hat{\theta}-\theta| \asymp \tilde{\tau} \wedge\Big[ (1+ \tilde{\tau})^{1-\frac{q}{2}} \tilde{R} \bigg(\frac{\log p}{n}\bigg)^{1-\frac{q}{2}} \Big]+ \Big[\tilde{\tau} \wedge \frac{\tilde{\tau}+\sqrt{\tilde{\tau}}}{\sqrt{n}}\Big].
\end{align*}
The estimator $\tilde{\theta}$ is minimax rate-optimal in the regime $\tilde{\tau} \gtrsim \tilde{R}(1+\tilde{\tau})^{-\frac{q}{2}}(\frac{\log p}{n})^{1-\frac{q}{2}}$, and the trivial estimator $0$ obtains the optimal rate when $\tilde{\tau} \lesssim \tilde{R}(1+\tilde{\tau})^{-\frac{q}{2}}(\frac{\log p}{n})^{1-\frac{q}{2}}$.
\end{enumerate}
\end{cor}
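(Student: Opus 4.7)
The plan is to assemble this corollary by combining the upper bounds of Theorem \ref{approximate:sparse:thm1} with the lower bounds of Theorem \ref{approxiamte:sparse:thm2}, supplementing the upper side with a trivial-estimator analysis to cover the weak-signal regime. The starting observation is that $\cH_q(R,\tau)=\cH_q(\tilde R,\tilde\tau)$ by Lemma \ref{signal:strength} from Section \ref{proof:of:thm3}, so throughout I may replace $(R,\tau)$ by $(\tilde R,\tilde\tau)$ and treat the latter as the effective scaling parameters that appear in the minimax rate. The scaling condition $\tilde R(1+\tilde\tau)^{-q/2}(\log p/n)^{1-q/2}=o(1)$ together with the assumed lower bound on $\tilde R$ is precisely what is needed to apply both Theorems \ref{approximate:sparse:thm1} and \ref{approxiamte:sparse:thm2}.

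For part (a), Theorem \ref{approximate:sparse:thm1} yields
\[
\sup_{\cH_q(R,\tau)}\EE\|\tilde{\balpha}-\balpha\|_2^2 \lesssim (1+\tilde\tau)^{1-q/2}\tilde R\Big(\frac{\log p}{n}\Big)^{1-q/2}.
\]
The trivial estimator $\bzero$, on the other hand, incurs error at most $c_L^{-1}\bmu^T\bSigma^{-1}\bmu\lesssim \tilde\tau$ by the eigenvalue constraint in the parameter space. Taking the pointwise minimum of these two estimators' errors gives the upper bound $\tilde\tau\wedge[(1+\tilde\tau)^{1-q/2}\tilde R(\log p/n)^{1-q/2}]$, which matches Theorem \ref{approxiamte:sparse:thm2}(a). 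The two regime statements follow directly: in the strong-signal regime $\tilde{\balpha}$ is optimal, while in the weak-signal regime $\bzero$ achieves the optimal rate.

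For part (b), the same strategy applies. Theorem \ref{approximate:sparse:thm1} bounds $\EE|\tilde\theta-\theta|$ by $A+B$, where $A=(1+\tilde\tau)^{1-q/2}\tilde R(\log p/n)^{1-q/2}$ and $B=(\tilde\tau+\sqrt{\tilde\tau})/\sqrt n$; the trivial estimator $0$ has error at most $\tilde\tau$. Hence the upper bound is $\tilde\tau\wedge(A+B)$, while Theorem \ref{approxiamte:sparse:thm2}(b) gives lower bound $(\tilde\tau\wedge A)+(\tilde\tau\wedge B)$. A short case analysis shows $\tilde\tau\wedge(A+B)\asymp(\tilde\tau\wedge A)+(\tilde\tau\wedge B)$: when both $A,B\le\tilde\tau$ both expressions are comparable to $A+B$; when both exceed $\tilde\tau$ both are of order $\tilde\tau$; and in the mixed cases one side dominates and the two expressions again agree up to a factor of $2$. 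Hence the minimax rate is pinned down, with $\tilde\theta$ optimal in the strong-signal regime and $0$ optimal in the weak-signal regime.

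I do not expect a serious obstacle here, since the corollary is primarily a reorganization of Theorems \ref{approximate:sparse:thm1}--\ref{approxiamte:sparse:thm2} together with two trivial estimator bounds. The one mildly delicate point is the verification that the algebraic forms $\tilde\tau\wedge(A+B)$ and $(\tilde\tau\wedge A)+(\tilde\tau\wedge B)$ coincide up to a universal constant, which is handled by the case analysis above. A secondary bookkeeping issue is ensuring that the scaling hypotheses in part (a), respectively part (b), imply the scaling hypotheses of Theorems \ref{approximate:sparse:thm1} and \ref{approxiamte:sparse:thm2} and that the choices $\lambda\asymp\sqrt{(1+\tilde\tau)\log p/n}$ and $\gamma\asymp\sqrt{\tilde\tau}$ are admissible under both parts; this is direct once one uses $\cH_q(R,\tau)=\cH_q(\tilde R,\tilde\tau)$ throughout.
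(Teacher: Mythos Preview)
Your proposal is correct and follows exactly the approach the paper takes: the paper simply states that the trivial-estimator bounds (displayed immediately before the corollary) ``combined with Theorems \ref{approximate:sparse:thm1} and \ref{approxiamte:sparse:thm2}'' yield the result, and you have carefully spelled out that combination, including the equivalence $\tilde\tau\wedge(A+B)\asymp(\tilde\tau\wedge A)+(\tilde\tau\wedge B)$. The only phrasing to tighten is ``pointwise minimum of these two estimators' errors''---what you are really using is that the minimax risk is bounded above by the minimum of the two \emph{sup}-risks, which is legitimate since $\tilde\tau$ and $A,B$ are scaling parameters rather than unknown quantities.
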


\subsection{Functional estimation in the dense regime}
\label{dense:regime}

As shown in Sections \ref{rate-optimal:sparse} and \ref{approximate:sparse:class}, the minimax optimality of our proposed functional estimator $\tilde{\theta}$ in \eqref{optimal:estimator} relies on the condition $s^2\lesssim p^{1-\tilde{\delta}}$ for exact sparsity classes and $\tilde{R}^2(1+\tilde{\tau})^{-q}(\frac{\log p}{n})^{-q} \lesssim p^{1-\delta}$ for approximate sparsity ones. This sparsity condition is required in a chi-squared distance calculation on Gaussian mixtures to obtain the matching minimax lower bound. Similar conditions and chi-squared distance calculations have appeared in the minimax analysis of other high-dimensional sparse problems \citep{cl04, cz12, frw15}. Such condition can be rather mild and hold for many high-dimensional data applications when the signal is reasonably sparse. On the other hand, it is interesting to investigate the complementary dense regime. This section presents some results and discussions along this line to shed more lights on the functional estimation problem. To simplify the presentation, we will focus on the estimation over exact sparsity classes. A straightforward modification of Theorem \ref{lower:bound}(b) yields the following minimax lower bound in the dense regime where $s \gtrsim \sqrt{p}$. 

\begin{prop}
\label{lowerbound:dense}
In the regime $s \gtrsim \sqrt{p}$, it holds that
\begin{align*}
\inf_{\hat{\theta}}\sup_{(\bmu,\bSigma)\in \mathcal{H}(s,\tau)}\mathbb{E}|\hat{\theta}-\theta| \gtrsim  \Big[\tau \wedge \frac{(1+\tau)\sqrt{p}}{n}\Big]+\Big[\tau \wedge  \frac{\tau+\sqrt{\tau}}{\sqrt{n}}\Big].
\end{align*}
\end{prop}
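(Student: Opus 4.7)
The minimax lower bound splits into a parametric piece $\tau \wedge (\tau+\sqrt\tau)/\sqrt n$ and a high-dimensional piece $\tau \wedge (1+\tau)\sqrt p/n$. The parametric piece is already the first summand of Theorem \ref{lower:bound}(b); its derivation there uses a two-point perturbation supported in $\mathcal{H}(1,\tau) \subseteq \mathcal{H}(s,\tau)$ for any $s \geq 1$, so it transfers to the dense regime without change. The remaining task is to establish the $(1+\tau)\sqrt{p}/n$ term, which requires a new construction because the $s$-sparse mixture used to obtain $(1+\tau)s\log p/n$ in Theorem \ref{lower:bound}(b) has a chi-squared bound controlled only when $s^2 \lesssim p$ --- this is exactly why that theorem carries the vanishing factor $\exp(-e^{2s^2p^{c_6 c_0-1}})$ once $s^2 \gtrsim p$.

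I would replace it with an Ingster-style prior that trades the $\log p$ factor for a better scaling in the dense regime. Fix $\bSigma = c_0 \bI_p$ with $c_0 \in [c_L, c_U]$ and pick $\tilde s$ with $\sqrt p \leq \tilde s \leq s$ (possible since $s \gtrsim \sqrt p$). Let $\pi_0$ be $\delta_{\bmu = \bzero}$ and let $\pi_1$ draw $\bmu = \epsilon \sum_{i \in S} \xi_i \be_i$ with $S$ uniform on $\binom{[p]}{\tilde s}$ and $\xi_i$ i.i.d.\ Rademacher. Under $\pi_1$ one has $\|\balpha\|_0 = \tilde s \leq s$ and $\theta = \tilde s\epsilon^2/c_0$ deterministically, so the mixture is supported in $\mathcal{H}(s,\tau)$ whenever $\tilde s\epsilon^2/c_0 \leq \tau$. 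With $\bx_i \sim N(\bmu, c_0 \bI_p)$ i.i.d., Fubini and the Rademacher MGF give
\begin{align*}
\chi^2(P_{\pi_1}, P_{\pi_0}) + 1 \;=\; \mathbb{E}_K \cosh(n\epsilon^2/c_0)^K, \qquad K = |S\cap S'|,
\end{align*}
where $K$ is hypergeometric with mean $\tilde s^2/p$. Using $\cosh x \leq e^{x^2/2}$ together with Hoeffding's domination of the hypergeometric MGF by the corresponding binomial one, the right-hand side is at most $\exp(C\tilde s^2 n^2 \epsilon^4/(p c_0^2))$ while its exponent is $O(1)$. Tuning $\epsilon^2 \asymp c_0\sqrt p /(\tilde s n)$ keeps $\chi^2 \leq 1/4$, and Le Cam's two-point lemma then yields a lower bound of order $\tilde s\epsilon^2/c_0 \asymp \sqrt p / n$, independently of the particular $\tilde s$. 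If $\tau \lesssim \sqrt p /n$ I instead shrink $\epsilon$ to saturate $\theta = \tau$ and recover the $\tau$ branch, producing $\tau \wedge \sqrt p/n$.

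To upgrade $\sqrt p/n$ to $(1+\tau)\sqrt p/n$, which is the stronger claim only when $\tau \gtrsim 1$, I would follow the same enhancement used in the proof of Theorem \ref{lower:bound}(b): couple the Ingster perturbation to a base vector $\bmu_0$ with $\bmu_0^T\bSigma^{-1}\bmu_0 \asymp \tau$, and modify $\bSigma$ by a low-rank perturbation (still obeying $c_L \leq \lambda_{\min}(\bSigma) \leq \delta_{\bSigma} \leq c_U$) so that the Gaussian log-likelihood fluctuations in the informative directions have variance of order $(1+\tau)$ instead of order one. This relaxes the admissible $\epsilon^2$ by a factor $(1+\tau)$ and amplifies the induced functional gap by the same factor. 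The main technical obstacle is maintaining the spectral constraint $\delta_\bSigma \leq c_U$ together with the sparsity $\|\balpha\|_0 \leq s$ and the quadratic constraint $\theta \leq \tau$, while keeping the hypergeometric chi-squared exponent of order one when $\tilde s$ is as large as $p$; the first three are handled by keeping the covariance perturbation rank one with carefully tuned amplitude, and the last follows from the choice of $\epsilon$ together with the Hoeffding-type MGF bound.
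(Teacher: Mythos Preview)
Your plan is essentially the paper's own proof: recycle the two mixture constructions from Theorem~\ref{lower:bound}(b) at effective sparsity $\tilde s\asymp\sqrt p\le s$ and retune the amplitudes so the hypergeometric $\chi^2$ exponent stays $O(1)$ without the $\log p$ factor. For $\tau\lesssim 1$ the paper simply sets $\tilde s=\sqrt p$ and $\bmu^S=\sqrt{c_L}\,[n^{-1/2}\wedge(p^{-1/4}\tau^{1/2})]\bone_S$ with fixed positive signs; your Rademacher-signed variant is a harmless change (it replaces $e^{Kn\epsilon^2/c_0}$ by $\cosh(n\epsilon^2/c_0)^K$ in the $\chi^2$ and the same MGF bound goes through). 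For $\tau\gtrsim 1$ the paper just plugs $s=\sqrt p$ and $\rho=(n^{-1/2}p^{1/4})\wedge 1$ into Case~(i), giving $na^2\asymp 1$ hence $\chi^2\lesssim e^{Cs^2/p}=O(1)$ and functional gap $\asymp\tau\rho^2\asymp\tau\sqrt p/n$.

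Where your write-up goes astray is the mechanism you describe for the $(1+\tau)$ amplification. Inflating the covariance in the directions where you perturb $\bmu$ does relax the admissible $\epsilon^2$ by a factor of order $\tau$, but it simultaneously deflates the contribution of those coordinates to $\theta=\bmu^T\bSigma^{-1}\bmu$ by the reciprocal factor, so the gain cancels. The actual source of the $\tau$ factor in Case~(i) is different: $\bmu$ is held \emph{fixed} (with a single entry $\asymp\sqrt\tau$) under both the null and each alternative, and only the off-diagonal block $\bSigma_{1,S}$ is randomized. Because $\bmu^0=\bmu^S$, the $\chi^2$ between $\mathbb{P}^n_{(\bmu,\bSigma^S)}$ and $\mathbb{P}^n_{(\bmu,\bSigma^0)}$ involves only the covariance perturbation and is independent of $\tau$; the functional gap, on the other hand, comes from $\bmu^T[(\bSigma^S)^{-1}-(\bSigma^0)^{-1}]\bmu$ and inherits the factor $\|\bmu\|_2^2\asymp\tau$. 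So the enhancement is not ``larger noise permits larger signal'' but rather ``the functional is $\tau$-sensitive to covariance perturbations whose statistical cost is $\tau$-free.'' Once you correct this description, your proposal coincides with the paper's argument.
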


While the lower bound in Proposition \ref{lowerbound:dense} does not depend on the sparsity level $s$, we will propose estimators to obtain the matching upper bound under some scaling conditions. We first consider the Gaussian case.

\begin{prop}
\label{gaussian:case}
Suppose $\bx_1, \ldots, \bx_n\overset{i.i.d.}{\sim} N(\bmu,\bSigma)$ and $\limsup_{n\rightarrow \infty}\frac{p}{n}<1$. Consider the functional estimator $\hat{\theta}_{g}=\frac{n-p-2}{n}\hat{\bmu}^T\hat{\bSigma}^{-1}\hat{\bmu}-\frac{p}{n}$. Then,
\begin{align*}
\sup_{(\bmu,\bSigma)\in \cH(s,\tau)}\EE|\hat{\theta}_g-\theta|\lesssim \frac{\sqrt{p}}{n} + \frac{\tau+\sqrt{\tau}}{\sqrt{n}}.
\end{align*}
\end{prop}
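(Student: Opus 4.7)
The plan is to exploit the Gaussian structure directly: show that $\hat{\theta}_g$ is unbiased and then control its variance via Cauchy--Schwarz. The key facts are that $\hat{\bmu}\sim N(\bmu,\bSigma/n)$ is independent of $n\hat{\bSigma}\sim W_p(n-1,\bSigma)$, and that $(n-1)\hat{\bmu}^T\hat{\bSigma}^{-1}\hat{\bmu}$ is exactly Hotelling's $T^2$ statistic.

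\textbf{Step 1 (unbiasedness).} Using the classical inverse-Wishart mean $\EE[(n\hat{\bSigma})^{-1}]=\bSigma^{-1}/(n-p-2)$ (valid since $\limsup p/n<1$ gives $n-p-2>0$ for large $n$) and independence of $\hat{\bmu}$ and $\hat{\bSigma}$, I compute
\begin{align*}
\EE[\hat{\bmu}^T\hat{\bSigma}^{-1}\hat{\bmu}]
=\tr\!\big(\EE[\hat{\bSigma}^{-1}]\,\EE[\hat{\bmu}\hat{\bmu}^T]\big)
=\tfrac{n}{n-p-2}\tr\!\big(\bSigma^{-1}(\bSigma/n+\bmu\bmu^T)\big)
=\tfrac{p}{n-p-2}+\tfrac{n\theta}{n-p-2},
\end{align*}
which, after multiplying by $(n-p-2)/n$ and subtracting $p/n$, yields $\EE[\hat{\theta}_g]=\theta$.

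\textbf{Step 2 (variance via noncentral $F$).} Writing $S=n\hat{\bSigma}/(n-1)$, we have $T^2=n\hat{\bmu}^TS^{-1}\hat{\bmu}=(n-1)\hat{\bmu}^T\hat{\bSigma}^{-1}\hat{\bmu}$, and by the standard multivariate-normal result (e.g.\ Anderson 2003), $(n-p)T^2/[p(n-1)]\sim F_{p,n-p}(\lambda)$ with noncentrality $\lambda=n\theta$. Plugging into the known noncentral-$F$ variance formula (valid for $n-p>4$) and simplifying gives
\begin{align*}
\Var(\hat{\theta}_g)=\frac{(n-p-2)^2}{n^2}\Var(\hat{\bmu}^T\hat{\bSigma}^{-1}\hat{\bmu})
=\frac{2\bigl[(p+n\theta)^2+(n-p-2)(p+2n\theta)\bigr]}{n^2(n-p-4)}.
\end{align*}

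\textbf{Step 3 (simplification and conclusion).} Since $\limsup p/n<1$, we have $n-p-4\asymp n$, so the variance is bounded up to constants by $[(p+n\theta)^2+n(p+n\theta)]/n^3$. Expanding and using $\theta\leq\tau$ and $p\leq n$, each term is dominated by one of $p/n^{2}$, $\tau/n$, or $\tau^{2}/n$, giving
\begin{align*}
\Var(\hat{\theta}_g)\lesssim \frac{p}{n^2}+\frac{\tau+\tau^2}{n}.
\end{align*}
Finally, Cauchy--Schwarz together with unbiasedness yields $\EE|\hat{\theta}_g-\theta|\leq\sqrt{\Var(\hat{\theta}_g)}\lesssim \frac{\sqrt{p}}{n}+\frac{\sqrt{\tau}+\tau}{\sqrt{n}}$, since $\sqrt{\tau+\tau^2}\lesssim\sqrt{\tau}+\tau$. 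Taking the supremum over $\cH(s,\tau)$ (on which $\theta\leq\tau$) completes the proof.

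The argument has no real obstacle aside from bookkeeping: the cleanest route is the reduction to a noncentral $F$, which sidesteps computing second moments of the inverse Wishart via Haff-type identities. One small point to check is that the scaling $\limsup p/n<1$ ensures all of $n-p-2$, $n-p-4$ are eventually positive and comparable to $n$, so the formulas are applicable and the constants hidden in $\lesssim$ are uniform.
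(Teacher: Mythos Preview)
Your proof is correct and takes a genuinely different route from the paper's. Both arguments establish unbiasedness in the same way (inverse-Wishart mean plus independence), but diverge on the variance calculation. The paper conditions on $\hat{\bSigma}$ and applies the law of total variance, bounding $\mathbb{E}[\var(\hat{\bmu}^T\hat{\bSigma}^{-1}\hat{\bmu}\mid\hat{\bSigma})]$ and $\var[\mathbb{E}(\hat{\bmu}^T\hat{\bSigma}^{-1}\hat{\bmu}\mid\hat{\bSigma})]$ separately via explicit second-moment formulas for the inverse Wishart (citing \cite{vr88}). Your route---recognizing $(n-1)\hat{\bmu}^T\hat{\bSigma}^{-1}\hat{\bmu}$ as Hotelling's $T^2$ and invoking the exact noncentral $F_{p,n-p}(n\theta)$ law---is more direct: it yields a closed-form variance in one line and bypasses the inverse-Wishart moment bookkeeping entirely. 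The paper's decomposition, on the other hand, is more transparent about \emph{where} each term in the final bound originates (the $p/n^2$ piece from the trace term, the $\theta^2/n$ piece from $\var(\bmu^T\hat{\bSigma}^{-1}\bmu)$), and its conditioning structure is what generalizes to the sub-Gaussian case in Theorem~\ref{subgaussian:case}, where no exact distributional identity is available. Your noncentral-$F$ shortcut is specific to the Gaussian setting but is the cleaner argument for this proposition in isolation.
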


On a related note, we refer to \cite{ka10} for a comprehensive study of the high-dimensionality effects in the Markowitz problem when $s=p\asymp n$. It is clear from the proof of Proposition \ref{gaussian:case} that the estimator $\hat{\theta}_g$ is unbiased for $\theta$. Hence $\hat{\theta}_g$ is a de-biased version of the naive plug-in estimator $\hat{\bmu}^T\hat{\bSigma}^{-1}\hat{\bmu}$. Before discussing the optimality of $\hat{\theta}_g$, we generalize the upper bound to sub-Gaussian distributions. 

We split the data into $m+1$ roughly equal-sized parts with sizes $n-m\lfloor \frac{n}{m+1}\rfloor, \lfloor \frac{n}{m+1}\rfloor, \ldots, \lfloor \frac{n}{m+1}\rfloor$. Let $\hat{\bmu}_{(0)}$ and $\hat{\bSigma}_{(0)}$ be the sample mean and sample covariance matrix based on the first part, respectively. Denote the unbiased sample covariance matrix from the $(j+1)$th part by $\hat{\bSigma}_{(j)}$, for $j=1,2,\ldots, m$. Consider the following functional estimator:
\begin{align}
\label{estimator:sg:new}
\hat{\theta}_{sg}=\sum_{k=0}^m\Big(\hat{\bmu}_{(0)}^T\tilde{\bSigma}^{-1/2}_{(0)}\cdot \prod_{j=1}^k(\bI_p-\tilde{\bSigma}^{-1/2}_{(0)}\hat{\bSigma}_{(j)}\tilde{\bSigma}_{(0)}^{-1/2})\cdot \tilde{\bSigma}^{-1/2}_{(0)}\hat{\bmu}_{(0)}\Big)-\frac{p(m+1)}{n},
\end{align}
where $\tilde{\bSigma}_{(0)}=\hat{\bSigma}_{(0)}+\epsilon_n \bI_p$ with some constant $\epsilon_n>0$. We add the small regularization term $\epsilon_n\bI_p$ to the sample covariance matrix $\hat{\bSigma}_{(0)}$ to ensure valid matrix inverse operation. The data splitting employed in $\hat{\theta}_{sg}$ is amenable to the bias and variance analysis.

\begin{thm}
\label{subgaussian:case}
Consider the scaling $p=O(n^{\alpha})$ for some $\alpha<1$. Choose $m=\lceil \frac{\alpha}{1-\alpha}\rceil, \epsilon_n=\sqrt{\frac{p}{n}}$ in $\hat{\theta}_{sg}$. For sub-Gaussian distributions as described in \eqref{subgaussian:def}, it holds that
\begin{align*}
\sup_{(\bmu,\bSigma)\in \cH(s,\tau)}\EE|\hat{\theta}_{sg}-\theta|\lesssim \frac{\sqrt{p}}{n} + \frac{\tau+\sqrt{\tau}}{\sqrt{n}}.
\end{align*}
\end{thm}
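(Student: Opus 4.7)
\noindent\emph{Proof plan.} Condition on the first split: let $n_0 = n - m\lfloor n/(m+1)\rfloor$ be its size and set $\cF_0 := \sigma(\bx_1,\ldots,\bx_{n_0})$, so that $\hat{\bmu}_{(0)}$ and $\tilde{\bSigma}_{(0)}$ are $\cF_0$-measurable while $\hat{\bSigma}_{(1)},\ldots,\hat{\bSigma}_{(m)}$ are mutually independent and independent of $\cF_0$, each with mean $\bSigma$. Using independence one obtains, for $1 \le k \le m$,
\[
\EE\bigg[\prod_{j=1}^k\bigl(\bI_p-\tilde{\bSigma}_{(0)}^{-1/2}\hat{\bSigma}_{(j)}\tilde{\bSigma}_{(0)}^{-1/2}\bigr)\,\bigg|\,\cF_0\bigg] = (\bI_p-\bM)^k,\qquad \bM := \tilde{\bSigma}_{(0)}^{-1/2}\bSigma\tilde{\bSigma}_{(0)}^{-1/2}.
\]
The plan is then to split the error by the triangle inequality,
\[
\EE|\hat{\theta}_{sg}-\theta|\le \EE\bigl|\hat{\theta}_{sg}-\EE[\hat{\theta}_{sg}\mid\cF_0]\bigr|+\EE\bigl|\EE[\hat{\theta}_{sg}\mid\cF_0]-\theta\bigr|,
\]
and control the conditional fluctuation and the conditional bias separately.

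\medskip

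\noindent Most of the work is on the good event $\cE := \{\|\hat{\bSigma}_{(0)}-\bSigma\|_{\rm op}\le C\sqrt{p/n}\}$, which has $\PP(\cE^c) \le e^{-c n}$ by standard sub-Gaussian sample-covariance concentration; on $\cE$, $c_L/2 \le \lambda_{\min}(\tilde{\bSigma}_{(0)}) \le \lambda_{\max}(\tilde{\bSigma}_{(0)}) \le 2c_U$ and
\[
\rho := \|\bI_p-\bM\|_{\rm op}\le \|\tilde{\bSigma}_{(0)}^{-1}\|_{\rm op}\,\|\tilde{\bSigma}_{(0)}-\bSigma\|_{\rm op}\lesssim \sqrt{p/n}.
\]
The truncated Neumann identity $\sum_{k=0}^m(\bI_p-\bM)^k = \bM^{-1}[\bI_p-(\bI_p-\bM)^{m+1}]$ together with the first display gives, on $\cE$,
\[
\EE[\hat{\theta}_{sg}\mid\cF_0] = \hat{\bmu}_{(0)}^T\bSigma^{-1}\hat{\bmu}_{(0)} - \hat{\bmu}_{(0)}^T\bR_m\hat{\bmu}_{(0)} - \frac{p(m+1)}{n},
\]
where $\bR_m := \tilde{\bSigma}_{(0)}^{-1/2}(\bI_p-\bM)^{m+1}\tilde{\bSigma}_{(0)}^{1/2}\bSigma^{-1}$ satisfies $\|\bR_m\|_{\rm op}\lesssim \rho^{m+1}$. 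The prescribed $m = \lceil\alpha/(1-\alpha)\rceil$ is exactly what forces $(m+1)(1-\alpha)/2\ge 1/2$, so $\rho^{m+1}\lesssim n^{-1/2}$.

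\medskip

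\noindent For the conditional bias, note $\EE\hat{\bmu}_{(0)}^T\bSigma^{-1}\hat{\bmu}_{(0)}=\theta+p/n_0$ with $|p/n_0-p(m+1)/n|\lesssim p/n^2\lesssim \sqrt{p}/n$ (using $p\lesssim n$); the fluctuation of $\hat{\bmu}_{(0)}^T\bSigma^{-1}\hat{\bmu}_{(0)}$ around its mean is controlled by splitting into linear-in-noise and quadratic-in-noise pieces in $\hat{\bmu}_{(0)}-\bmu$ and applying Hanson--Wright to the sub-Gaussian quadratic form, giving variance $\lesssim \theta/n + p/n^2$ and hence $L^1$-deviation $\lesssim \sqrt{\tau}/\sqrt{n}+\sqrt{p}/n$. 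The remainder satisfies $\EE\bigl|\hat{\bmu}_{(0)}^T\bR_m\hat{\bmu}_{(0)}\bigr|\mathbf{1}_\cE\lesssim \rho^{m+1}\EE\|\hat{\bmu}_{(0)}\|^2\lesssim \tau/\sqrt{n}+\sqrt{p}/n$. The contribution from $\cE^c$ is negligible via a crude polynomial bound $|\hat{\theta}_{sg}|\lesssim \epsilon_n^{-(m+1)}\|\hat{\bmu}_{(0)}\|^2(1+\max_j\|\hat{\bSigma}_{(j)}\|_{\rm op})^m$ combined with $\PP(\cE^c)\le e^{-cn}$ and sub-Gaussian moment bounds.

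\medskip

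\noindent For the conditional fluctuation, write $\bQ_j := \bI_p-\tilde{\bSigma}_{(0)}^{-1/2}\hat{\bSigma}_{(j)}\tilde{\bSigma}_{(0)}^{-1/2} = (\bI_p-\bM) + \bD_j$ with $\bD_j := -\tilde{\bSigma}_{(0)}^{-1/2}(\hat{\bSigma}_{(j)}-\bSigma)\tilde{\bSigma}_{(0)}^{-1/2}$ mean-zero, independent across $j$ given $\cF_0$, and $\|\bD_j\|_{\rm op}\lesssim \rho$ on $\cE$. Expanding $\prod_{j=1}^k\bQ_j$ in the $\bD_j$'s and grouping by the number $\ell$ of $\bD$-factors, the $\ell=0$ term is $(\bI_p-\bM)^k$ and is precisely the conditional mean; by independence across $j$, the $\ell=1$ contributions across different positions $j$ are uncorrelated, and each takes the form $\bw_j^T\bD_j\bz_j$ with $\|\bw_j\|\lesssim \rho^{j-1}\sqrt{\tau+p/n}$ and $\|\bz_j\|\lesssim \sqrt{\tau+p/n}$ on $\cE$. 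The sub-Gaussian quadratic-form bound $\Var(\ba^T\hat{\bSigma}_{(j)}\bb\mid\cF_0)\lesssim \|\ba\|^2\|\bb\|^2/n_j$ then yields $\Var(\hat{\theta}_{sg}\mid\cF_0)\mathbf{1}_\cE\lesssim (\tau+p/n)^2/n$, and higher-order terms pick up extra factors $\rho^{2(\ell-1)}\lesssim (p/n)^{\ell-1}$ and are negligible; by Jensen, $\EE\bigl|\hat{\theta}_{sg}-\EE[\hat{\theta}_{sg}\mid\cF_0]\bigr| \lesssim \tau/\sqrt{n}+\sqrt{p}/n$. Summing the two parts yields the target $\sqrt{p}/n+(\tau+\sqrt{\tau})/\sqrt{n}$. \emph{The main obstacle} is the product-expansion bookkeeping: one must verify that the between-$k$ cross-terms in the sum $\sum_{k=0}^m\bv^T\prod_j\bQ_j\bv$ collapse via $\EE\bD_j=0$ and the independence of the $\bD_j$'s, and that all $(\bI_p-\bM)^\ell$-sandwiches remain controlled by geometric series in $\rho$ uniformly over $j,k$; these geometric bounds are precisely what link the prescribed $m=\lceil\alpha/(1-\alpha)\rceil$ to the parametric-rate target $1/\sqrt{n}$ in the final estimate.
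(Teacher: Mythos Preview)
Your conditioning-on-$\cF_0$ strategy with the truncated Neumann identity mirrors the paper's, and your $\bD_j$-expansion for the conditional fluctuation is a viable alternative to the paper's Efron--Stein computation. However, there is a genuine gap: you implicitly assume $\lambda_{\max}(\bSigma)=O(1)$, but $\cH(s,\tau)$ only requires $\delta_{\bSigma}=\max_i\sigma_{ii}\le c_U$, so $\lambda_{\max}(\bSigma)$ can be of order $p$ (e.g.\ $\bSigma=c_L\bI_p+(c_U-c_L)\bone\bone^T$). This breaks the argument in two places. First, the event $\cE=\{\|\hat{\bSigma}_{(0)}-\bSigma\|_{\rm op}\le C\sqrt{p/n}\}$ need not have high probability: standard sub-Gaussian concentration controls $\|\bSigma^{-1/2}\hat{\bSigma}_{(0)}\bSigma^{-1/2}-\bI_p\|_{\rm op}$, and rescaling back costs a factor $\lambda_{\max}(\bSigma)$. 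Second, even granting $\rho\lesssim\sqrt{p/n}$, the remainder bound $\EE|\hat{\bmu}_{(0)}^T\bR_m\hat{\bmu}_{(0)}|\mathbf{1}_{\cE}\lesssim\rho^{m+1}\EE\|\hat{\bmu}_{(0)}\|^2$ fails, since $\EE\|\hat{\bmu}_{(0)}\|^2=\|\bmu\|^2+\tr(\bSigma)/n_0$ and $\|\bmu\|^2\le\lambda_{\max}(\bSigma)\theta$ can be of order $p\tau$, yielding $p\tau/\sqrt{n}$ instead of $\tau/\sqrt{n}$. The same issue hits your variance step: the correct identity is $\Var(\ba^T\hat{\bSigma}_{(j)}\bb\mid\cF_0)\lesssim(\ba^T\bSigma\ba)(\bb^T\bSigma\bb)/n_j$, not $\|\ba\|^2\|\bb\|^2/n_j$.

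The repair is to work in the $\bSigma^{-1}$-weighted geometry throughout. Replace $\cE$ by $\cE'=\{\|\bSigma^{-1/2}\hat{\bSigma}_{(0)}\bSigma^{-1/2}-\bI_p\|_{\rm op}\le C\sqrt{p/n}\}$, on which $\frac{1}{2}\bSigma^{-1}\preceq\tilde{\bSigma}_{(0)}^{-1}\preceq 2\bSigma^{-1}$ in the Loewner order and hence $\rho\lesssim\sqrt{p/n}$; and bound the remainder via
\[
|\hat{\bmu}_{(0)}^T\bR_m\hat{\bmu}_{(0)}|\le\|\tilde{\bSigma}_{(0)}^{-1/2}\hat{\bmu}_{(0)}\|_2\cdot\rho^{m+1}\cdot\|\tilde{\bSigma}_{(0)}^{1/2}\bSigma^{-1}\hat{\bmu}_{(0)}\|_2\lesssim\rho^{m+1}\,\hat{\bmu}_{(0)}^T\bSigma^{-1}\hat{\bmu}_{(0)},
\]
whose expectation is $\theta+p/n_0$. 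Similarly, in the fluctuation step $\ba^T\bSigma\ba=\bv^T(\bI_p-\bM)^{j-1}\bM(\bI_p-\bM)^{j-1}\bv\lesssim\|\bv\|_2^2\lesssim\hat{\bmu}_{(0)}^T\bSigma^{-1}\hat{\bmu}_{(0)}$ on $\cE'$, which is exactly what you need. The paper obtains this directly via the symmetric identity $\bA^{-1}-\sum_{k=0}^m(\bI-\bA)^k=\bA^{-1/2}(\bI-\bA)^{m+1}\bA^{-1/2}$ (so that $|\hat{\bmu}_{(0)}^T\Delta\hat{\bmu}_{(0)}|\le\hat{\bmu}_{(0)}^T\bSigma^{-1}\hat{\bmu}_{(0)}\cdot\|\bI_p-\bM\|_{\rm op}^{m+1}$ falls out immediately), and dispenses with good-event truncation altogether by using pure moment bounds $\EE\|\bI_p-\bM\|_{\rm op}^k\lesssim(p/n)^{k/2}$ together with Cauchy--Schwarz/H\"older; for the conditional variance it applies Efron--Stein rather than your product expansion.
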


Recall that in the Gaussian case, correcting the bias of the naive plug-in estimator $\hat{\bmu}^T\hat{\bSigma}^{-1}\hat{\bmu}$ enables us to obtain the upper bound in Proposition \ref{gaussian:case}. Such bias correction is possible largely due to the two results that $\hat{\bSigma}^{-1}$ is inverse Wishart distributed (up to a scaling) and it is independent from $\hat{\bmu}$. However, for general sub-Gaussian distributions, neither of them is correct. We therefore have to correct the bias in a more ``nonparametric" way. Without delving into details (which will be made clear in the proof), the terms in the summation of $\eqref{estimator:sg:new}$ can be understood as correcting different orders of bias so as to make remaining bias after correction negligible compared to the upper bound $\frac{\sqrt{p}}{n} + \frac{\tau+\sqrt{\tau}}{\sqrt{n}}$. The number of terms needed depends on the dimension $p$ in a monotonically increasing way as specified in Theorem \ref{subgaussian:case}. Our bias correction has some similarity with the idea of the iterated bootstrap that is developed in a series of papers \citep{k19, kz19, k20} for bias reduction in the estimation of smooth functionals under high-dimensional models. 
 
Based on Propositions \ref{lowerbound:dense}, \ref{gaussian:case} and Theorem \ref{subgaussian:case}, an argument like the one leading to Corollary \ref{minimax:optimal}(b) yields the following minimax rate results.

\begin{cor}
\label{minimax:optimal:dense}
Consider the dense regime where $s \gtrsim \sqrt{p}$.
\begin{itemize}
\item[(a)] For Gaussian distributions, suppose $\limsup_{n\rightarrow \infty}p/n<1$ or $p\gtrsim n^2$, then
\begin{align*}
 \inf_{\hat{\theta}} \sup_{(\bmu,\bSigma)\in \mathcal{H}(s,\tau)} \EE|\hat{\theta}-\theta| \asymp \Big[\tau \wedge \frac{(1+\tau)\sqrt{p}}{n}\Big]+\Big[\tau \wedge  \frac{\tau+\sqrt{\tau}}{\sqrt{n}}\Big].
\end{align*}
The estimator $\hat{\theta}_g$ is minimax rate-optimal when $\limsup_{n\rightarrow \infty}p/n<1$ and $\tau \gtrsim \frac{\sqrt{p}}{n}$, while the trivial estimator $\bzero$ is optimal when $p\gtrsim n^2$ or $\tau \lesssim \frac{\sqrt{p}}{n}$.
\item[(b)] For sub-Gaussian distributions, suppose $p=O(n^{\alpha})$ for some $\alpha<1$ or $p\gtrsim n^2$, then
\begin{align*}
 \inf_{\hat{\theta}} \sup_{(\bmu,\bSigma)\in \mathcal{H}(s,\tau)} \EE|\hat{\theta}-\theta| \asymp \Big[\tau \wedge \frac{(1+\tau)\sqrt{p}}{n}\Big]+\Big[\tau \wedge  \frac{\tau+\sqrt{\tau}}{\sqrt{n}}\Big].
\end{align*}
The estimator $\hat{\theta}_{sg}$ is minimax rate-optimal when $p=O(n^{\alpha})$ and $\tau \gtrsim \frac{\sqrt{p}}{n}$, and the trivial estimator $\bzero$ is optimal when $p\gtrsim n^2$ or $\tau \lesssim \frac{\sqrt{p}}{n}$.
\end{itemize}
\end{cor}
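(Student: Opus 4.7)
\textbf{Proof plan for Corollary \ref{minimax:optimal:dense}.}

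The plan is to assemble the corollary from three ingredients already established in this subsection. The lower bound in both (a) and (b) comes directly from Proposition \ref{lowerbound:dense}, which is valid throughout the dense regime $s \gtrsim \sqrt{p}$ with no further restriction on $p$ relative to $n$. For the upper bound I would use Proposition \ref{gaussian:case} in the Gaussian case with $\limsup_{n\to\infty} p/n < 1$, Theorem \ref{subgaussian:case} in the sub-Gaussian case with $p = O(n^\alpha)$, and the trivial estimator $\hat\theta \equiv 0$, whose worst-case error is bounded by $\tau$ because $0 \leq \theta \leq \tau$ on $\mathcal{H}(s,\tau)$.

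The bulk of the work is a short case analysis verifying that the upper and lower bounds match in each regime. When $\tau \gtrsim \sqrt{p}/n$ and $n \gtrsim p$, Proposition \ref{gaussian:case} (resp.\ Theorem \ref{subgaussian:case}) gives an upper bound of order $\sqrt{p}/n + (\tau+\sqrt{\tau})/\sqrt{n}$; since $n \gtrsim p$ yields $\sqrt{p}/n \leq 1/\sqrt{n}$, the extra $\tau\sqrt{p}/n$ factor appearing in the lower bound is dominated by $\tau/\sqrt{n}$, so the lower bound $(1+\tau)\sqrt{p}/n + (\tau+\sqrt{\tau})/\sqrt{n}$ matches the upper bound up to constants, and both minima with $\tau$ are inactive because $\tau \gtrsim \sqrt{p}/n \gtrsim 1/n$. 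In the complementary regime $\tau \lesssim \sqrt{p}/n$, one observes $(1+\tau)\sqrt{p}/n \gtrsim \tau$, so the first bracket of the lower bound reduces to $\tau$, while the second bracket is always $\leq \tau$; this gives a lower bound of order $\tau$, matched by the trivial estimator. Finally, when $p \gtrsim n^2$, the inequality $\sqrt{p}/n \gtrsim 1$ again forces the first bracket to equal $\tau$, and the trivial estimator still matches.

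There is no serious technical obstacle: the whole argument is an elementary reduction to already-proven statements. The one point that requires care is the apparent mismatch between the $(1+\tau)$ factor on $\sqrt{p}/n$ in Proposition \ref{lowerbound:dense} and its absence from the upper bounds in Proposition \ref{gaussian:case} and Theorem \ref{subgaussian:case}. Verifying that the shortfall $\tau\sqrt{p}/n$ is absorbed into the parametric-rate term $\tau/\sqrt{n}$ under the assumption $p \lesssim n$ (which holds in both $\limsup p/n < 1$ and $p = O(n^\alpha)$ with $\alpha < 1$) closes this gap and completes the proof of both (a) and (b).
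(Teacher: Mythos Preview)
Your proposal is correct and follows exactly the route the paper indicates: the paper does not write out a separate proof but states that the corollary follows from Propositions \ref{lowerbound:dense}, \ref{gaussian:case} and Theorem \ref{subgaussian:case} by ``an argument like the one leading to Corollary \ref{minimax:optimal}(b),'' which is precisely the case analysis you carry out. Your observation that the apparent $(1+\tau)$ mismatch is absorbed via $\tau\sqrt{p}/n \lesssim \tau/\sqrt{n}$ under $p\lesssim n$ is the only non-obvious step, and it is handled correctly.
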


\noindent {\bf Remark 8.~} Corollaries \ref{minimax:optimal} and \ref{minimax:optimal:dense} together show that the minimax rate of estimation has an elbow at $s=\sqrt{p}$ (ignore the logarithmic term). The same elbow effect has been revealed for quadratic functional estimation in the Gaussian sequence model \citep{cct17}. We should point out that the minimax result under the scaling $p\lesssim s^2, n \lesssim p\lesssim n^2$ is not covered in Corollaries \ref{minimax:optimal} and \ref{minimax:optimal:dense}. The essential difficulty lies in obtaining accurate estimation of $\bSigma^{-1}$ while $n\lesssim p$ and no direct sparsity structure exists in $\bSigma$ or $\bSigma^{-1}$. We leave a thorough investigation of functional estimation under the aforementioned scaling for future research. That said, in Section \ref{dense:combine:all} we discuss some possible solutions when additional assumptions are imposed on the model. If $\bSigma$ is known to be proportional to $\bI_p$, correcting the bias in the dense regime becomes rather manageable. \cite{cct17} provides a complete minimax rate characterization in this reduced case.

\noindent {\bf Remark 9.~} The rate-optimal estimators $\tilde{\theta}$ in the sparse regime and $\{\hat{\theta}_g, \hat{\theta}_{sg}\}$ in the dense regime are obtained via different de-biasing methods. For the former, $\ell_1$ regularization exploits the sparsity structure and also induces substantial bias that needs be corrected to achieve optimal rate; while for the latter, the dominating bias arises from error accumulation in the high dimension.



\section{Numerical experiments}
\label{sec6}

\subsection{Simulation}
\label{simu:setting}

In this section, we perform simulation studies to validate our theoretical results. In particular, we compute the empirical convergence rates over some instances and compare them with the theoretical forms we have derived in Section \ref{sec2}. We further evaluate and compare our proposed estimators with some alternative methods.

To empirically verify the convergence rate, we set $\bmu = \xi\cdot (\bone_s^T, \bzero^T)^T\in \RR^{p}$, where $\bone_s$ is a $s$-dimensional vector with all entries equal to $1$ and $\bzero$ is a vector with all entries equal to $0$, and $\bSigma = \eta\cdot \bI_p$. Under these parameters, we have $\balpha =  \xi\eta^{-1} \cdot(\bone_s^T, \bzero^T)^T$ and $\theta = s \xi^2\eta^{-1}$. We generate data from normal distribution with the sample size $n = \lfloor 2^k \rfloor$ for $k = 10.5, 11, 11.5,\cdots, 15$. Now we take  $p = \lfloor 0.5\cdot n^{0.5}\rfloor + 8, \eta = 2$ and consider the following three settings:

\begin{itemize}
	\item[(1)] $ s = 2, \xi = 1$
	\item[(2)] $ s =  \lfloor n^{0.24} \rfloor, \xi = \lfloor n^{0.24} \rfloor^{-0.5}$
	\item[(3)] $s = \lfloor n^{0.24} \rfloor , \xi = 3\cdot n^{-0.45}$
\end{itemize}

\noindent
For each setting we repeat the experiment for $200$ times, and for each specific $n$ we use formulas \eqref{lasso:alpha} and \eqref{optimal:estimator} to obtain estimators $\tilde{\balpha}^{(1)}, \tilde{\theta}^{(1)}$ in setting (1), $\tilde{\balpha}^{(2)}, \tilde{\theta}^{(2)}$ in setting (2) and $\tilde{\balpha}^{(3)}, \tilde{\theta}^{(3)}$ in setting (3). They are indeed the same estimator but applied to three different simulation settings. The tuning parameters are picked optimally to minimize the estimation error of $\balpha$.

\begin{figure}[htb!]
	\begin{center}
		\includegraphics[width=0.75\linewidth]{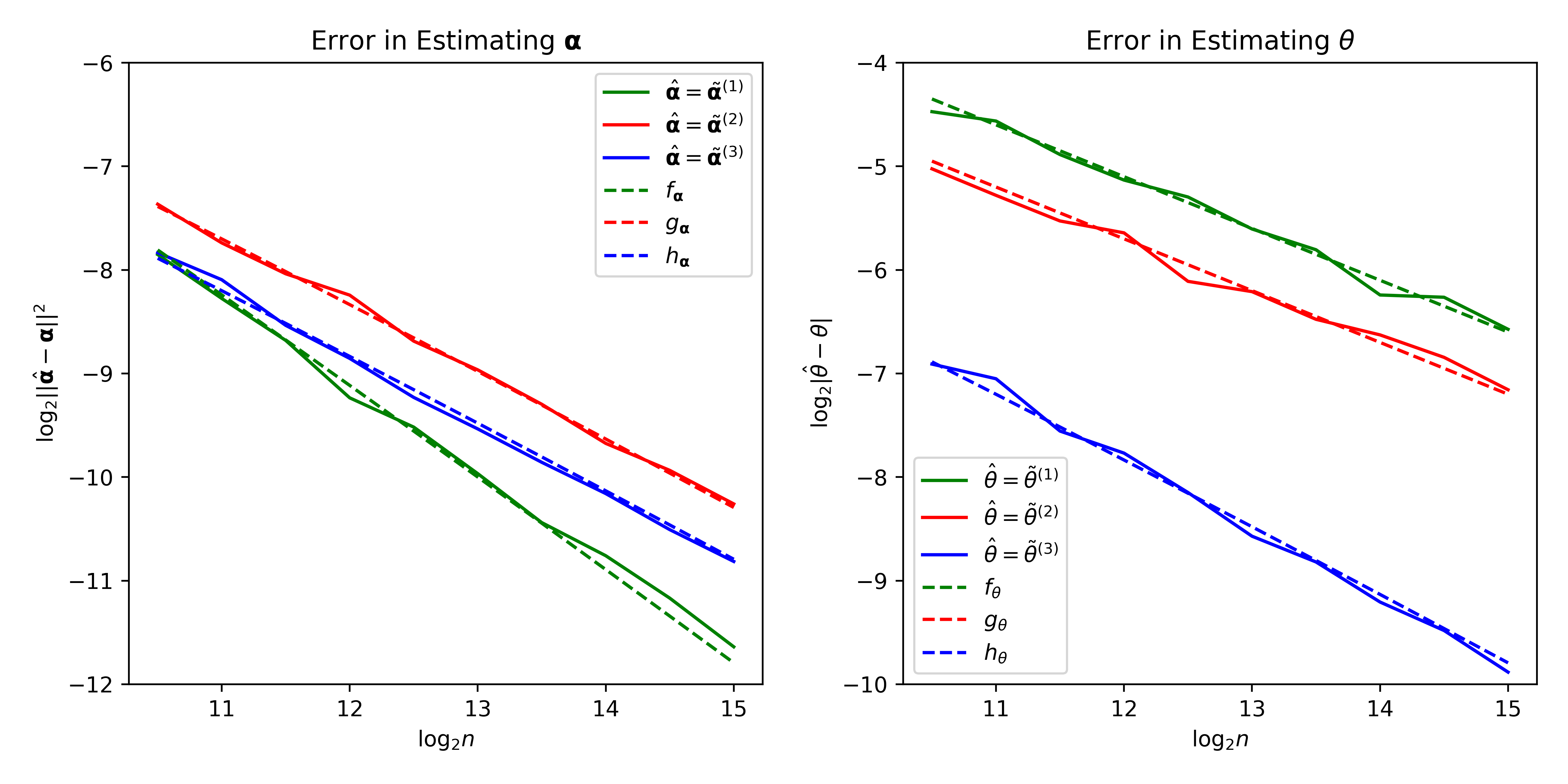}
		\caption{Averaged error v.s. sample size on logarithmic scale for the three settings (solid curves) of Section \ref{simu:setting} along with their corresponding theoretical rates of convergence (dashed lines) for setting 1 (green), 2 (red) and 3 (blue).} \label{fig:one}
	\end{center}
\end{figure}

Figure \ref{fig:one} depicts the averaged error ($\log_2 \|\hat \balpha - \balpha \|^2_2$ for estimating $\balpha$ and $\log_2 |\hat \theta - \theta|$ for estimating $\theta$) versus sample size in the logarithmic scale. To verify our theoretical results, we also show the theoretical rate of convergence in the figure. Specifically, for setting (1), it's easy to check that the form of theoretical convergence rate for estimating $\tilde{\balpha}$ on logarithmic scale is $C + \log_2 \log_2 n - \log_2 n$, where $C$ is a constant. Since the plots are on log-scale, we include the following function in the left plot of Figure \ref{fig:one}:
\[
f_{\balpha}(x) = C_1 + \log_2 x - x
\]
with an  appropriately calibrated  constant $C_1$. In the same way, we add the other two functions for setting (2) and setting (3):
\begin{align*}
&g_{\balpha}(x) = C_2 + \log_2 x - 0.76 \cdot x,   \\
&h_{\balpha}(x) = C_3 + \log_2 x - 0.76 \cdot x.
\end{align*}
For the estimation of $\theta$, similarly, we attach the three functions below to the right plot of Figure \ref{fig:one}:
\begin{align*}
&f_{\theta}(x)=C_4-0.5\cdot x,  \\
&g_{\theta}(x)=C_5-0.5\cdot x, \\
&h_{\theta}(x)=C_6 + \log_2 x - 0.76 \cdot x.
\end{align*}
From Figure \ref{fig:one}, we can see that all the empirical convergence rates (solid curves) are well matched with the theoretical ones (dashed curves).

We next compare the performance of our method with some alternative estimators. The benchmark would be the plug-in estimators:
\[
\hat{\balpha}_P = \hat{\bSigma}^{-1} \hat{\bmu}, \quad \hat{\theta}_P = \frac{|\hat{\bmu}^T \hat{\bSigma}^{-1} \hat{\bmu} - a|}{b},
\]
where $a = \frac{p}{n - p}, b = \frac{n}{n - p}$. The estimator $\hat{\theta}_P$ is a bias-corrected plug-in estimator that was proposed in \cite{ka10} for normal distributions under the scaling $p<n$. We also consider the following Dantzig-type estimator for $\balpha$ \citep{cxw16}:
\begin{align*}
\hat{\balpha}_D=\argmin_{\balpha\in \mathbb{R}^p: \|\hat{\bSigma} \balpha - \hat{\bmu}\|_{\infty} \leqslant \lambda}  \|\balpha\|_1.
\end{align*}
We then construct a family of plug-in estimators for $\theta$:
\[
\hat{\theta}_{D, c} = c\hat{\bmu}^T \hat{\balpha}_D + (1 - c) \hat{\balpha}_D^T \hat{\bSigma} \hat{\balpha}_D.
\]
We perform the comparison under two different scenarios.

In the first scenario, we follow the same model setup from the part that verifies the convergence rate, with parameters $p = \lfloor 0.5\cdot n \rfloor, s = 5, \xi = 2, \eta = 1$ where $n = 60, 80, \cdots, 200$. For a fair comparison, all the estimators are obtained under optimal tuning. Each experiment is repeated $300$ times and the results are shown in Figure \ref{fig:comparison:result:optimal}. Regarding the estimation for $\balpha$, we see that both $\hat{\balpha}_D$ and $\tilde{\balpha}$ have much better performance compared with $\hat{\balpha}_P$. This is expected because the estimator $\hat{\balpha}_P$ does not exploit the sparsity structure in the data. For the estimation of the functional $\theta$, we also observe that $\hat{\theta}_{D, 2}$ and $\tilde{\theta}_2$ outperform $\hat{\theta}_P$ by a large margin. Moreover, it is clear that $\tilde{\theta}_2$ has a better convergence rate compared with the naive plug-in estimators $\tilde{\theta}_0$ and $\tilde{\theta}_1$. This is consistent with our theoretical conclusions in Section \ref{sec2}. As expected, the estimators $\hat{\balpha}_D$ and $\tilde{\balpha}$ have similar performance, and estimators $\tilde{\theta}_{D, c}$ and $\tilde{\theta}_c$ for $c=0,1,2$ have very similar performance too. Such a phenomenon that Lasso and Dantzig type estimators exhibit similar behavior has strong theoretical support in high-dimensional sparse regression \citep{brt09}. We leave the theoretical analysis for estimating functional using Danzig estimator as a future research.

\begin{figure}[h!]
	\begin{center}
		\includegraphics[width=0.75\linewidth]{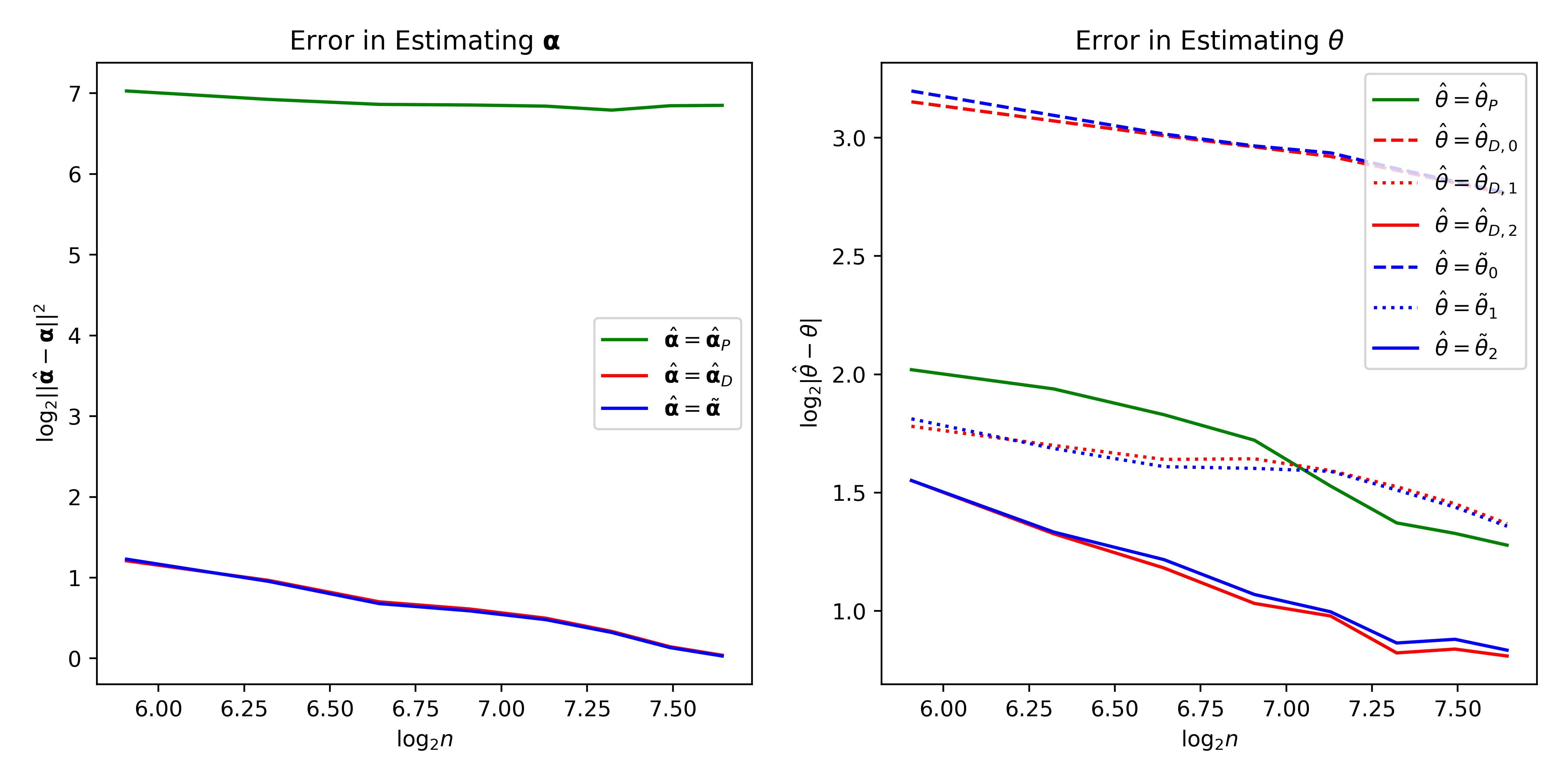}
		\caption{Comparison of different optimally tuned estimators for $\balpha$ and $\theta$ in the first scenario.} \label{fig:comparison:result:optimal}
	\end{center}
\end{figure}

In addition to the comparison of optimally tuned estimators, we also perform the comparison of the aforementioned estimators under 5-fold cross-validation (cv). The implementation of cv for $\hat{\balpha}_D$ can be found in \cite{cxw16}. We now describe cv for our estimator $\tilde{\balpha}$. Suppose we split the data into $m$ folds. For each $j=1,\ldots, m$, we construct the sample estimates $\hat{\bmu}_j$ and $\hat{\bSigma}_j$ from the data in $j$-th fold, and obtain estimator $\tilde{\balpha}_{-j}$ from the rest of the data. We then compute the cv error as
\[
\bar{l} = \frac{1}{m}\sum_{j=1}^m\Big(\frac{1}{2} \tilde{\balpha}_{-j}^T \hat{\bSigma}_j \tilde{\balpha}_{-j} - \hat{\bmu}_j^T \tilde{\balpha}_{-j}\Big).
\]
We choose the parameters that minimize $\bar{l}$ and obtain the corresponding estimator $\tilde{\balpha}$. The comparison is shown in Figure \ref{fig:comparison:result:data}. As is clear from the figure, similar comparison results to the ones under optimal tuning are found in the case of cross-validation.

\begin{figure}[h!]
	\begin{center}
		\includegraphics[width=0.75\linewidth]{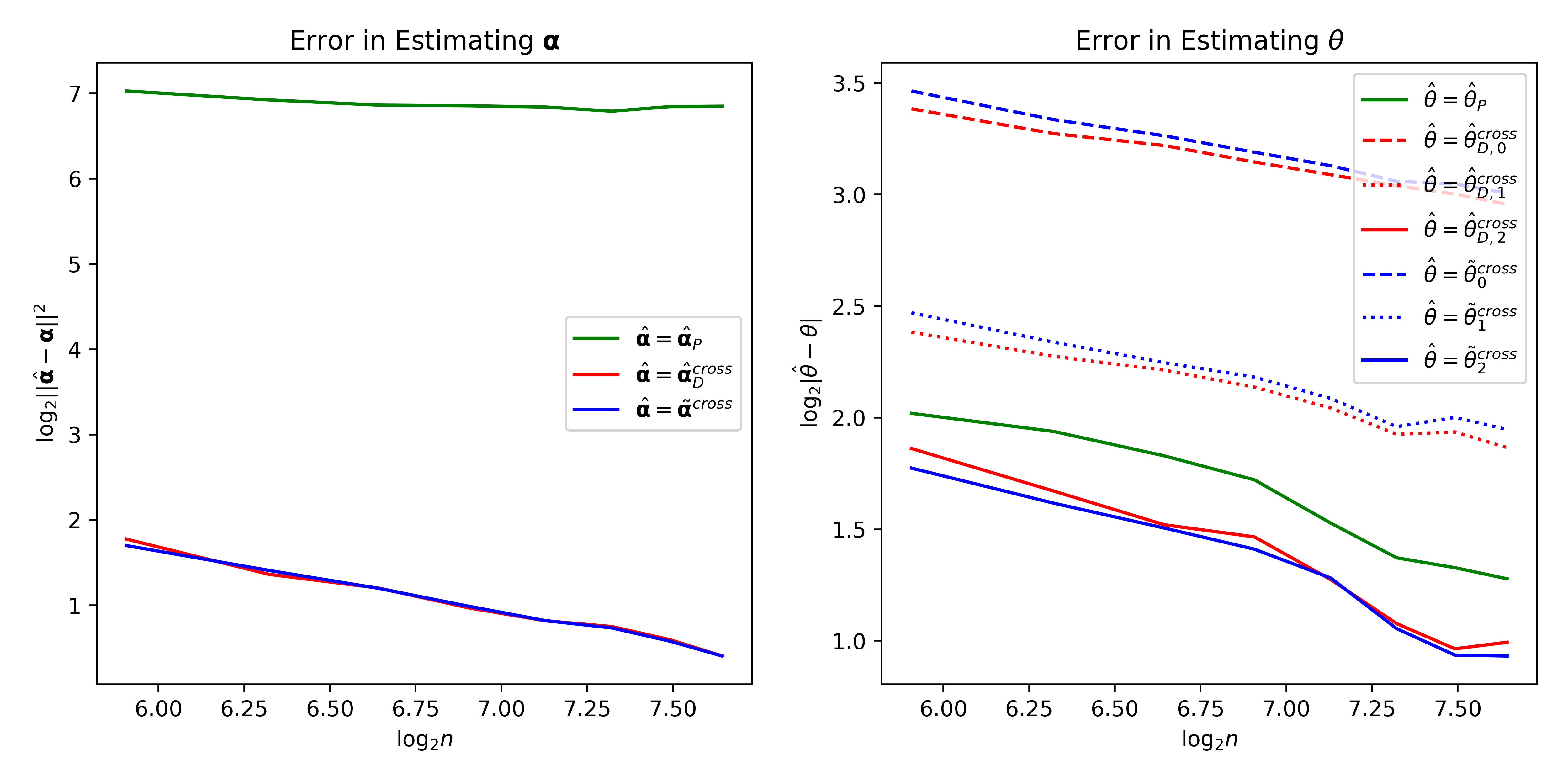}
		\caption{Comparison of different cross-validation tuned estimators for $\balpha$ and $\theta$ in the first scenario.} \label{fig:comparison:result:data}
	\end{center}
\end{figure}

In the second scenario, as a preliminary investigation for our empirical study, we use financial data to calibrate for a low signal-to-noise ratio regime. Specifically, we randomly select $p = 100$ stocks from S\&P500 to compute the sample mean $\hat{\bmu}$ and the sample covariance matrix $\hat{\bSigma}$, using daily data in 2017 and 2018. We hard threshold the vector $\hat{\bSigma}^{-1}\hat{\bmu}$ to keep the top $10$ entries with largest absolute values to obtain a sparse vector as the choice for $\balpha$. We then use $\hat{\bSigma}\balpha$ and $\hat{\bSigma}$ as the values for the mean and covariance matrix parameters to generate multivariate Gaussian data. The average signal-to-noise ratio, defined as $\sum_{j=1}^p |\mu_j|/ (\tr(\bSigma))^{1/2}$, is $0.096$, which is similar to the one from real data, despite the thresholding. Let $n = 260, 280, \cdots, 500$. Due to the low signal-to-noise ratio, we repeat the experiment for $600$ times. The comparisons under optimal tuning and cross-validation are shown in Figures \ref{fig:comparison:2:optimal} and \ref{fig:comparison:2:data}, respectively. Our estimators $\tilde{\balpha}$ and $\tilde{\theta}_2$ perform much better than benchmark estimators and naive plug-in estimators. The functional estimator $\tilde{\theta}_2$ also outperforms Dantzig-type estimators. Also $\tilde{\theta}_0, \tilde{\theta}_1$ have worse performance compared with $\tilde{\theta}_2$.

\begin{figure}[h!]
	\begin{center}
		\includegraphics[width=0.75\linewidth]{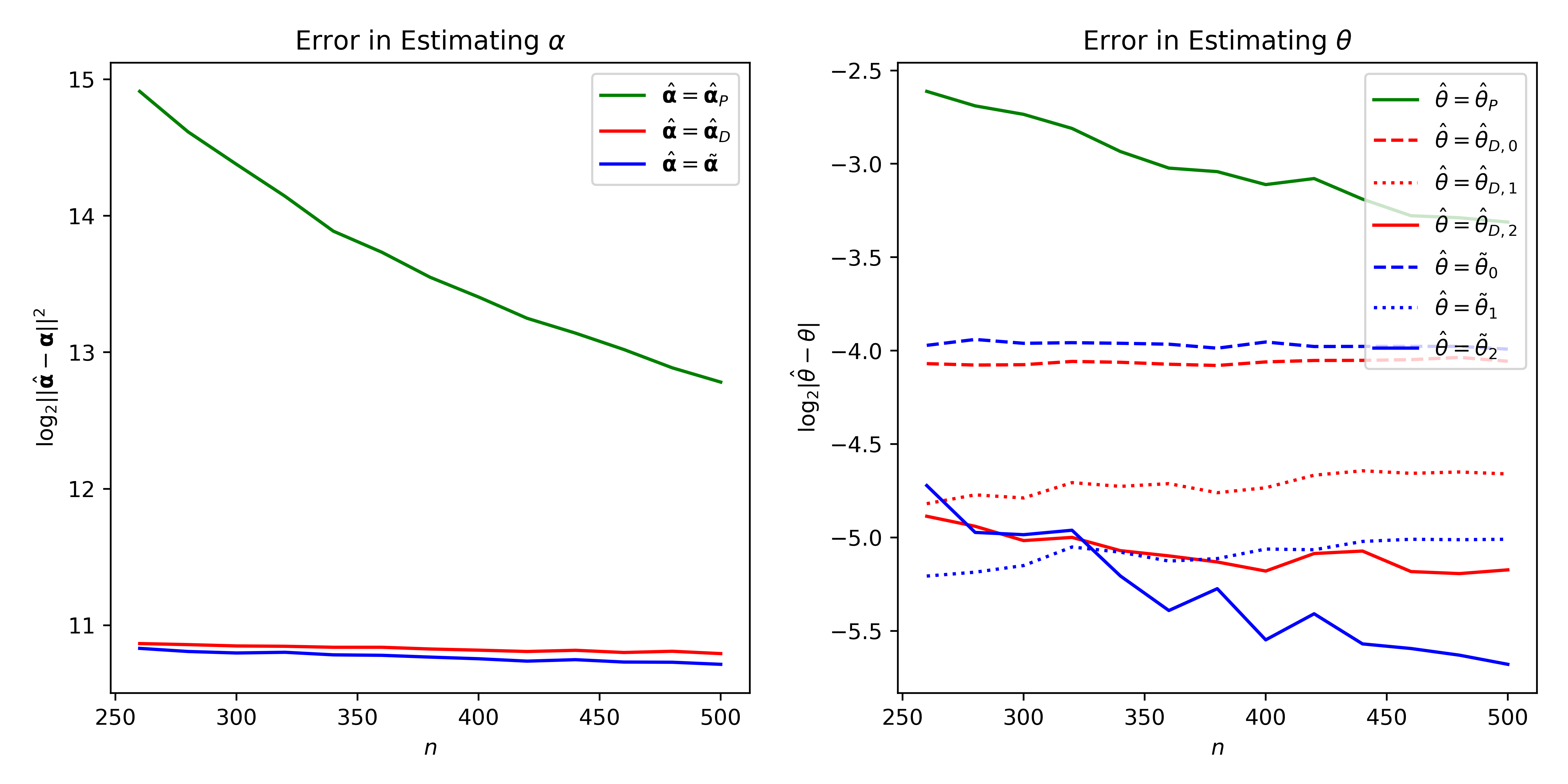}
		\caption{Comparison of different optimally tuned estimators for $\balpha$ and $\theta$ in the second scenario.} \label{fig:comparison:2:optimal}
	\end{center}
\end{figure}

\begin{figure}[h!]
	\begin{center}
		\includegraphics[width=0.75\linewidth]{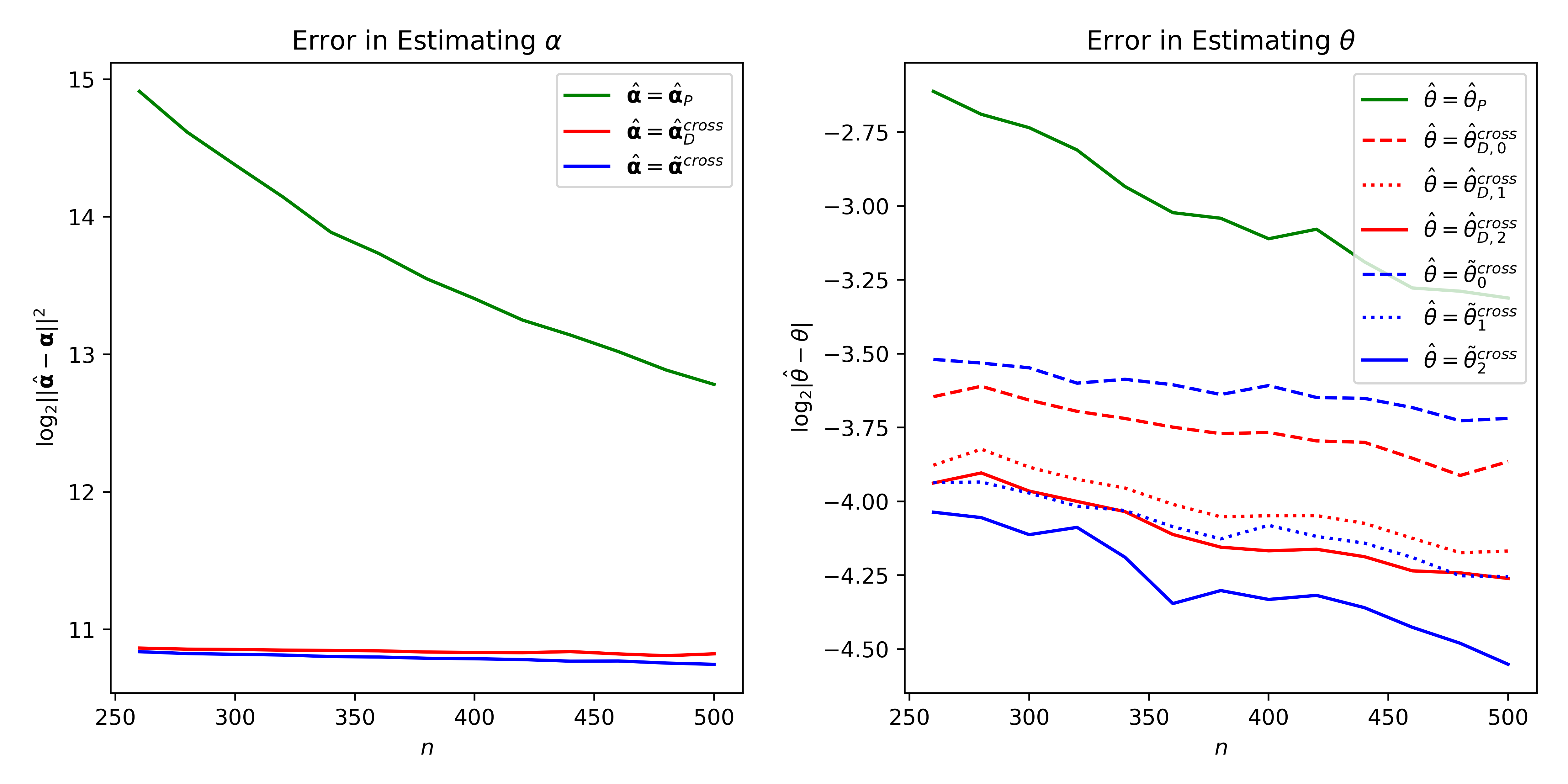}
		\caption{Comparison of different cross-validation tuned estimators for $\balpha$ and $\theta$ in the second scenario.} \label{fig:comparison:2:data}
	\end{center}
\end{figure}

\subsection{Empirical Study\label{sec4.2}}

In this section, we use the daily data for the constituents in the S\&P500 from 2012 to 2018 to construct portfolios and test the portfolio performance in a $5$-year horizon starting from 2014 to 2018. 
Since returns of stocks are highly correlated, we use a single factor, the market portfolio, to adjust their dependence and construct the portfolios based on the factor-adjusted returns (residuals after regressed on the returns of S\&P 500). A $2$-year training window is employed: we use the first 7 quarters of data to estimate $\hat{\bmu}$ and $\hat{\bSigma}$ and tune the parameters in estimating $\balpha$ and $\theta$ to yield the highest Sharpe ratio in the following quarter, which serves as a validation window. Note that we obtain and hedge the market beta in the training window and validation window combined. The testing window is $1$ month where we will hedge the beta previously obtained and test the portfolio trained with data in past $2$-year, and then we roll the window forward after the testing period, i.e. we re-balance our portfolio monthly. For comparison, we provide benchmarks the market and equal-weighted portfolios, and also the minimum-variance portfolio with gross-exposure constraint \citep{fzy12}. In addition, we impose the constraint on short-selling such that a short position for each individual stock  cannot exceed $25\%$ of the principal and the total short position cannot exceed $50\%$. Whenever this constraint is violated, we would rescale all the positions on risky assets to satisfy this constraint (the rest invested in cash).

\begin{figure}[h!]
	\begin{center}
		\includegraphics[width=0.55\linewidth]{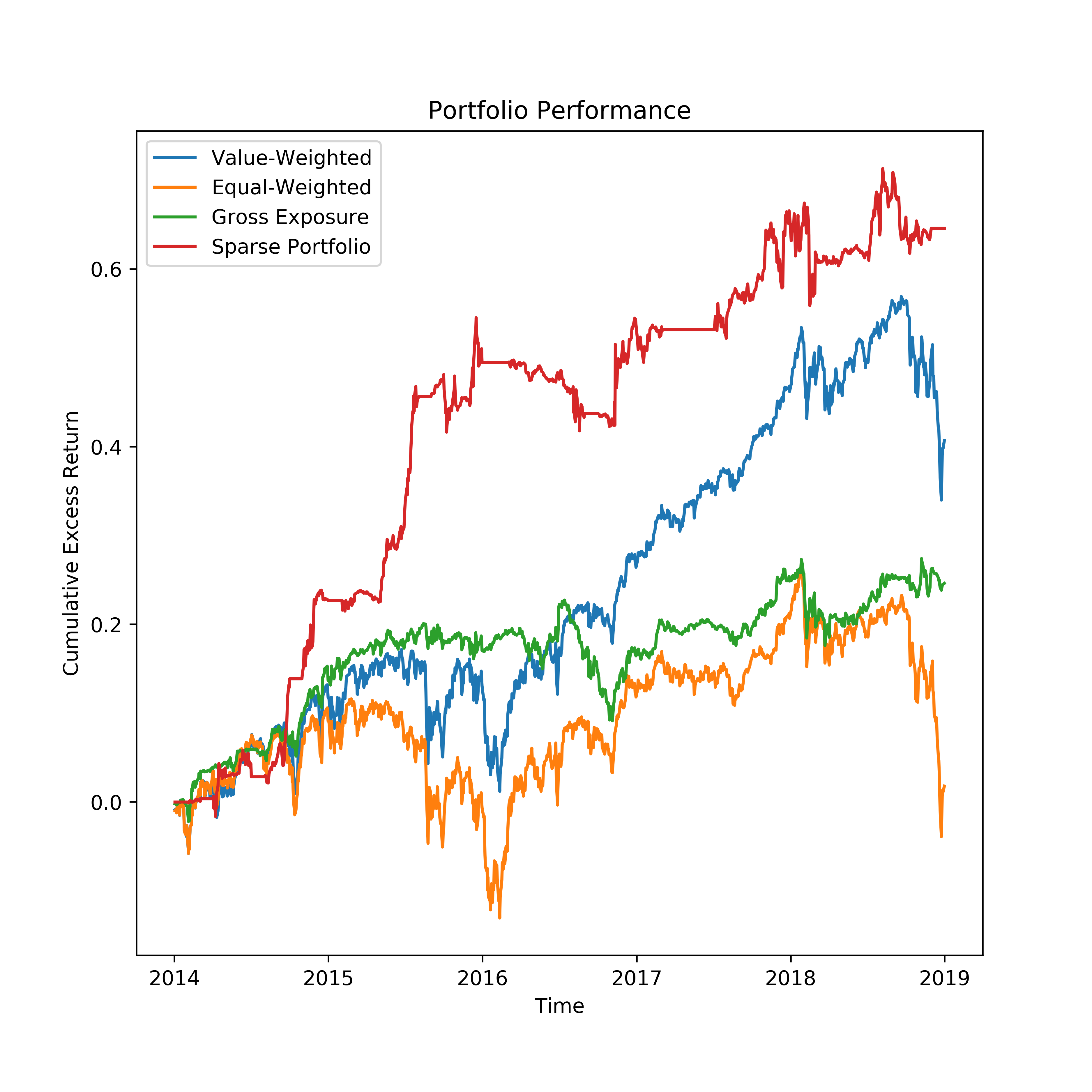}
		\caption{Portfolio performance.} \label{fig:final:one}
	\end{center}
\end{figure}

Figure \ref{fig:final:one} depicts the cumulative excess returns of the aforementioned strategies for constructing portfolios. Obviously, the sparse portfolio constructed by our estimators $\tilde{\balpha}, \tilde{\theta}_2$ (see \eqref{eq:fan0}) outperforms other portfolios, in terms of the annualized return and Shape ratio. Moreover, during two correction periods in these $5$ years, the sparse portfolio has smaller pullback compared with the market, and the same phenomenon can be observed for the minimum variance portfolio with gross exposure constraint, which is expected to performance in terms of stability and maximum draw-down.\\

\begin{table}[!h]
	\small
	\begin{center}
		\begin{tabular}{c|ccccccc}
			\hline
			\hline
			 & Value-Weighted & & Equal-Weighted & & Gross Exposure & & Sparse Portfolio\\
			\hline
			Annual Return & $8.15\%$ & & $0.36\%$ & & $4.93\%$ & & $12.94\%$\\
			Volatility & $13.53\%$ & & $13.73\%$ & & $7.76\%$ & & $12.26\%$\\
			Sharpe Ratio & $0.60$ & & $0.03$ & & $0.64$ & & $1.06$\\
			Maximum Draw-down & $-22.92\%$ & & $-30.29\%$ & & $-13.58\%$ & & $-12.75\%$\\
			Alpha & $0.00\%$ & & $-7.64\%$ & & $1.87\%$ & & $13.28\%$\\
			Beta & $1.00$ & & $0.98$ & & $0.38$ & & $-0.04$\\
			\hline
		\end{tabular}
		
		\caption{Portfolio characteristics}\label{table:portfolio}
	\end{center}	
\end{table}

Table \ref{table:portfolio} provides some details for each portfolio. All numbers except maximum draw-down and beta are annualized. We can see that the sparse portfolio has low correlation with the market and its alpha over the market is more than $13\%$, and at the same time it shares low maximum draw-down as the minimum variance portfolio with gross exposure constraint. Understandably, its Sharpe ratio is the highest.

\section{Proof of the main results}
\label{sec:proofs}

The section contains the proof of all the main results. The organization is as follows:
\begin{enumerate}
\item Section \ref{tech:lemma} collects a few lemmas that will be useful in the later proof.
\item Section \ref{proof:thm:one:index} proves Theorem \ref{thm:one} and Proposition \ref{general:debiasing}.
\item Section \ref{proof:thm:two:index} proves Theorem \ref{lower:bound} and Proposition \ref{prop:one}.
\item Section \ref{proof:corollary2} proves Proposition \ref{prop:two}.
\item Section \ref{proof:corollary:three} proves Theorem \ref{corollary:three}.
\item Section \ref{proof:of:thm3} proves Theorem \ref{approximate:sparse:thm1}.
\item Section \ref{proof:lower:bound:lqball} proves Theorem \ref{approxiamte:sparse:thm2}.
\item Section \ref{dense:combine:all} collects the proof of Proposition \ref{lowerbound:dense}, Proposition \ref{gaussian:case} and Theorem \ref{subgaussian:case}, and some further discussions of functional estimation in the dense regime.
\item Section \ref{reference:material} puts together some reference materials.
\end{enumerate}

We introduce more notations below.

\noindent \textbf{Notation.} For an integer $k \geq 1$, let $[k]=\{1,2,\ldots, k\}$. For a set $S \subseteq [k]$, its cardinality is $|S|$; $\bu_S \in \mathbb{R}^{|S|}$ is the subvector of $\bu\in \RR^k$ indexed by $S$, and $\bA_{SS} \in \RR^{|S|\times |S|}$ is the submatrix of $\bA \in \RR^{k\times k}$ whose rows and columns are both indexed by $S$. For a matrix $\bA \in \RR^{k \times k}$, $\|\bA\|_F, \ltwonorm{\bA}$ and $\linfnorm{\bA}$ represent its Frobenius norm, spectral norm and $\ell_{\infty}/\ell_{\infty}$ operator norm, respectively. We use $\{\be_i\}_{i=1}^n$ to denote the standard basis in $\RR^n$, and $\diag(v_1,v_2,\ldots,v_k)$ to represent a $k \times k$ diagonal matrix with diagonal elements $v_1,\ldots, v_k$. For a number $x\in \mathbb{R}$, $(x)_+=0 \vee x$ denotes its positive part. Further define
\begin{align}
&\cK(s)=\Big\{\bu  \in \mathbb{R}^p: \lonenorm{\bu_{S^c}} \leq 3 \lonenorm{\bu_{S}}, |S| \leq s, S\subseteq [p] \Big\}, \nonumber \\
&\cA(\kappa)=\Big\{\linfnorm{(\hat{\bSigma}-\bSigma)\balpha} \leq \kappa/4, \linfnorm{\hat{\bmu}-\bmu}  \leq \kappa/4 \Big\}, \nonumber \\
&\cB(s, \kappa)=\Big\{\max_{\bu\in \mathcal{K}(s)\cap B_2(1) }\big |\sqrt{\bu^T \bSigma \bu} - \sqrt{\bu^T \hat{\bSigma} \bu} \big| \leq \kappa \Big\}, \nonumber  \\
&\cC(s, \kappa)=\Big\{\max_{\bu\in B_0(2s)\cap B_2(1) } \big |\sqrt{\bu^T \bSigma \bu} - \sqrt{\bu^T \hat{\bSigma} \bu} \big| \leq \kappa \Big\}. \nonumber
\end{align}

\subsection{Technical lemmas}
\label{tech:lemma}

\begin{lem}
\label{rare:events}
The followings hold:
\begin{itemize}
\item[(i)] For any given $\lambda >0$,
\begin{equation*}
\mathbb{P}(\mathcal{A}(\lambda)) \geq 1-8p\exp\Big(-\frac{c_1n\lambda^2}{\nu^2}\cdot \min\Big(\frac{1}{\lambda(\delta_{\bSigma}+1)\sqrt{\theta}},\frac{1}{(\nu^2\theta+1)\delta_{\bSigma} }\Big) \Big).
\end{equation*}
\item[(ii)] For any given $t\geq 0$,
\begin{equation*}
\mathbb{E}\Big(\|\hat{\bmu}-\bmu \|^2_{\infty}\cdot \mathbbm{1}_{\|\hat{\bmu}-\bmu \|_{\infty} >t}\Big) \leq  2p\cdot (t^2+(c_2n)^{-1} \delta_{\bSigma}\nu^2)\cdot e^{\frac{-c_2nt^2}{\nu^2\delta_{\bSigma}}}.
\end{equation*}
\item[(iii)] For any given $t\geq 0$,
\begin{equation*}
\mathbb{P}(\|\bI_p-\bSigma^{-1}\hat{\bSigma}\|_{\max}\geq t)\leq 6p^2\exp\Big(\frac{-c_3nt(1\wedge t)}{(\nu^4+\nu^2)(\delta_{\bSigma}+1)(\lambda^{-1}_{\min}(\bSigma)+1)}\Big)
\end{equation*}
\end{itemize}
Here $c_1,c_2, c_3>0$ are absolute constants.
\end{lem}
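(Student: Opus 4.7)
The plan is to handle the three parts through a common template: reduce each statistic to an average of i.i.d.\ zero-mean random variables, identify its sub-Gaussian or sub-exponential norm through the definition \eqref{subgaussian:def}, apply Hoeffding or Bernstein as appropriate, and conclude by a union bound over the $O(p)$ or $O(p^2)$ coordinates involved. The single analytic input I will use repeatedly is that for deterministic $\bc \in \RR^p$ the scalar $\bc^T(\bx_i-\bmu) = (\bSigma^{1/2}\bc)^T\by_i$ is sub-Gaussian with $\psi_2$-norm of order $\nu\sqrt{\bc^T\bSigma\bc}$, and consequently the centered bilinear form $[\bc^T(\bx_i-\bmu)][\bd^T(\bx_i-\bmu)] - \bc^T\bSigma\bd$ is sub-exponential with $\psi_1$-norm of order $\nu^2\sqrt{(\bc^T\bSigma\bc)(\bd^T\bSigma\bd)}$.

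For part (i), I control the two events in $\cA(\lambda)$ separately. The coordinate $\hat{\mu}_j - \mu_j$ is a Hoeffding average of sub-Gaussians with norm $\nu\sqrt{\sigma_{jj}} \leq \nu\sqrt{\delta_{\bSigma}}$, which yields a tail of order $\exp(-cn\lambda^2/(\nu^2\delta_{\bSigma}))$; a union bound over the $p$ coordinates controls $\linfnorm{\hat{\bmu}-\bmu}$. For the second event I decompose
\begin{equation*}
(\hat{\bSigma}-\bSigma)\balpha = \frac{1}{n}\sum_{i=1}^n\bigl[(\bx_i-\bmu)(\bx_i-\bmu)^T\balpha - \bSigma\balpha\bigr] - (\hat{\bmu}-\bmu)(\hat{\bmu}-\bmu)^T\balpha.
\end{equation*}
The $j$th coordinate of the first term is a centered sum of sub-exponentials with norm $\nu^2\sqrt{\sigma_{jj}\cdot\balpha^T\bSigma\balpha}\leq \nu^2\sqrt{\delta_{\bSigma}\theta}$, to which Bernstein delivers the two-regime tail $\exp(-cn\min(\lambda^2/(\nu^4\theta\delta_{\bSigma}),\,\lambda/(\nu^2\sqrt{\theta\delta_{\bSigma}})))$; the bias remainder is of smaller order and is absorbed by the Hoeffding bound already obtained. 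A union bound followed by a short case analysis that folds the Hoeffding tail into the Bernstein-quadratic regime (picking up a $\nu^2\theta+1$ in the denominator) and into the Bernstein-linear regime (picking up a $\delta_{\bSigma}+1$) produces the claimed min-expression.

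Part (ii) is pure tail integration: for $Z = \linfnorm{\hat{\bmu}-\bmu}$,
\begin{equation*}
\EE\bigl(Z^2\mathbbm{1}_{Z>t}\bigr) = t^2\PP(Z>t) + \int_t^\infty 2s\,\PP(Z>s)\,ds,
\end{equation*}
and substituting the tail $\PP(Z>s) \leq 2p\exp(-c_2 ns^2/(\nu^2\delta_{\bSigma}))$ from part (i) yields the stated expression after a closed-form Gaussian integral. Part (iii) is structurally the same as the second half of part (i): since $\bI_p - \bSigma^{-1}\hat{\bSigma} = \bSigma^{-1}(\bSigma - \hat{\bSigma})$, the $(j,k)$-entry is $\bv_j^T(\bSigma-\hat{\bSigma})\be_k$ with $\bv_j = \bSigma^{-1}\be_j$. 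Because $\bv_j^T\bSigma\bv_j = \be_j^T\bSigma^{-1}\be_j \leq \lambda_{\min}^{-1}(\bSigma)$ and $\be_k^T\bSigma\be_k \leq \delta_{\bSigma}$, applying Bernstein to the main sub-exponential sum and Hoeffding to the $(\hat{\bmu}-\bmu)(\hat{\bmu}-\bmu)^T$ remainder, and union-bounding over $p^2$ entries, gives the stated tail with constants consolidated into $(\nu^4+\nu^2)(\delta_{\bSigma}+1)(\lambda_{\min}^{-1}(\bSigma)+1)$.

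The principal difficulty is bookkeeping rather than ideas: collapsing Bernstein's two-regime bound together with the lower-order Hoeffding contribution from the $(\hat{\bmu}-\bmu)(\hat{\bmu}-\bmu)^T$ correction into exactly the form stated in (i) and (iii) requires tracking the interplay of $\theta$, $\delta_{\bSigma}$, $\lambda_{\min}(\bSigma)$ and $\nu$ through the linear/quadratic crossover, and splitting into cases according to whether $\lambda$ (respectively $t$) is above or below one. This is routine but fussy; no genuinely new estimate beyond classical sub-exponential concentration is needed.
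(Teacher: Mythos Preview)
Your proposal is correct and follows essentially the same approach as the paper. The only cosmetic difference is in Part~(iii): the paper first rewrites $\bI_p-\bSigma^{-1}\hat{\bSigma}$ in terms of the isotropic vectors $\by_i$ as $n^{-1}\sum_i\bSigma^{-1/2}\by_i\by_i^T\bSigma^{1/2}-\bI_p$ plus the rank-one correction $\bSigma^{-1/2}\bar{\by}\bar{\by}^T\bSigma^{1/2}$, whereas you work directly with the $(j,k)$-entry $\bv_j^T(\bSigma-\hat{\bSigma})\be_k$; both routes yield the identical $\psi_1$-norm bound $\nu^2\sqrt{\delta_{\bSigma}\lambda_{\min}^{-1}(\bSigma)}$ for the main term and the same Hoeffding control on the correction, so the arguments are equivalent.
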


\begin{proof}
Throughout the proof, $C_i~(i=1,2,\ldots)$ are positive absolute constants.

Part (i):  We first bound $\|\hat{\bmu}-\bmu\|_{\infty}$. Since for each $1\leq j \leq p$, $\be_j^T(\bx_i-\bmu)$ is zero-mean and
\begin{equation*}
\|\be_j^T(\bx_i-\bmu)\|_{\psi_2} \leq \sqrt{\be_j^T\bSigma \be_j}\cdot \|\by_i\|_{\psi_2}\leq \sqrt{\delta_{\bSigma}}\cdot \nu,
\end{equation*}
the general Hoeffding's inequality (cf. Theorem \ref{hoeffding:quote}) enables us to obtain $\forall t \geq 0$,
\begin{equation}\label{mu:infinity}
\mathbb{P}(\|\hat{\bmu}-\bmu\|_{\infty}>t) \leq \sum_{j=1}^p\mathbb{P}\bigg(\bigg|\frac{1}{n}\sum_{i=1}^n\be_j^T(\bx_i-\bmu)\bigg|>t \bigg)\leq 2p\exp\bigg(-\frac{C_1n t^2}{\nu^2\delta_{\bSigma}}\bigg).
\end{equation}
Regarding the bound for $\|(\hat{\bSigma}-\bSigma)\balpha\|_{\infty}$, because of the following identity
\begin{equation}\label{following:identity}
(\hat{\bSigma}-\bSigma)\balpha=\frac{1}{n}\sum_{i=1}^n\Big[(\bx_i-\bmu)(\bx_i-\bmu)^T\balpha-\bSigma \balpha \Big]-(\hat{\bmu}-\bmu)(\hat{\bmu}-\bmu)^T\balpha,
\end{equation}
we bound the two terms on the above right-hand side, respectively. For each $1\leq j \leq p$,
\begin{align*}
&\|\be_j^T(\bx_i-\bmu)(\bx_i-\bmu)^T\balpha-\be_j^T\bSigma \balpha\|_{\psi_1} \leq C_2 \cdot \|\be_j^T(\bx_i-\bmu)(\bx_i-\bmu)^T\balpha\|_{\psi_1} \\
&\leq C_2 \cdot \|\be_j^T(\bx_i-\bmu)\|_{\psi_2} \cdot \|(\bx_i-\bmu)^T\balpha\|_{\psi_2} \leq C_2 \cdot \sqrt{\delta_{\bSigma}}\nu \cdot \sqrt{\theta} \nu,
\end{align*}
where in the first two inequalities we have used basic properties of sub-exponential norm (cf. Lemma 2.7.7 and Exercise 2.7.10 in \cite{versh18}), and the last inequality holds because $\bx_i-\bmu=\bSigma^{1/2}\by_i$ with $\subgnorm{\by_i}=\nu$. We can then apply Bernstein's inequality (cf. Theorem \ref{hoeffding:quote}) to obtain
\begin{align}
&\mathbb{P}\bigg(\Big \|\frac{1}{n}\sum_{i=1}^n \big[(\bx_i-\bmu)(\bx_i-\bmu)^T\balpha-\bSigma \balpha \big] \Big \|_{\infty}>\frac{\lambda}{8}\bigg)   \nonumber \\
\leq & \sum_{j=1}^p \mathbb{P}\bigg(\Big|\frac{1}{n}\sum_{i=1}^n \big[\be_j^T(\bx_i-\bmu)(\bx_i-\bmu)^T\balpha-\be_j^T\bSigma \balpha \big]\Big|>\frac{\lambda}{8}\bigg) \nonumber \\
\leq & 2p \exp\bigg(-\frac{C_3n\lambda^2}{\nu^2}\cdot \min\bigg(\frac{1}{\nu^2\delta_{\bSigma} \theta},\frac{1} {\lambda\sqrt{\delta_{\bSigma}\theta}}\bigg)\bigg). \label{sigma:alpha:one}
\end{align}
Moreover, since for each $1\leq i \leq n$, $\balpha^T(\bx_i-\bmu)$ is zero-mean and $\|\balpha^T(\bx_i-\bmu)\|_{\psi_2} \leq \sqrt{\theta} \nu$, the general Hoeffding's inequality (cf. Theorem \ref{hoeffding:quote}) gives us
\begin{eqnarray}\label{mean:nonunif}
\mathbb{P}(|\balpha^T(\hat{\bmu}-\bmu)|>t)=\mathbb{P}\bigg(\Big|\frac{1}{n}\sum_{i=1}^n\balpha^T(\bx_i-\bmu) \Big|>t\bigg)\leq 2\exp\big(-\frac{C_4nt^2}{\theta \nu^2}\big), \quad \forall t >0.
\end{eqnarray}
Based on the results from \eqref{mu:infinity} and \eqref{mean:nonunif}, it holds that
\begin{align}
&\mathbb{P}\bigg(\|(\hat{\bmu}-\bmu)(\hat{\bmu}-\bmu)^T\balpha\|_{\infty}>\frac{\lambda}{8} \bigg) \nonumber \\
\leq& \mathbb{P}\bigg(\|\hat{\bmu}-\bmu\|_{\infty}>\frac{\lambda^{1/2}}{4\theta^{1/4}}\bigg)+\mathbb{P}\bigg(|(\hat{\bmu}-\bmu)^T\balpha|>\frac{\lambda^{1/2}\theta^{1/4}}{2}\bigg) \nonumber \\
\leq &2p\exp\bigg(-\frac{C_1n\lambda}{16\nu^2\delta_{\bSigma}\sqrt{\theta}}\bigg)+ 2\exp\bigg(-\frac{C_4n\lambda}{4\sqrt{\theta} \nu^2}\bigg) \nonumber \\
 \leq &4p \exp\bigg(-\frac{C_5n \lambda}{\nu^2\sqrt{\theta}(\delta_{\bSigma}+1)}\bigg). \label{sigma:alpha:two}
\end{align}
Combining the results \eqref{following:identity}, \eqref{sigma:alpha:one} and \eqref{sigma:alpha:two} we obtain
\begin{equation}
\label{final:eq:one}
\PP\Big(\linfnorm{(\hat{\bSigma}-\bSigma)\balpha}>\frac{\lambda}{4}\Big)\leq 6p\exp\bigg(-\frac{C_6n\lambda^2}{\nu^2}\cdot \min\Big(\frac{1}{\nu^2\delta_{\bSigma}\theta},\frac{1}{\lambda \sqrt{\theta}(\delta_{\bSigma}+1)}\Big)\bigg).
\end{equation}
Therefore, the proof is completed by using \eqref{mu:infinity} and \eqref{final:eq:one} in the following bound,
\begin{align*}
\PP(\cA(\lambda))\geq 1-\PP(\linfnorm{(\hat{\bSigma}-\bSigma)\balpha}>\lambda/4)-\PP(\linfnorm{\hat{\bmu}-\bmu}>\lambda/4).
\end{align*}

Part (ii): Using the integral identity $\EE(z)=\int_0^{\infty}\PP(z>s)ds$ for any non-negative random $z \in \mathbb{R}_+$, we have
\begin{align*}
&\mathbb{E}\Big(\|\hat{\bmu}-\bmu \|^2_{\infty}\cdot \mathbbm{1}_{\|\hat{\bmu}-\bmu \|_{\infty} >t} \Big)=\int_0^{\infty} \mathbb{P}(\|\hat{\bmu}-\bmu \|^2_{\infty}\mathbbm{1}_{\|\hat{\bmu}-\bmu \|_{\infty} >t}>s)ds \\
&= t^2\mathbb{P}(\|\hat{\bmu}-\bmu \|_{\infty}>t)+\int_{t^2}^{\infty}\mathbb{P}(\|\hat{\bmu}-\bmu \|^2_{\infty}>s)ds \\
&= t^2\mathbb{P}(\|\hat{\bmu}-\bmu \|_{\infty}>t)+2\int_{t}^{\infty}s\mathbb{P}(\|\hat{\bmu}-\bmu \|_{\infty}>s)ds \\
&\leq  2t^2pe^{\frac{-C_1nt^2}{\nu^2 \delta_{\bSigma}}}+4p\int_t^{\infty}se^{\frac{-C_1ns^2}{\nu^2 \delta_{\bSigma}}}ds = 2p\cdot (t^2+(nC_1)^{-1} \delta_{\bSigma}\nu^2)\cdot e^{\frac{-C_1nt^2}{\nu^2\delta_{\bSigma}}},
\end{align*}
where the inequality above is due to \eqref{mu:infinity}. \\

Part (iii): Given that $\bx_i=\bmu+\bSigma^{1/2}\by_i$, if we define $\bar{\by}=n^{-1}\sum_{i=1}^n\by_i$, it is direct to confirm
\begin{align*}
\|\bI_p-\bSigma^{-1}\hat{\bSigma}\|_{\max}\leq \| n^{-1}\sum_{i=1}^n \bSigma^{-1/2}\by_i\by_i^T\bSigma^{1/2}-\bI_p\|_{\max}+\linfnorm{\bSigma^{1/2}\bar{\by}}\cdot \linfnorm{\bSigma^{-1/2}\bar{\by}}
\end{align*}
Since $\|\be_j^T\bSigma^{-1/2}\by_i\by_i^T\bSigma^{1/2}\be_{\tilde{j}}-\be_j^T\be_{\tilde{j}}\|_{\psi_1}\leq \delta^{1/2}_{\bSigma}\lambda^{-1/2}_{\min}(\bSigma)\nu^2$ for all $1\leq j,\tilde{j}\leq p$, the Bernstein's inequality from Theorem \ref{hoeffding:quote} combined with a union bound yields
\begin{align}
\label{finish:one}
&\mathbb{P}\Big(\| n^{-1}\sum_{i=1}^n \bSigma^{-1/2}\by_i\by_i^T\bSigma^{1/2}-\bI_p\|_{\max}\geq t\Big)\nonumber \\
\leq &2p^2\exp\Big(-C_1\min\big(\frac{nt^2}{\delta_{\bSigma}\lambda^{-1}_{\min}(\bSigma)\nu^4},\frac{nt}{\delta_{\bSigma}^{1/2}\lambda^{-1/2}_{\min}(\bSigma)\nu^2}\big)\Big), \quad \forall t\geq 0.
\end{align}
Moreover, as $\|\be_j^T\bSigma^{1/2}\by_i\|_{\psi_2}\leq \delta^{1/2}_{\bSigma}\nu, \|\be_j^T\bSigma^{-1/2}\by_i\|_{\psi_2}\leq \lambda^{-1/2}_{\min}(\bSigma)\nu$ for all $1\leq j\leq p$, the general Hoeffding's inequality in Theorem \ref{hoeffding:quote} combined with a union bound shows 
\begin{align}
\label{finish:two}
\mathbb{P}(\linfnorm{\bSigma^{1/2}\bar{\by}}\cdot \linfnorm{\bSigma^{-1/2}\bar{\by}}\geq t)\leq 4p\exp\Big(\frac{-C_2nt}{\nu^2(\delta_{\bSigma}+\lambda^{-1}_{\min}(\bSigma))}\Big)
\end{align}
Putting together \eqref{finish:one} and \eqref{finish:two} finishes the proof.
\end{proof}

\begin{lem}
\label{key:concentration:eq}
For any given constant $c>0$, with probability at least $1-4p^{-c}$, it holds that
\begin{align*}
\Big|\sqrt{\bu^T\hat{\bSigma}\bu}-\sqrt{\bu^T\bSigma\bu} \Big| \leq &~~ c_1\nu^2 \max(\delta_{\bSigma}^{1/2},1)\sqrt{\frac{\log p}{n}}  \|\bu\|_1+ \\
&\frac{c_2 \nu^2 \delta_{\bSigma} \frac{\log p}{n} \|\bu\|^2_1}{(\sqrt{\bu^T\bSigma \bu}-c_1\nu^2 \max(\delta_{\bSigma}^{1/2},1)\sqrt{\frac{\log p}{n}}  \|\bu\|_1)_+}, \quad~~ \forall \bu \in \RR^p,
\end{align*}
where $c_1,c_2>0$ are constants only depending on $c$, and $c_1,c_2\rightarrow \infty$, as $c\rightarrow \infty$.
\end{lem}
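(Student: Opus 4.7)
Set $A = \bu^T\bSigma\bu$, $B = \bu^T\hat{\bSigma}\bu$, and $\eta = |\sqrt{B} - \sqrt{A}|$. My plan is to first establish a Bernstein-type bound of the form $|B - A| \le 2\alpha_1\sqrt{A}\|\bu\|_1 + \alpha_2\|\bu\|_1^2$ (holding simultaneously for all $\bu$ with probability at least $1-4p^{-c}$), with $\alpha_1 \asymp \nu^2\max(\delta_{\bSigma}^{1/2},1)\sqrt{\log p/n}$ and $\alpha_2 \asymp \nu^2\delta_{\bSigma}\log p/n$, and then convert this into the claimed bound on $\eta$ via the identity $\eta(\sqrt{A}+\sqrt{B}) = |B-A|$.

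For the concentration step, I would use the duality-style estimate $|B-A| \le \|\bu\|_1 \cdot \|(\hat{\bSigma}-\bSigma)\bu\|_\infty$ and control the $\ell_\infty$-norm coordinate by coordinate: the $j$-th entry of $(\hat{\bSigma}-\bSigma)\bu$ is (up to a sample-mean correction handled by Lemma~\ref{rare:events}(ii)) the centered average $n^{-1}\sum_i (X_{ij}-\mu_j)\,\bu^T(\bx_i-\bmu) - \bu^T\bSigma_{j,\cdot}$, whose summands are sub-exponential with norm at most $C\nu^2\sqrt{\Sigma_{jj}\,A}$ because $X_{ij}-\mu_j$ is sub-Gaussian with norm $\nu\sqrt{\Sigma_{jj}}$ and $\bu^T(\bx_i-\bmu)$ with norm $\nu\sqrt{A}$. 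Bernstein's inequality plus a union bound over $j\in[p]$ then gives $\|(\hat{\bSigma}-\bSigma)\bu\|_\infty \lesssim \nu^2\sqrt{\delta_{\bSigma}}\sqrt{A}\sqrt{\log p/n} + \nu^2\delta_{\bSigma}\|\bu\|_1\log p/n$. Multiplying by $\|\bu\|_1$ delivers the desired decomposition of $|B-A|$.

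For the conversion step, case-analyze on the sign of $\sqrt{B}-\sqrt{A}$. If $\sqrt{B}\ge\sqrt{A}$, then $\sqrt{A}+\sqrt{B}\ge 2\sqrt{A}$, so $\eta \le \alpha_1\|\bu\|_1 + \alpha_2\|\bu\|_1^2/(2\sqrt{A})$, which is stronger than the claim. If $\sqrt{B}<\sqrt{A}$, substituting $\sqrt{B}=\sqrt{A}-\eta$ produces the quadratic inequality $\eta(2\sqrt{A}-\eta) \le 2\alpha_1\sqrt{A}\|\bu\|_1 + \alpha_2\|\bu\|_1^2$; its only admissible root (the smaller one, since $\eta\le\sqrt{A}$) equals $\sqrt{A} - \sqrt{(\sqrt{A}-\alpha_1\|\bu\|_1)^2 - (\alpha_1^2+\alpha_2)\|\bu\|_1^2}$. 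Applying $\sqrt{x}-\sqrt{y}=(x-y)/(\sqrt{x}+\sqrt{y})$ once re-expresses this as $\alpha_1\|\bu\|_1 + c\,\alpha_2\|\bu\|_1^2/(\sqrt{A}-\alpha_1\|\bu\|_1)_+$, which matches the claimed form after adjusting constants. Whenever $\sqrt{A}\le\alpha_1\|\bu\|_1$ the denominator's positive part is zero and the stated RHS is $+\infty$, so the bound is vacuously true there.

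The main obstacle is promoting the Bernstein concentration of $\|(\hat{\bSigma}-\bSigma)\bu\|_\infty$ from pointwise in $\bu$ to uniform in $\bu$. The route I would take is to exploit homogeneity (both sides of the target inequality scale identically under $\bu\mapsto t\bu$) to reduce to the $\ell_1$-unit sphere, cover it by a polynomial-size net, and fill in the gaps using the bilinear structure of $\bu^T(\hat{\bSigma}-\bSigma)\bu$ together with the global max-entry control of Lemma~\ref{rare:events}. The refined factor $\max(\delta_{\bSigma}^{1/2},1)$ (as opposed to the coarser $\delta_{\bSigma}$ given by naive entry-wise Bernstein on $\|\hat{\bSigma}-\bSigma\|_{\max}$) tracks through the ``sub-Gaussian $\times$ sub-Gaussian'' variance proxy $\sqrt{\Sigma_{jj}\,A}\le\sqrt{\delta_{\bSigma}}\sqrt{A}$ in the row-times-$\bu$ product, with the $\max(\cdot,1)$ absorbing the regime $\delta_{\bSigma}<1$.
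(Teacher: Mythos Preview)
Your conversion step from a bound on $|B-A|$ to one on $\eta=|\sqrt{B}-\sqrt{A}|$ is correct, and your pointwise Bernstein estimate on $\|(\hat{\bSigma}-\bSigma)\bu\|_{\infty}$ is valid for each fixed $\bu$. The real gap is in the uniformity step. The $\ell_1$-unit sphere in $\RR^p$ does \emph{not} admit a polynomial-size net: any $\epsilon$-net has cardinality of order $(C/\epsilon)^p$, so a union bound over the net against a pointwise failure probability of order $p^{-c}$ is hopeless. And your fallback---using bilinearity together with the entrywise bound $\|\hat{\bSigma}-\bSigma\|_{\max}$ to interpolate from net points to arbitrary points---collapses to the coarse uniform estimate $|B-A|\le\|\hat{\bSigma}-\bSigma\|_{\max}\|\bu\|_1^2\asymp\nu^2\delta_{\bSigma}\sqrt{(\log p)/n}\,\|\bu\|_1^2$, since in fact $\sup_{\|\bu\|_1=1}\|(\hat{\bSigma}-\bSigma)\bu\|_\infty=\|\hat{\bSigma}-\bSigma\|_{\max}$ exactly. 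After your conversion this yields only $\eta\lesssim\sqrt{(\log p)/n}\,\|\bu\|_1^2/\sqrt{A}$, which lacks the crucial linear-in-$\|\bu\|_1$ first term of the lemma. When the lemma is later applied to $\bu\in\cK(s)\cap B_2(1)$ (where $\|\bu\|_1\le4\sqrt{s}$ and $\sqrt{A}\asymp1$), this coarse bound delivers $\kappa\asymp s\sqrt{(\log p)/n}$ rather than the required $\sqrt{s(\log p)/n}$, a $\sqrt{s}$ loss that would propagate through Theorem~\ref{thm:one} and all downstream results.

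The paper's proof handles uniformity by a genuinely different route. After splitting off the sample-mean correction, it rewrites $\big|(n^{-1}\sum_i|\bu^T(\bx_i-\bmu)|^2)^{1/2}-\sqrt{A}\big|$ as $|n^{-1/2}\|\bA\bSigma^{1/2}\bu\|_2-\|\bSigma^{1/2}\bu\|_2|$, with $\bA$ the whitened data matrix, and applies the matrix deviation inequality (Theorem~\ref{matrix:deviation}) over the sets $\cT(r)=\{\bu:\|\bSigma^{1/2}\bu\|_\infty=1,\ \|\bSigma^{-1/2}\bu\|_1\le r\}$. That inequality bounds the supremum directly in terms of the Gaussian width $w(\cT(r))\lesssim\sqrt{\delta_{\bSigma}}\,r\sqrt{\log p}$ and the radius ${\rm rad}(\cT(r))\le\sqrt{r}$, sidestepping any net argument entirely. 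A dyadic slicing (peeling) over $r=2^k$ then promotes the per-slice bounds to all of $\RR^p$ simultaneously. The matrix deviation inequality---a chaining-based result---is precisely the tool your plan is missing; pointwise Bernstein plus crude netting cannot substitute for it here.
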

\begin{proof}
Throughout the proof, we use $C_i~(i=1,2,\ldots)$ to denote positive absolute constants. Since $\hat{\bSigma}=\frac{1}{n}\sum_{i=1}^n(\bx_i-\bmu)(\bx_i-\bmu)^T-(\hat{\bmu}-\bmu)(\hat{\bmu}-\bmu)^T$, we have
\begin{align}
\Big|\sqrt{\bu^T\hat{\bSigma}\bu}-\sqrt{\bu^T\bSigma\bu} \Big| \leq& \Big| \Big(n^{-1}\sum_{i=1}^n|\bu^T(\bx_i-\bmu)|^2\Big)^{\frac{1}{2}}-\sqrt{\bu^T\bSigma \bu} \Big|+ \Big|\sqrt{\bu^T\hat{\bSigma}\bu}- \Big(n^{-1}\sum_{i=1}^n|\bu^T(\bx_i-\bmu)|^2\Big)^{\frac{1}{2}}\Big| \nonumber \\
&\hspace{-2.5cm}\leq \Big| \Big(n^{-1}\sum_{i=1}^n|\bu^T(\bx_i-\bmu)|^2\Big)^{\frac{1}{2}}-\sqrt{\bu^T\bSigma \bu} \Big|+\frac{|\bu^T(\hat{\bmu}-\bmu)|^2}{\Big(n^{-1}\sum_{i=1}^n|\bu^T(\bx_i-\bmu)|^2\Big)^{\frac{1}{2}}}. \label{main:structure}
\end{align}
We bound the two terms on the right-hand side of the last inequality, respectively. Regarding the second term, from \eqref{mu:infinity} we have that $\forall a \geq 0$,
\begin{equation}
\label{bound:second:term}
|\bu^T(\hat{\bmu}-\bmu)|^2\leq \|\hat{\bmu}-\bmu\|_{\infty}^2 \cdot \|\bu\|^2_1 \leq a \nu^2 \delta_{\bSigma} \frac{\log p}{n} \|\bu\|^2_1
\end{equation}
holds with probability at least $1-2p^{1-C_1a}$. Once a bound for the first term is derived, using \eqref{bound:second:term} we can obtain the bound for the second term. The main part
of the proof uses the matrix deviation inequality (cf. Theorem \ref{matrix:deviation}) together with a slicing argument \citep{vanhandel16} to bound the first term.

Towards that goal,  let $\bA$ be a $n\times p$ matrix whose $i$th row is $(\bx_i-\bmu)^T\bSigma^{-1/2}$, for $i=1,2,\ldots, n$. It is straightforward to verify that
\begin{align}
\Big| \Big(n^{-1}\sum_{i=1}^n|\bu^T(\bx_i-\bmu)|^2\Big)^{\frac{1}{2}}-\sqrt{\bu^T\bSigma \bu} \Big|=\big |n^{-\frac{1}{2}}\|\bA\bSigma^{1/2}\bu\|_2-\|\bSigma^{1/2}\bu\|_2\big| \label{matrix:deviate:bound}
\end{align}
Define
\begin{equation*}
\cT(r)=\Big\{\bu\in\RR^p:\|\bSigma^{1/2}\bu\|_{\infty}= 1, \|\bSigma^{-1/2}\bu\|_1\leq r \Big \}, {\rm ~~~for~} r >0
\end{equation*}
with which it is clear that
\begin{equation}\label{transform:eq}
 \sup_{\|\bSigma \bu\|_{\infty}=1,\|\bu\|_1\leq r}\Big |n^{-\frac{1}{2}}\|\bA\bSigma^{1/2}\bu\|_2-\|\bSigma^{1/2}\bu\|_2\Big |=\sup_{\bu\in \cT(r)} \Big |n^{-\frac{1}{2}}\|\bA \bu\|_2-\|\bu\|_2\Big |.
\end{equation}
The matrix deviation inequality (cf. Theorem \ref{matrix:deviation}) enables us to conclude that $\forall b \geq 0$,
\begin{equation}\label{basic:bound:use}
\mathbb{P}\Big(\sup_{\bu\in \cT(r)}|n^{-\frac{1}{2}}\|\bA\bu\|_2-\|\bu\|_2| \leq C_2n^{-1/2}\nu^2 \big(w(\cT(r))+b \cdot {\rm rad}(\cT(r)) \big) \Big) \geq 1-2e^{-b^2}.
\end{equation}
Here,
\[
{\rm rad}(\cT(r))=\sup_{\bu\in \cT(r)}\|\bu\|_2\leq \sup_{\bu\in \cT(r)} \sqrt{\|\bSigma^{1/2}\bu\|_{\infty}\cdot \|\bSigma^{-1/2}\bu\|_1} \leq \sqrt{r},
\]
and
\[
w(\cT(r))=\EE\sup_{u\in \cT(r)} \bg^T\bu \leq \mathbb{E}\sup_{\bu \in \cT(r)} \|\bSigma^{\frac{1}{2}}\bg\|_{\infty}\cdot \|\bSigma^{-\frac{1}{2}}\bu\|_1 \leq r \mathbb{E}\|\bSigma^{\frac{1}{2}}\bg\|_{\infty}\leq C_3 \delta_{\bSigma}^{1/2}  r\sqrt{\log p},
\]
where the last inequality follows from Theorem \ref{maximal:noteq}. Therefore, setting $b=\max(\delta_{\bSigma}^{\frac{1}{2}},1)\tilde{b}C_3\sqrt{r\log p}$ in \eqref{basic:bound:use} combined with \eqref{transform:eq} yields that $\forall r> 0, \tilde{b}\geq 0$,
\begin{equation}\label{fixed:r}
\mathbb{P}\bigg( \sup_{\|\bSigma \bu\|_{\infty}=1,\|\bu\|_1\leq r}\Big|n^{-\frac{1}{2}}\|\bA\bSigma^{1/2}\bu\|_2-\|\bSigma^{1/2}\bu\|_2\Big| \geq C_4(1+\tilde{b})\nu^2r\max(\delta_{\bSigma}^{\frac{1}{2}},1)\sqrt{\frac{\log p}{n}} \bigg) \leq 2p^{-C_5\tilde{b}^2r\max(\delta_{\bSigma},1)}.
\end{equation}

We now utilize a slicing method to derive an upper bound on $|n^{-\frac{1}{2}}\|\bA\bSigma^{1/2}\bu\|_2-\|\bSigma^{1/2}\bu\|_2|$. Towards that end, denote $r_k=2^k, k=k_0, k_0+1,k_0+2\ldots$, where $k_0=\lfloor \log_2\delta_{\bSigma}^{-1} \rfloor$. We then have
\begin{align}
\label{slicing:argument}
&\mathbb{P}\Bigg(\sup_{\|\bSigma \bu\|_{\infty}=1} \frac{|n^{-\frac{1}{2}}\|\bA\bSigma^{1/2}\bu\|_2-\|\bSigma^{1/2}\bu\|_2|}{\lonenorm{\bu}} > t \Bigg) \nonumber \\
=&\mathbb{P}\Bigg(\sup_{k\geq k_0} \sup_{\substack{r_{k} \leq \lonenorm{\bu}\leq r_{k+1} \\ \|\bSigma \bu\|_{\infty}=1}} \frac{|n^{-\frac{1}{2}}\|\bA\bSigma^{1/2}\bu\|_2-\|\bSigma^{1/2}\bu\|_2|}{\lonenorm{\bu}} > t\Bigg) \nonumber \\
\leq & \sum_{k\geq k_0} \mathbb{P}\Bigg( \sup_{\substack{r_{k} \leq \lonenorm{\bu}\leq r_{k+1} \\  \|\bSigma \bu\|_{\infty}=1}} \frac{|n^{-\frac{1}{2}}\|\bA\bSigma^{1/2}\bu\|_2-\|\bSigma^{1/2}\bu\|_2|}{\lonenorm{\bu}} > t\Bigg)  \nonumber \\
\leq &\sum_{k\geq k_0} \mathbb{P}\Bigg( \sup_{\|\bSigma \bu\|_{\infty}=1, \lonenorm{\bu}\leq r_{k+1}} |n^{-\frac{1}{2}}\|\bA\bSigma^{1/2}\bu\|_2-\|\bSigma^{1/2}\bu\|_2| > r_{k} \cdot t\Bigg),
\end{align}
where the first equality holds since $\|\bSigma \bu\|_{\infty} \leq \delta_{\bSigma}\cdot \lonenorm{\bu}$. Choosing $t=2C_4(1+\tilde{b})\nu^2\max(\delta_{\bSigma}^{\frac{1}{2}},1)\sqrt{\frac{\log p}{n}}$ and using the result \eqref{fixed:r}, we can continue from \eqref{slicing:argument} to obtain that $\forall \tilde{b} \geq 0$
\begin{align*}
&\mathbb{P}\Bigg(\sup_{ \| \bSigma\bu \|_{\infty}=1} \frac{|n^{-\frac{1}{2}}\|\bA\bSigma^{1/2}\bu\|_2-\|\bSigma^{1/2}\bu\|_2|}{\lonenorm{\bu}} > 2C_4(1+\tilde{b})\nu^2 \max(\delta_{\bSigma}^{\frac{1}{2}},1)\sqrt{\frac{\log p}{n}} \Bigg) \\
\leq&2\sum_{k\geq k_0}\Big(p^{-2C_5\tilde{b}^2\max(\delta_{\bSigma},1)}\Big)^{2^k}\leq 2\sum_{m=1  }^{\infty}\Big(p^{-C_5\tilde{b}^2\max(\delta_{\bSigma},1) \delta_{\bSigma}^{-1} }\Big)^{m}  \leq \frac{2}{p^{C_5\tilde{b}^2}-1}.
\end{align*}
This is equivalent to saying that with probability at least $1-2(p^{C_5\tilde{b}^2}-1)^{-1}$,
\begin{align*}
\big |n^{-\frac{1}{2}}\|\bA\bSigma^{1/2}\bu\|_2-\|\bSigma^{1/2}\bu\|_2 \big | \leq 2C_4(1+\tilde{b})\nu^2 \max(\delta_{\bSigma}^{\frac{1}{2}},1)\sqrt{\frac{\log p}{n}}  \|\bu\|_1, ~~~\forall \bu \in \RR^p.
\end{align*}
The above result together with \eqref{matrix:deviate:bound} shows that with probability at least $1-2(p^{C_5\tilde{b}^2}-1)^{-1}$,
\begin{align}
\Big| \Big(n^{-1}\sum_{i=1}^n|\bu^T(\bx_i-\bmu)|^2\Big)^{\frac{1}{2}}-\sqrt{\bu^T\bSigma \bu} \Big|  \leq 2C_4(1+\tilde{b})\nu^2 \max(\delta_{\bSigma}^{\frac{1}{2}},1)\sqrt{\frac{\log p}{n}}  \|\bu\|_1, ~~~\forall \bu \in \RR^p. \label{bound:first:term}
\end{align}

Combining the results \eqref{main:structure}, \eqref{bound:second:term} and \eqref{bound:first:term} gives us that with probability at least $1-2(p^{C_5\tilde{b}^2}-1)^{-1}-2p^{1-C_1a}$,
\begin{align*}
\Big|\sqrt{\bu^T\hat{\bSigma}\bu}-\sqrt{\bu^T\bSigma\bu} \Big| \leq&~~ 2C_4(1+\tilde{b})\nu^2 \max(\delta_{\bSigma}^{\frac{1}{2}},1)\sqrt{\frac{\log p}{n}}  \|\bu\|_1+ \\
&\frac{a \nu^2 \delta_{\bSigma} \frac{\log p}{n} \|\bu\|^2_1}{(\sqrt{\bu^T\bSigma \bu}-2C_4(1+\tilde{b})\nu^2 \max(\delta_{\bSigma}^{\frac{1}{2}},1)\sqrt{\frac{\log p}{n}}  \|\bu\|_1)_+}.
\end{align*}
Finally since the above inequality holds for all $a \geq 0$ and $\tilde{b}\geq 0$, thus for any given constant $c>0$, setting $a=\frac{1+c}{C_1}, \tilde{b}=\sqrt{\frac{\log(p^c+1)}{C_5\log p}}$ establishes the desired result.
\end{proof}

\subsection{Proof of Theorem \ref{thm:one} and Proposition \ref{general:debiasing}}
\label{proof:thm:one:index}

\begin{proof}
Given that Theorem \ref{thm:one} is a specialized result of Proposition \ref{general:debiasing}, we will only present the proof for Theorem \ref{thm:one}. The proof of Proposition \ref{general:debiasing} follows directly by replacing $\tilde{\balpha},\tilde{\theta}, \hat{\bmu}, \hat{\bSigma}$ with $\hat{\bvarpi},\hat{\vartheta},\hat{\bxi},\hat{\bUpsilon}$ respectively. Throughout the proof, $C_1, C_2, \ldots$ are used to denote positive constants that possibly depend on $\nu, c_L, c_U$. Some of them may depend on additional quantities, and clarification will be made in such cases.

Lemma \ref{l2error:alpha} combined with the fact that $\ltwonorm{\balpha}\leq \gamma, \ltwonorm{\tilde{\balpha}}\leq \gamma$ shows that
\begin{align}
\mathbb{E}\|\tilde{\balpha}-\balpha\|_2^2 &= \mathbb{E}\big(\|\tilde{\balpha}-\balpha\|_2^2\mathbbm{1}_{\mathcal{A}(\lambda)\cap \mathcal{B}(s, \kappa)} \big) + \mathbb{E}\big(\|\tilde{\balpha}-\balpha\|_2^2\mathbbm{1}_{\mathcal{A}^c(\lambda)\cup \mathcal{B}^c(s, \kappa)} \big)  \nonumber \\
&\leq \frac{9t^2\nu^2(1+\tau) s\log p}{n(\lambda^{1/2}_{\min}(\bSigma)-\kappa)^4}+\frac{16\tau}{c_L}(\mathbb{P}(\mathcal{A}^c(\lambda))+\mathbb{P}(\mathcal{B}^c(s, \kappa))). \label{alpha:bound1}
\end{align}
According to Lemma \ref{rare:events} Part (i), it is straightforward to obtain the following bound,
\begin{align}
\label{alpha:bound2}
\mathbb{P}(\mathcal{A}^c(\lambda)) \leq 8p(e^{-C_1t^2\log p}+e^{-C_2t\sqrt{n\log p}}) \leq 8p^{1-C_3t},
\end{align}
where we have used the condition $\frac{s\log p}{n}<1$ and $t>1$. Moreover, since $\ltwonorm{\bu}\leq 1, \lonenorm{\bu}\leq 4\sqrt{s}$ for $\bu \in \cK(s)\cap B_2(1)$, Lemma \ref{key:concentration:eq} implies that for $\forall c>0$, as long as $\frac{s\log p}{n}\leq \frac{c_L}{4c_1^2\nu^4c_U}$,
\begin{align}
\label{alpha:bound3}
\PP(\cB^c(s, \kappa))\leq 4p^{-c},
\end{align}
with $\kappa=C_4\sqrt{\frac{s\log p}{n}}$. Here, $C_4>0$ is a constant depending on $c$ (in addition to $\nu, c_L, c_U$), and $C_4 \rightarrow \infty$, as $c\rightarrow \infty$. Observe that the constant $\tilde{c}$ in Theorem \ref{thm:one} can be chosen as $\min(\frac{c_L}{4c_1^2\nu^4c_U}, \frac{1}{2}, \frac{c_L}{4C_4^2})$, thus $\lambda^{1/2}_{\min}(\Sigma)-\kappa \geq \frac{\sqrt{c_L}}{2}$. Therefore, putting together the results \eqref{alpha:bound1}, \eqref{alpha:bound2} and \eqref{alpha:bound3} establishes the desired bound for $\mathbb{E}\|\tilde{\balpha}-\balpha\|_2^2$.

We now bound $\mathbb{E}|\tilde{\theta}-\theta|$. Set the same value for $\kappa$ as in the preceding proof. According to Lemma \ref{error:functional} we obtain
\begin{align}
\label{theta:bound:one}
\mathbb{E}|(\tilde{\theta}-\theta)\mathbbm{1}_{\mathcal{A}(\lambda)\cap \mathcal{B}(s,\kappa)}| &\leq \frac{C_5t^2(1+\tau)s\log p}{n} +2 \mathbb{E}|\balpha^T(\hat{\bmu}-\bmu)| + \mathbb{E}|\balpha^T(\hat{\bSigma}-\bSigma)\balpha|,
\end{align}
Moreover, observe that
\begin{equation}
\label{theta:bound:two}
\mathbb{E}|\balpha^T(\hat{\bmu}-\bmu)| \leq \sqrt{\var(\balpha^T(\hat{\bmu}-\bmu))} =\sqrt{\frac{\balpha^T\bSigma \balpha}{n}}\leq \sqrt{\frac{\tau}{n}},
\end{equation}
and
\begin{align}
 \mathbb{E}|\balpha^T(\hat{\bSigma}-\bSigma)\balpha| &\leq  \mathbb{E} \Big|\frac{1}{n}\sum_{i=1}^n|\balpha^T(\bx_i-\bmu)|^2-\balpha^T\bSigma \balpha \Big| +\mathbb{E} |\balpha^T(\hat{\bmu}-\bmu)|^2 \nonumber \\
&\leq \sqrt{\frac{\mathbb{E}|\balpha^T(\bx_1-\bmu)|^4}{n}}+\frac{\tau}{n} \leq C_6 \frac{\tau \nu^2}{\sqrt{n}}+\frac{\tau}{n}, \label{theta:bound:three}
\end{align}
where the last inequality is due to $\|\balpha^T(\bx_1-\bmu)\|_{\psi_2}\leq \sqrt{\tau}\nu$ and the moments inequality for sub-gaussian variables (cf. Proposition 2.5.2 in \cite{versh18}).

The rest of the proof is to bound $\mathbb{E}|(\tilde{\theta}-\theta)\mathbbm{1}_{\mathcal{A}^c(\lambda)\cup \mathcal{B}^c(s, \kappa)}|$. Towards that goal, we first bound $\mathbb{E}|\tilde{\theta}|^2$. By the definition of $\tilde{\balpha}$,
\begin{equation*}
\frac{1}{2}\tilde{\balpha}^T\hat{\bSigma}\tilde{\balpha}-\tilde{\balpha}^T\hat{\bmu}+\lambda \lonenorm{\tilde{\balpha}}\leq 0,
\end{equation*}
yielding $\tilde{\theta}\geq 2\lambda \|\tilde{\balpha}\|_1\geq 0$. Hence,
\begin{align}
\mathbb{E}|\tilde{\theta}|^2&\leq  4 \mathbb{E}|\tilde{\balpha}^T\hat{\bmu}|^2 \leq 12\mathbb{E}|\bmu^T \tilde{\balpha}|^2+12\mathbb{E}|(\hat{\bmu}-\bmu)^T\balpha|^2+12\mathbb{E}|(\hat{\bmu}-\bmu)^T(\tilde{\balpha}-\balpha)|^2 \nonumber \\
&\overset{(a)}{\leq}  \frac{48p c_U \tau^2}{c_L} + \frac{12\tau}{n}+12 \mathbb{E}(\|\hat{\bmu}-\bmu\|^2_{\infty} \cdot \|\tilde{\balpha}-\balpha\|^2_1), \nonumber \\
&\overset{(b)}{\leq}  \frac{48pc_U \tau^2}{c_L} + \frac{12\tau}{n} + 12b^2\mathbb{E}\|\tilde{\balpha}-\balpha\|^2_1+\frac{192 p\tau}{c_L}\mathbb{E}(\|\hat{\bmu}-\bmu\|^2_{\infty}\mathbbm{1}_{\|\hat{\bmu}-\bmu\|_{\infty}>b}), \quad \forall b \geq 0. \label{theta:square:bound}
\end{align}
Here, in $(a)$ we have used that $|\bmu^T \tilde{\balpha}|^2\leq \ltwonorm{\bmu}^2\cdot \ltwonorm{\tilde{\balpha}}^2\leq \gamma^2\ltwonorm{\bmu}^2\leq 4pc_Uc_L^{-1}\tau^2$; $(b)$ is due to $\lonenorm{\tilde{\balpha}-\balpha}^2\leq p\ltwonorm{\tilde{\balpha}-\balpha}^2\leq 4p\gamma^2$. Furthermore, we use Lemma \ref{l2error:alpha} and the upper bound for $\mathbb{E}\|\tilde{\balpha}-\balpha\|_2^2$ to obtain
\begin{align}
\mathbb{E}\|\tilde{\balpha}-\balpha\|^2_1 &= \mathbb{E}(\|\tilde{\balpha}-\balpha\|^2_1 \mathbbm{1}_{\mathcal{A}(\lambda)})+ \mathbb{E}(\|\tilde{\balpha}-\balpha\|^2_1 \mathbbm{1}_{\mathcal{A}^c(\lambda)})  \nonumber \\
&\leq 16 s \mathbb{E}\|\tilde{\balpha}-\balpha\|_2^2+  \frac{16 p \tau}{c_L}\mathbb{P}(\mathcal{A}^c(\lambda)) \nonumber \\
&\leq C_7 \cdot s\Big(\frac{t^2(1+\tau)s\log p}{n}+\tau p^{-(C_3t-2)\wedge c} \Big) \label{l1error:alpha}
\end{align}
According to Lemma \ref{rare:events} Part (ii), it is possible to set $b=C_8\sqrt{\frac{\log p}{n}}$ in \eqref{theta:square:bound} to have
\begin{equation}
\label{muhat:error}
\mathbb{E}(\|\hat{\bmu}-\bmu\|^2_{\infty}\mathbbm{1}_{\|\hat{\bmu}-\bmu\|_{\infty}>b}) \leq \frac{C_{9}}{np}.
\end{equation}
Based on the results \eqref{theta:square:bound}, \eqref{l1error:alpha} and \eqref{muhat:error}, we can derive an upper bound for $\mathbb{E}|\tilde{\theta}|^2$,
\begin{equation*}
\mathbb{E}|\tilde{\theta}|^2 \leq C_{10}\cdot \Big(p\tau^2+\frac{\tau}{n}+\frac{t^4(1+\tau)^2s^2\log^2 p}{n^2}+\tau p^{-(C_3t-2)\wedge c}\Big).
\end{equation*}
As a result, we are able to conclude
\begin{align*}
&\mathbb{E}|(\tilde{\theta}-\theta)\mathbbm{1}_{\mathcal{A}^c(\lambda)\cup \mathcal{B}^c(s, \kappa)}| \leq  \sqrt{2(\mathbb{E}|\tilde{\theta}|^2+\mathbb{E}|\theta|^2)}\cdot \sqrt{\mathbb{P}(\mathcal{A}^c(\lambda))+\mathbb{P}(\mathcal{B}^c(s, \kappa))} \\
 \leq &C_{11} \cdot \Big(\sqrt{p}\tau+\sqrt{\frac{\tau}{n}}+\frac{t^2(1+\tau)s\log p}{n}+\sqrt{\tau}p^{-\frac{(C_3t-2)\wedge c}{2}}\Big) \cdot p^{-\frac{(C_3t-1)\wedge c}{2}} \\
 \leq & C_{11} \cdot \Big(\sqrt{\frac{\tau}{n}}+\frac{t^2(1+\tau)s\log p}{n}+(\tau+\sqrt{\tau})p^{-\frac{(C_3t-2)\wedge (c-1)}{2}}  \Big),
\end{align*}
where in the last step we have used the condition $C_3t-2>0$. The above result combined with \eqref{theta:bound:one}, \eqref{theta:bound:two}, and \eqref{theta:bound:three} yields the desired bound for $\EE|\tilde{\theta}-\theta|$.
\end{proof}

\begin{lem}
\label{l2error:alpha}
Denote $s=\lzeronorm{\balpha}$, and set $\gamma$ in \eqref{lasso:alpha} such that $\ltwonorm{\balpha}\leq \gamma$. It holds that
\begin{itemize}
\item[(i)] On the event $\mathcal{A}(\lambda)$, $\tilde{\balpha}-\balpha\in \cK(s)$.
\item[(ii)] On the event $\cA(\lambda) \cap \cB(s, \kappa)$ with $\kappa < \lambda^{1/2}_{\min}(\bSigma)$,
\begin{equation}
\label{l2bound:alpha}
\ltwonorm{\tilde{\balpha}-\balpha}  \leq \frac{3 \lambda \sqrt{s}}{(\lambda^{1/2}_{\min}(\bSigma)-\kappa)^2}.
\end{equation}
\end{itemize}
\end{lem}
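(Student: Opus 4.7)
The plan is a standard basic-inequality analysis for constrained $\ell_1$-regularized M-estimators, combined with a restricted-strong-convexity reduction on the cone $\cK(s)$. Set $\bD = \tilde{\balpha} - \balpha$ and $S = \supp(\balpha)$, so $|S|=s$. Since $\ltwonorm{\balpha}\leq \gamma$, the vector $\balpha$ is feasible in \eqref{lasso:alpha}, so optimality of $\tilde{\balpha}$ yields, after rearrangement and the identity $\bSigma\balpha = \bmu$,
\[
\tfrac{1}{2}\bD^T\hat{\bSigma}\bD \;\leq\; \bD^T(\hat{\bmu}-\bmu) \;-\; \bD^T(\hat{\bSigma}-\bSigma)\balpha \;+\; \lambda\bigl(\lonenorm{\balpha} - \lonenorm{\tilde{\balpha}}\bigr).
\]
On the event $\cA(\lambda)$, H\"older's inequality bounds the two deviation terms jointly by $\tfrac{\lambda}{2}\lonenorm{\bD}$. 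Because $\balpha_{S^c}=0$, the reverse triangle inequality gives $\lonenorm{\balpha}-\lonenorm{\tilde{\balpha}} \leq \lonenorm{\bD_S} - \lonenorm{\bD_{S^c}}$.

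For part (i) I discard the nonnegative term $\tfrac{1}{2}\bD^T\hat{\bSigma}\bD$ and split $\lonenorm{\bD}=\lonenorm{\bD_S}+\lonenorm{\bD_{S^c}}$; collecting terms produces $\lonenorm{\bD_{S^c}} \leq 3\lonenorm{\bD_S}$, so $\bD\in\cK(s)$ by taking this $S$ as the witness. For part (ii), since $\bD\in\cK(s)$ from part (i), the rescaled vector $\bD/\ltwonorm{\bD}$ lies in $\cK(s)\cap B_2(1)$; invoking event $\cB(s,\kappa)$ together with $\sqrt{\bD^T\bSigma\bD}\geq \lambda^{1/2}_{\min}(\bSigma)\ltwonorm{\bD}$ and the homogeneity of $\bu\mapsto \sqrt{\bu^T M\bu}$ yields the restricted-strong-convexity bound $\bD^T\hat{\bSigma}\bD \geq \bigl(\lambda^{1/2}_{\min}(\bSigma)-\kappa\bigr)^2\ltwonorm{\bD}^2$.

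Plugging this lower bound into the basic inequality and bounding the right-hand side by $\lonenorm{\bD_S}\leq \sqrt{s}\ltwonorm{\bD}$ (Cauchy--Schwarz using $|S|\leq s$) together with $-\lambda\lonenorm{\bD_{S^c}}\leq 0$, I arrive at
\[
\bigl(\lambda^{1/2}_{\min}(\bSigma)-\kappa\bigr)^2\ltwonorm{\bD}^2 \;\leq\; 3\lambda\sqrt{s}\,\ltwonorm{\bD},
\]
and dividing through by $\ltwonorm{\bD}$ yields \eqref{l2bound:alpha}. The only delicate step in the argument is the homogeneity reduction that lifts the supremum bound defining $\cB(s,\kappa)$ (which is posed over the unit-$\ell_2$ slice of $\cK(s)$) to a pointwise inequality for the typically non-unit vector $\bD$; everything else is bookkeeping of the Karush--Kuhn--Tucker-type inequality.
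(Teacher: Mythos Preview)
Your proposal is correct and follows essentially the same route as the paper: the paper also starts from the basic inequality, bounds the linear deviation term by $\tfrac{\lambda}{2}\lonenorm{\bD}$ on $\cA(\lambda)$ (it packages this as $\linfnorm{\hat{\bSigma}\balpha-\hat{\bmu}}\leq \lambda/2$ rather than splitting into the two pieces, but it is the same bound), derives the cone condition $\lonenorm{\bD_{S^c}}\leq 3\lonenorm{\bD_S}$, and then uses the homogeneity of $\bu\mapsto\sqrt{\bu^T M\bu}$ to transfer the $\cB(s,\kappa)$ bound to $\bD$ and obtain $\sqrt{\bD^T\hat{\bSigma}\bD}\geq(\lambda_{\min}^{1/2}(\bSigma)-\kappa)\ltwonorm{\bD}$ before the final Cauchy--Schwarz step. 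The algebra and constants match exactly.
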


The above results might be obtained by the general analysis framework for high-dimensional M-estimator developed in \cite{nrwy12}. For completeness, we give a proof tailored for our problem.

\begin{proof}
Let the support of $\balpha$ be indexed by the set $S \subseteq [p]$. Since $\bmu=\bSigma \balpha$, it is clear that on $\mathcal{A}(\lambda)$,
\begin{equation}
\label{basic:eq:one}
\linfnorm{\hat{\bSigma}\balpha-\hat{\bmu}} \leq \linfnorm{(\hat{\bSigma}-\bSigma)\balpha}+\linfnorm{\hat{\bmu}-\bmu} \leq \frac{\lambda}{2}.
\end{equation}
Then by the definition of $\tilde{\balpha}$ in \eqref{lasso:alpha}, we have on the event $\mathcal{A}(\lambda)$:
\begin{align}
0& \geq \frac{1}{2}\tilde{\balpha}^T \hat{\bSigma}\tilde{\balpha}-\tilde{\balpha}^T \hat{\bmu}-\frac{1}{2}\balpha^T \hat{\bSigma}\balpha+\balpha^T \hat{\bmu}+\lambda(\|\tilde{\balpha}\|_1-\|\balpha\|_1)  \nonumber \\
&= \frac{1}{2}(\tilde{\balpha}-\balpha)^T\hat{\bSigma}(\tilde{\balpha}-\balpha)+ (\hat{\bSigma}\balpha-\hat{\bmu})^T(\tilde{\balpha}-\balpha)+\lambda(\|\tilde{\balpha}\|_1-\|\balpha\|_1) \nonumber \\
&\geq \frac{1}{2}(\tilde{\balpha}-\balpha)^T\hat{\bSigma}(\tilde{\balpha}-\balpha)-\|\hat{\bSigma}\balpha-\hat{\bmu}\|_{\infty} \cdot \|\tilde{\balpha}-\balpha\|_1+\lambda(\|\tilde{\balpha}\|_1-\|\balpha\|_1)  \nonumber. \\
&\overset{(a)}{\geq}  \frac{1}{2}(\tilde{\balpha}-\balpha)^T\hat{\bSigma}(\tilde{\balpha}-\balpha)+\frac{\lambda}{2}(-\|\tilde{\balpha}-\balpha\|_1+2\|\tilde{\balpha}\|_1-2\|\balpha\|_1) \nonumber \\
&= \frac{1}{2}(\tilde{\balpha}-\balpha)^T\hat{\bSigma}(\tilde{\balpha}-\balpha)+\frac{\lambda}{2}\Big[\|\tilde{\balpha}_{S^c}\|_1-\|\tilde{\balpha}_S-\balpha_S\|_1+2(\|\tilde{\balpha}_S\|_1-\|\balpha_S\|_1)\Big] \nonumber \\
&\overset{(b)}{\geq}  \frac{1}{2}(\tilde{\balpha}-\balpha)^T\hat{\bSigma}(\tilde{\balpha}-\balpha)+\frac{\lambda}{2}(\|\tilde{\balpha}_{S^c}\|_1-3\|\tilde{\balpha}_S-\balpha_S\|_1) \label{primal:dev} \\
&\geq \frac{\lambda}{2}(\|\tilde{\balpha}_{S^c}\|_1-3\|\tilde{\balpha}_S-\balpha_S\|_1),  \nonumber
\end{align}
where $(a)$ holds by \eqref{basic:eq:one}, and $(b)$ is due to $\lonenorm{\balpha_S} \leq \lonenorm{\tilde{\balpha}_S-\balpha_S}+\lonenorm{\tilde{\balpha}_S}$. We thus obtain $\lonenorm{\tilde{\balpha}_{S^c}} \leq 3 \lonenorm{\tilde{\balpha}_S-\balpha_S}$, i.e., $\tilde{\balpha}-\balpha \in \cK(s)$.

To prove the second part, using the fact that $\tilde{\balpha}-\balpha \in \cK(s)$ on $\cA(\lambda)$, we can conclude that on the event $\mathcal{A}(\lambda)\cap \mathcal{B}(s, \kappa)$,
\begin{align}
\sqrt{(\tilde{\balpha}-\balpha)^T\hat{\bSigma}(\tilde{\balpha}-\balpha)} &\geq  \ltwonorm{\tilde{\balpha}-\balpha} \cdot \Big(\min_{\bu\in B_2(1) }\sqrt{\bu^T\bSigma\bu}-\max_{\bu\in \mathcal{K}(s)\cap B_2(1) }\big |\sqrt{\bu^T \bSigma \bu} - \sqrt{\bu^T \hat{\bSigma} \bu} \big|\Big) \nonumber \\
& \geq  \ltwonorm{\tilde{\balpha}-\balpha} \cdot (\lambda^{1/2}_{\min}(\bSigma)-\kappa)   \label{basic:eq:two}
\end{align}
Therefore, on the event $\mathcal{A}(\lambda)\cap \mathcal{B}(s, \kappa)$, we can continue from \eqref{primal:dev} to obtain that when $\kappa<\lambda^{1/2}_{\min}(\bSigma)$,
\begin{align*}
0&\geq\frac{1}{2} (\lambda^{1/2}_{\min}(\bSigma)-\kappa)^2 \ltwonorm{\tilde{\balpha}-\balpha}^2+\frac{\lambda}{2}(\|\tilde{\balpha}_{S^c}\|_1-3\|\tilde{\balpha}_S-\balpha_S\|_1)\\
&\overset{(c)}{\geq} \frac{1}{2} (\lambda^{1/2}_{\min}(\bSigma)-\kappa)^2 \ltwonorm{\tilde{\balpha}-\balpha}^2-\frac{3\lambda\sqrt{s}}{2}\|\tilde{\balpha}-\balpha\|_2 \\
&= \frac{\|\tilde{\balpha}-\balpha\|_2}{2}\Big[(\lambda^{1/2}_{\min}(\bSigma)-\kappa)^2\|\tilde{\balpha}-\balpha\|_2-3\lambda \sqrt{s}\Big],
\end{align*}
where $(c)$ is simply by Cauchy-Schwarz inequality $\lonenorm{\tilde{\balpha}_S-\balpha_S}\leq \sqrt{|S|}\cdot \ltwonorm{\tilde{\balpha}_S-\balpha_S}\leq \sqrt{s}\ltwonorm{\tilde{\balpha}-\balpha}$. The upper bound on $\|\tilde{\balpha}-\balpha\|_2$ follows.
\end{proof}

\begin{lem}\label{error:functional}
Denote $s=\lzeronorm{\balpha}$, and set $\gamma$ in \eqref{lasso:alpha} such that $\ltwonorm{\balpha}\leq \gamma$. On the event $\mathcal{A}(\lambda) \cap \mathcal{B}(s, \kappa)$ with $\kappa<\lambda^{1/2}_{\min}(\bSigma)$, it holds that
\[
|\tilde{\theta}-\theta| \leq \frac{24s\lambda^2(5\lambda_{\min}(\bSigma)+8\kappa^2)}{(\lambda_{\min}^{1/2}(\bSigma)-\kappa)^4}+2 |\balpha^T(\hat{\bmu}-\bmu)| + |\balpha^T(\hat{\bSigma}-\bSigma)\balpha|.
\]
\end{lem}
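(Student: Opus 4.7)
The plan is to write $\tilde{\theta}-\theta$ as a sum of four terms, two of which appear explicitly on the right-hand side of the claim. Using $\bmu=\bSigma\balpha$ to rewrite $\theta=2\bmu^T\balpha-\balpha^T\bSigma\balpha$, and substituting $\tilde{\balpha}=\balpha+\bDelta$ with $\bDelta:=\tilde{\balpha}-\balpha$ into $\tilde{\theta}=2\hat{\bmu}^T\tilde{\balpha}-\tilde{\balpha}^T\hat{\bSigma}\tilde{\balpha}$, a direct expansion yields
\[
\tilde{\theta}-\theta=2(\hat{\bmu}-\bmu)^T\balpha-\balpha^T(\hat{\bSigma}-\bSigma)\balpha+2(\hat{\bmu}-\hat{\bSigma}\balpha)^T\bDelta-\bDelta^T\hat{\bSigma}\bDelta.
\]
The first two pieces are precisely those kept on the right-hand side of the lemma, so the task reduces to bounding the ``remainder'' $|2(\hat{\bmu}-\hat{\bSigma}\balpha)^T\bDelta|+\bDelta^T\hat{\bSigma}\bDelta$ by the first summand in the stated bound.

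For the linear piece, on $\cA(\lambda)$ the triangle inequality gives $\linfnorm{\hat{\bmu}-\hat{\bSigma}\balpha}\le\linfnorm{\hat{\bmu}-\bmu}+\linfnorm{(\hat{\bSigma}-\bSigma)\balpha}\le\lambda/2$, and Lemma~\ref{l2error:alpha}(i) places $\bDelta$ in $\cK(s)$, whence $\lonenorm{\bDelta}\le 4\sqrt{s}\,\ltwonorm{\bDelta}$. For the quadratic piece, I recycle the intermediate KKT-type inequality from inside the proof of Lemma~\ref{l2error:alpha} (display~\eqref{primal:dev}), which yields $\bDelta^T\hat{\bSigma}\bDelta\le 3\lambda\lonenorm{\bDelta_S}\le 3\lambda\sqrt{s}\,\ltwonorm{\bDelta}$. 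Combining each with the $\ell_2$ bound $\ltwonorm{\bDelta}\le 3\lambda\sqrt{s}/(\lambda_{\min}^{1/2}(\bSigma)-\kappa)^2$ from Lemma~\ref{l2error:alpha}(ii) gives
\[
|2(\hat{\bmu}-\hat{\bSigma}\balpha)^T\bDelta|\le\frac{12\,s\lambda^2}{(\lambda_{\min}^{1/2}(\bSigma)-\kappa)^2}\quad\text{and}\quad \bDelta^T\hat{\bSigma}\bDelta\le\frac{9\,s\lambda^2}{(\lambda_{\min}^{1/2}(\bSigma)-\kappa)^2}.
\]

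Summing, the total remainder is at most $21\,s\lambda^2/(\lambda_{\min}^{1/2}(\bSigma)-\kappa)^2$. The last step is cosmetic: to match the exact form in the claim, I will use the elementary inequality $(\lambda_{\min}^{1/2}(\bSigma)-\kappa)^2\le 2\lambda_{\min}(\bSigma)+2\kappa^2$ to verify $24(5\lambda_{\min}(\bSigma)+8\kappa^2)/(\lambda_{\min}^{1/2}(\bSigma)-\kappa)^2\ge 21$, which upgrades the remainder bound to $24\,s\lambda^2(5\lambda_{\min}(\bSigma)+8\kappa^2)/(\lambda_{\min}^{1/2}(\bSigma)-\kappa)^4$, as required. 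I do not foresee a real obstacle: all the heavy lifting (the $\cA(\lambda)\cap\cB(s,\kappa)$ event, cone membership, and the $\ell_2$ error bound) is contained in Lemma~\ref{l2error:alpha}, and what remains is bookkeeping of constants through the final loosening.
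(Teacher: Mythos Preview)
Your proof is correct and, in fact, yields a tighter intermediate bound than the paper's. The approaches differ in structure: the paper establishes separate upper and lower bounds for $\tilde{\theta}-\theta$. For the upper bound it exploits the quadratic optimality $\theta=f(\balpha)\ge f(\tilde{\balpha})$ with $f(\bx)=2\bmu^T\bx-\bx^T\bSigma\bx$, expands, and is then forced to convert $(\balpha-\tilde{\balpha})^T\bSigma(\balpha-\tilde{\balpha})$ to its empirical counterpart via the event $\cB(s,\kappa)$, which is precisely where the $\kappa^2$ term enters. The lower bound is obtained separately from the basic inequality for $\tilde{\balpha}$. Your single algebraic decomposition keeps everything in terms of $\hat{\bSigma}$ from the start, so the quadratic remainder $\bDelta^T\hat{\bSigma}\bDelta$ is handled directly by~\eqref{primal:dev} without any $\bSigma\leftrightarrow\hat{\bSigma}$ transfer; $\cB(s,\kappa)$ is invoked only implicitly through Lemma~\ref{l2error:alpha}(ii). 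This is why you land at the cleaner bound $21\,s\lambda^2/(\lambda_{\min}^{1/2}(\bSigma)-\kappa)^2$ and must artificially loosen it to reproduce the $\kappa^2$ factor in the statement. Your route is shorter and sharper; the paper's route is perhaps more transparent about the two-sided nature of the estimate but pays for it in constants.
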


\begin{proof}
Let the support of $\balpha$ be indexed by the set $S \subseteq [p]$. Define the function $f: \RR^p \rightarrow \RR$ as $f(\bx)=2\bmu^T\bx -\bx^T\bSigma \bx$. It is straightforward to verify that $f(\balpha)=\theta$, and $\bx=\balpha$ is the global maximizer of $f(\bx)$. Therefore,
\begin{equation}\label{simple:fact}
\theta =f(\balpha) \geq f(\tilde{\balpha})=2\bmu^T \tilde{\balpha}-\tilde{\balpha}^T\bSigma \tilde{\balpha},
\end{equation}
from which we can proceed to derive an upper bound on $\tilde{\theta}-\theta$,
\begin{align}\label{first:upper}
\tilde{\theta}-\theta&= (2\hat{\bmu}^T\tilde{\balpha}-\tilde{\balpha}^T\hat{\bSigma} \tilde{\balpha}) -f(\balpha) \leq (2\hat{\bmu}^T\tilde{\balpha}-\tilde{\balpha}^T\hat{\bSigma} \tilde{\balpha}) -f(\tilde{\balpha}) \nonumber \\
&=(2\hat{\bmu}^T\tilde{\balpha}-\tilde{\balpha}^T\hat{\bSigma} \tilde{\balpha})- (2\bmu^T \tilde{\balpha}-\tilde{\balpha}^T\bSigma \tilde{\balpha}) \nonumber \\
&\leq -2\balpha^T(\bSigma-\hat{\bSigma})(\balpha-\tilde{\balpha})+(\balpha-\tilde{\balpha})^T\bSigma(\balpha-\tilde{\balpha})-\balpha^T(\hat{\bSigma}-\bSigma)\balpha  \nonumber \\
&~~~-2(\tilde{\balpha}-\balpha)^T(\bmu-\hat{\bmu})-2\balpha^T(\bmu-\hat{\bmu})   \nonumber \\
&\leq 2\|(\hat{\bSigma}-\bSigma)\balpha\|_{\infty}\cdot \|\tilde{\balpha}-\balpha\|_1+(\balpha-\tilde{\balpha})^T\bSigma(\balpha-\tilde{\balpha})+|\balpha^T(\hat{\bSigma}-\bSigma)\balpha|    \nonumber \\
&~~~+ 2\|\hat{\bmu}-\bmu\|_{\infty}\cdot \|\tilde{\balpha}-\balpha\|_1 +2|\balpha^T(\hat{\bmu}-\bmu)|.
\end{align}
Moreover, \eqref{primal:dev} implies that on $\mathcal{A}(\lambda)$,
\[
(\balpha-\tilde{\balpha})^T\hat{\bSigma}(\balpha-\tilde{\balpha}) \leq 3\lambda\|\tilde{\balpha}_S-\balpha_S\|_1\leq 3\lambda \sqrt{s}\ltwonorm{\tilde{\balpha}-\balpha},
\]
which further yields that on $\mathcal{A}(\lambda) \cap \mathcal{B}(s, \kappa)$,
\begin{align}
\label{key:piece:relax}
(\balpha-\tilde{\balpha})^T\bSigma (\balpha-\tilde{\balpha})& \leq 2(\balpha-\tilde{\balpha})^T\hat{\bSigma}(\balpha-\tilde{\balpha}) +2\ltwonorm{\balpha-\tilde{\balpha}}^2\cdot \max_{\bu\in \mathcal{K}(s)\cap B_2(1) }\big |\sqrt{\bu^T \bSigma \bu} - \sqrt{\bu^T \hat{\bSigma} \bu} \big|^2 \nonumber \\
&\leq 6\lambda \sqrt{s}\ltwonorm{\tilde{\balpha}-\balpha}+2\kappa^2\ltwonorm{\balpha-\tilde{\balpha}}^2
\end{align}

Since $\|\tilde{\balpha}_{S^c}\|_1\leq 3\|\tilde{\balpha}_S-\balpha_S\|_1$ on event $\mathcal{A}(\lambda)$ by Lemma \ref{l2error:alpha}, it holds that
\begin{equation}\label{basic:eq:three}
\|\tilde{\balpha}-\balpha\|_1\leq 4 \|\tilde{\balpha}_S-\balpha_S\|_1\leq 4\sqrt{s}\|\tilde{\balpha}-\balpha\|_2.
\end{equation}
Hence on the event $\mathcal{A}(\lambda) \cap \mathcal{B}(s, \kappa)$, putting together the results \eqref{l2bound:alpha}, \eqref{first:upper}, \eqref{key:piece:relax} and \eqref{basic:eq:three} gives us that
\begin{align}
\tilde{\theta}-\theta &\leq 2\kappa^2 \|\tilde{\balpha}-\balpha\|^2_2+10\sqrt{s}\lambda \|\tilde{\balpha}-\balpha\|_2+  |\balpha^T(\hat{\bSigma}-\bSigma)\balpha|+2 |\balpha^T(\hat{\bmu}-\bmu)|. \nonumber \\
&\leq \frac{6s\lambda^2(5\lambda_{\min}(\bSigma)+8\kappa^2)}{(\lambda^{1/2}_{\min}(\bSigma)-\kappa)^4}+ |\balpha^T(\hat{\bSigma}-\bSigma)\balpha|+ 2 |\balpha^T(\hat{\bmu}-\bmu)|. \label{upper:bound:alphal2}
\end{align}
We now turn to lower bounding $\tilde{\theta}-\theta$. On the event $\mathcal{A}(\lambda)$,
\begin{align}
\tilde{\theta}-\theta &= (2\hat{\bmu}^T\tilde{\balpha}-\tilde{\balpha}^T\hat{\bSigma} \tilde{\balpha}) -(2\bmu^T\balpha-\balpha^T\bSigma \balpha) \nonumber \\
&\overset{(a)}{\geq} (2\hat{\bmu}^T\balpha-\balpha^T\hat{\bSigma} \balpha-2\lambda \|\balpha\|_1+2\lambda \|\tilde{\balpha}\|_1)-(2\bmu^T\balpha-\balpha^T\bSigma \balpha) \nonumber \\
&= -\balpha^T(\hat{\bSigma}-\bSigma)\balpha-2\balpha^T(\bmu-\hat{\bmu}) +2\lambda (\|\tilde{\balpha}\|_1-\|\balpha\|_1) \nonumber \\
&\geq -|\balpha^T(\hat{\bSigma}-\bSigma)\balpha|-2|\balpha^T(\bmu-\hat{\bmu})|-2\lambda \|\tilde{\balpha}-\balpha\|_1 \label{new:back:look} \\
&\overset{(b)}{\geq}-8\lambda \sqrt{s}\ltwonorm{\tilde{\balpha}-\balpha}-|\balpha^T(\hat{\bSigma}-\bSigma)\balpha|-2 |\balpha^T(\hat{\bmu}-\bmu)|   \nonumber \\
&\overset{(c)}{\geq} \frac{-24s\lambda^2}{(\lambda^{1/2}_{\min}(\bSigma)-\kappa)^2}-|\balpha^T(\hat{\bSigma}-\bSigma)\balpha|-2 |\balpha^T(\hat{\bmu}-\bmu)|. \label{lower:bound:alphal2}
\end{align}
Here, $(a)$ holds by the definition of $\tilde{\balpha}$; $(b)$ is by \eqref{basic:eq:three}; and $(c)$ is due to \eqref{l2bound:alpha}. Combining the upper bound \eqref{upper:bound:alphal2} and lower bound \eqref{lower:bound:alphal2} for $\tilde{\theta}-\theta$ completes the proof.
\end{proof}

\subsection{Proof of Theorem \ref{lower:bound} and Proposition \ref{prop:one}}
\label{proof:thm:two:index}

Proposition \ref{prop:one} is a simple by-product of Theorem \ref{lower:bound}. In the following, we present the proof for Theorem \ref{lower:bound}. The proof of Proposition \ref{prop:one} will be appropriately mentioned at some point in the proof.

\vspace{0.4cm}

\noindent To obtain the lower bounds, it is sufficient to consider Gaussian distributions and covariance matrices with bounded maximum eigenvalue $\lambda_{\max}(\bSigma)\leq c_U$. In this section, a given pair $(\bmu, \bSigma)$ that belongs to $\cH(s,\tau)$ represents a Gaussian distribution with mean $\bmu$ and covariance matrix $\bSigma$. Let $\mathbb{P}^n_{(\bmu,\bSigma)}$ be the joint distribution of n i.i.d samples from $N(\bmu,\bSigma)$. We use $C_1,C_2,\ldots$ to denote positive constants possibly depending on $c_L,c_U$.

\subsubsection{Lower bound for $\mathbb{E}|\hat{\theta}-\theta|$} \label{lowerbound:theta}

We first derive the lower bound $\tau \wedge \frac{\tau + \sqrt{\tau}}{\sqrt{n}}$. This can be obtained by reducing the estimation problem to a problem of testing between two distributions $N(\bmu, \bSigma)$ and $N(\tilde{\bmu},\tilde{\bSigma})$. Specifically, according to Theorem 2.2 (iii) in \cite{tsybakov09}, a lower bound
\[
\inf_{\hat{\theta}}\sup_{(\bmu,\bSigma)\in \mathcal{H}(s,\tau)}\mathbb{E}|\hat{\theta}-\theta| \geq \frac{\zeta}{4}e^{-\eta}
\]
holds if the followings are true:
\begin{itemize}
\item[(1)] $(\bmu,\bSigma), (\tilde{\bmu},\tilde{\bSigma})\in \mathcal{H}(s, \tau)$.
\item[(2)] $D_{KL}\Big(\mathbb{P}^n_{(\bmu,\bSigma)} \| \mathbb{P}^n_{(\tilde{\bmu},\tilde{\bSigma})}\Big) \leq \eta $, where $D_{KL}(\cdot\|\cdot)$ is the KL-divergence.
\item[(3)] $|\bmu^T\bSigma^{-1}\bmu-\tilde{\bmu}^T\tilde{\bSigma}^{-1}\tilde{\bmu}| \geq 2\zeta$.
\end{itemize}
Depending on the scaling of $\tau$, we construct different testing problems.
\begin{itemize}
\item[(i)] $\tau \geq 1$. We consider $\bmu=\tilde{\bmu}=(\sqrt{\tau c_L},0,\ldots, 0)^T, \bSigma=\diag(c_L,c_L,\ldots,c_L), \tilde{\bSigma}=\diag(c_L+\frac{c_U-c_L}{\sqrt{n}},c_L,\ldots,c_L)$. We verify (1)-(3) one by one. Part (1) is obvious. For Part (3), it is straightforward to see
\begin{equation*}
|\bmu^T\bSigma^{-1}\bmu-\tilde{\bmu}^T\tilde{\bSigma}^{-1}\tilde{\bmu}|=\frac{\tau (c_U-c_L)}{(\sqrt{n}-1)c_L+c_U} \geq \frac{\tau(c_U-c_L)}{\sqrt{n}(c_U+c_L)}.
\end{equation*}
Regarding Part (2), we have
\begin{align*}
&D_{KL}\Big(\mathbb{P}^n_{(\bmu,\bSigma)} \| \mathbb{P}^n_{(\tilde{\bmu},\tilde{\bSigma})}\Big)=\frac{n}{2}\big[\log |\tilde{\bSigma}|-\log|\bSigma|-p+\mbox{tr}(\bSigma\tilde{\bSigma}^{-1})\big] \\
&=\frac{n}{2}\Big(\log \big(1+\frac{c_U/c_L-1}{\sqrt{n}}\big)-\frac{c_U/c_L-1}{\sqrt{n}+c_U/c_L-1} \Big) \\
&\leq \frac{n}{2}\Big(\frac{c_U/c_L-1}{\sqrt{n}}+\frac{(c_U/c_L-1)^2}{2n}-\frac{c_U/c_L-1}{\sqrt{n}+c_U/c_L-1}  \Big) \leq \frac{(c_U-c_L)^2}{c^2_L},
\end{align*}
where we have used the fact that $\log(1+z)\leq z+\frac{z^2}{2}$ for $z\geq 0$. Above all, we obtain the lower bound
\[
\inf_{\hat{\theta}}\sup_{(\bmu,\bSigma)\in \mathcal{H}(s,\tau)}\mathbb{E}|\hat{\theta}-\theta| \geq  C_1 \frac{\tau}{\sqrt{n}}.
\]

\item[(ii)] $\frac{1}{n}\leq \tau \leq 1$. We set up the parameters: $\bmu=(\sqrt{\frac{\tau c_L}{2}}, 0,\ldots, 0)^T, \tilde{\bmu}=(\sqrt{\frac{\tau c_L}{2}}+\sqrt{\frac{c_U-c_L}{2n}},0,\ldots,0)^T, \bSigma=\tilde{\bSigma}=\diag(c_U,\ldots,c_U)$. Since $\frac{1}{n}\leq \tau$,
\[
\tilde{\bmu}^T\tilde{\bSigma}^{-1}\tilde{\bmu} \leq c^{-1}_U(\tau c_L+\frac{c_U-c_L}{n})\leq \tau.
\]
Then Part (1) follows directly. For Part (2),
\[
D_{KL}\Big(\mathbb{P}^n_{(\bmu,\bSigma)} \| \mathbb{P}^n_{(\tilde{\bmu},\tilde{\bSigma})} \Big)=\frac{n\|\bmu-\tilde{\bmu}\|_2^2}{2c_U}=\frac{c_U-c_L}{4c_U}.
\]
Finally, $|\bmu^T\bSigma^{-1}\bmu-\tilde{\bmu}^T\tilde{\bSigma}^{-1}\tilde{\bmu}| \geq  \sqrt{\frac{c_L(c_U-c_L)\tau}{c^2_U n}}$. We thus have the bound
\[
\inf_{\hat{\theta}}\sup_{(\bmu,\bSigma)\in \mathcal{H}(s,\tau)}\mathbb{E}|\hat{\theta}-\theta| \geq C_2 \sqrt{\frac{\tau}{n}}.
\]

\item[(iii)] $\tau \leq \frac{1}{n}$. We consider the parameters $\bmu=(\frac{\sqrt{\tau c_L}}{2},0,\ldots, 0)^T, \tilde{\bmu}=(\sqrt{\tau c_L},0,\ldots, 0)^T, \bSigma=\tilde{\bSigma}=\diag(c_L,\ldots,c_L)$. Part (1) clearly holds. Regarding Part (2), since $\tau  \leq \frac{1}{n}$
\[
D_{KL}(\mathbb{P}^n_{(\bmu,\bSigma)} \| \mathbb{P}^n_{(\tilde{\bmu},\tilde{\bSigma})} )=\frac{n\|\bmu-\tilde{\bmu}\|_2^2}{2c_L}=\frac{n\tau}{8}\leq 1.
\]
For Part (3), we have $|\bmu^T\bSigma^{-1}\bmu-\tilde{\bmu}^T\tilde{\bSigma}^{-1}\tilde{\bmu}|=\frac{3\tau}{4}$. Hence,
\[
\inf_{\hat{\theta}}\sup_{(\bmu,\bSigma)\in \mathcal{H}(s,\tau)}\mathbb{E}|\hat{\theta}-\theta| \geq C_3 \tau.
\]
\end{itemize}

\noindent We proceed to obtain the other lower bound $\big[\tau \wedge \frac{(\tau+1)s\log p}{n}\big]c_0 \exp(-e^{2s^2p^{c_6 c_0-1}})$. We apply the method of two fuzzy hypotheses, a generalization of the two-point testing technique. The method has been used to obtain minimax lower bound for functional estimation problems \citep{frw15}. Denote $\mathcal{S}=\{S\subseteq [p-1]: |S|=s-1\}$. We reduce the estimation problem to a testing between $\mathbb{P}^n_{(\bmu^0,\bSigma^0)}$ and $\frac{1}{|\mathcal{S}|}\sum_{S\in \mathcal{S}}\mathbb{P}^n_{(\bmu^S,\bSigma^S)}$, where the parameters $(\bmu^0, \bSigma^0), \{(\bmu^S,\bSigma^S)\}_{S\in \cS}$ will be constructed adaptively depending on the scaling of $\tau$. According to Theorem 2.15 in \cite{tsybakov09}, if we are able to show the following:
\begin{itemize}
\item[(1)] $(\bmu^0,\bSigma^0),(\bmu^S,\bSigma^S) \in \mathcal{H}(s,\tau), \forall S\in \mathcal{S}$,
\item[(2)] $\chi^2\Big(\frac{1}{|\mathcal{S}|}\sum_{S\in \mathcal{S}}\mathbb{P}^n_{(\bmu^S,\bSigma^S)},\mathbb{P}^n_{(\bmu^0,\bSigma^0)} \Big) \leq \eta$, where $\chi^2(\cdot,\cdot)$ is the $\chi^2$-divergence,
\item[(3)] $|(\bmu^0)^T(\bSigma^0)^{-1}\bmu^0-(\bmu^S)^T(\bSigma^S)^{-1}\bmu^S| \geq 2\zeta, \forall S \in \mathcal{S}$,
\end{itemize}
then
\[
\inf_{\hat{\theta}}\sup_{(\bmu, \bSigma)\in \mathcal{H}(s,\tau)}\mathbb{E}|\hat{\theta}-\theta| \geq \frac{\zeta}{4}e^{-\eta}.
\]
We perform the analyses for three different cases, respectively.

\begin{itemize}
\item[(i)] $\tau \geq 1$. Let $\bone \in\mathbb{R}^{p-1}$ be the vector with all the entries equal to 1. Set $\bmu^0=\bmu^S=(\sqrt{\frac{(c_L+c_U)\tau}{4}},0,\ldots,0)^T, \bSigma^0=\frac{c_L+c_U}{2}\bI_p$, and for $S\in \cS$
\[
\bSigma^S=\frac{c_L+c_U}{2}\cdot
\begin{bmatrix}
1& a\bone^T_S\\
a\bone_S&I_{p-1}
\end{bmatrix}
, ~~a=\frac{(c_U-c_L)\rho}{(c_U+c_L)\sqrt{2(s-1)}},~0\leq \rho \leq 1,
\]
where the value of $\rho$ will be specified later. For Part (1), first note that $(\bmu^0,\bSigma^0)\in \mathcal{H}(s,\tau)$ is trivially true. Moreover, by blockwise matrix inversion we obtain
\[
(\bSigma^S)^{-1}=\frac{2}{c_L+c_U}\bI_p+\frac{2}{c_L+c_U}\cdot
\begin{bmatrix}
\frac{a^2(s-1)}{1-a^2(s-1)}& \frac{-a}{1-a^2(s-1)}\bone^T_S \\
\frac{-a}{1-a^2(s-1)} \bone_S & \frac{a^2}{1-a^2(s-1)}\bone_S\bone^T_S
\end{bmatrix}.
\]
Hence,
\[
\|(\bSigma^S)^{-1}\bmu^S\|_0\leq s, \quad (\bmu^S)^T(\bSigma^S)^{-1}\bmu^S=\frac{\tau}{2-\frac{(c_U-c_L)^2\rho^2}{(c_L+c_U)^2}}\leq \tau.
\]
To prove $(\bmu^S,\bSigma^S)\in \mathcal{H}(s,\tau)$, it remains to show $c_L \leq \lambda_{\min}(\bSigma^S)\leq \lambda_{\max}(\bSigma^S)\leq c_U$. This holds because by Weyl's inequality,
\begin{align*}
&\max(|\lambda_{\max}(\bSigma^S)-\frac{c_L+c_U}{2}|, |\lambda_{\min}(\bSigma^S)-\frac{c_L+c_U}{2}|)  \\
&\leq \|\bSigma^S-\bSigma^0\|_F=\frac{(c_U-c_L)\rho}{2}\leq \frac{c_U-c_L}{2}.
\end{align*}
Regarding Part (3), a straightforward calculation delivers
\[
|(\bmu^0)^T(\bSigma^0)^{-1}\bmu^0-(\bmu^S)^T(\bSigma^S)^{-1}\bmu^S| \geq \frac{(c_U-c_L)^2\rho^2\tau}{4(c_L+c_U)^2}.
\]
Finally, we bound the $\chi^2$-divergence in Part (2). A similar $\chi^2$-divergence has been analyzed in \cite{frw15}. A simple change of variables enables us to directly borrow their results to obtain
\begin{align*}
&\chi^2\Big(\frac{1}{|\mathcal{S}|}\sum_{S\in \mathcal{S}}\mathbb{P}^n_{(\bmu^S,\bSigma^S)},\mathbb{P}^n_{(\bmu^0,\bSigma^0)}\Big) =\int \frac{\Big[\frac{1}{|\mathcal{S}|}\sum_{S\in \mathcal{S}}\prod_{i=1}^n\mathbb{P}_{(\bmu^S,\bSigma^S)}(\bx_i) \Big]^2}{\prod_{i=1}^n\mathbb{P}_{(\bmu^0,\bSigma^0)}(\bx_i)}d\bx_1\cdots d\bx_n-1 \\
&= \frac{1}{|S|^2}\sum_{S\in \mathcal{S}}\sum_{\tilde{S}\in \mathcal{S}} \Big| \frac{2}{c_L+c_U}(\bSigma^S+\bSigma^{\tilde{S}})-\frac{4}{(c_L+c_U)^2}\bSigma^S\bSigma^{\tilde{S}} \Big|^{-n/2}   -1 \\
&= \frac{1}{|S|^2}\sum_{S\in \mathcal{S}}\sum_{\tilde{S}\in \mathcal{S}}|1-a^2\bone^T_S\bone_{\tilde{S}}|^{-n}-1 \leq \Big[(e^{2na^2}-1)\frac{s}{p-1}+1\Big]^s \leq e^{\frac{s^2e^{2na^2}}{p-1}},
\end{align*}
where the last two equations and the first inequality follow from Lemma A.1 in \cite{frw15}, and the second inequality is due to the fact that $\log(1+x)\leq x, \forall x\geq 0$. Therefore, by choosing $\rho=\rho_0\sqrt{\frac{s\log p}{n}}$, we can conclude the lower bound,
\begin{align*}
\inf_{\hat{\theta}}\sup_{(\bmu, \bSigma)\in \mathcal{H}(s,\tau)}\mathbb{E}|\hat{\theta}-\theta| \geq C_1 \rho_0^2 \tau\frac{s\log p}{n}\exp(-e^{2s^2p^{C_2\rho_0^2-1}}),~~ \forall \rho_0 \leq \sqrt{\frac{n}{s\log p}}.
\end{align*}

\item[(ii)] $\frac{s\log p}{n} \leq  \tau \leq 1$. We set the parameters: $\bmu^0=\bzero, \bmu^S=(\gamma_0\sqrt{\frac{c_L\log p}{n}}\bone_S, 0)^T$ for $\gamma_0 \in (0,1]$, and $\bSigma^0=\bSigma^S=c_L\bI_p$. Clearly, $(\bmu^0,\bSigma^0) \in \mathcal{H}(s,\tau)$. Also since $\frac{s\log p}{n} \leq \tau$ and $\gamma_0\leq 1$,
\[
(\bmu^S)^T(\bSigma^S)^{-1}\bmu^S= \frac{\gamma_0^2(s-1)\log p}{n} \leq \tau,
\]
thus $(\bmu^S,\bSigma^S)\in \mathcal{H}(s,\tau), \forall S \in \cS$. For Part (3), we obtain
\[
|(\bmu^0)^T(\bSigma^0)^{-1}\bmu^0-(\bmu^S)^T(\bSigma^S)^{-1}\bmu^S|=\frac{\gamma_0^2(s-1)\log p}{n},~\forall S\in \mathcal{S}.
\]
The key step is to bound the $\chi^2$-divergence in Part (2):
\begin{align*}
& \chi^2 \Big(\frac{1}{|\mathcal{S}|}\sum_{S\in \mathcal{S}}\mathbb{P}^n_{(\bmu^S,\bSigma^S)},\mathbb{P}^n_{(\bmu^0,\bSigma^0)} \Big)  \\
=& \frac{1}{|\mathcal{S}|^2}\sum_{S\in \mathcal{S}}\sum_{\tilde{S}\in \mathcal{S}}\Big[(2\pi)^{-p/2}\int e^{-\frac{1}{2}\|\bx\|^2_2+c_L^{-1/2}(\bmu^S+\bmu^{\tilde{S}})^T\bx-\frac{1}{2c_L}\|\bmu^S\|_2^2-\frac{1}{2c_L}\|\bmu^{\tilde{S}}\|_2^2}d\bx \Big]^n-1 \\
=&\frac{1}{|\mathcal{S}|^2}\sum_{S\in\mathcal{S}}\sum_{\tilde{S}\in \mathcal{S}}e^{nc_L^{-1}(\bmu^S)^T\bmu^{\tilde{S}}}-1=\frac{1}{|\mathcal{S}|^2}\sum_{S\in\mathcal{S}}\sum_{\tilde{S} \in \mathcal{S}}e^{\gamma_0^2 \bone^T_{S} \bone_{\tilde{S}}\log p}-1 \\
=& \mathbb{E}_ke^{k \gamma_0^2 \log p }-1 \leq \Big[(e^{\gamma_0^2\log p}-1)\frac{s}{p-1}+1\Big]^s \leq e^{2s^2p^{\gamma_0^2-1}},
\end{align*}
where the random number $k$ equals $|S\cap \tilde{S}|$ with $S,\tilde{S}$ being two independent copies of a set randomly chosen from $\mathcal{S}$; the first inequality is again due to the result in Lemma A.1 from \cite{frw15}; and the last inequality holds because $\log(1+x)\leq x, \forall x\geq 0$. As a result, we obtain the lower bound,
\[
\inf_{\hat{\theta}}\sup_{(\bmu, \bSigma)\in \mathcal{H}(s,\tau)}\mathbb{E}|\hat{\theta}-\theta| \geq C_3 \gamma_0^2 \frac{s\log p}{n}\exp(-e^{2s^2p^{\gamma_0^2-1}}),~~ \forall \gamma_0 \in (0,1].
\]

\textbf{Remark:} Proposition \ref{prop:one} can be proved similarly as in Case (ii), by setting $\bmu^0=\bzero, \bmu^S=(\frac{\gamma_0}{\sqrt{s}}\bone_S, 0)^T, \bSigma^0=\bSigma^S=c_L\bI_p$, where $s=\frac{p}{n}$ and $\gamma_0>0$ is some fixed constant. We thus do not repeat the details.

\item[(iii)] $\tau \leq \frac{s\log p}{n}$. We construct the same parameters as we did in Case (ii) except setting $\bmu^S=(\gamma_0 \sqrt{\frac{c_L\tau}{s}}\bone_S,0)^T$. It is direct to confirm that Part (1) holds, and in Part (3),
\[
|(\bmu^0)^T(\bSigma^0)^{-1}\bmu^0-(\bmu^S)^T(\bSigma^S)^{-1}\bmu^S|=\frac{\gamma_0^2(s-1)\tau}{s}.
\]
Regarding Part (2), since $\tau \leq \frac{s\log p}{n}$, the bound for the $\chi^2$-divergence derived in Case (ii) continues to hold here. Hence, we have
\begin{eqnarray*}
\inf_{\hat{\theta}}\sup_{(\bmu, \bSigma)\in \mathcal{H}(s,\tau)}\mathbb{E}|\hat{\theta}-\theta| \geq C_4 \gamma_0^2 \tau \exp(-e^{2s^2p^{\gamma_0^2-1}}),~~ \forall \gamma_0 \in (0,1].
\end{eqnarray*}

\end{itemize}

\subsubsection{Lower bound for $\mathbb{E}\|\hat{\balpha}-\balpha\|^2_2$}
\label{lower:bound:alpha:sparse}

Since
\[
\inf_{\hat{\balpha}}\sup_{(\bmu, \bSigma)\in \mathcal{H}(s,\tau)}\mathbb{E}\|\hat{\balpha}-\balpha\|^2_2 \geq \Big(\inf_{\hat{\balpha}}\sup_{(\bmu, \bSigma)\in \mathcal{H}(s,\tau)}\mathbb{E}\|\hat{\balpha}-\balpha\|_2\Big)^2,
\]
the rest of the proof will be focused on obtaining
\[
\inf_{\hat{\balpha}}\sup_{(\bmu, \bSigma)\in \mathcal{H}(s,\tau)}\mathbb{E}\|\hat{\balpha}-\balpha\|_2 \geq C_1 \Big[\sqrt{\tau} \wedge \sqrt{\frac{(1+\tau)s\log(p/s)}{n}}\Big].
\]
We use Fano's inequality \citep{ct12} to lower bound $\mathbb{E}\|\hat{\balpha}-\balpha\|_2$. More specifically, according to Corollary 2.6 in \cite{tsybakov09}, if the followings hold,
\begin{itemize}
\item[(1)] $(\bmu^j,\bSigma^j)\in \mathcal{H}(s,\tau), j=0,1,\ldots, M$, for $M\geq 2$,
\item[(2)] $\frac{1}{M+1}\sum_{j=1}^M D_{KL}\Big(\mathbb{P}^n_{(\bmu^j,\bSigma^j)} \| \mathbb{P}^n_{(\bmu^0,\bSigma^0)} \Big) \leq  \eta \log M$ for some $\eta \in (0,1)$,
\item[(3)] $\|(\bSigma^i)^{-1}\bmu^i-(\bSigma^j)^{-1}\bmu^j\|_2 \geq 2 \zeta, 0\leq i < j \leq M$,
\end{itemize}
then
\begin{eqnarray*}
\inf_{\hat{\balpha}}\sup_{(\bmu, \bSigma)\in \mathcal{H}(s,\tau)}\mathbb{E}\|\hat{\balpha}-\balpha\|_2  \geq \zeta \cdot \Big(\frac{\log(M+1)-\log 2}{\log M}-\eta\Big).
\end{eqnarray*}
As in Section \ref{lowerbound:theta}, we will construct different parameters $\{(\bmu^j,\bSigma^j)\}_{j=0}^M$ based on the scaling of $\tau$, and verify Parts (1)-(3) to derive the lower bounds.

\begin{itemize}
\item[(i)] $\tau \geq 1$. Let $\Lambda=\{1,2,\ldots, p-\frac{s}{2}\}$, $\cQ$ be the set of all subsets of $\Lambda$ with cardinality $\frac{s}{2}$, and $\bone \in \mathbb{R}^{p-s/2}$ be the vector with all the elements equal to 1. For each $0\leq j \leq M$, we consider
\[
\bmu^j=\bar{\bmu}=(\underbrace{\sqrt{c_L\tau/s}, \sqrt{c_L\tau/s},\ldots, \sqrt{c_L\tau /s}}_{\frac{s}{2}~ {\rm times}}, 0,\ldots, 0)^T, ~~\bSigma^j=c_L \bI_p +\frac{(c_0-c_L)\ba^j(\ba^j)^T}{\|\ba^j\|^2_2},
\]
where $c_0  \in [c_L,c_U], \ba^j=\bar{\bmu}+\sqrt{\frac{c_L\tau}{s}}(\bzero,\bone_{Q^j}), Q^j\in \cQ$. The specific choice of $c_0, Q^j, M$ will be made clear later. We first verify Part (1). Observe that $\lambda_{\min}(\bSigma^j)=c_L, \lambda_{\max}(\bSigma^j)=c_0 \leq c_U$. Moreover, applying Woodbury matrix identity gives
\[
(\bSigma^j)^{-1}=\frac{1}{c_L}\bI_p-\frac{(c_0-c_L)\ba^j(\ba^j)^T}{c_Lc_0\|\ba^j\|_2^2}.
\]
Hence, $(\bmu^j)^T (\bSigma^j)^{-1}\bmu^j \leq \frac{\|\bar{\bmu}\|^2_2}{c_L} <\tau$, and $\|(\bSigma^j)^{-1}\bmu^j\|_0 \leq s$.
For Part (3), $\forall~ 0\leq i < j \leq M$,
\begin{equation}\label{pair:distance}
\|(\bSigma^i)^{-1}\bmu^i-(\bSigma^j)^{-1}\bmu^j\|_2=\frac{c_0-c_L}{2c_0c_L}\|\ba^i-\ba^j\|_2=\frac{(c_0-c_L)\sqrt{\tau}}{2c_0\sqrt{c_Ls}}\sqrt{s-2|Q^j\cap Q^i|}.
\end{equation}
Regarding Part (2), we can do the following calculations,
\begin{align}
&\frac{1}{M+1}\sum_{j=1}^M D_{KL}\Big(\mathbb{P}^n_{(\bmu^j,\bSigma^j)} \| \mathbb{P}^n_{(\bmu^0,\bSigma^0)} \Big)=\frac{1}{M+1}\sum_{j=1}^M \frac{n}{2}\Big(\log|\bSigma^0|-\log |\bSigma^j|-p+\mbox{tr}(\bSigma^j(\bSigma^0)^{-1}) \Big) \nonumber \\
&=\frac{1}{M+1}\sum_{j=1}^M \frac{n(c_0-c_L)^2(\|\ba^j\|_2^2 \cdot \|\ba^0\|_2^2-((\ba^0)^T\ba^j)^2)}{2c_0c_L\|\ba^j\|_2^2 \cdot \|\ba^0\|_2^2} \leq \frac{n(c_0-c_L)^2}{2c_0c_L}. \label{kl:divergence}
\end{align}
Based on \eqref{pair:distance} and \eqref{kl:divergence}, we now choose the value for $c_0, M$ and $Q^j~(j=0,\ldots,M)$. To obtain tighter lower bounds, it is desirable to make $M$ as large as possible while keeping the norms in \eqref{pair:distance} ``not small" (equivalently $|Q^j\cap Q^i|, i\neq j$ ``not large"). Towards that goal, we utilize an existing combinatorics result (cf. Lemma 4 in \cite{bm01}), which implies that if $p \geq 2s$ then there exists a subset $\mathcal{C}$ of $\cQ$, such that $|Q^i\cap Q^j|\leq \frac{s}{4}$ for all $Q^i \neq Q^j \in \mathcal{C}$, and
\[
\log |\mathcal{C}|\geq \frac{s}{4}\log \frac{e(p-s/2)}{4s}\geq \frac{s}{4}\log \frac{3ep}{16s}.
\]
Thus we choose $\{Q^j\}_{j=0}^M =\mathcal{C}$, and $M=|\mathcal{C}|-1$. Accordingly, from \eqref{kl:divergence} we can set $c_0=c_L(1+\sqrt{\frac{\log \frac{1}{2}+\frac{s}{4}\log\frac{3ep}{16s}}{n}})$ to satisfy Part (2) with $\eta=\frac{1}{2}$, given the condition that $p/s>C_2$. It is also clear that $c_0 \in [c_L, c_U]$ as long as $\frac{s\log p/s}{n} \leq C_3$. Finally, using the property $|Q^i\cap Q^j|\leq \frac{s}{4}$ for $i\neq j$, Equation \eqref{pair:distance} implies that Part (3) holds for $\zeta=C_4 \sqrt{\frac{\tau s\log (p/s)}{n}}$. Above all, we reach the lower bound,
\[
\inf_{\hat{\balpha}}\sup_{(\bmu, \bSigma)\in \mathcal{H}(s,\tau)}\mathbb{E}\|\hat{\balpha}-\balpha\|_2 \geq C_5  \sqrt{\frac{\tau s\log (p/s)}{n}}.
\]

\item[(ii)] $\frac{s \log (p/s)}{n}\leq \tau \leq 1$. With a bit abuse of notations, let $\cQ$ be the set of all subsets of $\{1,2,\ldots, p\}$ with cardinality $s$. We set the parameters:
\[
\bmu^j=\sqrt{\frac{\rho c_L\log(p/s)}{n}}\bone_{Q^j}, ~\rho \in (0,1), ~Q^j \in \cQ,~~ \bSigma^j=c_L \bI_p, ~j=0,1,\ldots, M.
\]
We will specify $M$ and $\{Q^j\}_{j=0}^M$ shortly. Since $\frac{s \log (p/s)}{n}\leq \tau$, it is straightforward to confirm that Part (1) holds. In Parts (2) and (3), we obtain respectively,
\[
\frac{1}{M+1}\sum_{j=1}^M D_{KL}\Big(\mathbb{P}^n_{(\bmu^j,\bSigma^j)} \| \mathbb{P}^n_{(\bmu^0,\bSigma^0)} \Big) = \frac{\rho \log(p/s)}{M+1}\sum_{j=1}^M(s-|Q^j\cap Q^0|)\leq \rho s\log (p/s),
\]
and
\[
\|(\bSigma^i)^{-1}\bmu^i-(\bSigma^j)^{-1}\bmu^j\|_2=\sqrt{\frac{2\rho \log(p/s)}{nc_L}}\sqrt{s-|Q^i \cap Q^j|}.
\]
We then use the same arguments as in Case (i) to choose $M$ and $\{Q^j\}_{j=0}^M$. Under the condition that $p/s>C_2$, we will be able to choose sufficiently small $\rho$ such that $\rho s\log(p/s) \leq \frac{1}{2}\log M$. As a result, we will achieve the lower bound,
\[
\inf_{\hat{\balpha}}\sup_{(\bmu, \bSigma)\in \mathcal{H}(s,\tau)}\mathbb{E}\|\hat{\balpha}-\balpha\|_2 \geq C_6  \sqrt{\frac{ s\log (p/s)}{n}}.
\]
\item[(iii)] $\tau \leq \frac{s\log(p/s)}{n}$. We consider
\[
\bmu^j=\sqrt{\frac{ \rho c_L\tau}{s}}\bone_{Q^j}, ~\rho \in (0,1), ~Q^j \in \cQ, ~~\bSigma^j=c_L \bI_p, ~j=0,1,\ldots, M.
\]
Here $\cQ$ is the same as in Case (ii). Clearly Part (1) holds. For Parts (2) and (3), with the same choice of $\rho, M$ and $\{Q^j\}_{j=0}^M$ as in Case (ii), it is not hard to verify
\begin{align*}
\frac{1}{M+1}\sum_{j=1}^M D_{KL}\Big(\mathbb{P}^n_{(\bmu^j,\bSigma^j)} \| \mathbb{P}^n_{(\bmu^0,\bSigma^0)} \Big)&=\frac{\rho n\tau}{s(M+1)}\sum_{j=1}^M(s-|Q^j\cap Q^0|) \leq \frac{1}{2}\log M, \\
 \|(\bSigma^i)^{-1}\bmu^i-(\bSigma^j)^{-1}\bmu^j\|_2&=\sqrt{\frac{2\rho \tau}{c_Ls}}\sqrt{s-|Q^i \cap Q^j|}\geq C_7\sqrt{\tau},
\end{align*}
where we have used the fact $\tau \leq \frac{s\log(p/s)}{n}$ in the first inequality. Therefore, we obtain the lower bound,
\[
\inf_{\hat{\balpha}}\sup_{(\bmu, \bSigma)\in \mathcal{H}(s,\tau)}\mathbb{E}\|\hat{\balpha}-\balpha\|_2 \geq C_8 \sqrt{\tau}.
\]
\end{itemize}

\subsection{Proof of Proposition \ref{prop:two}}
\label{proof:corollary2}
\begin{proof}
By the definition of $\tilde{\balpha}$ in \eqref{lasso:alpha}, it is clear that the quadratic function $(\frac{1}{2}\tilde{\balpha}^T\hat{\bSigma}\tilde{\balpha})w^2+(\lambda \|\tilde{\balpha}\|_1-\tilde{\balpha}^T\hat{\bmu})w$ achieves minimum over $[0,1]$ at $w=1$, which implies
\begin{align*}
\hat{\bmu}^T\tilde{\balpha}-\tilde{\balpha}^T\hat{\bSigma}\tilde{\balpha} \geq \lambda \|\tilde{\balpha}\|_1\geq \lambda (\|\balpha\|_1-\|\tilde{\balpha}-\balpha\|_1).
\end{align*}
The above inequality results in the following bounds,
\begin{align*}
|\tilde{\theta}_{c}-\theta|&=|\tilde{\theta}_c-\tilde{\theta}+\tilde{\theta}-\theta| \geq   |c-2| \cdot |\hat{\bmu}^T\tilde{\balpha}-\tilde{\balpha}^T\hat{\bSigma}\tilde{\balpha}| - |\tilde{\theta}-\theta| \\
&\geq \lambda |c-2|  \cdot \|\balpha\|_1- \lambda |c-2| \cdot \|\tilde{\balpha}-\balpha\|_1- |\tilde{\theta}-\theta|.
\end{align*}
Hence we can obtain
\begin{align}
\sup_{(\bmu,\bSigma)\in \cH(s,\tau)}\EE|\tilde{\theta}_c-\theta| \geq & \underbrace{\lambda |c-2|\cdot \sup_{(\bmu,\bSigma)\in \cH(s,\tau)}\lonenorm{\balpha}}_{:=\cJ_1} -\underbrace{\lambda |c-2|\cdot \sqrt{\sup_{(\bmu,\bSigma)\in \cH(s,\tau)} \EE\|\tilde{\balpha}-\balpha\|^2_1}}_{:=\cJ_2} \nonumber \\
&-\underbrace{\sup_{(\bmu,\bSigma)\in \cH(s,\tau)}\EE|\tilde{\theta}-\theta|}_{:=\cJ_3}. \nonumber
\end{align}
Let $\bmu^*=(\underbrace{\sqrt{\tau c_U/s},\ldots, \sqrt{\tau c_U/s}}_{s {\rm~times}}, 0,\ldots, 0),\bSigma^*=\diag(c_U,c_U,\ldots,c_U)$. It is straightforward to confirm that $(\bmu^*,\bSigma^*)\in \cH(s,\tau)$. So
\[
\sup_{(\bmu,\bSigma)\in \cH(s,\tau)}\lonenorm{\balpha}\geq \lonenorm{(\bSigma^*)^{-1}\bmu^*}=\sqrt{\frac{s\tau}{c_U}},
\]
from which it holds that $\cJ_1 \gtrsim \sqrt{\frac{\tau(1+\tau)s\log p}{n}}$. Regarding $\cJ_2$, under the scaling $\frac{s\log p}{n}=o(1), p^{-\delta}\lesssim \frac{s\log p}{n}$, the upper bound we have derived for $\mathbb{E}\|\tilde{\balpha}-\balpha\|^2_1$  in \eqref{l1error:alpha} leads to $\cJ_2 \lesssim \frac{(1+\tau)s\log p}{n}$. For $\cJ_3$, we already know from \eqref{clean:upper:bound},
\[
\cJ_3 \lesssim \frac{(1+\tau)s\log p}{n} + \frac{\tau+\sqrt{\tau}}{\sqrt{n}}.
\]
Given the condition $\tau \gtrsim \frac{s\log p}{n}$, it is not hard to verify that $\cJ_1$ is the dominant term among $\cJ_i, i=1,2,3$. This completes the proof.
\end{proof}

\subsection{Proof of Theorem \ref{corollary:three}}
\label{proof:corollary:three}

\begin{proof}
Throughout the proof, we use $C_1, C_2, \ldots$ to denote positive constants that possibly depend on $\nu, c_L, c_U$. Some of them may depend on additional quantities, and clarification will be made when necessary.

Recall the definition of $\check{\balpha}$ in \eqref{alpha:lzero}. The basic inequality holds that
\begin{equation*}
\frac{1}{2}\check{\balpha}\hat{\bSigma}\check{\balpha}-\check{\balpha}^T\hat{\bmu} \leq \frac{1}{2}\balpha\hat{\bSigma}\balpha-\balpha^T \hat{\bmu},
\end{equation*}
which is equivalent to
\begin{align*}
\frac{1}{2}(\balpha-\check{\balpha})^T\hat{\bSigma} (\balpha-\check{\balpha}) \leq &\balpha^T(\hat{\bSigma}-\bSigma)(\balpha-\check{\balpha}) +(\bmu-\hat{\bmu})^T(\balpha-\check{\balpha}).
\end{align*}

Since $\|\check{\balpha}-\balpha\|_0\leq 2s$, on the event $\mathcal{A}(\lambda) \cap \cC(s, \kappa)$, using a similar inequality to \eqref{basic:eq:two} we can continue from the above inequality to obtain,
\begin{equation*}
\frac{1}{2}(\lambda^{1/2}_{\min}(\bSigma)-\kappa)^2\|\check{\balpha}-\balpha\|_2^2\leq \frac{\sqrt{s}\lambda}{\sqrt{2}}\|\check{\balpha}-\balpha\|_2,
\end{equation*}
leading to for $0<\kappa <\lambda^{1/2}_{\min}(\bSigma)$,
\begin{equation*}
\|\check{\balpha}-\balpha\|^2_2 \leq \frac{2s\lambda^2}{(\lambda^{1/2}_{\min}(\bSigma)-\kappa)^4}.
\end{equation*}
The rest of the derivation of the upper bound for $\EE\ltwonorm{\check{\balpha}-\balpha}^2$ is similar to the one for $\mathbb{E}\ltwonorm{\tilde{\balpha}-\balpha}^2$ in Theorem \ref{thm:one}, we thus do not repeat the arguments.

Regarding the bound for $\EE|\check{\theta}_c-\theta|$, we first consider the case $c=2$. With a minor modification of the proof of Lemma \ref{error:functional}, it can be obtained that on the event $\mathcal{A}(\lambda) \cap \cC(s, \kappa)$, the same upper bound (up to constants) as the one in Lemma \ref{error:functional} holds for $|\check{\theta}_2-\theta|$. Accordingly, the bound for $\EE|\check{\theta}_2-\theta|$ can be derived in the same way as for $\EE|\tilde{\theta}-\theta|$ in Theorem \ref{thm:one}. We will not repeat the details here. When $c \neq 2$, we use
\begin{align}\label{theta:l0:noteq2}
\sup_{(\bmu,\bSigma)\in \mathcal{H}(s,\tau)} \mathbb{E}|\check{\theta}_c-\theta|\leq \sup_{(\bmu,\bSigma)\in \mathcal{H}(s,\tau)} \mathbb{E}|\check{\theta}_{2}-\theta|+|c-2|\sup_{(\bmu,\bSigma)\in \mathcal{H}(s,\tau)}\mathbb{E}|\check{\balpha}^T\hat{\bmu}-\check{\balpha}^T\hat{\bSigma}\check{\balpha}|.
\end{align}
We have already shown for the case $c=2$ that
\begin{align}
\label{theta:l0:noteq2:one}
\sup_{(\bmu,\bSigma)\in \mathcal{H}(s,\tau)} \mathbb{E}|\check{\theta}_{2}-\theta| \lesssim \frac{(1+\tau)s\log p}{n}+\frac{\tau+\sqrt{\tau}}{\sqrt{n}}.
\end{align}
The remainder of the proof is to bound $\mathbb{E}|\check{\balpha}^T\hat{\bmu}-\check{\balpha}^T\hat{\bSigma}\check{\balpha}|$. Define the event
\[
\cD=\bigg\{\sup_{|S| \leq s}\ltwonorm{\hat{\bSigma}^{-1}_{SS}\hat{\bmu}_{S}}\leq \gamma \bigg \}.
\]
Based on the definition of $\check{\balpha}$ in \eqref{alpha:lzero}, it is straightforward to verify that on the event $\cD$, $\check{\balpha}$ will take the following form,
\begin{align*}
\check{\balpha}_{\hat{S}}=\hat{\bSigma}_{\hat{S}\hat{S}}^{-1}\hat{\bmu}_{\hat{S}},
\end{align*}
where $\hat{S}$ is the support of $\check{\balpha}$, and $|\hat{S}| \leq s$. As a result, on the event $\cD$ it holds that
\begin{align*}
\check{\balpha}^T\hat{\bmu}-\check{\balpha}^T\hat{\bSigma}\check{\balpha}=0,
\end{align*}
thus we can bound $\mathbb{E}|\check{\balpha}^T\hat{\bmu}-\check{\balpha}^T\hat{\bSigma}\check{\balpha}|$ in the following way,
\begin{align}
&\EE|\check{\balpha}^T\hat{\bmu}-\check{\balpha}^T\hat{\bSigma}\check{\balpha}| =\EE|\check{\balpha}^T\hat{\bmu}-\check{\balpha}^T\hat{\bSigma}\check{\balpha}|\mathbbm{1}_{\cD^c} \leq 3\EE(\check{\balpha}^T\hat{\bmu}\mathbbm{1}_{\cD^c}) \nonumber \\
&= 3\EE(\check{\balpha}^T\bmu \mathbbm{1}_{\cD^c})+3\EE(\check{\balpha}^T(\hat{\bmu}-\bmu) \mathbbm{1}_{\cD^c}), \label{bound:starting}
\end{align}
where the inequality is due to the fact that $\frac{1}{2}\check{\balpha}^T\hat{\bSigma}\check{\balpha}-\check{\balpha}^T\hat{\bmu}\leq 0$. Moreover, we have
\begin{equation}
\label{bound:continue:one}
|\EE(\check{\balpha}^T\bmu \mathbbm{1}_{\cD^c})| \leq \ltwonorm{\bmu}\cdot  \ltwonorm{\check{\balpha}} \cdot \PP(\cD^c) \leq \gamma\sqrt{pc_U\tau}\PP(\cD^c),
\end{equation}
and
\begin{align}
& |\EE(\check{\balpha}^T(\hat{\bmu}-\bmu) \mathbbm{1}_{\cD^c})| \leq \EE(\lonenorm{\check{\balpha}}\cdot \linfnorm{\hat{\bmu}-\bmu}\mathbbm{1}_{\cD^c})\leq \sqrt{s}\gamma\EE( \linfnorm{\hat{\bmu}-\bmu} \mathbbm{1}_{\cD^c})  \nonumber \\
&\leq \sqrt{s}\gamma \sqrt{\EE\linfnorm{\hat{\bmu}-\bmu}^2 \cdot \PP(\cD^c)} \leq C_1 \gamma \sqrt{\frac{s\log p}{n}\cdot \PP(\cD^c)}. \label{bound:continue:two}
\end{align}
Here, to obtain the last inequality we have used \eqref{mu:infinity} to compute
\begin{align*}
\EE\linfnorm{\hat{\bmu}-\bmu}^2&=\int_{0}^{\frac{C_2\log p}{n}}\PP(\linfnorm{\hat{\bmu}-\bmu}^2>t)dt+\int_{\frac{C_2\log p}{n}}^{\infty}\PP(\linfnorm{\hat{\bmu}-\bmu}^2>t)dt \\
&\leq \frac{C_2\log p}{n}+2p\int_{\frac{C_2\log p}{n}}^{\infty}\exp(-C_3n t)dt\leq  \frac{C_4 \log p}{n}.
\end{align*}
We now bound $\PP(\cD^c)$. We first have
\begin{align*}
\sup_{|S| \leq s}\ltwonorm{\hat{\bSigma}_{SS}^{-1}\hat{\bmu}_{S}}^2 & \leq \Big[\inf_{|S| \leq s} \lambda_{\min}(\hat{\bSigma}_{SS})\Big]^{-1}\cdot \Big[\inf_{|S| \leq s} \lambda_{\min}(\bSigma^{-1/2}_{SS}\hat{\bSigma}_{SS}\bSigma^{-1/2}_{SS})\Big]^{-1} \cdot \sup_{|S| \leq s} \ltwonorm{\bSigma^{-1/2}_{SS}\hat{\bmu}_S}^2.
\end{align*}
We bound the three terms on the above right-hand side, respectively. According to Lemma \ref{key:concentration:eq}, it holds with probability at least $1-4p^{-c}$ that
\begin{align*}
\inf_{|S| \leq s}\lambda_{\min}(\hat{\bSigma}_{SS})&=\inf_{\ltwonorm{\bu}=1, \lzeronorm{\bu}\leq s}\bu^T\hat{\bSigma} \bu \geq \Big(\inf_{\ltwonorm{\bu}=1}\sqrt{\bu \bSigma \bu}- \sup_{\bu \in B_0(s)\cap B_2(1)}\big|\sqrt{\bu\hat{\bSigma}\bu}-\sqrt{\bu \bSigma \bu}\big|\Big)^2 \nonumber \\
& \geq \Bigg(\sqrt{c_L}-C_5\sqrt{\frac{s\log p}{n}}-\frac{C_6\frac{s\log p}{n}}{\big(\sqrt{c_L}-C_5\sqrt{\frac{s\log p}{n}}\big)_+}\Bigg)_+^2, \label{mini:v1}
\end{align*}
and with similar arguments that
\begin{align*}
\inf_{|S| \leq s}\lambda_{\min}(\bSigma^{-1/2}_{SS}\hat{\bSigma}_{SS}\bSigma^{-1/2}_{SS}) \geq \Bigg(1-C_7\sqrt{\frac{s\log p}{n}}-\frac{C_8\frac{s\log p}{n}}{\big(1-C_7\sqrt{\frac{s\log p}{n}}\big)_+}\Bigg)^2_+,
\end{align*}
where for $j=5,6,7,8$, $C_j >0 $ depend on $c_U,\nu, c$ and $C_j \rightarrow \infty$ as $c\rightarrow \infty$. Regarding the third term,
\begin{align*}
\sup_{|S| \leq s} \ltwonorm{\bSigma^{-1/2}_{SS}\hat{\bmu}_S}^2&\leq 2\Big(\sup_{|S| \leq s} \ltwonorm{\bSigma^{-1/2}_{SS}\bmu_S}^2+ \sup_{|S| \leq s} \ltwonorm{\bSigma^{-1/2}_{SS}(\hat{\bmu}_S-\bmu_S)}^2\Big)\\
&\leq 2(\tau+c_L^{-1}s\|\hat{\bmu}-\bmu\|_{\infty}^2).
\end{align*}
We further upper bound the above using \eqref{mu:infinity} to yield
\[
\PP(\linfnorm{\hat{\bmu}-\bmu}^2\leq \tau  t /s)\geq 1-2p\exp(-C_8n\tau t/s), \quad \forall t>0.
\]
Above all, as long as $\frac{s\log p}{n}$ is sufficiently small, we will be able to set $\gamma=C_9\sqrt{(1+t)\tau}$ such that
\begin{align}
\label{bound:on:d}
\PP(\cD^c)=\PP\bigg(\sup_{|S| \leq s}\ltwonorm{\hat{\bSigma}_{SS}^{-1}\hat{\bmu}_{S}}^2 > \gamma^2 \bigg) \leq 4p^{-c}+2p\exp(-C_8n\tau t/s).
\end{align}
Under the conditions that $\frac{s\log p}{n}=o(1), p^{-\delta}\lesssim \frac{s\log p}{n}, \tau \gtrsim \frac{s\log p}{n}$, we can choose the constants $c, t$ in \eqref{bound:on:d} large enough so that
\[
\PP(\cD^c) \lesssim \frac{p^{-1/2}s\log p}{n}.
\]
The above combined with \eqref{bound:starting}, \eqref{bound:continue:one} and \eqref{bound:continue:two} shows that
\begin{align}
\label{theta:l0:noteq2:two}
\sup_{(\bmu,\bSigma)\in\cH(s,\tau)}\mathbb{E}|\check{\balpha}^T\hat{\bmu}-\check{\balpha}^T\hat{\bSigma}\check{\balpha}| \lesssim \frac{(\tau+\sqrt{\tau})s\log p}{n}.
\end{align}
Putting together the results \eqref{theta:l0:noteq2}, \eqref{theta:l0:noteq2:one}, and \eqref{theta:l0:noteq2:two} establishes the upper bound for $\EE|\check{\theta}_c-\theta|$.
\end{proof}


\subsection{Proof of Theorem \ref{approximate:sparse:thm1}}
\label{proof:of:thm3}

The proof is a direct generalization of the one in the exact sparsity setting. We simply highlight the major different steps, and refer to the proof of Theorem \ref{thm:one} for details. With approximate sparsity, we define a ``pseudo-support" set of $\balpha$ as\footnote{With a bit abuse of notations, we have adopted some notations from the exact sparsity case with slightly different meanings.}
\[
S=\Big\{i\in [p]: |\alpha_i|> (1+\sqrt{\tilde{\tau}})\sqrt{\frac{\log p}{n}}\Big \},
\]
and let $|S|=s$. The following two lemmas are useful throughout the proof.

\begin{lem}
\label{signal:strength}
For any $(\bmu, \bSigma) \in \cH_q(R,\tau)$, it holds that
\[
\bmu^T\bSigma^{-1} \bmu \leq \tilde{\tau}, \quad \sum_{i=1}^p |\alpha_i|^q \leq \tilde{R}.
\]
\end{lem}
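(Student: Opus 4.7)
The plan is to verify the two inequalities separately, each time combining the explicit constraint already built into $\cH_q(R,\tau)$ with one additional bound obtained by linking the $\ell_q$ geometry of $\balpha$ with the quadratic form $\balpha^T\bSigma\balpha$ via the eigenvalue bounds $c_L \le \lambda_{\min}(\bSigma) \le \delta_{\bSigma}\le c_U$.

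First I would show $\bmu^T\bSigma^{-1}\bmu \le c_U R^{2/q}$; together with the defining inequality $\bmu^T\bSigma^{-1}\bmu \le \tau$ this yields $\theta \le \tilde{\tau}=(c_UR^{2/q})\wedge \tau$. Since $\bmu = \bSigma\balpha$, we have $\theta = \balpha^T\bSigma\balpha \le \lambda_{\max}(\bSigma)\|\balpha\|_2^2 \le c_U\|\balpha\|_2^2$. Using monotonicity of $\ell_q$ norms for $q\in(0,1]\subseteq(0,2]$, $\|\balpha\|_2 \le \|\balpha\|_q = (\|\balpha\|_q^q)^{1/q} \le R^{1/q}$, which gives $\theta \le c_U R^{2/q}$.

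Next I would show $\|\balpha\|_q^q \le p^{1-q/2}c_L^{-q/2}\tau^{q/2}$; combined with the assumed bound $\|\balpha\|_q^q\le R$ this gives $\|\balpha\|_q^q\le \tilde R$. By H\"older's inequality with conjugate exponents $2/q$ and $2/(2-q)$,
\[
\sum_{i=1}^p |\alpha_i|^q = \sum_{i=1}^p |\alpha_i|^q\cdot 1 \le \Bigl(\sum_{i=1}^p|\alpha_i|^2\Bigr)^{q/2}\Bigl(\sum_{i=1}^p 1\Bigr)^{(2-q)/2} = p^{1-q/2}\|\balpha\|_2^q.
\]
Finally, $\|\balpha\|_2^2 \le \lambda_{\min}(\bSigma)^{-1}\,\balpha^T\bSigma\balpha \le c_L^{-1}\theta \le c_L^{-1}\tau$, so $\|\balpha\|_q^q \le p^{1-q/2}(c_L^{-1}\tau)^{q/2} = p^{1-q/2}c_L^{-q/2}\tau^{q/2}$, completing the proof.

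There is essentially no hard step here; the only thing to be careful about is the direction of the $\ell_q$-norm comparisons (using $q\le 2$ for the first bound and H\"older in the second) and matching the constants $c_L, c_U$ to the correct side. The lemma can then be used downstream to rewrite $\cH_q(R,\tau)=\cH_q(\tilde R,\tilde\tau)$, which is the reason $(\tilde R,\tilde\tau)$ appear as the effective scaling parameters in Theorems \ref{approximate:sparse:thm1} and \ref{approxiamte:sparse:thm2}.
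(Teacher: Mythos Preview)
Your second inequality is fine and matches the paper's argument exactly: H\"older with exponents $2/q$ and $2/(2-q)$, followed by $\|\balpha\|_2^2 \le c_L^{-1}\balpha^T\bSigma\balpha \le c_L^{-1}\tau$.

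The first inequality, however, has a genuine gap. You write $\theta = \balpha^T\bSigma\balpha \le \lambda_{\max}(\bSigma)\|\balpha\|_2^2 \le c_U\|\balpha\|_2^2$, but the parameter space $\cH_q(R,\tau)$ only assumes $\delta_{\bSigma}=\max_i\sigma_{ii}\le c_U$, not $\lambda_{\max}(\bSigma)\le c_U$. These are very different: for instance $\bSigma=c_L\bI_p+\epsilon\,\bone\bone^T$ with small $\epsilon>0$ has $\delta_{\bSigma}=c_L+\epsilon\le c_U$ while $\lambda_{\max}(\bSigma)=c_L+p\epsilon$, which can exceed $c_U$ by an arbitrary factor once $p$ is large. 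So the step $\lambda_{\max}(\bSigma)\le c_U$ is not justified.

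The paper's fix is to route through the $\ell_1$ norm instead of $\ell_2$. Since $\bSigma\succeq 0$, each off-diagonal entry satisfies $|\sigma_{ij}|\le\sqrt{\sigma_{ii}\sigma_{jj}}\le\delta_{\bSigma}$, so
\[
\balpha^T\bSigma\balpha \;=\;\sum_{i,j}\sigma_{ij}\alpha_i\alpha_j \;\le\; \delta_{\bSigma}\sum_{i,j}|\alpha_i||\alpha_j| \;=\; \delta_{\bSigma}\,\lonenorm{\balpha}^2 \;\le\; c_U\,\lonenorm{\balpha}^2.
\]
Then, because $q\le 1$, monotonicity of $\ell_q$ norms gives $\lonenorm{\balpha}\le\lqnorm{\balpha}\le R^{1/q}$, and hence $\theta\le c_UR^{2/q}$ as required. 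Note that your use of $\|\balpha\|_2\le\|\balpha\|_q$ is still valid for $q\le 2$, but it is the inequality $\balpha^T\bSigma\balpha\le c_U\|\balpha\|_2^2$ that fails; replacing $\|\balpha\|_2$ by $\|\balpha\|_1$ and using only the diagonal bound $\delta_{\bSigma}\le c_U$ is what makes the argument go through.
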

\begin{proof}
Recall $\balpha=\bSigma^{-1}\bmu$. Given $(\bmu, \bSigma) \in \cH_q(R,\tau)$, it satisfies that
\[
\bmu^T\bSigma^{-1}\bmu \leq \tau \quad \sum_{i=1}^p |\alpha_i|^q\leq R.
\]
Hence,
\[
\bmu^T\bSigma^{-1}\bmu =\balpha^T \bSigma \balpha \leq \delta_{\bSigma}\lonenorm{\balpha}^2\leq c_U \Big(\sum_{i=1}^n|\alpha_i|^q\Big)^{\frac{2}{q}}\leq c_U R^{\frac{2}{q}},
\]
yielding that $\bmu^T\bSigma^{-1}\bmu \leq (c_UR^{\frac{2}{q}}) \wedge \tau=\tilde{\tau}$. Moreover, by H\"older's inequality
\[
\sum_{i=1}^p|\alpha_i|^q\leq \Big(\sum_{i=1}^p|\alpha|^2\Big)^{\frac{q}{2}}\cdot \Big(\sum_{i=1}^p 1\Big)^{1-\frac{q}{2}}\leq p^{1-\frac{q}{2}}c_L^{-\frac{q}{2}}(\balpha^T\bSigma\balpha)^{\frac{q}{2}} \leq p^{1-\frac{q}{2}}c_L^{-\frac{q}{2}} \tau^{\frac{q}{2}}.
\]
Thus $\sum_{i=1}^p|\alpha_i|^q \leq (p^{1-\frac{q}{2}}c_L^{-\frac{q}{2}} \tau^{\frac{q}{2}}) \wedge R=\tilde{R}$.
\end{proof}

\begin{lem}
\label{pseudo:support}
For any $\balpha$ with $\lqnorm{\balpha}^q\leq \tilde{R}$, it holds that
\begin{align*}
s \leq \tilde{R} (1+\sqrt{\tilde{\tau}})^{-q}\Big(\frac{\log p}{n} \Big)^{-\frac{q}{2}}, \quad \lonenorm{\balpha_{S^c}} \leq \tilde{R}(1+\sqrt{\tilde{\tau}})^{1-q}\Big(\frac{\log p}{n}\Big)^{\frac{1-q}{2}}.
\end{align*}
\end{lem}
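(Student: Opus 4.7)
The plan is to prove the two bounds by the standard two-sided argument for $\ell_q$-balls, splitting the index set according to whether $|\alpha_i|$ lies above or below the threshold $t := (1+\sqrt{\tilde\tau})\sqrt{\log p / n}$ that defines $S$.

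For the bound on $s$, I would observe that every coordinate in $S$ contributes at least $t^q$ to $\lqnorm{\balpha}^q$, so
\[
s\, t^q \;=\; \sum_{i \in S} t^q \;\le\; \sum_{i \in S} |\alpha_i|^q \;\le\; \lqnorm{\balpha}^q \;\le\; \tilde{R},
\]
and solving for $s$ yields the claimed inequality $s \le \tilde R (1+\sqrt{\tilde\tau})^{-q}(\log p/n)^{-q/2}$.

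For the bound on $\lonenorm{\balpha_{S^c}}$, I would use the elementary identity $|\alpha_i| = |\alpha_i|^q \cdot |\alpha_i|^{1-q}$ together with $|\alpha_i| \le t$ for $i \in S^c$ (and $1-q \ge 0$ since $q \in (0,1]$) to get
\[
\lonenorm{\balpha_{S^c}} \;=\; \sum_{i \in S^c} |\alpha_i|^q \cdot |\alpha_i|^{1-q} \;\le\; t^{1-q} \sum_{i \in S^c} |\alpha_i|^q \;\le\; t^{1-q}\, \tilde R,
\]
which, after substituting the value of $t$, gives exactly $\tilde R (1+\sqrt{\tilde\tau})^{1-q}(\log p/n)^{(1-q)/2}$.

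There is no real obstacle here; both inequalities are one-line consequences of the definition of $S$. The only thing to keep in mind is that $q \in (0,1]$ ensures $1-q \ge 0$ so the inequality $|\alpha_i|^{1-q} \le t^{1-q}$ is in the right direction, and that both bounds are vacuous/trivial when $S = \emptyset$ or $S^c = \emptyset$, respectively, so no special case needs to be treated.
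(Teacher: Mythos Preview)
Your proposal is correct and essentially identical to the paper's own proof: the paper bounds $s$ via $\tilde R \ge \sum_{i\in S}|\alpha_i|^q \ge s\,t^q$ and bounds $\lonenorm{\balpha_{S^c}}$ via $\tilde R \ge \sum_{i\in S^c}|\alpha_i|\cdot|\alpha_i|^{q-1} \ge t^{q-1}\lonenorm{\balpha_{S^c}}$, which is exactly your two-sided threshold argument with $t=(1+\sqrt{\tilde\tau})\sqrt{\log p/n}$.
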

\begin{proof}
For the bound on $s$, observe that
\begin{align*}
\tilde{R}\geq \sum_{i=1}^p |\alpha_i|^q\mathbbm{1}_{|\alpha_i|>(1+\sqrt{\tilde{\tau}}) \sqrt{\frac{\log p}{n}}} \geq s(1+\sqrt{\tilde{\tau}})^q \Big(\frac{\log p}{n}\Big)^{\frac{q}{2}}.
\end{align*}
The bound for $\lonenorm{\balpha_{S^c}}$ is due to
\begin{align*}
\tilde{R}\geq \sum_{i=1}^p|\alpha_i|\cdot |\alpha_i|^{q-1}\mathbbm{1}_{|\alpha_i| \leq  (1+\sqrt{\tilde{\tau}})\sqrt{\frac{\log p}{n}}} \geq (1+\sqrt{\tilde{\tau}})^{q-1}\Big(\frac{\log p}{n}\Big)^{\frac{q-1}{2}} \lonenorm{\balpha_{S^c}}.
\end{align*}
\end{proof}

\noindent We are in the position to derive the bounds for $\EE\ltwonorm{\tilde{\balpha}-\balpha}^2$ and $\EE|\tilde{\theta}-\theta|$. Throughout the proof, $C_1, C_2, \ldots$ are used to denote positive constants that possibly depend on $\nu, c_L, c_U$. Some of them may depend on additional quantities, and clarification will be made when necessary.

\vspace{0.2cm}

\emph{Upper bound for $\EE\ltwonorm{\tilde{\balpha}-\balpha}^2$}. Due to the approximate sparsity, we have a generalization of \eqref{primal:dev}: on the event $\cA(\lambda)$
\begin{align}
\label{gen:421}
0 \geq \frac{1}{2}(\tilde{\balpha}-\balpha)^T\hat{\bSigma}(\tilde{\balpha}-\balpha)+\frac{\lambda}{2} (\lonenorm{\tilde{\balpha}_{S^c}-\balpha_{S^c}}-3\lonenorm{\tilde{\balpha}_S-\balpha_S}-4\lonenorm{\balpha_{S^c}}),
\end{align}
implying
\[
\lonenorm{\tilde{\balpha}_{S^c}-\balpha_{S^c}}\leq 3\lonenorm{\tilde{\balpha}_S-\balpha_S}+4\lonenorm{\balpha_{S^c}}.
\]
Thus we obtain
\begin{align}
\label{thm3:eqone}
\lonenorm{\tilde{\balpha}-\balpha} \leq 4\lonenorm{\tilde{\balpha}_S-\balpha_S}+4\lonenorm{\balpha_{S^c}}  \leq 4\sqrt{s}\ltwonorm{\tilde{\balpha}-\balpha}+4\lonenorm{\balpha_{S^c}}.
\end{align}
Combining this result with Lemma \ref{key:concentration:eq}, we can proceed from \eqref{gen:421} to conclude, with probability at least $1-4p^{-c}-\PP(\cA^c(\lambda))$,
\begin{align}
0 \geq &\frac{1}{2}\Bigg(\sqrt{c_L}\ltwonorm{\tilde{\balpha}-\balpha}-C_1\sqrt{\frac{\log p}{n}}\lonenorm{\tilde{\balpha}-\balpha} -\frac{C_2\frac{\log p}{n}\lonenorm{\tilde{\balpha}-\balpha}^2}{(\sqrt{c_L}\ltwonorm{\tilde{\balpha}-\balpha}-C_1\sqrt{\frac{\log p}{n}}\lonenorm{\tilde{\balpha}-\balpha})_+}\Bigg)_+^2 \nonumber \\
&-\frac{\lambda}{2}(3\sqrt{s}\ltwonorm{\tilde{\balpha}-\balpha}+4\lonenorm{\balpha_{S^c}}), \label{basic:starting:approx}
\end{align}
where the positive constants $C_1,C_2 \rightarrow \infty$ as $c\rightarrow \infty$. We now show that the above inequality leads to the result
\begin{align}
\label{approx:sparse:alpha:error}
\ltwonorm{\tilde{\balpha}-\balpha}^2 \lesssim (1+ \tilde{\tau})^{1-\frac{q}{2}}\tilde{R}\Big(\frac{\log p}{n}\Big)^{1-\frac{q}{2}}.
\end{align}
Towards that goal, under the scaling $\tilde{R}(1+\tilde{\tau})^{-\frac{q}{2}}(\frac{\log p}{n})^{1-\frac{q}{2}}=o(1)$, we can assume
\[
\ltwonorm{\tilde{\balpha}-\balpha} \gg (1+\sqrt{\tilde{\tau}})^{1-q}\tilde{R}\Big(\frac{\log p}{n}\Big)^{1-\frac{q}{2}}.
\]
Otherwise \eqref{approx:sparse:alpha:error} trivially holds.  Using this assumption together with Lemma \ref{pseudo:support} and \eqref{thm3:eqone}, it is straightforward to verify that
\[
\sqrt{\frac{\log p}{n}}\lonenorm{\tilde{\balpha}-\balpha}  \ll \ltwonorm{\tilde{\balpha}-\balpha}.
\]
Hence, \eqref{basic:starting:approx} can be simplified to
\begin{align}
\label{follow:direct:alpha}
\ltwonorm{\tilde{\balpha}-\balpha}^2- (1+ \tilde{\tau})^{\frac{1}{2}-\frac{q}{4}}\tilde{R}^{\frac{1}{2}}\Big(\frac{\log p}{n}\Big)^{\frac{1}{2}-\frac{q}{4}}\cdot \ltwonorm{\tilde{\balpha}-\balpha} - (1+ \tilde{\tau})^{1-\frac{q}{2}}\tilde{R}\Big(\frac{\log p}{n}\Big)^{1-\frac{q}{2}}\lesssim 0,
\end{align}
where we have used Lemma \ref{pseudo:support}. The bound \eqref{approx:sparse:alpha:error} follows directly from \eqref{follow:direct:alpha}. We thus have shown
\begin{align}
\label{alpha:error:proof}
\PP\bigg(\ltwonorm{\tilde{\balpha}-\balpha}^2 \lesssim (1+ \tilde{\tau})^{1-\frac{q}{2}}\tilde{R}\Big(\frac{\log p}{n}\Big)^{1-\frac{q}{2}}\bigg) \geq 1-4p^{-c}-\PP(\cA^c(\lambda)).
\end{align}
Finally, by Lemma \ref{signal:strength} it holds that $\ltwonorm{\tilde{\balpha}}\lesssim \tilde{\tau},\ltwonorm{\balpha}\lesssim \tilde{\tau}$. We obtain
\begin{align*}
\EE\ltwonorm{\tilde{\balpha}-\balpha}^2\lesssim (1+ \tilde{\tau})^{1-\frac{q}{2}}\tilde{R}\Big(\frac{\log p}{n}\Big)^{1-\frac{q}{2}}+\tilde{\tau} (p^{-c}+\PP(\cA^c(\lambda)).
\end{align*}
The proof is completed by the fact that under the condition $p^{-\delta}\lesssim \tilde{R}\Big(\frac{\log p}{n}\Big)^{1-\frac{q}{2}}( \tilde{\tau}^{-\frac{q}{2}} \vee  \tilde{\tau}^{-1})$,
\[
\tilde{\tau} (p^{-c}+\PP(\cA^c(\lambda))\lesssim (1+\tilde{\tau})^{1-\frac{q}{2}}\tilde{R}\Big(\frac{\log p}{n}\Big)^{1-\frac{q}{2}},
\]
where $\PP(\cA^c(\lambda)) \lesssim p^{-c}$ holds by Lemma \ref{rare:events} Part (i).

\emph{Upper bound for $\EE|\tilde{\theta}-\theta|$}. The key step is to obtain an analog of Lemma \ref{error:functional}. Observe that the inequality \eqref{first:upper} continues to hold here. Hence on the event $\mathcal{A}(\lambda)$ we have
\begin{align*}
\tilde{\theta}-\theta\leq \lambda \lonenorm{\tilde{\balpha}-\balpha}+(\balpha-\tilde{\balpha})^T\bSigma(\balpha-\tilde{\balpha})+|\balpha^T(\hat{\bSigma}-\bSigma)\balpha|  +2|\balpha^T(\hat{\bmu}-\bmu)|
\end{align*}
From the proof of Lemma \ref{key:concentration:eq}, we can directly obtain that with probability at least $1-4p^{-c}$,
\begin{align*}
\sqrt{\bu^T\bSigma\bu}\leq \sqrt{\bu^T\hat{\bSigma}\bu+|\bu^T(\hat{\bmu}-\bmu)|^2}+C_3 \sqrt{\frac{\log p}{n}}\lonenorm{\bu}\leq \sqrt{\bu^T\hat{\bSigma}\bu}+C_4 \sqrt{\frac{\log p}{n}}\lonenorm{\bu}, \quad \forall \bu\in \RR^p.
\end{align*}
This result combined with \eqref{gen:421} enables us to bound $(\balpha-\tilde{\balpha})^T\bSigma(\balpha-\tilde{\balpha})$,
\begin{align*}
(\balpha-\tilde{\balpha})^T\bSigma(\balpha-\tilde{\balpha})\lesssim \lambda (\sqrt{s}\ltwonorm{\tilde{\balpha}-\balpha}+\lonenorm{\balpha_{S^c}})+\frac{\log p}{n}\lonenorm{\balpha-\tilde{\balpha}}^2.
\end{align*}
We thus have that with probability at least $1-4p^{-c}-\PP(\cA^c(\lambda))$,
\begin{align*}
\tilde{\theta}-\theta \lesssim & \underbrace{\frac{\log p}{n}\lonenorm{\tilde{\balpha}-\balpha}^2}_{:=\cJ_1}+ \underbrace{\lambda (\sqrt{s}\ltwonorm{\tilde{\balpha}-\balpha}+\lonenorm{\balpha_{S^c}})}_{:=\cJ_2} +\underbrace{\lambda \lonenorm{\tilde{\balpha}-\balpha}}_{:=\cJ_3} +|\balpha^T(\hat{\bSigma}-\bSigma)\balpha|+2|\balpha^T(\hat{\bmu}-\bmu)|.
\end{align*}
Based on the results \eqref{thm3:eqone}, \eqref{alpha:error:proof} and  Lemma \ref{pseudo:support}, it is direct to verify that
\[
\cJ_1=O\Big((1+ \tilde{\tau})^{1-q}\tilde{R}^2\big(\frac{\log p}{n}\big)^{2-q}\Big),~~\cJ_2=O\Big((1+\tilde{\tau})^{1-\frac{q}{2}}\tilde{R}\big(\frac{\log p}{n}\big)^{1-\frac{q}{2}}\Big),~~\cJ_3=O\Big((1+\tilde{\tau})^{1-\frac{q}{2}}\tilde{R}\big(\frac{\log p}{n}\big)^{1-\frac{q}{2}}\Big),
\]
and under the scaling $\tilde{R}(1+\tilde{\tau})^{-\frac{q}{2}}(\frac{\log p}{n})^{1-\frac{q}{2}}=o(1)$, $\cJ_3$ is the dominant term. Hence, it holds with probability at least $1-4p^{-c}-\PP(\cA^c(\lambda))$ that
\[
\tilde{\theta}-\theta\lesssim (1+ \tilde{\tau})^{1-\frac{q}{2}}\tilde{R}\big(\frac{\log p}{n}\big)^{1-\frac{q}{2}}+|\balpha^T(\hat{\bSigma}-\bSigma)\balpha|+|\balpha^T(\hat{\bmu}-\bmu)|.
\]
Regarding the lower bound, \eqref{new:back:look} remains valid. Given the order of $\cJ_3$ we have derived, it holds that with probability at least $1-4p^{-c}-\PP(\cA^c(\lambda))$,
\[
\tilde{\theta}-\theta \gtrsim -(1+ \tilde{\tau})^{1-\frac{q}{2}}\tilde{R}\big(\frac{\log p}{n}\big)^{1-\frac{q}{2}}- |\balpha^T(\hat{\bSigma}-\bSigma)\balpha|-|\balpha^T(\hat{\bmu}-\bmu)|.
\]
Equipped with Lemma \ref{signal:strength} and under the scaling condition $p^{-\delta}\lesssim \tilde{R}\Big(\frac{\log p}{n}\Big)^{1-\frac{q}{2}} (\tilde{\tau}^{-\frac{q}{2}}\vee \tilde{\tau}^{-1})$, the remainder of the proof follows the same line of arguments in the proof of Theorem \ref{thm:one}. We thus do not repeat the details.


\subsection{Proof of Theorem \ref{approxiamte:sparse:thm2}}
\label{proof:lower:bound:lqball}

The proof generalizes the one of Theorem \ref{lower:bound}. Hence we do not detail out every step, and will refer to the proof of Theorem \ref{lower:bound} on many occasions. We use $C_1,C_2,\ldots$ to denote positive constants possibly depending on $c_L, c_U$.

\subsubsection{Lower bound for $\EE|\hat{\theta}-\theta|$}

The derivation for the lower bound $\tilde{\tau}\wedge \frac{\tilde{\tau}+\sqrt{\tilde{\tau}}}{\sqrt{n}}$ is almost the same as for the lower bound $\tau \wedge \frac{\tau+\sqrt{\tau}}{\sqrt{n}}$ in Theorem \ref{lower:bound}. The only modification is to replace $\tau$ by $\tilde{\tau}$. Then all the arguments there continue to hold (up to constants). The next step is to obtain the other lower bound $\tilde{\tau} \wedge \Big[ (1+ \tilde{\tau})^{1-\frac{q}{2}} \tilde{R} \big(\frac{\log p}{n}\big)^{1-\frac{q}{2}} \Big]$. We follow closely the arguments that were used to derive the lower bound $\big[\tau \wedge \frac{(\tau+1)s\log p}{n}\big]c_0 \exp(-e^{2s^2p^{c_6 c_0-1}})$ in Theorem \ref{lower:bound}. For simplicity, we merely point out the differences in the following.

\begin{itemize}
\item[(i)] $\tilde{\tau} \geq 1$. Replace $s$ by $\tilde{R}(\frac{\log p}{n})^{-\frac{q}{2}}\tilde{\tau}^{-\frac{q}{2}}$, and $\tau$ by $\tilde{\tau}$. Under the scaling $\tilde{R}\tilde{\tau}^{-\frac{q}{2}}\big(\frac{\log p}{n}\big)^{1-\frac{q}{2}}=o(1)$, all the arguments remain valid (up to constants), except that the verification of $\lqnorm{(\bSigma^0)^{-1}\bmu^0}^q \leq R, \lqnorm{(\bSigma^S)^{-1}\bmu^S}^q \leq R$ is required. This can be easily verified as
\begin{align*}
\lqnorm{(\bSigma^0)^{-1}\bmu^0}^q\lesssim \tilde{\tau}^{\frac{q}{2}}\lesssim R, \quad  \lqnorm{(\bSigma^S)^{-1}\bmu^S}^q \lesssim \tilde{\tau}^{\frac{q}{2}}+c^q\tilde{\tau}^{\frac{q}{2}}s \lesssim \tilde{\tau}^{\frac{q}{2}}+\tilde{R}\lesssim R.
\end{align*}
Thus we obtain the lower bound, $\forall \rho_0 \leq  (\tilde{R}(\frac{\log p}{n})^{1-\frac{q}{2}}\tilde{\tau}^{-\frac{q}{2}})^{-\frac{1}{2}}$
\begin{align*}
\inf_{\hat{\theta}}\sup_{(\bmu, \bSigma)\in \mathcal{H}_q(R,\tau)}\mathbb{E}|\hat{\theta}-\theta| \geq C_1 \rho_0^2 \tilde{\tau}^{1-\frac{q}{2}} \tilde{R}\Big(\frac{\log p}{n}\Big)^{1-\frac{q}{2}}\exp(-e^{2 \tilde{R}^2(\frac{\log p}{n})^{-q}\tilde{\tau}^{-q} p^{C_2\rho_0^2-1}}).
\end{align*}
Given the condition $\tilde{R}^2\tilde{\tau}^{-q}(\frac{\log p}{n})^{-q} \lesssim p^{1-\delta}$ for some $\delta >0$, we can choose $\rho_0 \asymp 1$ so that
\[
2 \tilde{R}^2\Big(\frac{\log p}{n}\Big)^{-q}\tilde{\tau}^{-q} p^{C_2\rho_0^2-1} \leq 1,
\]
leading to the lower bound
\[
\inf_{\hat{\theta}}\sup_{(\bmu,\bSigma) \in \cH_q(R, \tau)} \EE|\hat{\theta}-\theta| \gtrsim \tilde{\tau}^{1-\frac{q}{2}}\tilde{R}\Big(\frac{\log p}{n}\Big)^{1-\frac{q}{2}}.
\]
\item[(ii)] $\tilde{R}(\frac{\log p}{n})^{1-\frac{q}{2}} \leq \tilde{\tau} \leq 1$. Replace $s$ by $\tilde{R}(\frac{\log p}{n})^{-\frac{q}{2}}$, and $\tau$ by $\tilde{\tau}$. With the scaling $\tilde{R}(\frac{\log p}{n})^{1-\frac{q}{2}}=o(1)$, all continue to hold, up to constants. The result $\lqnorm{(\bSigma^0)^{-1}\bmu^0}^q \leq R, \lqnorm{(\bSigma^S)^{-1}\bmu^S}^q \leq R$ is also straightforward to confirm. Using the condition $\tilde{R}^2(\frac{\log p}{n})^{-q} \lesssim p^{1- \delta}$ for some $\delta>0$, we can choose $\gamma_0\asymp 1$ to obtain the bound,
\begin{align*}
\inf_{\hat{\theta}}\sup_{(\bmu,\bSigma) \in \cH_q(R, \tau)} \EE|\hat{\theta}-\theta| \gtrsim \tilde{R}\Big(\frac{\log p}{n}\Big)^{1-\frac{q}{2}}.
\end{align*}
\item[(iii)] $\tilde{\tau} \leq \tilde{R}(\frac{\log p}{n})^{1-\frac{q}{2}}$. The same modification as in Case (ii). We will have
\begin{align*}
\inf_{\hat{\theta}}\sup_{(\bmu,\bSigma) \in \cH_q(R, \tau)} \EE|\hat{\theta}-\theta| \gtrsim  \tilde{\tau}.
\end{align*}
\end{itemize}

\subsubsection{Lower bound for $\EE\ltwonorm{\hat{\balpha}-\balpha}^2$}

Again, we adapt the derivation of the lower bound for $\EE\ltwonorm{\hat{\balpha}-\balpha}^2$ in Theorem \ref{lower:bound} (cf. Section \ref{lower:bound:alpha:sparse}). We summarize the modifications below.

\begin{itemize}
\item[(i)] $\tilde{\tau}\geq 1$. Replace $s$ by $\tilde{R}\tilde{\tau}^{-\frac{q}{2}}(\frac{\log p}{n})^{-\frac{q}{2}}$, $\tau$ by $\tilde{\tau}$, and set $\bmu^j=\bar{\bmu}=(\sqrt{c_L\tilde{\tau}}, 0, 0, \ldots, 0)^T$. Under the condition $1\lesssim \tilde{R}(1+\tilde{\tau})^{-\frac{q}{2}}(\frac{\log p}{n})^{-\frac{q}{2}}\lesssim p^{1-\delta}$ for some $\delta$, the support size $\tilde{R}\tilde{\tau}^{-\frac{q}{2}}(\frac{\log p}{n})^{-\frac{q}{2}}$ is a legitimate number between $1$ and $p$. Moreover, it is straightforward to confirm that under the scaling $\tilde{R}(1+\tilde{\tau})^{-\frac{q}{2}}(\frac{\log p}{n})^{-\frac{q}{2}}\lesssim p^{1-\delta}, \tilde{R}(1+\tilde{\tau})^{-\frac{q}{2}}(\frac{\log p}{n})^{1-\frac{q}{2}}=o(1)$,  all the arguments in Case (i) of Section \ref{lower:bound:alpha:sparse} remain valid (up to constants). Additionally, we need verify  $\lqnorm{(\bSigma^j)^{-1}\bmu^j}^q \leq R$. This is true because
\begin{align*}
\lqnorm{(\bSigma^j)^{-1}\bmu^j}^q \lesssim (\sqrt{\tilde{\tau}})^q + (c_0-c_L)^q \tilde{\tau}^{\frac{q}{2}}\Big(\tilde{R}\big(\frac{\log p}{n} \big)^{-\frac{q}{2}}\tilde{\tau}^{-\frac{q}{2}}\Big)^{1-\frac{q}{2}} \lesssim R.
\end{align*}
where $c_0-c_L\asymp \sqrt{\tilde{R}(\frac{\log p}{n})^{1-\frac{q}{2}}\tilde{\tau}^{-\frac{q}{2}}}$ according to Section \ref{lower:bound:alpha:sparse}. Hence we obtain
\[
\inf_{\hat{\balpha}}\sup_{(\bmu,\bSigma)\in \cH_q(R, \tau)}\EE\ltwonorm{\hat{\balpha}-\balpha}^2 \gtrsim \tilde{\tau}^{1-\frac{q}{2}}\Big(\frac{\log p}{n} \Big)^{1-\frac{q}{2}}\tilde{R}.
\]
\item[(ii)] $\tilde{R}(\frac{\log p}{n})^{1-\frac{q}{2}} \leq  \tilde{\tau} \leq 1$. Replace $s$ by $\tilde{R}(\frac{\log p}{n})^{-\frac{q}{2}}$, and $\tau$ by $\tilde{\tau}$. Then under the scaling $\tilde{R}(\frac{\log p}{n})^{-\frac{q}{2}}\lesssim p^{1-\delta}, \tilde{R}(\frac{\log p}{n})^{1-\frac{q}{2}}=o(1)$, the arguments in Section \ref{lower:bound:alpha:sparse} continue to hold, and $\lqnorm{(\bSigma^j)^{-1}\bmu^j}^q \leq R$ can be verified easily (up to constants). Therefore, we can conclude
\[
\inf_{\hat{\balpha}}\sup_{(\bmu,\bSigma)\in \cH_q(R, \tau)}\EE\ltwonorm{\hat{\balpha}-\balpha}^2 \gtrsim  \tilde{R}\Big(\frac{\log p}{n}\Big)^{1-\frac{q}{2}}.
\]
\item[(iii)] $\tilde{\tau} \leq \tilde{R}(\frac{\log p}{n})^{1-\frac{q}{2}}$. Do the same change as in Case (ii) and we are able to obtain
\[
\inf_{\hat{\balpha}}\sup_{(\bmu,\bSigma)\in \cH_q(R, \tau)}\EE\ltwonorm{\hat{\balpha}-\balpha}^2 \gtrsim \tilde{\tau}.
\]
\end{itemize}

\subsection{Proof and Discussion in Dense Regime}
\label{dense:combine:all}

\subsubsection{Proof of Proposition \ref{lowerbound:dense}}
\label{proof:denselower}

\begin{proof}
Given the condition $p=O(s^2)$, without loss of generality, we assume $p\leq s^2$. The proof is a direct modification of that of Theorem \ref{lower:bound}(b) in Section \ref{lowerbound:theta}. The lower bound $\tau \wedge \frac{\tau + \sqrt{\tau}}{\sqrt{n}}$ has been already obtained in Section \ref{lowerbound:theta}. We now derive the other one $\tau \wedge \frac{(1+\tau)\sqrt{p}}{n}$. This is done by a simple adaptation of the three cases in the derivation of the lower bound $\big[\tau \wedge \frac{(\tau+1)s\log p}{n}\big]c_0 \exp(-e^{2s^2p^{c_6 c_0-1}})$ from Section \ref{lowerbound:theta}. We thus do not repeat the details and only point out the changes:
\begin{itemize}
\item[(i)] $\tau\geq 1$. Set $s=\sqrt{p}$ and $\rho=(n^{-1/2}p^{1/4})\wedge 1$ in Case (i) of Section \ref{lowerbound:theta}, which yields 
\[
\inf_{\hat{\theta}}\sup_{(\bmu,\bSigma)\in \mathcal{H}(s,\tau)}\mathbb{E}|\hat{\theta}-\theta| \gtrsim \tau \wedge \frac{\tau \sqrt{p}}{n} \gtrsim \tau \wedge \frac{(1+\tau)\sqrt{p}}{n}.
\]
\item[(ii)] $0<\tau \leq 1$. Set $\mathcal{S}=\{S\subseteq [p]: |S|=\sqrt{p}\}, \bmu^{S}=\sqrt{c_L}[n^{-1/2}\wedge (p^{-1/4}\tau^{1/2})]\bone_S$ in Case (ii) of Section \ref{lowerbound:theta}, leading to the bound
\[
\inf_{\hat{\theta}}\sup_{(\bmu,\bSigma)\in \mathcal{H}(s,\tau)}\mathbb{E}|\hat{\theta}-\theta| \gtrsim \tau \wedge \frac{\sqrt{p}}{n} \gtrsim \tau \wedge \frac{(1+\tau)\sqrt{p}}{n}.
\]
\end{itemize}
\end{proof}

\subsubsection{Proof of Proposition \ref{gaussian:case}}
\label{proof:prop2new}

\begin{proof}
Since $\bx_i \overset{i.i.d.}{\sim}N(\bmu,\bSigma)$, it is known that  $\hat{\bmu}\sim N(\bmu, n^{-1}\bSigma), \hat{\bSigma}^{-1}\sim n\cdot \mathcal{W}_p^{-1}(\bSigma^{-1},n-1)$, and $\hat{\bmu}$ is independent from $\hat{\bSigma}^{-1}$. We use these results to first show that $\hat{\theta}_g$ is unbiased. 
\begin{align}
\label{bias:zero}
\mathbb{E}(\hat{\theta}_g)=\frac{n-p-2}{n}\mbox{tr}[\mathbb{E}(\hat{\bSigma}^{-1})\cdot \mathbb{E}(\hat{\bmu}\hat{\bmu}^T)]-\frac{p}{n} =\mbox{tr}\big[\bSigma^{-1}\cdot (\bmu\bmu^T+n^{-1}\bSigma)\big]-\frac{p}{n}=\bmu^T\bSigma^{-1}\bmu.
\end{align}
Here, we have used the fact that the mean of an inverse Wishart distribution $\mathcal{W}_p^{-1}({\bf{\Psi}},k)$ is $\frac{{\bf \Psi}}{k-p-1}$. We next evaluate the variance of $\hat{\theta}_g$. 
\begin{align}
\label{variance:decomp}
\var(\hat{\theta}_g)=\frac{(n-p-2)^2}{n^2}\cdot \Big(\underbrace{\mathbb{E}[\mbox{var}(\hat{\bmu}^T\hat{\bSigma}^{-1}\hat{\bmu}\mid \hat{\bSigma})]}_{:=\cJ_1}+\underbrace{\mbox{var}[\mathbb{E}(\hat{\bmu}^T\hat{\bSigma}^{-1}\hat{\bmu}\mid \hat{\bSigma})]}_{:=\cJ_2}\Big).
\end{align}
It is direct to verify that for a given symmetric matrix $\bA$ and $\by\sim N(\bxi,\bLambda)$, it holds that $\var(\by^T\bA\by)\lesssim \|\bLambda^{1/2}\bA\bLambda^{1/2}\|_F^2+\bxi^T\bA\bLambda \bA\bxi$, which implies 
\begin{align*}
\cJ_1 \lesssim \mathbb{E}\big(\|n^{-1}\bSigma^{1/2}\hat{\bSigma}^{-1}\bSigma^{1/2}\|_F^2+n^{-1}\bmu^T\hat{\bSigma}^{-1}\bSigma\hat{\bSigma}^{-1}\bmu\big)=\mathbb{E}(\|\bH\|_F^2)+n(\bSigma^{-1/2}\bmu)^T\cdot \mathbb{E}(\bH^2)\cdot (\bSigma^{-1/2}\bmu),
\end{align*}
where $\bH$ is defined as $\bH=n^{-1}\bSigma^{1/2}\hat{\bSigma}^{-1}\bSigma^{1/2}\sim \mathcal{W}_p^{-1}(\bI_p, n-1)$. According to moments of the inverse Wishart distribution \citep{vr88}, it is straightforward to compute the above upper bound to obtain,
\begin{align}
\label{var:j1}
\cJ_1\lesssim \frac{(n-2)(p+n\theta)}{(n-p-1)(n-p-2)(n-p-4)}
\end{align}
Regarding $\cJ_2$, we have $\cJ_2=\var\big[\bmu^T\hat{\bSigma}^{-1}\bmu+n^{-1}\mbox{tr}(\hat{\bSigma}^{-1}\bSigma)\big]\leq 2\var(\bmu^T\hat{\bSigma}^{-1}\bmu)+2\var(n^{-1}\mbox{tr}(\hat{\bSigma}^{-1}\bSigma))$. We again utilize the moments of the inverse Wishart distribution in \cite{vr88} to derive 
\begin{align*}
\var(\bmu^T\hat{\bSigma}^{-1}\bmu)=\frac{2n^2\theta^2}{(n-p-2)^2(n-p-4)}, ~\var(n^{-1}\mbox{tr}(\hat{\bSigma}^{-1}\bSigma))=\frac{2p(n-2)}{(n-p-1)(n-p-2)^2(n-p-4)}.
\end{align*}
Hence,
\begin{align}
\label{var:j2}
\cJ_2 \lesssim \frac{2p(n-2)+2\theta^2n^2(n-p-1)}{(n-p-1)(n-p-2)^2(n-p-4)}.
\end{align}
Under the scaling condition $\limsup_{n\rightarrow \infty}\frac{p}{n}< 1$, putting together the results \eqref{bias:zero}, \eqref{variance:decomp}, \eqref{var:j1} and \eqref{var:j2} completes the proof. 
\end{proof}

\subsubsection{Proof of Theorem \ref{subgaussian:case}}
\label{proof:thm3new}
\begin{proof}
Let $b_n=\lfloor \frac{n}{m+1}\rfloor$ denote the sample size of the $(j+1)$th part ($1\leq j \leq m$). So the sample size of the first part is $n-mb_n$. Setting $\bA=\tilde{\bSigma}^{-1/2}_{(0)}\bSigma\tilde{\bSigma}^{-1/2}_{(0)}$ in Lemma \ref{identity:deviation} Part (i) yields
\begin{align}
\label{key:eq}
&\bSigma^{-1}-\sum_{k=0}^m \tilde{\bSigma}^{-1/2}_{(0)}(\bI_p-\tilde{\bSigma}^{-1/2}_{(0)}\bSigma\tilde{\bSigma}^{-1/2}_{(0)})^k\tilde{\bSigma}^{-1/2}_{(0)} \nonumber \\
=& \tilde{\bSigma}^{-1/2}_{(0)}(\tilde{\bSigma}^{1/2}_{(0)}\bSigma^{-1}\tilde{\bSigma}^{1/2}_{(0)})^{1/2}(\bI_p-\tilde{\bSigma}^{-1/2}_{(0)}\bSigma\tilde{\bSigma}^{-1/2}_{(0)})^{m+1}(\tilde{\bSigma}^{1/2}_{(0)}\bSigma^{-1}\tilde{\bSigma}^{1/2}_{(0)})^{1/2}\tilde{\bSigma}^{-1/2}_{(0)}:=\Delta.
\end{align}
We first evaluate the bias of $\hat{\theta}_{sg}$. Using the mutual independence between different parts,
\begin{align*}
\mathbb{E}(\hat{\theta}_{sg})&=\mathbb{E}\sum_{k=0}^m\Big(\hat{\bmu}_{(0)}^T\tilde{\bSigma}^{-1/2}_{(0)}\cdot (\bI_p-\tilde{\bSigma}^{-1/2}_{(0)}\bSigma\tilde{\bSigma}_{(0)}^{-1/2})^k\cdot \tilde{\bSigma}^{-1/2}_{(0)}\hat{\bmu}_{(0)}\Big)-\frac{p(m+1)}{n}\\
&=\mathbb{E}\big(\hat{\bmu}_{(0)}^T\bSigma^{-1}\hat{\bmu}_{(0)}-\hat{\bmu}_{(0)}^T\Delta \hat{\bmu}_{(0)}\big)-\frac{p(m+1)}{n}=\theta+\frac{p}{n-mb_n}-\frac{p(m+1)}{n}-\mathbb{E}(\hat{\bmu}_{(0)}^T\Delta \hat{\bmu}_{(0)})
\end{align*}
where the second equality is due to \eqref{key:eq}. Hence,
\begin{align*}
&|\mathbb{E}(\hat{\theta}_{sg})-\theta|\leq \mathbb{E}|\hat{\bmu}_{(0)}^T\Delta \hat{\bmu}_{(0)}|+\frac{p(m+1)}{n}-\frac{p}{n-mb_n} \nonumber \\
\leq & \mathbb{E}\Big[\|\hat{\bmu}_{(0)}^T\tilde{\bSigma}^{-1/2}_{(0)}(\tilde{\bSigma}^{1/2}_{(0)}\bSigma^{-1}\tilde{\bSigma}^{1/2}_{(0)})^{1/2}\|_2^2\cdot \|\bI_p-\tilde{\bSigma}^{-1/2}_{(0)}\bSigma\tilde{\bSigma}^{-1/2}_{(0)}\|_2^{m+1}\Big]+\frac{pm(m+1)^2}{n^2+nm(m+1)} \nonumber \\
=&\mathbb{E}\Big[\hat{\bmu}_{(0)}^T\bSigma^{-1}\hat{\bmu}_{(0)} \cdot \|\bI_p-\tilde{\bSigma}^{-1/2}_{(0)}\bSigma\tilde{\bSigma}^{-1/2}_{(0)}\|_2^{m+1}\Big]+\frac{pm(m+1)^2}{n^2+nm(m+1)}\nonumber \\
\leq &\sqrt{\mathbb{E}(\hat{\bmu}_{(0)}^T\bSigma^{-1}\hat{\bmu}_{(0)})^2}\cdot \sqrt{\mathbb{E}\|\bI_p-\tilde{\bSigma}^{-1/2}_{(0)}\bSigma\tilde{\bSigma}^{-1/2}_{(0)}\|_2^{2m+2}}+\frac{pm(m+1)^2}{n^2+nm(m+1)} \nonumber \\
\overset{(a)}{\lesssim} & \Big(\theta+\sqrt{\frac{\theta}{n}}+\frac{p}{n}\Big)\cdot \Big(\frac{p}{n}\Big)^{\frac{m+1}{2}}+\frac{p}{n^2} \lesssim \frac{\theta+\sqrt{\theta}}{\sqrt{n}}+\frac{\sqrt{p}}{n},
\end{align*}
where the last inequality holds since $p=O(n^{\alpha}), m=\lceil \frac{\alpha}{1-\alpha}\rceil$. To obtain (a), we have used Lemma \ref{identity:deviation} Parts (iv) and (v). \\
We now turn to analyze the variance of $\hat{\theta}_{sg}$. Denote the first part of the data by $\mathcal{D}_0$. We have $\mbox{var}(\hat{\theta}_{sg})=\mbox{var}(\mathbb{E}(\hat{\theta}_{sg}\mid \mathcal{D}_0))+\mathbb{E}(\mbox{var}(\hat{\theta}_{sg}\mid \mathcal{D}_0))$. The preceding calculation of $\mathbb{E}(\hat{\theta}_{sg})$ implies
\begin{align*}
\mbox{var}(\mathbb{E}(\hat{\theta}_{sg}\mid \mathcal{D}_0))=\mbox{var}\big(\hat{\bmu}_{(0)}^T\bSigma^{-1}\hat{\bmu}_{(0)}-\hat{\bmu}_{(0)}^T\Delta\hat{\bmu}_{(0)}\big) \leq 2\mbox{var}(\hat{\bmu}_{(0)}^T\bSigma^{-1}\hat{\bmu}_{(0)})+2\mbox{var}(\hat{\bmu}_{(0)}^T\Delta\hat{\bmu}_{(0)}).
\end{align*}
As in bounding the bias, we apply Lemma \ref{identity:deviation} Parts (iv) and (v) to have
\begin{align*}
&\mbox{var}(\hat{\bmu}_{(0)}^T\Delta\hat{\bmu}_{(0)})\leq \mathbb{E}(\hat{\bmu}_{(0)}^T\Delta\hat{\bmu}_{(0)})^2\leq \sqrt{\mathbb{E}(\hat{\bmu}_{(0)}^T\bSigma^{-1}\hat{\bmu}_{(0)})^4}\cdot \sqrt{\mathbb{E}\|\bI_p-\tilde{\bSigma}^{-1/2}_{(0)}\bSigma\tilde{\bSigma}^{-1/2}_{(0)}\|_2^{4m+4}} \\
\lesssim & \Big(\theta^2+\frac{\theta}{n}+\frac{p^2}{n^2}\Big)\cdot \Big(\frac{p}{n}\Big)^{m+1}\lesssim \frac{\theta^2+\theta}{n}+\frac{p}{n^2}.
\end{align*}
We use the Efron-Stein inequality to bound $\var(\hat{\bmu}_{(0)}^T\bSigma^{-1}\hat{\bmu}_{(0)})$. Let $\bxi_{-1}=\sum_{i=2}^{n-mb_n}\bx_i$. Then
\begin{align*}
&\var(\hat{\bmu}_{(0)}^T\bSigma^{-1}\hat{\bmu}_{(0)})\leq (n-mb_n)\cdot \mathbb{E}(\hat{\bmu}_{(0)}^T\bSigma^{-1}\hat{\bmu}_{(0)}-(n-mb_n)^{-2}\bxi_{-1}^T\bSigma^{-1}\bxi_{-1})^2 \\
\lesssim &\frac{1}{n^3}\mathbb{E}(\bx_1^T\bSigma^{-1}\bx_1)^2+\frac{1}{n^3}\mathbb{E}(\bx_1^T\bSigma^{-1}\bxi_{-1})^2 \overset{(a)}{\lesssim} \frac{\theta^2+\theta+p^2}{n^3}+
\frac{1}{n^3}(\mathbb{E}(\bmu^T\bSigma^{-1}\bxi_{-1})^2+\mathbb{E}(\bxi_{-1}^T\bSigma^{-1}\bxi_{-1})) \\
\lesssim & \frac{\theta^2+\theta+p^2}{n^3}+\frac{n^2\theta^2+n\theta+n^2\theta+np}{n^3} \lesssim \frac{\theta^2+\theta}{n}+\frac{p}{n^2},
\end{align*}
where in (a) we have used the independence between $\bx_1$ and $\bxi_{-1}$ and Lemma \ref{identity:deviation} Part (v). We have thus showed that $\mbox{var}(\mathbb{E}(\hat{\theta}_{sg}\mid \mathcal{D}_0))\lesssim \frac{\theta^2+\theta}{n}+\frac{p}{n^2}$. \\
It remains to derive the same upper bound for $\mathbb{E}(\mbox{var}(\hat{\theta}_{sg}\mid \mathcal{D}_0))$. Since $m$ is a constant, we have
\begin{align}
\label{start:from}
\mathbb{E}(\mbox{var}(\hat{\theta}_{sg}\mid \mathcal{D}_0))\lesssim \sum_{k=1}^m\mathbb{E}\Big(\var(\underbrace{\hat{\bmu}_{(0)}^T\tilde{\bSigma}^{-1/2}_{(0)}\cdot \prod_{j=1}^k(\bI_p-\tilde{\bSigma}^{-1/2}_{(0)}\hat{\bSigma}_{(j)}\tilde{\bSigma}_{(0)}^{-1/2})\cdot \tilde{\bSigma}^{-1/2}_{(0)}\hat{\bmu}_{(0)}}_{:=f_k(\bx_{1},\ldots, \bx_{n-(m-k)b_n})} \mid \mathcal{D}_0)\Big)
\end{align}
Let $f^{(i)}_k(\bx_{1},\ldots, \bx_{n-(m-k)b_n})$ be the function obtained by replacing $\bx_i$ in $f_k(\bx_{1},\ldots, \bx_{n-(m-k)b_n})$ with $\bmu$. Conditioned on $\mathcal{D}_0$, $f_k(\bx_{1},\ldots, \bx_{n-(m-k)b_n})$ is a function of samples from the second to the $(k+1)$th parts.  We use the Efron-Stein inequality to bound the conditional variance to obtain
\begin{align}
\label{efron-stein:cond}
&\mathbb{E}[\var(f_k(\bx_{1},\ldots, \bx_{n-(m-k)b_n})\mid \mathcal{D}_0)] \nonumber \\
=&\sum_{i=n-mb_n+1}^{n-mb_n+kb_n}\mathbb{E}[(f_k(\bx_{1},\ldots, \bx_{n-(m-k)b_n})-f^{(i)}_k(\bx_{1},\ldots, \bx_{n-(m-k)b_n}))^2].
\end{align}
It is straightforward to verify that each $f_k(\bx_{1},\ldots, \bx_{n-(m-k)b_n})-f^{(i)}_k(\bx_{1},\ldots, \bx_{n-(m-k)b_n})$ above takes the form:
\begin{align*}
\hat{\bmu}_{(0)}^T\tilde{\bSigma}^{-1/2}_{(0)}\cdot \underbrace{\prod_{j=1}^{\tilde{k}-1}(\bI_p-\tilde{\bSigma}^{-1/2}_{(0)}\hat{\bSigma}_{(j)}\tilde{\bSigma}_{(0)}^{-1/2})}_{:=\bA}\cdot \bH \cdot \underbrace{\prod_{j=\tilde{k}+1}^{k}(\bI_p-\tilde{\bSigma}^{-1/2}_{(0)}\hat{\bSigma}_{(j)}\tilde{\bSigma}_{(0)}^{-1/2})}_{:=\bB}\cdot \underbrace{\tilde{\bSigma}^{-1/2}_{(0)}\hat{\bmu}_{(0)}}_{:=\ba},
\end{align*}
where $1\leq \tilde{k}\leq k, \bH=-b_n(b_n-1)^{-1}\tilde{\bSigma}^{-1/2}_{(0)}\big(b_n^{-1}(\bx_j-\bmu)(\bx_j-\bmu)^T-b_n^{-2}(\bx_j-\bmu)(\bx_j-\bmu)^T-b_n^{-2}(\bx_j-\bmu)(\bxi_{-j}-(b_n-1)\bmu)^T-b_n^{-2}(\bxi_{-j}-(b_n-1)\bmu)(\bx_j-\bmu)^T\big)\tilde{\bSigma}^{-1/2}_{(0)}$, $\bx_j$ is a sample from the $(\tilde{k}+1)$th part, and $\bxi_{-j}$ is the sum of samples from the $(\tilde{k}+1)$th part with $\bx_j$ removed. We now bound each $\mathbb{E}(\ba^T\bA\bH\bB\ba)^2$ in \eqref{efron-stein:cond}. According to the form of $\bH$, we have  
\begin{align*}
\mathbb{E}(\ba^T\bA\bH\bB\ba)^2 \lesssim& \mathbb{E}(\ba^T\bA\tilde{\bSigma}^{-1/2}_{(0)}b_n^{-1}(\bx_j-\bmu)(\bx_j-\bmu)^T\tilde{\bSigma}^{-1/2}_{(0)}\bB\ba)^2+ \\
&\mathbb{E}(\ba^T\bA\tilde{\bSigma}^{-1/2}_{(0)}b_n^{-2}(\bx_j-\bmu)(\bxi_{-j}-(b_n-1)\bmu)^T\tilde{\bSigma}^{-1/2}_{(0)}\bB\ba)^2 +\\
&\mathbb{E}(\ba^T\bA\tilde{\bSigma}^{-1/2}_{(0)}b_n^{-2}(\bxi_{-j}-(b_n-1)\bmu)(\bx_j-\bmu)^T\tilde{\bSigma}^{-1/2}_{(0)}\bB\ba)^2:=\cJ_1+\cJ_2+\cJ_3.
\end{align*}
We first bound $\cJ_1$. 
\begin{align*}
\cJ_1 & \leq  b_n^{-2} \sqrt{\mathbb{E}(\ba^T\bA\tilde{\bSigma}^{-1/2}_{(0)}(\bx_j-\bmu))^4}\cdot \sqrt{\mathbb{E}((\bx_j-\bmu)^T\tilde{\bSigma}^{-1/2}_{(0)}\bB\ba)^4} \\
&\overset{(a)}{\lesssim}b_n^{-2}  \sqrt{\mathbb{E}\|\ba^T\bA\tilde{\bSigma}^{-1/2}_{(0)}\bSigma^{1/2}\|_2^4}\cdot \sqrt{\mathbb{E}\|\bSigma^{1/2}\tilde{\bSigma}^{-1/2}_{(0)}\bB\ba\|_2^4} \\
& \lesssim b_n^{-2}  \sqrt{\mathbb{E}(\|\ba\|_2^4\|\bA\|_2^4\|\tilde{\bSigma}^{-1/2}_{(0)}\bSigma^{1/2}\|_2^4)}\cdot \sqrt{\mathbb{E}(\|\bSigma^{1/2}\tilde{\bSigma}^{-1/2}_{(0)}\|_2^4\|\bB\|_2^4\|\ba\|_2^4)}  \\
& \overset{(b)}{\lesssim} b_n^{-2} (\mathbb{E}\|\ba\|_2^8)^{1/2}\cdot (\mathbb{E}\|\bA\|_2^{16})^{1/8}\cdot (\mathbb{E}\|\bB\|_2^{16})^{1/8}\cdot (\mathbb{E}\|\tilde{\bSigma}^{-1/2}_{(0)}\bSigma^{1/2}\|_2^{16})^{1/4},
\end{align*}
Here, (a) is due to the independence between $\bx_j$ and $\{\ba, \bA, \bB\}$, and $\|\bSigma^{-1/2}(\bx_j-\bmu)\|_{\psi_2}\lesssim 1$; (b) is by multiple applications of H\"{o}lder's inequality. Moreover, 
\begin{align*}
\mathbb{E}\|\ba\|_2^8=&\mathbb{E}|\hat{\bmu}_{(0)}^T\tilde{\bSigma}^{-1}_{(0)}\hat{\bmu}_{(0)}|^4\lesssim \mathbb{E}|\hat{\bmu}_{(0)}^T\bSigma^{-1/2}(\bSigma^{1/2}\tilde{\bSigma}^{-1}_{(0)}\bSigma^{1/2}-\bI_p)\bSigma^{-1/2}\hat{\bmu}_{(0)}|^4+\mathbb{E}|\hat{\bmu}^T_{(0)}\bSigma^{-1}\hat{\bmu}_{(0)}|^4 \\
\leq & (\mathbb{E}|\hat{\bmu}^T_{(0)}\bSigma^{-1}\hat{\bmu}_{(0)}|^8)^{1/2}\cdot (\mathbb{E}\|\bSigma^{1/2}\tilde{\bSigma}^{-1}_{(0)}\bSigma^{1/2}-\bI_p\|_2^8)^{1/2}+\mathbb{E}|\hat{\bmu}_{(0)}^T\bSigma^{-1}\hat{\bmu}_{(0)}|^4  \lesssim \theta^4+\frac{\theta^2}{n^2}+\frac{p^4}{n^4},
\end{align*}
where we have used Lemma \ref{identity:deviation} Parts (iii) and (v) to obtain the last inequality. We apply Lemma \ref{identity:deviation} Parts (iii) again to get
\begin{align}
\label{reuse:arg}
\mathbb{E}\|\tilde{\bSigma}^{-1/2}_{(0)}\bSigma^{1/2}\|_2^{16}=\mathbb{E}\|\bSigma^{1/2}\tilde{\bSigma}^{-1}_{(0)}\bSigma^{1/2}\|_2^{8}\lesssim \mathbb{E}\|\bSigma^{1/2}\tilde{\bSigma}^{-1}_{(0)}\bSigma^{1/2}-\bI_p\|_2^{8}+\mathbb{E}\|\bI_p\|_2^8\lesssim 1.
\end{align}
We now bound $\mathbb{E}\|\bA\|_2^{16}$ and $\mathbb{E}\|\bB\|_2^{16}$. First note that
\begin{align*}
&\|\bI_p-\tilde{\bSigma}^{-1/2}_{(0)}\hat{\bSigma}_{(j)}\tilde{\bSigma}_{(0)}^{-1/2}\|_2 \\
=&\|(\tilde{\bSigma}^{-1/2}_{(0)}\bSigma^{1/2})\cdot [(\tilde{\bSigma}^{-1/2}_{(0)}\bSigma^{1/2})^{-1}(\bSigma^{1/2}\tilde{\bSigma}^{-1/2}_{(0)})^{-1}-\bSigma^{-1/2}\hat{\bSigma}_{(j)}\bSigma^{-1/2}]\cdot (\bSigma^{1/2}\tilde{\bSigma}^{-1/2}_{(0)})\|_2 \\
\leq& \|\tilde{\bSigma}^{-1/2}_{(0)}\bSigma^{1/2}\|_2^2\cdot \Big(\|\underbrace{\bI_p-\bSigma^{-1/2}\hat{\bSigma}_{(j)}\bSigma^{-1/2}}_{:=\bA_j}\|_2+\|\underbrace{\bI_p-\bSigma^{-1/2}\tilde{\bSigma}_{(0)}\bSigma^{-1/2}}_{:=\bA_0}\|_2\Big)
\end{align*}
The above bound combined with H\"{o}der's inequality yields
\begin{align*}
\mathbb{E}\|\bA\|_2^{16} \lesssim & \sqrt{\mathbb{E}\|\tilde{\bSigma}^{-1/2}_{(0)}\bSigma^{1/2}\|_2^{64(\tilde{k}-1)}}\cdot \sqrt{\mathbb{E}\prod_{j=1}^{\tilde{k}-1}(\|\bA_j\|^{32}_2+\|\bA_0\|^{32}_2)} \\
\overset{(a)}{=}&\sqrt{\mathbb{E}\|\tilde{\bSigma}^{-1/2}_{(0)}\bSigma^{1/2}\|_2^{64(\tilde{k}-1)}}\cdot \sqrt{\mathbb{E}(\mathbb{E}\|\bA_1\|^{32}_2+\|\bA_0\|^{32}_2)^{\tilde{k}-1}} \\
 \lesssim &\sqrt{\mathbb{E}\|\tilde{\bSigma}^{-1/2}_{(0)}\bSigma^{1/2}\|_2^{64(\tilde{k}-1)}}\cdot \sqrt{(\mathbb{E}\|\bA_1\|^{32}_2)^{\tilde{k}-1}+\mathbb{E}\|\bA_0\|^{32(\tilde{k}-1)}_2}  \overset{(b)}{\lesssim}1,
\end{align*}
where (a) holds because $\bA_0,\ldots,\bA_{\tilde{k}-1}$ are mutually independent and $\bA_1,\ldots, \bA_{\tilde{k}-1}$ are identically distributed; (b) is implied by the arguments in \eqref{reuse:arg} and Lemma \ref{identity:deviation} Part (ii). We can then obtain $\mathbb{E}\|\bB\|_2^{16}\lesssim 1$ in a similar way. We have thus derived the bound for $\cJ_1$ as $\cJ_1\lesssim \frac{\theta^2+\theta}{n^2}+\frac{p^2}{n^4}$. The bound for $\cJ_2$ can be obtained in the following,
\begin{align*}
\cJ_2& =b_n^{-4}\cdot \mathbb{E}\big\{\mathbb{E}[(\ba^T\bA\tilde{\bSigma}^{-1/2}_{(0)}(\bx_j-\bmu))^2\cdot ((\bxi_{-j}-(b_n-1)\bmu)^T\tilde{\bSigma}^{-1/2}_{(0)}\bB\ba)^2\mid \bA,\bB,\mathcal{D}_0]\big\}\\
&=\frac{b_n-1}{b_n^4}\cdot \mathbb{E}(\|\ba^T\bA\tilde{\bSigma}^{-1/2}_{(0)}\bSigma^{1/2}\|^2_2\cdot \|\bSigma^{1/2}\tilde{\bSigma}^{-1/2}_{(0)}\bB\ba\|_2^2) \\
&\leq \frac{b_n-1}{b_n^4}\cdot  \sqrt{\mathbb{E}\|\ba^T\bA\tilde{\bSigma}^{-1/2}_{(0)}\bSigma^{1/2}\|_2^4}\cdot \sqrt{\mathbb{E}\|\bSigma^{1/2}\tilde{\bSigma}^{-1/2}_{(0)}\bB\ba\|_2^4},
\end{align*}
where the second equation is due to that $\bx_j,\bxi_{-j}$ are independent and they are independent from $\mathcal{D}_0,\bA,\bB$. Recall that the last upper bound has been controlled in the analysis of $\cJ_1$. Hence we have $\cJ_2\lesssim \frac{\theta^2+\theta}{n^2}+\frac{p^2}{n^4}$. The same bound holds for $\cJ_3$ using the same arguments. Therefore, we can conclude that each term in the summation of \eqref{efron-stein:cond} is $O(\frac{\theta^2+\theta}{n^2}+\frac{p^2}{n^4})$. This result combined with \eqref{start:from} leads to $\mathbb{E}(\mbox{var}(\hat{\theta}_{sg}\mid \mathcal{D}_0))\lesssim \frac{\theta^2+\theta}{n}+\frac{p}{n^2}$.
\end{proof}

\begin{lem}
\label{identity:deviation}
It is assumed in (ii)--(iv) that $p=O(n^{\alpha})$ for some $\alpha<1$ and $c_L\leq \lambda_{\min}(\bSigma)\leq \delta_{\bSigma}\leq c_U$ for two constants $c_L,c_U>0$.
\begin{itemize}
\item[(i)] $\bA^{-1}-\sum_{k=0}^m(\bI-\bA)^k=\bA^{-1/2}(\bI-\bA)^{m+1}\bA^{-1/2},\quad ~ \forall ~\bA \succ 0, ~m=1,2,\ldots$
\item[(ii)] $\mathbb{E}\|\bSigma^{-1/2}\hat{\bSigma}_{(0)}\bSigma^{-1/2}-\bI_p\|^k_2\lesssim \Big(\frac{p}{n}\Big)^{k/2}, \quad \forall ~k=1,2,\ldots$
\item[(iii)] $\mathbb{E}\|\bSigma^{1/2}\tilde{\bSigma}_{(0)}^{-1}\bSigma^{1/2}-\bI_p\|^k_2\lesssim \Big(\frac{p}{n}\Big)^{k/2}, \quad \forall ~k=1,2,\ldots$
\item[(iv)] $\mathbb{E}\|\tilde{\bSigma}_{(0)}^{-1/2}\bSigma\tilde{\bSigma}_{(0)}^{-1/2}-\bI_p\|^k_2\lesssim \Big(\frac{p}{n}\Big)^{k/2}, \quad \forall ~k=1,2,\ldots$
\item[(v)] $\mathbb{E}(\hat{\bmu}_{(0)}^T\bSigma^{-1}\hat{\bmu}_{(0)})^{2k}\lesssim \theta^{2k}+\Big(\frac{\theta}{n}\Big)^k+\Big(\frac{p}{n}\Big)^{2k}, \quad \forall ~k=1,2,\ldots$
\end{itemize}
\end{lem}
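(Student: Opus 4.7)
Part (i) is a pure algebraic identity. Since $\bA$ and $\bI-\bA$ commute, the geometric series identity gives $\bA\sum_{k=0}^m(\bI-\bA)^k=\bI-(\bI-\bA)^{m+1}$. Multiplying on the left by $\bA^{-1}$ and noting $\bA^{-1}(\bI-\bA)^{m+1}=\bA^{-1/2}(\bI-\bA)^{m+1}\bA^{-1/2}$ (again using commutativity and the spectral decomposition of $\bA$) delivers the claimed identity after rearrangement.

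For Part (ii), write $\bSigma^{-1/2}\hat{\bSigma}_{(0)}\bSigma^{-1/2}=n_0^{-1}\sum_i\by_i\by_i^T-\bar{\by}\bar{\by}^T$, where $n_0=n-mb_n\asymp n$ is the size of the first part and $\by_i=\bSigma^{-1/2}(\bx_i-\bmu)$ are i.i.d.\ zero-mean isotropic sub-Gaussian vectors with $\subgnorm{\by_i}\lesssim 1$. Standard covariance concentration (e.g.\ Theorem 4.6.1 of \cite{versh18}) yields, for every $t\ge 0$, a deviation bound of the form $\PP(\|n_0^{-1}\sum_i\by_i\by_i^T-\bI_p\|_2\ge C(\sqrt{p/n}+\sqrt{t/n}+t/n))\le 2e^{-t}$, and a parallel tail bound $\PP(\|\bar{\by}\bar{\by}^T\|_2\ge C(p/n+t/n))\le 2e^{-t}$ from concentration of $\|\bar{\by}\|_2^2$. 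Integrating these tail bounds through $\mathbb{E} Z^k=k\int_0^\infty t^{k-1}\PP(Z>t)dt$, the contribution from $\sqrt{p/n}$ dominates under $p=O(n^\alpha)$ with $\alpha<1$, giving $\mathbb{E}\|\bSigma^{-1/2}\hat{\bSigma}_{(0)}\bSigma^{-1/2}-\bI_p\|_2^k\lesssim(p/n)^{k/2}$.

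Parts (iii) and (iv) follow by transferring Part (ii). For Part (iii), observe $\bSigma^{1/2}\tilde{\bSigma}_{(0)}^{-1}\bSigma^{1/2}=(\bSigma^{-1/2}\tilde{\bSigma}_{(0)}\bSigma^{-1/2})^{-1}$ and
\[
\bSigma^{-1/2}\tilde{\bSigma}_{(0)}\bSigma^{-1/2}-\bI_p=(\bSigma^{-1/2}\hat{\bSigma}_{(0)}\bSigma^{-1/2}-\bI_p)+\epsilon_n\bSigma^{-1}.
\]
With $\epsilon_n=\sqrt{p/n}$ and $\lambda_{\min}(\bSigma)\ge c_L$, the deterministic term is $O(\sqrt{p/n})$, while the stochastic term has $k$th moment controlled by Part (ii). On the event where $\|\bSigma^{-1/2}\tilde{\bSigma}_{(0)}\bSigma^{-1/2}-\bI_p\|_2\le 1/2$ one applies the Neumann series expansion $\bM^{-1}-\bI_p=-\sum_{j\ge1}(-\bE)^j$ with $\bE=\bM-\bI_p$ to get $\|\bM^{-1}-\bI_p\|_2\le 2\|\bE\|_2$; on the complementary (exponentially rare) event the bound $\|\bM^{-1}\|_2\le\epsilon_n^{-1}\lambda_{\max}(\bSigma)\lesssim\sqrt{n/p}$ is deterministic and its moments remain $O((p/n)^{k/2})$ after multiplying by the tail probability. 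Part (iv) is immediate because $\tilde{\bSigma}_{(0)}^{-1/2}\bSigma\tilde{\bSigma}_{(0)}^{-1/2}$ is similar to $\bSigma\tilde{\bSigma}_{(0)}^{-1}$, whose spectrum equals that of $\bSigma^{1/2}\tilde{\bSigma}_{(0)}^{-1}\bSigma^{1/2}$, so Part (iii) applies verbatim.

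For Part (v), decompose
\[
\hat{\bmu}_{(0)}^T\bSigma^{-1}\hat{\bmu}_{(0)}=\theta+2\bmu^T\bSigma^{-1}(\hat{\bmu}_{(0)}-\bmu)+(\hat{\bmu}_{(0)}-\bmu)^T\bSigma^{-1}(\hat{\bmu}_{(0)}-\bmu):=\theta+T_1+T_2,
\]
so that $(\theta+T_1+T_2)^{2k}\lesssim_k\theta^{2k}+T_1^{2k}+T_2^{2k}$. The term $T_1$ is a mean-zero sub-Gaussian scalar with $\subgnorm{T_1}\lesssim\sqrt{\theta/n}$, giving $\mathbb{E}|T_1|^{2k}\lesssim(\theta/n)^k$ by the moment bound for sub-Gaussians. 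For $T_2$, set $\bw=\sqrt{n_0}\bSigma^{-1/2}(\hat{\bmu}_{(0)}-\bmu)=n_0^{-1/2}\sum_i\by_i$, so $T_2=n_0^{-1}\|\bw\|_2^2$ with $\bw$ zero-mean isotropic sub-Gaussian. Hanson--Wright or direct chi-square-type concentration yields $\PP(|\|\bw\|_2^2-p|\ge t)\le 2\exp(-c\min(t^2/p,t))$, whence $\mathbb{E}|\|\bw\|_2^2|^{2k}\lesssim p^{2k}$ and $\mathbb{E} T_2^{2k}\lesssim(p/n)^{2k}$. Combining the three bounds gives the stated inequality.

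The main obstacle is the bookkeeping in Parts (iii)--(iv): to pass from operator-norm tail bounds on $\bSigma^{-1/2}\tilde{\bSigma}_{(0)}\bSigma^{-1/2}-\bI_p$ to moment bounds on the inverse one must control the rare event where this perturbation is not small and show that the deterministic bound $\|\tilde{\bSigma}_{(0)}^{-1}\|_2\le\epsilon_n^{-1}$ (afforded by the ridge regularization) keeps its contribution at the same $O((p/n)^{k/2})$ order; this is where the explicit choice $\epsilon_n=\sqrt{p/n}$ enters the analysis in an essential way.
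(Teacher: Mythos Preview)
Your approach matches the paper's proof in structure for every part: the geometric-series/spectral argument in (i), the covariance concentration plus tail integration in (ii), the good-event/bad-event split in (iii), the similarity argument in (iv), and the three-term decomposition in (v).

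There is, however, a genuine gap in your handling of the bad event in Part~(iii). You write $\|\bM^{-1}\|_2\le\epsilon_n^{-1}\lambda_{\max}(\bSigma)\lesssim\sqrt{n/p}$, which implicitly uses $\lambda_{\max}(\bSigma)=O(1)$. But the hypothesis of the lemma (and of the parameter space $\cH(s,\tau)$) only bounds the maximum diagonal entry $\delta_{\bSigma}\le c_U$, not the largest eigenvalue; $\lambda_{\max}(\bSigma)$ can be as large as $pc_U$. The correct deterministic bound on the complementary event is therefore $\|\bSigma^{1/2}\tilde{\bSigma}_{(0)}^{-1}\bSigma^{1/2}-\bI_p\|_2\lesssim p\sqrt{n/p}+1$, which is much larger than what you claimed. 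With this bound, simply ``multiplying by the tail probability'' no longer works at a fixed Markov exponent: one needs $(p\sqrt{n/p})^k\cdot(p/n)^{\ell/2}\lesssim(p/n)^{k/2}$, which forces $\ell\ge 2k/(1-\alpha)$ under the scaling $p=O(n^{\alpha})$. This is precisely where the assumption $\alpha<1$ enters; the paper makes this explicit by choosing $\ell=\lceil 2k/(1-\alpha)\rceil$ and applying Markov's inequality at that order to bound $\PP(\mathcal{E}^c)\lesssim(p/n)^{\ell/2}$ via Part~(ii). Your outline is salvageable with this correction, but as written the bad-event contribution is not controlled.
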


\begin{proof}
Part (i): It is direct to use the spectral decomposition of $\bA$ to confirm the identity in (i).   \\
Part (ii): First note that $\bSigma^{-1/2}\hat{\bSigma}_{(0)}\bSigma^{-1/2}=\hat{\bSigma}_{\by}-\bar{\by}\bar{\by}^T$, where $\hat{\bSigma}_{\by}=\ell^{-1}_n\sum_{i=1}^{\ell_n}\by_i\by_i^T, \bar{\by}=\ell_n^{-1}\sum_{i=1}^{\ell_n}\by_i$, $\ell_n=n-m\lfloor \frac{n}{m+1}\rfloor$, and $\by_i$'s are i.i.d. from the zero-mean isotropic sub-Gaussian distribution. According to Theorem 5.39 in \cite{versh10} and that $\ell_n$ is of order $n$, it holds
\begin{align}
\label{spectral:bound}
\mathbb{P}\Big(\|\hat{\bSigma}_{\by}-\bI_p\|_2> \max(\delta, \delta^2)\Big)\leq 2\exp(-ct^2), \quad \forall t\geq 0,~~~\delta=C\sqrt{\frac{p}{n}}+\frac{t}{\sqrt{n}},
\end{align}
for some constants $c,C>0$. We use the above non-asymptotic result to bound 
\begin{align*}
\mathbb{E}\|\hat{\bSigma}_{\by}-\bI_p\|^k_2 =&\int_{0}^{C\sqrt{\frac{p}{n}}}ku^{k-1}\mathbb{P}(\|\hat{\bSigma}_{\by}-\bI_p\|_2>u)du+\int_{C\sqrt{\frac{p}{n}}}^1ku^{k-1}\mathbb{P}(\|\hat{\bSigma}_{\by}-\bI_p\|_2>u)du \\
&+\int_1^{\infty}ku^{k-1}\mathbb{P}(\|\hat{\bSigma}_{\by}-\bI_p\|_2>u)du:=\cJ_1+\cJ_2+\cJ_3
\end{align*}
It is obvious that $\cJ_1\leq \int_{0}^{C\sqrt{\frac{p}{n}}}ku^{k-1}du\leq \big(C\sqrt{\frac{p}{n}}\big)^k \lesssim (\frac{p}{n})^{k/2}$. Using \eqref{spectral:bound} and a change of variable $v=\sqrt{\frac{n}{p}}u-C$, we obtain 
\begin{align*}
\cJ_2&=\Big(\frac{p}{n}\Big)^{k/2}\int_{0}^{\sqrt{\frac{n}{p}}-C}k(v+C)^{k-1}\cdot \mathbb{P}\big(\|\hat{\bSigma}_{\by}-\bI_p\|_2>(C+v)\sqrt{p/n}\big)dv \\
&\leq 2\Big(\frac{p}{n}\Big)^{k/2}\int_{0}^{\sqrt{\frac{n}{p}}-C}k(v+C)^{k-1}e^{-cpv^2}dv \lesssim \Big(\frac{p}{n}\Big)^{k/2}.
\end{align*}
Similarly, we use \eqref{spectral:bound} and a change of variable $v=\sqrt{\frac{n}{p}}\cdot \sqrt{u}-C$ to bound $\cJ_3$,
\begin{align*}
\cJ_3 \lesssim \Big(\frac{p}{n}\Big)^{k} \int_{\sqrt{\frac{n}{p}}-C}^{\infty}(v+C)^{2k-1}e^{-cpv^2}dv\lesssim \Big(\frac{p}{n}\Big)^{k/2}
\end{align*}
So far we have showed that $\mathbb{E}\|\hat{\bSigma}_{\by}-\bI_p\|^k_2\lesssim (\frac{p}{n})^{k/2}$. To finish the proof, it is sufficient to further show that $\mathbb{E}\|\bar{\by}\bar{\by}^T\|_2^{k}=\mathbb{E}\|\bar{\by}\|_2^{2k}\lesssim (\frac{p}{n})^{k/2}$. Towards this end, note that $\bar{\by}$ is sub-Gaussian with $\|\bar{\by}\|_{\psi_2}\lesssim n^{-1/2}$, therefore
\begin{align*}
\mathbb{E}\|\bar{\by}\|_2^{2k}=\mathbb{E}\Big(\sum_{j=1}^p\bar{\by}^2_j\Big)^k\leq p^{k-1}\cdot \sum_{j=1}^p\mathbb{E}\bar{\by}^{2k}_j \lesssim \Big(\frac{p}{n}\Big)^k\lesssim \Big(\frac{p}{n}\Big)^{k/2}.
\end{align*}
Part (iii): Define the probability event $\mathcal{E}=\{\lambda_{\min}(\bSigma^{-1/2}\tilde{\bSigma}_{(0)}\bSigma^{-1/2})>1/3\}$. We have
\begin{align*}
&\mathbb{E}\|\bSigma^{1/2}\tilde{\bSigma}_{(0)}^{-1}\bSigma^{1/2}-\bI_p\|^k_2 \\
=&  \mathbb{E}\big(\|\bSigma^{1/2}\tilde{\bSigma}_{(0)}^{-1}\bSigma^{1/2}-\bI_p\|^k_2\cdot \mathbbm{1}_{\mathcal{E}}\big) + \mathbb{E}\big(\|\bSigma^{1/2}\tilde{\bSigma}_{(0)}^{-1}\bSigma^{1/2}-\bI_p\|^k_2\cdot \mathbbm{1}_{\mathcal{E}^c}\big):=\cJ_1+\cJ_2.
\end{align*}
We bound $\cJ_1$ in the following,
\begin{align*}
\cJ_1&\leq 3^k\cdot \mathbb{E}(\|\bSigma^{-1/2}\tilde{\bSigma}_{(0)}\bSigma^{-1/2}-\bI_p\|_2^k\cdot \mathbbm{1}_{\mathcal{E}}) \leq 3^k\cdot \mathbb{E}\|\bSigma^{-1/2}\tilde{\bSigma}_{(0)}\bSigma^{-1/2}-\bI_p\|_2^k \\
&\lesssim \mathbb{E}\|\bSigma^{-1/2}\hat{\bSigma}_{(0)}\bSigma^{-1/2}-\bI_p\|_2^k+\|\epsilon_n\bSigma^{-1}\|_2^k\lesssim \Big(\frac{p}{n}\Big)^{k/2},
\end{align*}
where we have used Part (ii) and the conditions $\lambda_{\min}(\bSigma)\geq c_L, \epsilon_n=\sqrt{\frac{p}{n}}$ in the last inequality. To bound $\cJ_2$, we first get
\[
\|\bSigma^{1/2}\tilde{\bSigma}_{(0)}^{-1}\bSigma^{1/2}-\bI_p\|_2\leq \|\bSigma\|_2\cdot \|\tilde{\bSigma}_{(0)}^{-1}\|_2+1\lesssim p\sqrt{\frac{n}{p}}+1,
\]
where the last result is because $\|\bSigma\|_2\leq \|\bSigma\|_F\leq pc_U$. Further use Part (ii) to derive
\begin{align*}
\mathbb{P}(\mathcal{E}^c)&\leq \mathbb{P}(\lambda_{\min}(\bSigma^{-1/2}\hat{\bSigma}_{(0)}\bSigma^{-1/2})\leq 1/3)\leq \mathbb{P}(\|\bSigma^{-1/2}\hat{\bSigma}_{(0)}\bSigma^{-1/2}-\bI_p\|_2\geq 2/3)
 \\
&\leq (3/2)^{\ell}\cdot \mathbb{E}\|\bSigma^{-1/2}\hat{\bSigma}_{(0)}\bSigma^{-1/2}-\bI_p\|_2^{\ell}\lesssim \Big(\frac{p}{n}\Big)^{\ell/2}
\end{align*}
Choosing $\ell=\lceil \frac{2k}{1-\alpha}\rceil$ above combined with the condition $p=O(n^{\alpha})$ we can obtain
\begin{align*}
\cJ_2\leq (1+p\sqrt{n/p})^k\cdot \mathbb{P}(\mathcal{E}^c)\lesssim p^{\frac{k+\ell}{2}}n^{\frac{k-\ell}{2}}\lesssim \Big(\frac{p}{n}\Big)^{k/2}.
\end{align*}
Part (iv): Combine Part (iii) and the simple identity $\|\tilde{\bSigma}_{(0)}^{-1/2}\bSigma\tilde{\bSigma}_{(0)}^{-1/2}-\bI_p\|_2=\|\bSigma^{1/2}\tilde{\bSigma}_{(0)}^{-1}\bSigma^{1/2}-\bI_p\|_2$. \\
Part (v): Note that $\hat{\bmu}_{(0)}^T\bSigma^{-1}\hat{\bmu}_{(0)}=\theta+2\bar{\by}^T\bSigma^{-1/2}\bmu+\|\bar{\by}\|_2^2$. Therefore, 
\begin{align*}
\mathbb{E}(\hat{\bmu}_{(0)}^T\bSigma^{-1}\hat{\bmu}_{(0)})^{2k}\lesssim \theta^{2k}+\mathbb{E}(|\bar{\by}^T\bSigma^{-1/2}\bmu|^{2k})+\mathbb{E}\|\bar{\by}\|_2^{4k}.
\end{align*}
Given that $\bar{\by}$ has sub-Gaussian norm $\|\bar{\by}\|_{\psi_2}\lesssim n^{-1/2}$, we have $\mathbb{E}(|\bar{\by}^T\bSigma^{-1/2}\bmu|^{2k})\lesssim \frac{\theta^k}{n^k}$. Finally, $\mathbb{E}\|\bar{\by}\|_2^{4k}\lesssim \frac{p^{2k}}{n^{2k}}$ is obtained in the proof of Part (ii). 
\end{proof}

\subsubsection{Further discussions in the dense regime}

As pointed out in \textbf{Remark 8}, the minimax rate of functional estimation under the scaling $p\lesssim s^2, n\lesssim  p\lesssim n^2$ remains unknown. Here we provide some partial results with extra assumptions made on the model. To fix the idea, we focus on Gaussian distributions throughout this section. 

\begin{itemize}
\item[(1)] \emph{Block structures on $\bSigma$}. Suppose the covariance matrix $\bSigma$ has a known block structure (up to a permutation): 
\begin{align*}
\bSigma=
\begin{pmatrix}
\bSigma_{11} & & &\\
& \bSigma_{22} & & \\
&& \ddots & \\
& & & \bSigma_{kk}
\end{pmatrix}
, \quad \bSigma_{jj} \in \mathbb{R}^{d_j\times d_j},~~ j=1,2,\ldots, k,
\end{align*}
where the number of blocks $k$ can depend on $n$. Let $d=\max_{1\leq j\leq k}d_j$ be the maximum block size, and formulate this block structure in a parameter space denoted by $\mathcal{B}(d)=\{(\bmu,\bSigma): \bSigma=\diag(\bSigma_{11},\ldots, \bSigma_{kk})\}$. Observe that for $(\bmu,\bSigma)\in \mathcal{B}(d)$, if we split $\bmu$ into $k$ corresponding blocks as $\bmu=(\bmu^T_1,\ldots,\bmu^T_k)^T$, then $\theta=\sum_{j=1}^k \bmu_j^T\bSigma_{jj}^{-1}\bmu_j$ and it is natural to estimate $\bmu_j^T\bSigma_{jj}^{-1}\bmu_j$ within each block using the debiasing idea from Proposition \ref{gaussian:case}. This motivates us to consider the functional estimator: 
\[
\hat{\theta}_b=\sum_{j=1}^k\frac{n-d_j-2}{n}\hat{\bmu}_{[j]}^T\hat{\bSigma}^{-1}_{[j]}\hat{\bmu}_{[j]}-\frac{p}{n}, 
\]
where $\hat{\bmu}_{[j]},\hat{\bSigma}_{[j]}$ are the sample mean and sample covariance matrix from the $j$th block respectively. Suppose $p\lesssim s^2, n\lesssim  p\lesssim n^2, \limsup_{n\rightarrow \infty}\frac{d}{n}<1$, then
\begin{align*}
\inf_{\hat{\theta}}\sup_{(\bmu,\bSigma)\in \mathcal{H}(s,\tau)\cap \mathcal{B}(d)}\mathbb{E}|\hat{\theta}-\theta| \asymp \Big[\tau \wedge \frac{\sqrt{p}}{n}\Big]+\Big[\tau \wedge  \frac{\tau+\sqrt{\tau}}{\sqrt{n}}\Big].
\end{align*}
The estimator $\hat{\theta}_b$ is minimax rate-optimal when $\tau \gtrsim \frac{\sqrt{p}}{n}$, while the trivial estimator $\bzero$ is optimal when $\tau \lesssim \frac{\sqrt{p}}{n}$.
\begin{proof}
The lower bound has been essentially derived in the proof of Proposition \ref{lowerbound:dense}, as some parameter configurations considered there belong to $\mathcal{H}(s,\tau)\cap \mathcal{B}(d)$ as well. Specifically, the lower bound term $\tau \wedge  \frac{\tau+\sqrt{\tau}}{\sqrt{n}}$ continues to hold in the current case. The other term $\tau \wedge \frac{\sqrt{p}}{n}$ follows from the argument in Case (ii) of Section \ref{proof:denselower}. To obtain the matching upper bound, it is clear that in the regime where $\tau \lesssim \frac{\sqrt{p}}{n}$, the lower bound rate reduces to $\tau$ which is attained by the trivial estimator $\bzero$. When $\tau \gtrsim \frac{\sqrt{p}}{n}$, it remains to show $\hat{\theta}_b$ achieves the optimal rate $\frac{\sqrt{p}}{n}+\frac{\tau+\sqrt{\tau}}{\sqrt{n}}$. Under the block structure and condition $\limsup_{n\rightarrow \infty}\frac{b}{n}<1$, we directly apply the result of Proposition \ref{gaussian:case} within each block to obtain
\begin{align*}
\mathbb{E}|\hat{\theta}_b-\theta| &\lesssim \sqrt{\var(\hat{\theta}_b)}= \Bigg[\sum_{j=1}^k \var\Big(\frac{n-d_j-2}{n}\hat{\bmu}_{[j]}^T\hat{\bSigma}^{-1}_{[j]}\hat{\bmu}_{[j]}-\frac{d_j}{n}\Big)\Bigg]^{1/2} \\
&\lesssim \Bigg[\sum_{j=1}^k\Big(\frac{d_j}{n^2}+\frac{\bmu_j^T\bSigma_{jj}^{-1}\bmu_j+(\bmu_j^T\bSigma_{jj}^{-1}\bmu_j)^2}{n}\Big)\Bigg]^{1/2}\lesssim \frac{\sqrt{p}}{n}+\frac{\theta+\sqrt{\theta}}{\sqrt{n}},
\end{align*}
where we have used the mutual independence between the blocks, and the identities $p=\sum_{j=1}^kd_j, \theta=\sum_{j=1}^d\bmu_j^T\bSigma_{jj}^{-1}\bmu_j$.
\end{proof}
\item[(2)] Support recovery of $\bSigma^{-1}\bmu$. Let $S$ be the support of $\bSigma^{-1}\bmu$. It is direct to confirm that $\theta=\bmu_{S}^T\bSigma^{-1}_{SS}\bmu_{S}$. Recall that we assume $\bSigma^{-1}\bmu$ is an $s$-sparse vector, i.e., $|S|\leq s$. In the situations where support recovery is possible, one appealing idea of constructing functional estimator is to first identify the support $S$ and then employ the debiasing from Proposition \ref{gaussian:case} in the reduced space. Thusly motivated, we split the data into two parts $\mathcal{D}_1,\mathcal{D}_2$ with sample size $n_1=\lfloor \frac{n}{2} \rfloor ,n_2=n-\lfloor \frac{n}{2}\rfloor$, use $\mathcal{D}_1$ to estimate the support, and perform debiasing based on $\mathcal{D}_2$. Specifically, we first obtain $\hat{S}$ as
\begin{align}
\label{support:rec}
\hat{S}=\{j\in[p]: (\hat{\balpha}_{sr})_j\neq 0 \}, \quad \hat{\balpha}_{sr} \in \argmin_{\lonenorm{\bbeta} \leq \gamma} \frac{1}{2}\bbeta^T \hat{\bSigma}^{(1)}\bbeta-\bbeta^T \hat{\bmu}^{(1)}+\sum_{j=1}^p\rho_{\lambda}(\beta_j),
\end{align}
where $\rho_{\lambda}(\cdot)$ is the MCP \citep{z10} or SCAD \citep{fl01} penalty function, and $ \hat{\bmu}^{(1)}, \hat{\bSigma}^{(1)}$ are the sample mean and unbiased sample covariance matrix from $\mathcal{D}_1$ respectively. We then construct the functional estimator 
\begin{align*}
\hat{\theta}_{sr}=
\begin{cases}
\frac{n_2-|\hat{S}|-2}{n_2}(\hat{\bmu}^{(2)}_{\hat{S}})^T\big(\hat{\bSigma}^{(2)}_{\hat{S}\hat{S}}\big)^{-1}\hat{\bmu}^{(2)}_{\hat{S}}-\frac{|\hat{S}|}{n_2} & \text{if}~|\hat{S}|\leq \frac{n}{4} \\
0 & \text{otherwise}
\end{cases}
\end{align*}
Here, $\hat{\bmu}^{(2)}, \hat{\bSigma}^{(2)}$ are the sample mean and sample covariance matrix from $\mathcal{D}_2$ respectively. To ensure support recovery we impose a beta-min condition: 
\[
\mathcal{G}=\Big\{(\bmu,\bSigma): \min_j\big |(\bSigma^{-1}_{SS}\bmu_S)_j \big |\gg \sqrt{\frac{\log p}{n}} \cdot \big[\|\bSigma^{-1/2}_{SS}\|^2_{\infty}\cdot (1\vee \sqrt{|S|^2/n})+1\big]\Big\}.
\]
Consider the common scenario where $\frac{s\log p}{n}=o(1), \tau \asymp 1$, and $\exists \delta>0 ~{\rm s.t.}~ p^{-\delta}\lesssim n^{-1/2}$, then
\begin{align*}
\inf_{\hat{\theta}}\sup_{(\bmu,\bSigma)\in \mathcal{H}(s,\tau)\cap \mathcal{G}}\mathbb{E}|\hat{\theta}-\theta| \asymp \frac{1}{\sqrt{n}}.
\end{align*}
The estimator $\hat{\theta}_{sr}$ with tuning parameters $\lambda\asymp \sqrt{\frac{\log p}{n}}, \gamma \asymp \sqrt{\frac{n}{\log p}}$ attains the optimal rate. Before presenting the proof, we make a few remarks below. 
\begin{enumerate}
\item[(a)] The support recovery of nonconvex regularized M-estimators has been systematically studied in \cite{lw17}. We will apply the general theorem from \cite{lw17} to establish variable selection consistency for our problem. Notably, the theorem certifies that the nonconvex optimization \eqref{support:rec} has a unique stationary point and it recovers the support without the typical incoherence conditions required for $\ell_1$-regularized methods. 
\item[(b)] Estimating the functional in the variable selection consistency regime becomes comparatively more tractable, as the minimax estimation admits the regular parametric rate. The optimal rate is attained by performing debiasing on the estimated support. A quick inspection of Proposition \ref{gaussian:case} indicates that the same debiasing can work when the estimated support contains the true one as a subset while allowing for false positives. This weaker notion is commonly referred to as the sure screening property \citep{fl08}. It is possible to explore sure screening methods \citep{fl18} combined with the debiasing to derive rate-optimal functional estimator under a different set of conditions. We do not include a detailed treatment here for simplicity. 
\end{enumerate}
\begin{proof}
The lower bound $\frac{1}{\sqrt{n}}$ follows directly from the argument in Case (i) of Section \ref{lowerbound:theta}, as the parameter configurations used there belong to $\mathcal{H}(s,\tau)\cap \mathcal{G}$ as well. We now show $\hat{\theta}_{sr}$ achieves the same rate. Suppose $\PP(\hat{S}\neq S)\lesssim p^{-\delta}+n^{-1/2}$ for the moment. Define the event $\mathcal{L}=\{|\hat{S}|\leq n/4\}$. For a given set $R\subseteq [p]$, denote $\theta_R=\bmu_{R}^T\bSigma^{-1}_{RR}\bmu_{R}$. We have
\begin{align*}
\mathbb{E}|\hat{\theta}_{sr}-\theta| \leq \mathbb{E}(|\hat{\theta}_{sr}-\theta_{\hat{S}}|\cdot \mathbbm{1}_{\mathcal{L}})+\mathbb{E}(|\theta_{\hat{S}}-\theta|\cdot \mathbbm{1}_{\mathcal{L}})+\mathbb{E}(|\hat{\theta}_{sr}-\theta|\cdot \mathbbm{1}_{\mathcal{L}^c}):=\mathcal{J}_1+\mathcal{J}_2+\mathcal{J}_3.
\end{align*}
First note that the conditions $\frac{s\log p}{n}=o(1), \tau \asymp 1, p^{-\delta}\lesssim n^{-1/2}$ imply that $\mathcal{J}_3=\theta\cdot \PP(\mathcal{L}^c)\lesssim \PP(\hat{S}\neq S)\lesssim n^{-1/2}$. It is also clear that $\mathcal{J}_2\leq \mathbb{E}(|\theta_{\hat{S}}-\theta|\lesssim \PP(\hat{S}\neq S)\lesssim n^{-1/2}$. Regarding $J_1$,
\begin{align*}
\mathcal{J}_1&=\sum_{|R|\leq n/4} \PP(\hat{S}=R)\cdot \mathbb{E}\big|(n_2-|R|-2)n_2^{-1}(\hat{\bmu}^{(2)}_{R})^T(\hat{\bSigma}^{(2)}_{RR})^{-1}\hat{\bmu}^{(2)}_{R}-|R|n_2^{-1}-\theta_{R}\big| \\
&\lesssim n^{-1/2}\cdot \sum_{|R|\leq n/4} \PP(\hat{S}=R) \lesssim n^{-1/2},
\end{align*}
where we have used the independence of $\hat{S}$ from $\hat{\bmu}^{(2)}, \hat{\bSigma}^{(2)}$ in the first equality; the second inequality follows from the proof of Proposition \ref{gaussian:case} and the fact that $\theta_R\leq \theta \lesssim 1$. It remains to prove $\PP(\hat{S}\neq S)\lesssim p^{-\delta}+n^{-1/2}$. Like the proof of Corollary 1(b) in \cite{lw17}, we verify the conditions of Theorem 2(b) in \cite{lw17} to obtain the variable selection consistency in our problem. With some minor abuse of notation, below all the statistics are computed from $\mathcal{D}_1$. 
\begin{enumerate}
\item[(i)] Restricted strong convexity of $\frac{1}{2}\bbeta^T \hat{\bSigma}\bbeta-\bbeta^T \hat{\bmu}$. With probability greater than $1-p^{-\delta}$ it holds that
\begin{align*}
\bu^T\hat{\bSigma}\bu&\geq -|\bu^T(\hat{\bmu}-\bmu)|^2+\frac{1}{2}\bu^T\bSigma\bu-\Big| \Big(n^{-1}\sum_{i=1}^n|\bu^T(\bx_i-\bmu)|^2\Big)^{\frac{1}{2}}-\sqrt{\bu^T\bSigma \bu} \Big|^2 \\
&\geq c_1 \ltwonorm{\bu}^2-c_2\frac{\log p}{n}\lonenorm{\bu}^2, \quad \forall \bu \in \mathbb{R}^p,
\end{align*}
where the last inequality holds due to \eqref{bound:second:term} and \eqref{bound:first:term}.
\item[(ii)] Choice of the tuning parameters $\lambda, \gamma$. The orders $\lambda\asymp \sqrt{\frac{\log p}{n}}, \gamma \asymp \sqrt{\frac{n}{\log p}}$ satisfy the requirements specified in Theorem 1 of \cite{lw17}, because $\frac{s\log p}{n}=o(1), \lonenorm{\balpha}\lesssim \sqrt{s}$, and $\linfnorm{\hat{\bSigma}\balpha-\hat{\bmu}}\lesssim \sqrt{\frac{\log p}{n}}$ from Lemma \ref{rare:events} Part (i).  
\item[(iii)] Strict dual feasibility. As in the proof of Corollary 1(b) in \cite{lw17}, the strict dual feasibility will hold if (a) $\linfnorm{\hat{\bSigma}_{S^cS}\hat{\bSigma}^{-1}_{SS}\hat{\bmu}_S-\hat{\bmu}_{S^c}}\lesssim \sqrt{\frac{\log p}{n}}$; (b) $\linfnorm{\hat{\bSigma}_{SS}\balpha_S-\hat{\bmu}_S} \lesssim \sqrt{\frac{\log p}{n}}$; (c) $\linfnorm{\hat{\bSigma}_{SS}^{-1}}\lesssim \|\bSigma^{-1/2}_{SS}\|^2_{\infty}\cdot (1+ \sqrt{|S|^2/n})$. Firstly, (b) is implied by $\linfnorm{\hat{\bSigma}\balpha-\hat{\bmu}}\lesssim \sqrt{\frac{\log p}{n}}$ from (ii). To prove (c), we have 
\begin{align*}
\linfnorm{\hat{\bSigma}_{SS}^{-1}} &\leq \linfnorm{\hat{\bSigma}_{SS}^{-1}-\bSigma^{-1}_{SS}}+\linfnorm{\bSigma_{SS}^{-1}}\leq \linfnorm{\bSigma_{SS}^{-1/2}}^2\cdot (1+ \linfnorm{(\bSigma_{SS}^{-1/2}\hat{\bSigma}_{SS}\bSigma_{SS}^{-1/2})^{-1}-\bI_{|S|}}) \\
&\hspace{-1.cm}\leq \linfnorm{\bSigma_{SS}^{-1/2}}^2\cdot (1+ \sqrt{|S|}\ltwonorm{(\bSigma_{SS}^{-1/2}\hat{\bSigma}_{SS}\bSigma_{SS}^{-1/2})^{-1}-\bI_{|S|}})\lesssim \|\bSigma^{-1/2}_{SS}\|^2_{\infty}\cdot (1+ \sqrt{|S|^2/n}),
\end{align*}
where the last inequality holds with probability at least $1-p^{-\delta}$, and this is due to the spectral norm bound $\ltwonorm{(\bSigma_{SS}^{-1/2}\hat{\bSigma}_{SS}\bSigma_{SS}^{-1/2})^{-1}-\bI_{|S|}}\lesssim \sqrt{|S|/n}+\sqrt{\log p/n}$ which can be derived using a similar argument as in the proof of Lemma \ref{identity:deviation} Part (ii). Regarding (a), let $\bX_S\in \mathbb{R}^{n_1\times |S|}, \bX_{S^c}\in \mathbb{R}^{n_1\times (p-|S|)}$ be the data matrices indexed by $S, S^c$ respectively. It is straightforward to use the Woodbury formula to confirm the identity 
\[
\hat{\bSigma}_{S^cS}\hat{\bSigma}^{-1}_{SS}\hat{\bmu}_S-\hat{\bmu}_{S^c}=-[\underbrace{n_1^{-1}\bX^T_{S^c}(\bI_{n}-\bX_S(\bX_S^T\bX_S)^{-1}\bX_S^T)\bone}_{:=\mathcal{J}_1}]\cdot [\underbrace{1+\hat{\bmu}_S^T(\hat{\bSigma}_{SS})^{-1}\hat{\bmu}_S}_{:=\mathcal{J}_2}]
\]
We first prove $\linfnorm{\mathcal{J}_1}\lesssim \sqrt{\frac{\log p}{n}}$. Let $\bz_j$ be the $j$th column of $\bX_{S^c}$. Given that $\balpha=\bSigma^{-1}\bmu$ has the support $S$, it is direct to verify $\bSigma_{S^cS}\bSigma^{-1}_{SS}\bmu_S=\bmu_{S^c}$, which implies that $\bz_j\mid \bX_S\sim N(\bv_j, \sigma^2_j\bI_{n_1})$ with $\bv_j=\bX_S\bSigma^{-1}_{SS}\bSigma_{SS^c}\be_j, \sigma^2_j=\be_j^T(\bSigma_{S^cS^c}-\bSigma_{S^cS}\bSigma_{SS}^{-1}\bSigma_{SS^c})\be_j$. Then $(\mathcal{J}_1)_j=n_1^{-1}\bz^T_j(\bI_{n}-\bX_S(\bX_S^T\bX_S)^{-1}\bX_S^T)\bone\mid \bX_S \sim N(0, \sigma_j^2n_1^{-2}\bone^T(\bI_{n}-\bX_S(\bX_S^T\bX_S)^{-1}\bX_S^T)\bone)$. Therefore, the Gaussian tail inequality gives $\forall t>0$,
\begin{align*}
\PP\Big(|(\mathcal{J}_1)_j|\geq t\sqrt{\frac{\log p}{n}}\Big)\leq 2\EE \exp\Big(\frac{-t^2\log p}{2n\sigma_j^2n_1^{-2}\bone^T(\bI_n-\bX_S(\bX_S^T\bX_S)^{-1}\bX_S^T)\bone}\Big)\lesssim 2e^{-c_2t^2\log p}.
\end{align*}
Choosing sufficiently large $t>0$, this result combined with a simple union bound yields $\linfnorm{\mathcal{J}_1}\lesssim \sqrt{\frac{\log p}{n}}$ with probability greater than $1-p^{-\delta}$. Finally, we should prove $\mathcal{J}_2\lesssim 1$. Towards this goal, we write $\hat{\bmu}_S^T(\hat{\bSigma}_{SS})^{-1}\hat{\bmu}_S=\frac{n_1}{n_1-|S|-2}\cdot [\frac{n_1-|S|-2}{n_1}\hat{\bmu}_S^T(\hat{\bSigma}_{SS})^{-1}\hat{\bmu}_S-\frac{|S|}{n_1}]+\frac{|S|}{n_1-|S|-2}$. Under the scaling $\frac{s\log p}{n}=o(1)$, combining the variance calculations in the proof of Proposition \ref{gaussian:case} with Chebyshev's inequality leads to $\PP(\hat{\bmu}_S^T(\hat{\bSigma}_{SS})^{-1}\hat{\bmu}_S\gtrsim 1)\lesssim n^{-1}$. 
\end{enumerate}
\end{proof} 
\end{itemize}

\subsection{Reference Material}
\label{reference:material}
\begin{thmx} \label{hoeffding:quote}
(General Hoeffding's inequality). Let $x_1, \ldots, x_n \in \RR$ be independent, zero-mean, sub-gaussian random variables. Then for every $t \geq 0$, we have
\begin{eqnarray*}
\mathbb{P}\bigg(\Big|\sum_{i=1}^nx_i \Big|\geq t \bigg) \leq 2\exp \Big(-\frac{ct^2}{\sum_{i=1}^n\|x_i\|^2_{\psi_2}}\Big),
\end{eqnarray*}
where $c>0$ is an absolute constant, and $\|\cdot \|_{\psi_2}$ is the sub-gaussian norm defined as $\|x \|_{\psi_2}=\inf\{t>0: \mathbb{E}e^{x^2/t^2}\leq 2\}$. \\

\noindent (Bernstein's inequality). Let $x_1,\ldots, x_n \in \RR $ be independent, zero-mean, sub-exponential random variables. Then for every $t\geq 0$, we have
\begin{eqnarray*}
\mathbb{P}\bigg(\Big|\sum_{i=1}^nx_i \Big|\geq t \bigg) \leq 2\exp\Bigg[-c \min \bigg(\frac{t^2}{\sum_{i=1}^n\|x_i\|^2_{\psi_1}}, \frac{t}{\max_i \|x_i\|_{\psi_1}} \bigg) \Bigg],
\end{eqnarray*}
where $c>0$ is an absolute constant, and $\|\cdot\|_{\psi_1}$ is the sub-exponential norm defined as $\|x\|_{\psi_1}=\inf\{t>0: \mathbb{E}e^{|x|/t}\leq 2\}$.
\end{thmx}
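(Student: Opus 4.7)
These are classical concentration inequalities for sums of independent sub-gaussian and sub-exponential random variables, so the plan is simply to record the standard moment generating function plus Chernoff bound argument; indeed, both statements appear verbatim as Theorems 2.6.3 and 2.8.2 of \cite{versh18}, and I would cite that reference for a complete proof. For the sake of self-containedness I would sketch the standard derivation as follows.

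The plan for the Hoeffding part is to combine the sub-gaussian MGF bound with the Chernoff technique. Specifically, the definition of $\subgnorm{\cdot}$ is equivalent (up to an absolute constant) to the moment generating function bound $\EE e^{\lambda x_i} \leq e^{c \lambda^2 \subgnorm{x_i}^2}$ for all $\lambda \in \RR$, valid because $x_i$ has zero mean. By independence,
\begin{equation*}
\EE \exp\Bigl(\lambda \sum_{i=1}^n x_i\Bigr) \leq \exp\Bigl(c \lambda^2 \sum_{i=1}^n \subgnorm{x_i}^2\Bigr).
\end{equation*}
Applying Markov's inequality to the exponential, then optimizing over $\lambda > 0$ by choosing $\lambda = t/(2c \sum_i \subgnorm{x_i}^2)$, yields the one-sided bound with the exponent $-ct^2/\sum_i \subgnorm{x_i}^2$ (with a possibly adjusted absolute constant $c$). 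A symmetric argument on $-x_i$ and a union bound give the two-sided inequality.

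For the Bernstein part, the key difference is that the sub-exponential MGF bound $\EE e^{\lambda x_i} \leq e^{c \lambda^2 \subexpnorm{x_i}^2}$ only holds on the restricted range $|\lambda| \leq 1/(c \subexpnorm{x_i}_{\psi_1})$. Proceeding as above on this range gives
\begin{equation*}
\PP\Bigl(\sum_{i=1}^n x_i \geq t\Bigr) \leq \exp\Bigl(c \lambda^2 \sum_{i=1}^n \subexpnorm{x_i}^2 - \lambda t\Bigr),
\end{equation*}
and the optimization over $\lambda$ now splits into two cases. When $t$ is small (specifically $t \leq \sum_i \subexpnorm{x_i}^2 / \max_i \subexpnorm{x_i}$), the unconstrained optimum $\lambda = t/(2c\sum_i \subexpnorm{x_i}^2)$ lies inside the admissible range, giving the Gaussian-like exponent $-ct^2/\sum_i \subexpnorm{x_i}^2$. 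In the opposite regime, the optimum is attained at the boundary $\lambda = 1/(c\max_i \subexpnorm{x_i})$, yielding the linear exponent $-c t/\max_i \subexpnorm{x_i}$. Taking the worse of the two cases produces the $\min(\cdot,\cdot)$ bound in the statement, and again a symmetric argument gives the two-sided inequality. The only subtle step is the passage between the equivalent characterizations of $\subgnorm{\cdot}$ and $\subexpnorm{\cdot}$ (via Orlicz norm, moment bounds, or MGF bound), but these equivalences are standard and carry only absolute constant losses, which are absorbed into the constant $c$ in the statement.
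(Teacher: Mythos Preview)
Your proposal is correct and matches the paper's treatment: the paper does not prove this statement at all but simply cites Theorems 2.6.2 and 2.8.1 of \cite{versh18} and refers the reader there for details. Your additional MGF-plus-Chernoff sketch is accurate and goes beyond what the paper provides.
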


The above two results are Theorem 2.6.2 and Theorem 2.8.1, respectively in \cite{versh18}. Please refer there to see the details.

\begin{thmx}\label{maximal:noteq}
Let $x_1,\ldots, x_p \in \mathbb{R}$ be sub-gaussian random variables, which are not necessarily independent. Then there exists an absolute constant $c>0$ such that for all $p>1$,
\begin{eqnarray*}
\mathbb{E}\max_{1\leq i\leq p} |x_i| \leq c \sqrt{\log p} \max_{1\leq i\leq p} \|x_i\|_{\psi_2}.
\end{eqnarray*}
\end{thmx}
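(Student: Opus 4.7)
The plan is to reduce the maximum of sub-gaussians to the standard union-bound-in-expectation argument built on the moment generating bound that defines the sub-gaussian norm. Write $K=\max_{1\leq i\leq p}\|x_i\|_{\psi_2}$. The first step is to invoke Jensen's inequality with the convex function $\exp(\cdot)$ applied to the random variable $Y=\max_{i}x_i^2/K^2$, giving $\exp(\mathbb{E}Y)\leq \mathbb{E}\exp(Y)$. Then I would swap the maximum out of the exponential via $\exp(\max_i x_i^2/K^2)=\max_i \exp(x_i^2/K^2)\leq \sum_i \exp(x_i^2/K^2)$ and take expectations to obtain $\mathbb{E}\exp(\max_i x_i^2/K^2)\leq \sum_{i=1}^p \mathbb{E}\exp(x_i^2/K^2)\leq 2p$, where the last inequality is exactly the definition of the sub-gaussian norm applied entry by entry (note that this bound requires no independence, which is precisely what the theorem allows).

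Combining the two displays yields $\mathbb{E}\max_i x_i^2/K^2\leq \log(2p)$, i.e.\ $\mathbb{E}\max_i x_i^2\leq K^2\log(2p)$. A second, routine application of Jensen's inequality to the concave function $\sqrt{\cdot}$ then gives
\[
\mathbb{E}\max_{1\leq i\leq p}|x_i|\leq \sqrt{\mathbb{E}\max_{1\leq i\leq p}x_i^2}\leq K\sqrt{\log(2p)}.
\]
Finally, absorbing the $\log 2$ into the constant via $\sqrt{\log(2p)}\leq \sqrt{2\log p}$ for $p\geq 2$ (and handling $p=1$ separately or using $p>1$ as stated) delivers the conclusion with an absolute constant $c$.

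There is no real obstacle: the argument is the textbook two-step Jensen trick, and the only subtlety is keeping track of whether the $x_i$ are centered. Because $\|x_i\|_{\psi_2}$ is defined through $\mathbb{E}\exp(x_i^2/K^2)\leq 2$ (no centering required), the bound passes through cleanly on the squared quantities, which is why we work with $x_i^2$ rather than with $|x_i|$ directly via an MGF bound on $\exp(\lambda|x_i|)$. Independence is never invoked, only the monotonicity $\max_i z_i \leq \sum_i z_i$ for $z_i\geq 0$, so the conclusion holds for arbitrary joint distributions as claimed.
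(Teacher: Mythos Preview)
Your argument is correct and complete: the two applications of Jensen (once with $\exp$, once with $\sqrt{\cdot}$) together with the union bound $\max_i z_i\leq \sum_i z_i$ and the defining inequality $\mathbb{E}e^{x_i^2/K^2}\leq 2$ are exactly what is needed, and you correctly observe that no independence is used anywhere.

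There is nothing to compare against in the paper itself: the paper does not prove this statement but simply quotes it as a reference result, pointing to Lemma~2.4 of \cite{blm13}. Your proof is in fact the standard textbook argument that appears there (and in Vershynin's book), so you have reproduced precisely the proof behind the citation.
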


The above result can be found in Lemma 2.4 of \cite{blm13}.

\begin{thmx}\label{matrix:deviation}
(Matrix deviation inequality). Let $\bA$ be an $m \times n$ matrix whose rows $\{\bA_i\}_{i=1}^n$are independent, isotropic and sub-gaussian random vectors in $\RR^n$. Then for any subset $\cT \subseteq \mathbb{R}^n$, we have for any $u \geq 0$, the event
\begin{eqnarray*}
\sup_{\bx\in \cT}\big |\|\bA\bx\|_2-\sqrt{m}\|\bx\|_2\big |\leq CK^2(w(\cT)+u \cdot {\rm rad}(\cT))
\end{eqnarray*}
holds with probability at least $1-2e^{-u^2}$. Here $C>0$ is an absolute constant, $K=\max_i \|\bA_i\|_{\psi_2}$, and $w(\cT), {\rm rad}(\cT)$ are defined as:
\begin{eqnarray*}
w(T)=\mathbb{E}\sup_{\bx \in \cT}\bg^T\bx, ~\bg \sim N(0, \bI_p); ~~ {\rm rad}(\cT)=\sup_{\bx\in \cT}\|\bx\|_2.
\end{eqnarray*}
\end{thmx}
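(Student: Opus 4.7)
The plan is to prove the Matrix Deviation Inequality via the generic chaining method, following the Talagrand–Vershynin template. The process to be controlled is
\[
Z_\bx := \|\bA\bx\|_2 - \sqrt{m}\,\|\bx\|_2, \qquad \bx \in \cT,
\]
and the goal is to show that $\sup_{\bx \in \cT} |Z_\bx|$ is bounded by a Gaussian-width-type quantity with sub-Gaussian tails. First I would reduce, by homogeneity and the normalization $K=\max_i\|\bA_i\|_{\psi_2}$, to the case $K=1$, pull a fixed anchor point $\bx_0 \in \cT$ out (contributing the $\mathrm{rad}(\cT)$ term), and focus on the increments of $Z_\bx$ over $\cT$.

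The key technical step is to verify that $\{Z_\bx\}_{\bx\in \cT}$ has sub-Gaussian increments relative to the Euclidean metric, namely
\[
\bigl\|Z_\bx - Z_\by\bigr\|_{\psi_2} \;\le\; C K^2 \,\|\bx - \by\|_2, \qquad \forall\, \bx,\by \in \cT.
\]
I would obtain this by writing $\|\bA\bz\|_2^2 - m\|\bz\|_2^2 = \sum_i \bigl(\langle \bA_i,\bz\rangle^2 - \|\bz\|_2^2\bigr)$ for $\bz = \bx-\by$, applying Bernstein's inequality (equivalently the Hanson–Wright inequality) to the sum of independent centered sub-exponential variables with $\psi_1$-norm $\lesssim K^2 \|\bz\|_2^2$, and then using the Lipschitz identity $|\|\bA\bx\|_2 - \|\bA\by\|_2| \le \|\bA(\bx-\by)\|_2$ together with $\bigl|\sqrt{a} - \sqrt{b}\bigr| \le \sqrt{|a-b|}$ to pass from the quadratic form to the linear statistic $Z_\bx - Z_\by$. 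This step requires balancing the two regimes (small vs.\ large $\|\bx-\by\|_2/\sqrt m$), which is where the factor $K^2$ enters.

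Once sub-Gaussian increments are in hand, I would invoke Talagrand's generic chaining tail bound, which yields, for any process $\{Z_\bx\}$ with sub-Gaussian increments of parameter $L$,
\[
\PP\!\left(\sup_{\bx,\by \in \cT} |Z_\bx - Z_\by| \;\le\; C\,L\,\bigl(\gamma_2(\cT,\|\cdot\|_2) + u\cdot \mathrm{diam}(\cT)\bigr)\right) \;\ge\; 1 - 2e^{-u^2},
\]
where $\gamma_2(\cT,\|\cdot\|_2)$ is Talagrand's chaining functional. The final step is Talagrand's majorizing-measure theorem, which provides the two-sided bound $\gamma_2(\cT,\|\cdot\|_2) \asymp w(\cT)$, converting the chaining functional into the Gaussian width that appears in the statement. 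Combining this with the anchor-point contribution (handled by a one-dimensional sub-Gaussian tail bound of the form $\PP(|Z_{\bx_0}| > CK^2 u\|\bx_0\|_2) \le 2e^{-u^2}$) gives the claimed inequality.

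The main obstacle is the sub-Gaussian increment estimate: the natural Hanson–Wright bound controls $\|\bA\bz\|_2^2 - m\|\bz\|_2^2$ with mixed sub-exponential/sub-Gaussian tails, and squeezing this into a clean sub-Gaussian bound on $\|\bA\bz\|_2 - \sqrt m\,\|\bz\|_2$ uniformly in the magnitude of $\|\bz\|_2$ requires the two-regime argument mentioned above. Everything else — the chaining step and the majorizing measure step — can be cited as black boxes from generic chaining theory, and the reduction to $K=1$ and the anchor-point extraction are routine.
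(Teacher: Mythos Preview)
Your proof outline is the standard and correct approach to the Matrix Deviation Inequality (sub-Gaussian increments via a two-regime Bernstein/Hanson--Wright argument, then Talagrand's generic chaining and the majorizing measure theorem to convert $\gamma_2$ into Gaussian width). However, the paper does not prove this statement at all: Theorem~C is listed under ``Reference Material'' and is simply cited as Exercise~9.1.8 in Vershynin~(2018), so there is no in-paper proof to compare against. Your sketch is precisely the argument Vershynin develops in Chapters~8--9 of that book, so in that sense you are reproducing the intended source proof.
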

Theorem \ref{matrix:deviation} appears as Exercise 9.1.8 in \cite{versh18}.

\begin{singlespace}

\end{singlespace}

\end{document}